\documentclass[10pt]{article}

\usepackage[top = 1.5 in, bottom = 1.5in, left = 1.5 in, right = 1.5 in]{geometry}

\input{mymacros.sty}
	
\begin{document}

\title{Local models for Weil-restricted groups}
    \author{Brandon Levin}

\maketitle

\begin{abstract}
We extend the group theoretic construction of local models of Pappas and Zhu \cite{PZ} to the case of groups obtained by Weil restriction along a possibly wildly ramified extension.  This completes the construction of local models for all reductive groups when $p \geq 5$.  We show that the local models are normal with special fiber reduced and study the monodromy action on the sheaves of nearby cycles. As a consequence, we prove a conjecture of Kottwitz that the semi-simple trace of Frobenius gives a central function in the parahoric Hecke algebra.  We also introduce a notion of splitting model and use this to study the inertial action in the case of an unramified group.         
\end{abstract}

\tableofcontents

\section{Introduction}

A \emph{local model} is a projective scheme over the ring of integers of a $p$-adic field $E$ which is supposed to \'etale locally model the integral structure of a Shimura variety with parahoric level structure. The theory of local models for Shimura varieties of PEL-type was developed in \cite{RZ}. There were subsequent refinements studied mostly on a case by case basis by Faltings, G\"ortz, Haines, Pappas, and Rapoport, among others. Pappas and Zhu \cite{PZ} give a new construction of local models which unlike previous constructions, is purely group-theoretic, i.e., it does not rely on any particular representation of $G$. They build their local models inside degenerations of affine Grassmanians, extending constructions of Beilinson, Drinfeld, Gaitsgory, and Zhu to mixed characteristic. This allows them to employ powerful techniques from geometric representation theory. In particular, the coherence conjecture of Pappas and Rapoport, proved in \cite{ZhuCoh}, plays a crucial role in understanding the geometry of their local models.  

Specifically, \cite{PZ} associates a local model to triples $(G, \cP, {\mu})$ where $G$ is a connected reductive group over a $p$-adic field $F$, $\cP$ is a parahoric (level) subgroup of $G(F)$, and $\{ \mu \}$ is a geometric conjugacy class of cocharacters of $G$.  Their approach requires an assumption on the group $G$, that it splits over a tamely ramified extension of $F$. If $G$ is absolutely simple, this is mild assumption which is always satisfied if $G$ is simply-connected or adjoint and $p \geq 5$. However, it is easy to construct non-absolutely simple groups which do not satisfy this tameness hypothesis by taking a wildly ramified extension $K/F$ and considering the Weil-restriction $\Res_{K/F} G$ of any reductive group $G$ over $K$. Furthermore, these sorts of groups arise naturally in the theory of Shimura varieties whenever one works over a field other than $\Q$. For example, Hilbert modular varieties are associated to the group $\Res_{L/\Q} \GL_2$ for a totally real field $L/\Q$. 

In this paper, we extend Pappas and Zhu's construction to groups of the form $\Res_{K/F} G$ where $G$ is any connected reductive group over $K$ which splits over a tame extension of $K$. If $p \geq 5$, then any group $G'/F$ is isogenous to a product of groups of this form and so this completes the construction of local models except when $p = 2$ or $3$ (see Remark \ref{isogeny}). In the case where $G = \GL_n$ or $\GSp_{2g}$, these local models were constructed and studied in \cite{PR1, PR2}.  If $K/F$ is a tame extension, our local models agree with those constructed by Pappas and Zhu (see Proposition \ref{PZcomp}). Our construction is partially inspired by the splitting models introduced in \cite{PR2}.

Let $\cP$ be a parahoric subgroup of $G(K) = (\Res_{K/F} G)(F)$, and let $\{ \mu \}$ be a geometric conjugacy class of cocharacters of $\Res_{K/F} G$. Denote by $E$ the minimal field of definition (\emph{reflex field}) of $\{\mu \}$.  The (generalized)\footnote{This refers to the fact that we do not assume here that the cocharacter $\mu$ is minuscule. We will usually omit the adjective ``generalized'' later on.} local model $M(\mu)$ is a projective scheme over the ring of integers $\cO_{E}$ of $E$. Our first main result is the analogue of Theorem 8.1 in \cite{PZ}:
\begin{thm} \label{Intlocmodel} Suppose that $p \not{\mid} \: |\pi_1(G_{\text{der}})|$. Let $k_E$ be the residue field of $\cO_E$. The scheme $M(\mu)$ is normal.  The special fiber $M(\mu) \otimes k_E$ is reduced, normal, and Frobenius split. Each irreducible component of $M(\mu) \otimes k_E$ is Cohen-Macaulay.   
\end{thm}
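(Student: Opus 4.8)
The plan is to follow the structure of the proof of Theorem~8.1 in \cite{PZ}, reducing all four assertions to a description of the special fibre $M(\mu)\otimes k_E$ as a union of (twisted) affine Schubert varieties. Recall from the construction that $M(\mu)$ is the scheme-theoretic closure, inside the global affine Grassmannian $\Gr_{\cG}$ over the relevant base, of the Schubert variety attached to $\{\mu\}$ in the generic fibre; in particular $M(\mu)$ is flat over the discrete valuation ring $\cO_E$ and irreducible (its generic fibre being irreducible). After base change to $\overline E$ and using the decomposition $\Res_{K/F}G\times_F\overline E\cong\prod_{\si\colon K\hookrightarrow\overline E}G_\si$, the generic fibre $M(\mu)\otimes E$ becomes geometrically a product $\prod_\si\overline{\Gr^{\mu_\si}}$ of affine Schubert varieties for the groups $G_\si$, hence is normal, since affine Schubert varieties are normal in every characteristic (Faltings, \cite{PZ}, and the references therein). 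By the standard fact that a flat finite-type scheme over a discrete valuation ring with reduced special fibre and normal generic fibre is normal (one checks $R_1$ at the generic points of the special fibre, where the local ring has principal maximal ideal hence is a discrete valuation ring, and $S_2$ using that the special fibre is $S_1$ together with flatness), it therefore suffices to prove that $M(\mu)\otimes k_E$ is reduced; the remaining assertions — normal, Frobenius split, Cohen--Macaulay irreducible components — concern the special fibre alone.

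\emph{The special fibre as a union of Schubert varieties.} This is the step where the Weil-restricted nature of the group, and the splitting-model device built into the construction, are essential. Although $\Res_{K/F}G$ itself does not split over a tamely ramified extension of $F$, the group scheme $\cG\otimes k_E[[t]]$ is — by the way the integral model over the two-dimensional base has been chosen — a parahoric group scheme for a group over $k_E((t))$ that \emph{does} split over a tame extension, the wild ramification of $K/F$ having been spread out along the deformation of the extension. Consequently its partial affine flag variety $\Fl$ is one of the twisted affine flag varieties studied in \cite{PR1,PR2} and \cite{PZ}, and a specialization argument identifies $M(\mu)\otimes k_E$ with a closed subscheme of $\Fl$ contained in the finite union $\cA(\mu):=\bigcup_{w\in\Adm(\mu)}\overline{\Fl_w}$ of affine Schubert varieties indexed by the $\mu$-admissible set.

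\emph{Coherence and the geometric properties.} Fix an $\cO_E$-ample line bundle $\cL$ on $M(\mu)$ pulled back from $\Gr_{\cG}$. Flatness over $\cO_E$ forces the Hilbert polynomials of $M(\mu)\otimes E$ and $M(\mu)\otimes k_E$ with respect to $\cL$ to agree. On the generic side this Hilbert polynomial is computed from the product $\prod_\si\overline{\Gr^{\mu_\si}}$; on the special side the coherence conjecture of Pappas--Rapoport, proved by Zhu \cite{ZhuCoh}, identifies $\dim_{k_E}H^0(\cA(\mu),\cL^{\otimes n})$ with precisely the same quantity. Hence $M(\mu)\otimes k_E\subseteq\cA(\mu)$ is a closed immersion of projective $k_E$-schemes with equal Hilbert polynomials; as $\cA(\mu)$, being a union of Schubert varieties, has no embedded components, this immersion is an equality of schemes, so $M(\mu)\otimes k_E=\cA(\mu)$ is reduced. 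Normality, Frobenius splitness, and Cohen--Macaulayness of the irreducible components now follow from the corresponding known properties of unions of affine Schubert varieties in $\Fl$ (Faltings, \cite{PR1,PR2}, \cite{PZ}), the hypothesis $p\nmid|\pi_1(G_{\der})|$ entering exactly to guarantee that $\cG\otimes k_E[[t]]$ is of the type for which normality of Schubert varieties and a compatible Frobenius splitting of $\Fl$ are available. Together with the reduction above this yields all the assertions.

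\emph{Main obstacle.} The cohomological bookkeeping is routine once the Schubert description is in place; the real difficulty is establishing that description. One must verify that the chosen integral model $\cG$ of $\Res_{K/F}G$ over the two-dimensional base genuinely degenerates the wild ramification, so that $\cG\otimes k_E[[t]]$ lands inside the Pappas--Rapoport framework for twisted affine flag varieties, and simultaneously that the combinatorial datum $\Adm(\mu)$ on the special fibre matches $\{\mu\}$ on the generic fibre closely enough that Zhu's theorem applies verbatim to compute $\dim H^0(\cA(\mu),\cL^{\otimes n})$. Carrying this out — together with descending the ample line bundle $\cL$ to $\cO_E$ and the elementary but necessary flat-descent lemma for normality over $\cO_E$ — is the technical heart of the argument.
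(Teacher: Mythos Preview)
Your overall architecture matches the paper's: reduce normality of $M(\mu)$ to reducedness of the special fibre, identify that fibre scheme-theoretically with a union of Schubert varieties indexed by an admissible set, and prove the identification by a Hilbert-polynomial comparison against an ample line bundle. There are, however, two genuine gaps.

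The main one is in the Hilbert-polynomial step. Zhu's coherence theorem equates $h^0(\cA^{\fa^\flat}(\lambda),\cL^{\otimes n})$ with $h^0(S_H(\lambda),\cL_{\det}^{\otimes n})$ for a \emph{single} cocharacter $\lambda$ of the split form $H$; it does not compute the Hilbert polynomial of the product $\prod_{\psi}S_H(\mu_\psi)$ that you see on the generic fibre. The bridge, which is the new ingredient specific to Weil restriction, is the product formula
\[
h^0\bigl(S_H(\mu+\mu'),\cL_{\det}^{\otimes n}\bigr)=h^0\bigl(S_H(\mu),\cL_{\det}^{\otimes n}\bigr)\cdot h^0\bigl(S_H(\mu'),\cL_{\det}^{\otimes n}\bigr)
\]
(Proposition~\ref{productformula}, due to Fourier--Littelmann and Zhu). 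This collapses the product over $\psi:K\hookrightarrow\overline F$ to the single Schubert variety for the sum $\la_\mu:=\sum_\psi\mu_\psi$, after which coherence applies. Without this step the computation does not close up. Relatedly, the admissible set governing the special fibre is $\Adm^{\fa^\flat}(\la_\mu)$ for the sum cocharacter, not an admissible set for $\mu$ itself.

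Second, you have the elementary inclusion backwards. You assert a ``specialization argument'' gives $\overline M(\mu)\subset\cA(\mu)$, but no such argument is indicated, and it is not clear how one would bound a priori which Schubert cells occur in the closure. The paper instead proves $\cA^{\fa^\flat}(\la_\mu)\subset\overline M(\mu)$ by exhibiting, for each extremal translation $t_{\la'}$, an explicit $\cO_{\widetilde E}$-point $s_{\mu'}$ of the torus loop group whose generic fibre lies in $S(\mu)$ and whose reduction lands in the cell for $t_{\la'}$ (Proposition~\ref{section2} and Corollary~\ref{inclusion}); this is already somewhat delicate when $G$ is not split over $K$. Equality of Hilbert polynomials then upgrades this inclusion to an equality of schemes.
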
 

As in \cite{PZ}, one can also identify the special fiber as a union of affine Schubert varieties in an affine flag variety (see Theorem \ref{thmadm} for a precise statement).  The main ingredients in the proof of Theorem \ref{Intlocmodel} are the coherence conjecture of Pappas and Rapoport proved in \cite{ZhuCoh} and a product formula which is also found in \cite{ZhuCoh}.   The tameness hypothesis in \cite{PZ} appears in their construction of a certain global affine Grassmanian inside which the local models are built.  The affine Grassmanians and affine flag varieties 
which appear in \cite{PZ} are mixed characteristic versions of now familiar constructions in the theory of the geometric Langlands correspondence.  In this same spirit, we adapt their construction to take into account the Weil restriction from $K/F$ building on ideas of Beilinson and Drinfeld in equal characterstic \cite{BD}. We now discuss this construction in some detail since the other arguments of \cite{PZ} carry over mostly formally to our situation. 

Assume for simplicity that $K/F$ is totally ramified.  In \cite[\S 4]{PZ}, they associate to the pair $(G, \cP)$ over $K$ a parahoric or Bruhat-Tits group scheme $\cG$ over $\bA^1_{\cO_K} = \Spec \cO_K[u]$. The group $\cG$ is a smooth affine group scheme with connected fibers which is reductive over $(\Gm)_{\cO_K}$ and ``parahoric'' at $u = 0$. This allows them to define a degeneration $\Gr_{\cG, \cO_K}$ of the affine Grassmanian $\Gr_G$ over $K$  whose special fiber is an affine flag variety over the residue field $k_0$ of $\cO_K$ which is determined by $\cP$ (up to some choices).  This is a mixed characteristic version of constructions of Gaitsgory and Beilinson-Drinfeld.  Note that we have specialized Pappas and Zhu's family $\Gr_{\cG, X}$ to $\Spec \cO_K$ via $u \mapsto \varpi$, where $\varpi$ is a uniformizer of $K$.  In our situation, we desire a degeneration of $\Gr_{\Res_{K/F} G}$ over $F$ to $\cO_F$ whose special fiber is again an affine flag variety determined by $\cP$.  Geometrically $\Gr_{\Res_{K/F} G}$ is $[K:F]$-copies of the affine Grassmanian for $G$ and so we are reminded of the BD-Grassmanian (\cite{BD, RicharzAG}) in equal characteristic which degenerates $d$ copies of the affine Grassmanian to a single affine Grassmanian.  Our construction combines a mixed characteristic version of the BD-Grassmanian with the construction from \cite{PZ}.  

Choose a uniformizer $\varpi$ of $K$.  Let $Q(u)$ be the minimal polynomial for $\varpi$ over $F$.   Our version of $\Gr_{\cG, \cO_K}$ from \cite{PZ} will be a moduli space $\Gr^{Q(u)}_{\cG}$ over $\cO_F$ of $\cG$-bundles over $\bA^1_{\cO_F}$ with a trivialization away from $Q(u) = 0$.  When $p$ is invertible, $Q(u)$ decomposes geometrically into a disjoint union of $[K:F]$-divisors and this gives rise to $\Gr_{\Res_{K/F} G}$.  Over the residue field $k$ of $\cO_F$, however, the roots of $Q(u)$ ``collide'' and we see an affine flag variety for the completion of $\cG$ at $k[\![u]\!]$ which by construction will be a parahoric group scheme $\cP^{\flat}$ over $k[\![u]\!]$. There is a technical subtlety related to the construction of the Bruhat-Tits group scheme $\cG$.  The parahoric group scheme in \cite{PZ} is defined over $\bA^1_{\cO_K}$ whereas it is important that we have a moduli problem over $\cO_F$.  Using that the group $G/K$ is tame, we are able to arrange so that $\cG$ is in fact defined over $\bA^1_{\cO_F}$.  This is done in \S 3.  

If $E$ is the reflex field of $\{\mu\}$, then the local model $M_{\cG}(\mu)$ is defined to be the closure of the affine Schubert variety $S(\mu) \subset (\Gr_{\Res_{K/F} G})_E$ in $\left(\Gr^{Q(u)}_{\cG} \right)_{\cO_{E}}$.  Since $M_{\cG}(\mu)$ is defined by Zariski closure, it does not have a simple moduli space description.  However, the interpretation as a subvariety of $\Gr^{Q(u)}_{\cG}$ allows us to prove many nice properties of $M_{\cG}(\mu)$ as in Theorem \ref{Intlocmodel}.   This connection with affine Grassmanian also allows us to study the sheaf of nearby cycles  $\RPsi (\IC_{\mu})$\footnote{When $\mu$ is minuscule, $S(\mu)$ is smooth and so $\IC_{\mu}$ is the constant sheaf $\overline{\Q}_{\ell}$ up to a shift.} in \S 5.   Whenever $M_{\cG}(\mu)$ \'etale locally describes the structure of an integral model of a Shimura variety, one then obtains results for the nearby cycles sheaf for the Shimura variety with parahoric level structure. 

Assume for simplicity of statement that $\mu$ is a minuscule cocharacter.  Our group scheme $\cG$ gives rise to a group $G^{\flat} := \cG_{k(\!(u)\!)}$ where $k$ is residue field of $F$ and a parahoric group scheme $\cP^{\flat} := \cG_{k[\![u]\!]}$. For any $\F_q \supset k_E$, we can view the semisimple trace of Frobenius $\tau^{\mathrm{ss}}_{\RPsi(\overline{\Q}_{\ell})}$ as an element of the parahoric Hecke algebra $\cH_q(G^{\flat}, \cP^{\flat})$ of bi-$\cP^{\flat}(\F_q(\!(u)\!))$-equivaraint, compactly supported locally constant $\overline{\Q}_{\ell}$-valued functions on $G^{\flat}(\F_q(\!(u)\!))$.  
\begin{thm} $($Kottwitz's conjecture$)$ The semi-simple trace of Frobenius on the sheaf of nearby cycles $\RPsi (\overline{\Q}_{\ell})$ of $M_{\cG}(\mu) \ra \Spec \cO_E$ gives a central function in the parahoric Hecke algebra $\cH_q(G^{\flat}, \cP^{\flat})$. 
\end{thm}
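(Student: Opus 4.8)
The plan is to reduce the statement to a purely geometric one — that $\RPsi(\IC_\mu)$ (which for minuscule $\mu$ is $\RPsi(\overline{\Q}_{\ell})$ up to shift) is a \emph{central sheaf} on the affine flag variety $\Fl_{\cP^{\flat}}$ of $\cP^{\flat}$ in the sense of Gaitsgory — and then to transport centrality to the Hecke algebra via Grothendieck's sheaf--function dictionary. Concretely, the degeneration $\Gr^{Q(u)}_{\cG} \ra \Spec \cO_E$ has generic fiber a form of $\Gr_{\Res_{K/F}G}$ and special fiber $\Fl_{\cP^{\flat}}$; since by Theorem~\ref{Intlocmodel} the local model $M_{\cG}(\mu)$ is normal with reduced, normal special fiber, the functor $\RPsi$ applied to $\IC_{\mu}$ produces a $\cP^{\flat}$-equivariant perverse sheaf on $\Fl_{\cP^{\flat}}$, and the Haines--Ng\^o formalism then attaches to it a well-defined element $\tau^{\mathrm{ss}}_{\RPsi(\overline{\Q}_{\ell})} \in \cH_q(G^{\flat},\cP^{\flat})$ for every $\F_q \supset k_E$. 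Because the sheaf--function correspondence is additive in distinguished triangles and turns convolution into convolution of functions, it suffices to exhibit $\RPsi(\IC_{\mu})$ as central for the convolution product $\Conv$ on $\Fl_{\cP^{\flat}}$, equipped with a commutativity constraint compatible with the symmetric monoidal structure on the Satake category of the generic affine Grassmannian.

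First I would construct this commutativity constraint following the method of \cite{PZ}, which is modeled on Gaitsgory and on the Beilinson--Drinfeld degeneration. The key structural input is that $\Gr^{Q(u)}_{\cG}$ was built by combining the mixed-characteristic parahoric Grassmannian of \cite{PZ} with the BD-Grassmannian of \cite{BD, RicharzAG}, so that it fits into a \emph{factorizable} family over the space of divisors cut out by $Q$. Spreading the modification points apart recovers a convolution (equivalently an external product) of affine Grassmannians for $G$, while letting the $[K:F]$ points collide at $u=0$ in the special fiber produces, through nearby cycles, an iterated convolution on $\Fl_{\cP^{\flat}}$. The symmetry of the associated fusion product under permuting the points — available because the family over the configuration space is symmetric — yields the sought commutativity isomorphism for $\RPsi$, and one verifies the hexagon and unit axioms exactly as in \cite{PZ}. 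This shows $\RPsi$ is a central functor, so $\RPsi(\IC_{\mu})$ lies in the center of $\Perv_{\cP^{\flat}}(\Fl_{\cP^{\flat}})$ and convolution with it is a central operation.

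It then remains to descend from sheaves to functions. Passing to semisimple traces of Frobenius and using that $\tau^{\mathrm{ss}}$ intertwines convolution of sheaves with convolution of functions, the commutativity constraint on $\RPsi(\IC_{\mu})$ becomes the identity $f \ast \tau^{\mathrm{ss}}_{\RPsi(\overline{\Q}_{\ell})} = \tau^{\mathrm{ss}}_{\RPsi(\overline{\Q}_{\ell})} \ast f$ for every $f \in \cH_q(G^{\flat},\cP^{\flat})$ supported on a single $\cP^{\flat}$-double coset, hence for all $f$ by linearity. Using the semisimple trace rather than the ordinary trace is precisely what makes the argument insensitive to the possible failure of purity of $\RPsi$, so no weight hypothesis is needed. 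The product formula of \cite{ZhuCoh} additionally identifies $\RPsi(\overline{\Q}_{\ell})$ with the expected iterated convolution coming from the colliding geometric copies of $\Gr_G$, pinning down $\tau^{\mathrm{ss}}_{\RPsi(\overline{\Q}_{\ell})}$ explicitly, though this refinement is not needed for centrality itself.

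The main obstacle is the first step: running Gaitsgory's centrality machinery in the wildly ramified setting. All of the genuinely substantive inputs — that $\cG$ descends to a smooth affine group scheme over $\bA^1_{\cO_F}$ (the content of \S3, which uses the tameness of $G/K$ and \emph{not} of $\Res_{K/F}G$), that $\Gr^{Q(u)}_{\cG}$ is ind-projective and factorizable over the configuration space of roots of $Q$, and that the special fiber of $M_{\cG}(\mu)$ is reduced so that $\RPsi$ of an IC sheaf is perverse (Theorem~\ref{Intlocmodel}) — are exactly where the effort goes. Once these are in hand, the construction of the commutativity constraint, the verification of the monoidal axioms, and the translation into the Hecke algebra are formal and follow \cite{PZ} essentially verbatim.
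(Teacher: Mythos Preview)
Your high-level strategy --- establish a commutativity constraint at the level of sheaves and then pass to functions via the semisimple trace --- is correct and matches the paper. But the specific degeneration you propose for producing the commutativity constraint is the wrong one, and this is a genuine gap.

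You describe a factorization over the configuration space of the $[K:F]$ roots of $Q(u)$: spread them apart to see a product of copies of $\Gr_G$, let them collide to see $\Fl_{\cP^{\flat}}$, and use the symmetric-group symmetry of the configuration space. What this buys you is (at best) a commutativity among the factors $\IC_{\mu_1},\ldots,\IC_{\mu_d}$ in the convolution identification $\RPsi_\mu \cong \IC_{\mu_1}\star\cdots\star\IC_{\mu_d}$ of Theorem~\ref{convid}. It does \emph{not} show that $\RPsi_\mu$ commutes under convolution with an arbitrary $\IC_w$ for $w$ in the Iwahori--Weyl group of $G^{\flat}$, which is what centrality in $\cH_q(G^{\flat},\cP^{\flat})$ requires. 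The roots of $Q(u)$ all carry copies of the \emph{same} affine Grassmannian; none of them carries the affine flag variety $\Fl_{\cP^{\flat}}$ where the $\IC_w$ live.

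The paper instead introduces an \emph{additional} divisor $D_0=\{u=0\}$ and builds a Beilinson--Drinfeld Grassmannian $\Gr_{\cG}^{\BD,Q(u)}$ with modifications along both $D=\{Q(u)=0\}$ and $D_0$ (Definition~\ref{BDdef}). Generically $D$ and $D_0$ are disjoint, so the fiber is $\Fl_{\cP_F}\times\Fl_{\cG,F}^{Q(u)}$; on the special fiber they coincide and one sees $\Fl_{\cP^{\flat}}$. Two convolution Grassmannians $\Gr_{\cG}^{\Conv,Q(u)}$ and $\Gr_{\cG}^{\Conv',Q(u)}$ (one modifying first at $D$ then at $D_0$, the other in the opposite order) both map to $\Gr_{\cG}^{\BD,Q(u)}$; each map is an isomorphism generically and becomes the convolution map $\Fl_{\cP^{\flat}}\widetilde{\times}\Fl_{\cP^{\flat}}\to\Fl_{\cP^{\flat}}$ on the special fiber. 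Comparing nearby cycles through these two maps (Proposition~\ref{commid}) gives the isomorphism $\IC_w\star\RPsi_\mu\cong\RPsi_\mu\star\IC_w$ of Theorem~\ref{commconst}, and then the Haines--Ng\^o argument finishes. The point is that the extra divisor $D_0$ is what brings the arbitrary $\IC_w$ into play; your configuration of roots of $Q$ alone never sees it.

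Two minor remarks: perversity of $\RPsi_\mu$ is automatic from the general fact that nearby cycles preserve perversity and does not require Theorem~\ref{Intlocmodel}; and the ``product formula of \cite{ZhuCoh}'' is used in the paper for the geometry of the special fiber (Theorem~\ref{thmadm}), not for the nearby-cycles identification, which instead uses the splitting Grassmannian of \S5.4.
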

See \S 5.3 and Theorem \ref{Kottwitz} for more details and a more general statement.  The proof of this theorem follows a strategy of \cite{HNnearby} and \cite{Gaitsgory} to prove a commutativity constraint on the nearby cycles (see Theorem \ref{commconst}).

If $\widetilde{F}$ is an extension of $F$ over which $\Res_{K/F} G$ becomes split, then we show that the inertial action of $I_{\widetilde{F}}$ on the the nearby cycles is unipotent generalizing results of \cite{HNnearby} (see Theorem \ref{monodromy}).  Moreover, when the parahoric is \emph{very special} in the sense \cite[Definition 6.1]{ZhuGS} and $G$ is an unramified group over $K$, we describe the sheaf of nearby cycles as a convolution product (see Theorem \ref{convid} for a precise statement). We also give a conjectural description of the action of $I_E$.  This generalizes a conjecture of \cite{PR1} in the case of $G = \GL_n$ or $G = \GSp_{2g}$.  The key tool is an affine Grassmanian version of the splitting models introduced in \cite{PR2}.  These are closely related to the convolution Grassmanians from geometric Langlands.  

We now give a brief overview of the sections. In \S 2, we introduce local models in the special case where $K/F$ is totally ramified and $G$ is an unramified group over $F$.  In this case, the constructions more closely resemble those of \cite{Gaitsgory}. In \S 3, we do the technical work of constructing the parahoric (or Bruhat-Tits) group scheme $\cG$ over $\cO_F[u]$ attached to the group $G/K$ and a parahoric subgroup of $G(K)$. 

In \S 4, we construct the local models associated to the conjugacy class $\{\mu\}$ and prove the main theorem (Theorem \ref{Intlocmodel}) on their geometry.  We also prove the ind-representability of the affine Grassmanian $\Gr_{\cG}^{Q(u)}$  and show that it is ind-proper.  Our proof is different from \cite{PZ} and follows a strategy introduced in \cite{RicharzAG}.   In the last section (\S5), we study the nearby cycles on local models.  To do this, we introduce a number of auxiliary affine Grassmanians including versions of convolution and BD-Grassmanians, and what we call a \emph{splitting Grassmanians} which generalize the splitting models of \cite{PR2}.   

\subsection{Acknowledgments}  
This paper is partially based on the author's Ph.D. thesis advised by Brian Conrad to whom the author owes a debt of gratitude for his generous guidance.  It is a pleasure to thank Xinwen Zhu and George Pappas for many helpful exchanges and for comments on an earlier draft. I would also like to thank Tasho Kaletha and Bhargav Bhatt for useful discussions related to the project.  Part of this work was completed while the author was a visitor at the Institute for Advanced Study supported by the National Science Foundation grant DMS-1128155.  The author is grateful to the IAS for its support and hospitality.  The debt this paper owes to the work of Michael Rapoport, George Pappas and Xinwen Zhu will be obvious to the reader.   
  
\subsection{Notation}
Let $F$ denote a finite extension of $\Qp$ with ring of integers $\cO$.   We will always take $K$ to be a finite extension of $F$ and $K_0$ to be the maximal subfield of $K$ unramified over $F$.  We will use $\varpi$ to denote a uniformizer of $K$.  Set $d = [K:F]$.  Let $k$ denote the residue field of $\cO$ and $k_0/k$ the residue field of $K_0$. We will denote the minimal polynomial of $\varpi$ over $K_0$ by $Q(u)$.  For any $\cO_{K_0}$-algebra $R$, we will take $\widehat{R}_{Q(u)}$ to be the $Q(u)$-adic completion of $R[u]$.

Unless stated otherwise, $G$ will be a connected reductive group over $K$ which splits over a tame extension of $K$. We will fix such a tame extension $\widetilde{K}$ in \S 3. We use $\mu$ to denote a fixed cocharacter of $\Res_{K/F} G$ and denote its conjugacy class by $\{ \mu \}$. We will use $E$ for the reflex field of $\{ \mu \}$ inside a fixed algebraic closure $\overline{F}$ of $F$.  Parahoric subgroups will be denoted $\cP_*$, and abusing notation, we will be use the same notation for the corresponding parahoric group scheme. On the other hand parabolic subgroup (schemes) will be denoted by $P_*$.   

For any connected reductive group $H$ over a field $\kappa$, $H_{\der}$ will denote the derived subgroup and $H_{\ad}$ the adjoint quotient.   

\section{Unramified groups}  

In this section, we study local models for groups of the form $\Res_{K/F} G$ where $G$ is an unramified group defined over $F$. For simplicity, we will also assume that $K/F$ is totally ramified. This is special case of the what we consider in the rest of the paper.  The unramified case illustrates the main conceptual points without as many of the technical difficulties.  The construction in this section closely resembles the equal characteristic construction in \cite{Gaitsgory}.  In Proposition \ref{comp2}, we compare with the more general construction given in \S 4.    

\subsection{Preliminaries}

Let $H$ be any connected reductive group over a field $\kappa$.  The affine Grassmanian $\Gr_H$ is the moduli space of isomorphism classes of $H$-bundles on the formal unit disc $\Spec R[\![u]\!]$ together with a trivialization over the punctured unit disc $\Spec R(\!(u)\!)$ where $R$ is any $\kappa$-algebra.  As a functor on $\kappa$-algebras, $\Gr_H$ is represented by an ind-projective scheme over $\Spec \kappa$.

Let $\{\mu\}$ denote a geometric conjugacy class of cocharacters of $H$.  The smallest field of definition of $\{\mu\}$ inside a fixed algebraic closure $\overline{\kappa}$ of $\kappa$ is called the \emph{reflex field} of ${\mu}$. Here we denote it by $\kappa_{\{\mu\}}$.  The affine Schubert variety $S_H(\mu)$ associated to $\{\mu\}$ is a closed finite type subscheme of $(\Gr_H)_{\kappa_{\{\mu\}}}$.  It is the closure of an orbit for the positive loop group $L^+ H := H(\kappa[\![u]\!])$ acting on $\Gr_H$.

More generally, one can associate to any facet $\fa$ in the building $\cB(H_{\kappa(\!(u)\!)}, \kappa(\!(u)\!))$  an affine flag variety $\Fl_{\fa}$, an ind-projective scheme over $\kappa$ (cf. \cite[\S 1]{PRTwisted} or \cite[\S 1.1]{RicharzAG}).   Assume for the moment that $\kappa$ is algebraically closed. If $T_H$ is a maximal split torus of $H$, then the Iwahori-Weyl group is given by
$$
\widetilde{W} := \widetilde{W}(H, T_H, \kappa(\!(u)\!)) = N_H(T_H)(\kappa(\!(u)\!)) /T_H(\kappa[\![u]\!]).
$$
It fits into an exact sequence 
$$
0 \ra X_*(T_H) \ra \widetilde{W} \ra W(H, T_H) \ra 1
$$
where $W(H, T_H)$ is the ordinary Weyl group. The Iwahori-Weyl group is discussed in a more general setting in \S 3.2.   

If $\cH_{\fa}$ is the parahoric group scheme over $\kappa[\![u]\!]$ associated to $\fa$, then the \emph{positive loop group} $L^+ \cH_{\fa}$ is the functor on $\kappa$-algebras defined by 
$$
L^+ \cH_{\fa}: R \mapsto \cH_{\fa}(R[\![u]\!]).
$$
The loop group $L^+ \cH_{\fa}$ acts on $\Fl_{\fa}$ and its orbits are indexed by elements of $\widetilde{W}$ (cf. \cite[Proposition 1.2]{RicharzAG}). 

\begin{defn}
 For any $w \in \widetilde{W}$, let $S_{w}^{\fa}$ be the associated closed affine Schubert variety in $\Fl_{\fa}$.  
\end{defn}

Recall that
$$
(\Fl_{\fa})_{\red} = \bigcup_{w \in \widetilde{W}} S_w^{\fa}.
$$
At the level of $\kappa$-points, this is related to the Cartan decomposition for $H(\kappa(\!(u)\!))$.
  

\subsection{Mixed characteristic affine Grassmanians}

Let $G$ be an unramified reductive group (quasi-split and split over an unramified extension) over $F$, and let $\fa$ be a facet of the building $\cB(G, F)$.  Assume furthermore that the closure of $\fa$ contains a hyperspecial vertex.  Hyperspecial vertices exist since $G$ is unramified.   Note that when $\fa$ is a chamber (i.e., when $\fa$ corresponds to an Iwahori subgroup) the closure always contains a hyperspecial vertex .

Pick a hyperspecial vertex $x_0$ in the closure of $\fa$.  The point $x_0$ corresponds to a connected reductive group scheme $\cG_0$ over $\Spec \cO$ whose generic fiber is $G$.  Furthermore, there is a unique parabolic $P_{k} \subset \cG_{k}$ such that
$$
\cP_{\fa} := \{ g \in \cG_0(\cO) \mid \overline{g} \in P_{k}(k) \}
$$
is the parahoric subgroup associated to $\fa$. Define $\cG := \cG_0 \otimes_{\cO} \cO[u]$, the constant extension. Observe that if we take $G^{\flat} := \cG_{k(\!(u)\!)}$ then $\cG_{k[\![u]\!]}$ is a reductive model of $G^{\flat}$ and the parabolic $P_{k}$ defines a parahoric subgroup
$$
\cP_{\fa^{\flat}} := \{ g \in \cG(k[\![u]\!]) \mid g \mod u \in P_{k}(k) \}
$$
of $G^{\flat}$ corresponding to a facet $\fa^{\flat}$ of $\cB(G^{\flat}, k(\!(u)\!)\!)$.  In particular, there is an affine flag variety $\Fl_{\fa^{\flat}}$ over $k$ associated to $\fa^{\flat}$ as in \S 2.1.  In section \S 3.3, we recall the general procedure from \cite{PZ} for producing $\fa^{\flat}$ from $\fa$. 

In the function field setting, \cite[\S 2.2.1]{Gaitsgory} constructs a deformation of the affine flag variety to the affine Grassmanian generalizing constructions of Beilinson and Drinfeld.  Specifically, there is an ind-projective scheme $\Fl_X$ over $\bA^1_k$ whose fiber at 0 is an affine flag variety (for Iwahori level) for a group $G^{\flat}$ and whose generic fiber is the product $G^{\flat}/B^{\flat} \times \Gr_{G^{\flat}}$.  Our goal is to construct a deformation of $\Fl_{\fa^{\flat}}$ to $\Spec \cO$ whose generic fiber is a product $G/P_F \times \Gr_{\Res_{K/F} G}$ for some parabolic $P_F$ of $G$. 

\begin{defn} Let $P$ be a parabolic subgroup scheme of $\cG$. If $\cE$ is a $\cG$-bundle on an $\cO$-scheme $X$, then a \emph{reduction} of $\cE$ to $P$ is a pair $(Q, i)$ where $Q$ is $P$-bundle on $X$ and $i:\cG \times^{P} Q \cong \cE$ is an isomorphism.  
\end{defn} 

\begin{exam} If $\cG = \GL_n$ and $B$ is the Borel, then a reduction of a vector bundle $V$ over $X$ to $B$ is same as a filtration of $V$ whose graded pieces are line bundles on $X$.  If $V$ is trivial, then a reduction to $B$ the same as a full flag.
\end{exam} 

Choose a parabolic subgroup $P \subset \cG$ which lifts $P_k \subset \cG_k$.  

\begin{defn} \label{bigflag} Let $Q(u) \in \cO[u]$ be the minimal polynomial of $\varpi$ over $F$. For any $\cO$-algebra $R$, define
$$
\Fl_{\cG}^{Q(u)}(R) := \{\text{isomorphism classes of triples } (\cE, \beta, \eps) \},
$$ 
where $\cE$ is a $\cG$-bundle over $R[u]$, $\beta$ is a trivialization of $\cE|_{\Spec (R[u])[1/Q(u)]}$, and $\eps$ is a reduction of $\cE \mod u$ to $P$.  If $P_{k} = \cG_{k}$, then $\Fl_{\cG}^{Q(u)}$ is the moduli of pairs $(\cE, \beta)$ which we denote by $\Gr_{\cG}^{Q(u)}$.
\end{defn}

\begin{rmk} The analogy with the Beilinson-Drinfeld Grassmanian \cite{BD} arises from the fact that over $\overline{F}$ the polynomial $Q(u)$ corresponds to $[K:F]$ distinct points whereas on the special fiber $Q(u) \equiv u^{[K:F]}$.  The bundle together with a reduction to $P$ is similar to the definition $\Fl_X$ from \cite[\S 2.2.1]{Gaitsgory}.
\end{rmk}

\begin{rmk} We have assumed that $K/F$ is totally ramified.  In the general case, if $K_0/F$ is the maximal unramified subextension, then one does the construction over $K_0$ and then takes the \'etale Weil restriction from $\cO_{K_0}$ to $\cO$. All of our main results easily reduce to the totally ramified case.
\end{rmk}

There is also a ``local" version of $\Gr_{\cG}^{Q(u)}$:
\begin{defn} \label{bigflagloc0} For any $\cO$-algebra $R$, let $\widehat{R}_{Q(u)}$ denote the $Q(u)$-adic completion of $R[u]$. Define
$$
\Gr_{\cG}^{Q(u), \mathrm{loc}}(R) := \{\text{iso-classes of pairs } (\cE, \beta) \},
$$ 
where $\cE$ is a $\cG$-bundle over $\widehat{R}_{Q(u)}$, $\beta$ is a trivialization of $\cE|_{\Spec \widehat{R}_{Q(u)}[1/Q(u)]}$.
\end{defn}

The natural map of functors $\Gr_{\cG}^{Q(u)} \ra \Gr_{\cG}^{Q(u), \mathrm{loc}}$ given by completion at $Q(u)$ is an equivalence.  This follows from the Beauville-Laszlo descent lemma for $\cG$-bundles (see \cite[Theorem 3.1.8]{LevinThesis} or \cite[Lemma 6.1]{PZ}).  We will use the two moduli $\Gr_{\cG}^{Q(u), \mathrm{loc}}$ and $\Gr_{\cG}^{Q(u)}$ interchangeably. 

\begin{exam} \label{GLN} Let $M_0 := \cO[u]^n$ be the trivial rank $n$ vector bundle over $\bA^1_{\cO}$.  A \emph{$Q(u)$-lattice} in $M_0[1/Q(u)]$ with coefficients in an $\cO$-algebra $R$ is a projective $R[u]$-submodule $M \subset (M_0 \otimes R)[1/Q(u)]$ such that $M[1/Q(u)] = (M_0 \otimes R)[1/Q(u)]$.  When $\cG = \GL_n$, $\Gr_{\GL_n}^{Q(u)}$ is the moduli space of $Q(u)$-lattices in $M_0[1/Q(u)]$.  The ind-scheme structure on $\Gr_{\GL_n}^{Q(u)}$ is given by the subfunctors of lattices lying between $Q(u)^{-N} M_0$ and $Q(u)^N M_0$ for $N \geq 0$.   See \cite[\S 10.1]{LevinThesis} for more details. 
\end{exam}

\begin{prop}  \label{fibers1} The functor $\Fl_{\cG}^{Q(u)}$ is represented by an ind-scheme which is ind-projective over $\Spec \cO$.  Furthermore,
\begin{enumerate}
\item The generic fiber $\Fl_{\cG}^{Q(u)}[1/p]$ is isomorphic to $(\cG_F/P_{F}) \times \Gr_{\Res_{K/F} G_F}$ over $\Spec F$;
\item The special fiber $\Fl_{\cG}^{Q(u)} \otimes_{\cO} k$ is isomorphic to the affine flag variety $\Fl_{\fa^{\flat}}$ corresponding to the group $G^{\flat}$ over $k(\!(u)\!)$ and the facet $\fa^{\flat}$ determined by $P_k$.
\end{enumerate}
\end{prop}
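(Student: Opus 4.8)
The plan is to establish ind-representability and ind-properness first, then compute the two fibers, since the fiber computations are essentially unraveling of the moduli definitions once the representability is in place. For ind-representability: by the Beauville--Laszlo equivalence noted above, it suffices to work with $\Gr_{\cG}^{Q(u),\mathrm{loc}}$ and then add the reduction datum $\eps$, which lives over the finite $\cO$-scheme $\Spec \cO[u]/Q(u)$ (a product of twisted forms of $\cG_k/P_k$, an ind-projective — in fact projective — scheme), so it contributes only a projective morphism on top. The standard way, following the strategy of \cite{RicharzAG} that the paper advertises, is: reduce to $\cG = \GL_n$ via a faithful representation $\cG \hookrightarrow \GL_n$ with quasi-affine (or affine) quotient, use Example \ref{GLN} to see that $\Gr^{Q(u)}_{\GL_n}$ is an increasing union of the projective $\cO$-schemes of $Q(u)$-lattices squeezed between $Q(u)^{-N}M_0$ and $Q(u)^N M_0$ (these are closed subschemes of products of relative Quot/Grassmannian schemes over $\cO$, hence projective over $\cO$), and then check that $\Gr^{Q(u)}_{\cG}$ is a closed sub-ind-scheme of $\Gr^{Q(u)}_{\GL_n}$ (closed because the condition that a $\GL_n$-bundle with its lattice comes from a $\cG$-structure is closed — this uses the tameness/parahoricity of $\cG$ over $\cO[u]$ exactly as in \cite[\S6]{PZ}). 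Ind-properness then follows from the valuative criterion: a $\cG$-bundle on $R[u]$ trivialized away from $Q(u)$, with $R$ a DVR, extends over the closed point by the usual Beauville--Laszlo plus properness of $\Gr_{\GL_n}^{Q(u)}$ argument; alternatively deduce it from the $\GL_n$ case and the closed immersion.

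For part (1), the generic fiber: invert $p$. Then $Q(u)$ splits over $\overline{F}$ into $d=[K:F]$ distinct linear factors $u - \varpi_i$ (the conjugates of $\varpi$), and $\varpi$ itself gives one root over $K$, so $\Spec F[u]/Q(u) \cong \Spec K$ and more generally $\Spec R[u]/Q(u)$ for an $F$-algebra $R$ is étale. By Beauville--Laszlo again, a pair $(\cE,\beta)$ over $R[u]$ (completed at $Q(u)$) is the same as a $\cG$-bundle on $\Spec \widehat{R[u]}_{Q(u)}$ trivialized on the punctured formal neighborhood; since $\cG$ is the constant group $\cG_0\otimes\cO[u]$ and $Q(u)$ is étale over $F$, this neighborhood decomposes as a disjoint union indexed by the points of $\Spec R[u]/Q(u)$, and over the generic point $\Spec F$ that scheme is $\Spec K$ (the universal factor contributing $\Gr_{G_K}$, which after Weil restriction along $K/F$ is $\Gr_{\Res_{K/F}G}$ by the standard identity $\Gr_{\Res_{K/F}G} \cong \Res_{K/F}\Gr_{G_K}$). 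The reduction datum $\eps$ of $\cE\bmod u$ to $P$: over $F$, $u=0$ is disjoint from $Q(u)=0$ (since $Q(0) = \pm N_{K/F}(\varpi)$ is a unit after inverting $p$ — it is a uniformizer of $\cO$, hence invertible in $F$), so $\beta$ trivializes $\cE$ near $u=0$, and a reduction of a trivial $\cG_F$-bundle to $P_F$ is exactly a point of $\cG_F/P_F$. Assembling: $\Fl^{Q(u)}_{\cG}[1/p]\cong (\cG_F/P_F)\times\Gr_{\Res_{K/F}G_F}$.

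For part (2), the special fiber: reduce mod the maximal ideal of $\cO$, so $u \mapsto$ stays a variable but now $Q(u)\equiv u^d \bmod \mathfrak{m}_{\cO}$ (the roots collide at $u=0$), because $Q(0)=\pm N_{K/F}(\varpi)$ is a uniformizer of $\cO$ and the other coefficients of $Q$ are non-units. Thus $\Spec k[u]/Q(u) = \Spec k[u]/u^d$, a single infinitesimal thickening of the origin. Now the completion $\widehat{R[u]}_{Q(u)}\otimes k = R[\![u]\!]$ and a pair $(\cE,\beta)$ over $R[\![u]\!]$ trivialized on $R(\!(u)\!)$ is precisely a point of the affine Grassmannian $\Gr_{\cG_{k[\![u]\!]}}$; the extra reduction $\eps$ of $\cE\bmod u$ to $P_k$ cuts this down to the affine flag variety $\Fl_{\fa^\flat}$ for the facet $\fa^\flat$ determined by $P_k$ as in \S2.1 (this is the part where one invokes the dictionary — positive-loop-group orbits, parahoric $\cP_{\fa^\flat}$ — recalled in the Preliminaries). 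One should double-check that the ind-scheme structures match, i.e. that the bound by $u^{\pm N}$-lattices on the special fiber is the restriction of the $Q(u)^{\pm N}$-bound on the total space, which is immediate since $Q(u)\equiv u^d$.

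I expect the main obstacle to be the ind-representability step, specifically verifying that $\Gr^{Q(u)}_{\cG}\hookrightarrow\Gr^{Q(u)}_{\GL_n}$ is a \emph{closed} immersion of ind-schemes over all of $\cO$ (not just generically), which is where the hypothesis that $\cG$ descends to a nice group scheme over $\cO[u]$ — set up in \S3 — is genuinely used; the mixed-characteristic collision of the roots of $Q(u)$ means one cannot simply quote the equal-characteristic Beilinson--Drinfeld arguments verbatim, and one must be careful that the relevant quotient $\GL_n/\cG$ (or the affine-quotient trick) behaves well over the special fiber where $Q(u)=u^d$. The fiber computations, by contrast, are bookkeeping: the one genuine input is the elementary fact that $Q(0)$ is a uniformizer of $\cO$, which simultaneously makes $Q(u)$ étale and disjoint from $u=0$ generically, and makes $Q(u)\equiv u^d$ with everything concentrated at $u=0$ on the special fiber.
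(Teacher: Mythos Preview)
Your proposal is correct and follows essentially the same line as the paper's proof: reduce representability to $\Gr^{Q(u)}_{\cG}$ via the projective forgetful map, embed into $\Gr^{Q(u)}_{\GL_n}$, and compute the fibers using that $Q(u)$ and $u$ are disjoint over $F$ while $Q(u)\equiv u^d$ over $k$. Two small corrections: the reduction datum $\eps$ lives at $u=0$, i.e.\ over $\Spec \cO = \Spec \cO[u]/(u)$, not over $\Spec \cO[u]/Q(u)$ (you write it correctly later, so this is just a slip in your opening paragraph); and your anticipated ``main obstacle'' about closedness of $\Gr^{Q(u)}_{\cG}\hookrightarrow \Gr^{Q(u)}_{\GL_n}$ is not an issue here, since in \S2 the group $\cG=\cG_0\otimes_{\cO}\cO[u]$ is \emph{reductive} over $\cO[u]$ (constant extension of a hyperspecial model), so $\GL_n/\cG$ is affine and the immersion is closed by the standard argument---the subtleties you flag only arise in the \S4 setting where $\cG$ is merely parahoric.
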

\begin{proof} The forgetful map $\Fl_{\cG}^{Q(u)} \ra \Gr^{Q(u)}_{\cG}$ is relatively representable and projective. The ind-scheme $\Gr^{Q(u)}_{\cG}$ can be shown to be ind-projective by choosing a faithful representation $\cG \ra \GL_n$ which induces a closed immersion $\Gr^{Q(u)}_{\cG} \iarrow \Gr^{Q(u)}_{\GL_n}$ (see \cite[Proposition 10.1.13]{LevinThesis}).  

By functorialities on $G$-bundles with respect to Weil restriction (\cite[Theorem 2.6.1]{LevinThesis}), the affine Grassmanian $\Gr_{\Res_{K/F} G_K}$ is naturally isomorphic to $\Res_{K/F} \Gr_{G_K}$.  Since $Q(u) \equiv u^{[K:F]} \mod m_{\cO}$, $\Fl_{\cG}^{Q(u)} \otimes_{\cO} k$ is exactly the affine flag variety $\Fl_{\fa^{\flat}}$ associated to $P_{k}$ as in \cite[\S 1.1.3]{Gaitsgory}.

For (1), observe that when $p$ is invertible, $Q(u)$ and $u$ define disjoint divisors on $\bA^1_F$.  Thus, for any $F$-algebra $R$ and any $(\cE_R, \beta_R, \eps_R) \in \Fl_{\cG}^{Q(u)}(R)$ the trivialization $\beta_R$ induces a trivialization of $\cE_R \mod u$.  Thus, $\eps_R$ is a reduction to $P_F$ on a trivial bundle, i.e., an $R$-point of $\cG_F/P_F$. 

Over a splitting field $L$ for $Q(u)$ over $F$, the pair $(\cE, \beta)$ is a $G$-bundle on $\bA^1_L$ trivialized away from the $[K:F] = d$ roots of $Q(u)$.  It is a standard fact that if $D = \coprod_{1 \leq i \leq d} D_i$ is a union of pairwise disjoint divisors then there is an equivalence between bundles with trivialization away from $D$ and $d$-tuples of bundles with trivializations away from $D_i$ respectively (see \cite[\S 6.2]{ZhuCoh} or the proof of Proposition \ref{genfiber}). Thus, over $L$, we get $[K:F]$-copies of the affine Grassmanian $\Gr_G$ centered at the roots of $Q(u)$. One concludes by Galois descent.        
\end{proof} 

\begin{rmk} The idea that local models should live in spaces like $ \Fl_{\cG}^{Q(u)}$ goes back to work of \cite{PR1, HNnearby}. Without the Weil restriction (so $K = F$), these sorts of deformations were constructed in \cite{PZ} and our presentation is a reformulation of theirs in this special case.  In \cite[\S 10]{LevinThesis}, we constructed a deformation of $\Gr_{\cG_k}$ to $\Gr_{\Res_{K/F} G}$ which is the special case where $P_{k} = \cG_{k}$, i.e., when $\fa$ is a hyperspecial vertex.  
\end{rmk}

Next we discuss the ``loop group" which acts on $\Fl_{\cG}^{Q(u)}$. Define a pro-algebraic group over $\Spec \cO$ by 
$$
L^{+, Q(u)} \cG (R) := \varprojlim_N \cG \left(R[u]/Q(u)^N \right) = \cG \left( \widehat{R}_{Q(u)} \right).
$$
For any $N \geq 1$, the functor $R \mapsto \cG \left(R[u]/Q(u)^N \right)$ is represented by the smooth affine group scheme $\Res_{(\cO[u]/Q(u)^N)/\cO} \cG$. 

Using the local description (Definition \ref{bigflagloc0}) of $\Gr^{Q(u)}_{\cG}$ , we see that $L^{+, Q(u)} \cG$ acts on pairs $(\cE, \beta)$ by changing the trivialization.  In the same way, $L^{+, Q(u)} \cG$ acts on $\Fl^{Q(u)}_{\cG}$ in such away that the forgetful map $\Fl^{Q(u)}_{\cG} \ra \Gr^{Q(u)}_{\cG}$ is equivariant. 

\begin{prop} \label{nice} The action of $L^{+, Q(u)} \cG$ is \emph{nice} in the sense of group actions on ind-schemes $($\cite[Appendix A]{Gaitsgory}$)$.
\end{prop}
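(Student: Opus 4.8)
The plan is to verify the defining conditions of a \emph{nice} action from \cite[Appendix A]{Gaitsgory}, which amounts to writing $L^{+, Q(u)} \cG$ as an inverse limit of its truncations $L^{+, Q(u)}_N \cG : R \mapsto \cG(R[u]/Q(u)^N)$, checking that each truncation is a smooth affine group scheme of finite type over $\cO$, that the transition maps $L^{+,Q(u)}_{N+1}\cG \ra L^{+,Q(u)}_N\cG$ are surjective with kernel a successive extension of vector groups (here smoothness of $\cG$ is essential, as it guarantees the infinitesimal liftings), and that the action of $L^{+, Q(u)}\cG$ on $\Fl_{\cG}^{Q(u)}$ factors, on each finite-type closed ind-piece, through one of the truncations $L^{+,Q(u)}_N\cG$.

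First I would record that $L^{+,Q(u)}_N\cG = \Res_{(\cO[u]/Q(u)^N)/\cO}\cG$ is smooth and affine (it is the Weil restriction of a smooth affine group scheme along the finite flat $\cO$-algebra $\cO[u]/Q(u)^N$, which is free of rank $N\cdot\deg Q$ as an $\cO$-module), and that the transition maps are surjective with unipotent, in fact vector-group, kernel --- the kernel of $\cG(R[u]/Q(u)^{N+1}) \to \cG(R[u]/Q(u)^N)$ is identified with $\Lie(\cG)\otimes_{\cO} (Q(u)^N/Q(u)^{N+1})$ via the exponential-type splitting coming from smoothness of $\cG$. This is exactly the pro-unipotent-by-smooth structure required. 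Second, using the local description of $\Gr_{\cG}^{Q(u)}$ from Definition \ref{bigflagloc0} (equivalent to the global one by Beauville--Laszlo), the action of $L^{+,Q(u)}\cG$ on a pair $(\cE,\beta)$ only changes $\beta$, and on any quasi-compact closed subscheme of $\Gr_{\cG}^{Q(u)}$ --- e.g. a piece of the ind-scheme structure, cut out after embedding via $\cG \ra \GL_n$ into $\Gr_{\GL_n}^{Q(u)}$ as in Example \ref{GLN} by lattices between $Q(u)^{-N_0}M_0$ and $Q(u)^{N_0}M_0$ --- the action factors through $L^{+,Q(u)}_N\cG$ for $N$ depending only on $N_0$. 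The reduction datum $\eps$ in $\Fl_{\cG}^{Q(u)}$ lives over $u=0$, hence over $\cO[u]/Q(u)$, so it is moved only by the degree-$1$ truncation; thus the same factorization holds for $\Fl_{\cG}^{Q(u)}$ and the forgetful map is equivariant for it.

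The main obstacle I expect is not conceptual but bookkeeping: one must be careful that "nice" in the sense of \cite[Appendix A]{Gaitsgory} is formulated over a general base (here $\Spec\cO$, not a field), and confirm that the cited appendix's hypotheses --- representability of truncations by finite-type smooth affine group schemes, the pro-structure with vector-group kernels, and the level-wise factorization of the action on each finite-type ind-piece --- are all that is needed, and that no properness or reducedness of the base is secretly used. A secondary technical point is to pin down the constant $N$ in the factorization uniformly on a given ind-piece; this follows because a lattice trapped between $Q(u)^{\pm N_0}M_0$ is stabilized, up to level $2N_0$ say, by the action modulo $Q(u)^{2N_0}$, so one takes $N = 2N_0$ and checks that changing $\beta$ within that range does not see higher-order terms. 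Once these are in place the verification is routine, and I would simply cite \cite[Appendix A]{Gaitsgory} for the formal consequences (e.g. that orbits are locally closed and orbit closures are the affine Schubert varieties appearing later).
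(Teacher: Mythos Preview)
Your proposal is correct and follows essentially the same route as the paper: reduce to $\Gr^{Q(u)}_{\cG}$ (the forgetful map being equivariant and the reduction datum $\eps$ only seeing the degree-$1$ truncation), then embed into $\Gr^{Q(u)}_{\GL_n}$ via a faithful representation of $\cG$, and finally use the explicit $Q(u)$-lattice description from Example~\ref{GLN} to see that on each piece $\{Q(u)^{N_0}M_0 \subset M \subset Q(u)^{-N_0}M_0\}$ the action factors through a finite-level truncation. The paper's proof is terser and simply cites the lattice description for the last step; your additional verification of the pro-structure on $L^{+,Q(u)}\cG$ (smoothness of truncations, vector-group kernels of transition maps) is correct but already partially recorded just before the proposition, and is not strictly needed for the notion of ``nice'' in \cite[Appendix A]{Gaitsgory}, which only asks that the ind-pieces be stable and that the action on each factor through a finite-type quotient.
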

\begin{proof}
The ind-scheme structure on $\Fl^{Q(u)}_{\cG}$ is compatible with the ind-scheme structure on $\Gr^{Q(u)}_{\cG}$ so we can reduce to the case of  $\Gr^{Q(u)}_{\cG}$.  One then reduces to $\GL_n$ using a faithful representation of $\cG$. The description of $\Gr^{Q(u)}_{\GL_n}$ in terms of $Q(u)$-lattices from Example \ref{GLN} makes it clear that the action is nice.  See \cite[Proposition 10.2.9]{LevinThesis} for details.
\end{proof} 

The ind-scheme $\Fl_{\cG}^{Q(u)}$ has a canonical point over $\Spec \cO$ given by the trivial bundle with its trivialization and reduction to $P$ which we denote by $1_{\Fl^{Q(u)}_{\cG}}$.  

\begin{defn} \label{parahoric10}  Let $L^{+, Q(u)} \cP$ be the stabilizer of  $1_{\Fl^{Q(u)}_{\cG}}$ in $L^{+, Q(u)} \cG$.  By Proposition \ref{nice}, $L^{+, Q(u)} \cP$ is a closed subgroup of $L^{+, Q(u)} \cG$.  
\end{defn}

\begin{prop} \label{fibers2} Let $L^{+, Q(u)} \cP$ be as above.  Then we have isomorphisms
$$
 (L^{+, Q(u)} \cP)_k \cong L^+ \cP_{\fa^{\flat}} \text{ and } (L^{+, Q(u)} \cP)_{F} \cong L^+ \Res_{K/F} G
$$
compatible with the isomorphisms in Proposition \ref{fibers1}.
\end{prop}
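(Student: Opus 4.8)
The plan is to reduce the statement to a fibrewise computation of the stabiliser of the base point, using the descriptions of the fibres already obtained in Proposition \ref{fibers1}. First I would record the formal fact that the formation of a stabiliser commutes with base change: if a group functor acts on an ind-functor over $\cO$ and $x$ is an $\cO$-point of it, then $\Stab(x)_S = \Stab_S(x_S)$ for every $\cO$-scheme $S$, since both sides have the same $T$-points for all $S$-schemes $T$. Since $L^{+,Q(u)}\cP = \Stab_{L^{+,Q(u)}\cG}(1_{\Fl^{Q(u)}_\cG})$ by Definition \ref{parahoric10}, it then suffices to identify the stabiliser of the base point of $\Fl^{Q(u)}_\cG \otimes k$ in $(L^{+,Q(u)}\cG)_k$ and of $\Fl^{Q(u)}_\cG[1/p]$ in $(L^{+,Q(u)}\cG)_F$; the compatibility with the isomorphisms of Proposition \ref{fibers1} is then automatic because those same isomorphisms are used throughout. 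Next I would exploit the forgetful map $\Fl^{Q(u)}_\cG \ra \Gr^{Q(u)}_\cG$, which is $L^{+,Q(u)}\cG$-equivariant and relatively projective: its fibre over $1_{\Gr^{Q(u)}_\cG}$ is the scheme $\cG/P$ of $P$-reductions of the trivial $\cG$-bundle modulo $u$. Working in the local model $\Gr^{Q(u),\mathrm{loc}}_\cG$ of Definition \ref{bigflagloc0} and using Beauville--Laszlo descent, one checks directly that every $g \in L^{+,Q(u)}\cG(R) = \cG(\widehat{R}_{Q(u)})$ fixes $1_{\Gr^{Q(u)}_\cG}$ (it extends itself from $\Spec \widehat{R}_{Q(u)}[1/Q(u)]$ to $\Spec \widehat{R}_{Q(u)}$), so $\Stab_{L^{+,Q(u)}\cG}(1_{\Gr^{Q(u)}_\cG}) = L^{+,Q(u)}\cG$, just as $L^+H$ is the stabiliser of the base point of $\Gr_H$. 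Hence $L^{+,Q(u)}\cP$ is the preimage in $L^{+,Q(u)}\cG$ of $P \subset \cG$ under the homomorphism describing the action on the fibre $\cG/P$, and that action factors through $g \mapsto$ (the reduction modulo $u$ of the automorphism of the trivial bundle induced by $g$). Everything is thus reduced to identifying this homomorphism on each fibre.

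On the special fibre one has $Q(u) \equiv u^d$, so $\widehat{R}_{Q(u)} = R[\![u]\!]$, $(L^{+,Q(u)}\cG)_k = L^+\cG_{k[\![u]\!]}$, $\Gr^{Q(u)}_\cG\otimes k = \Gr_{\cG_{k[\![u]\!]}}$, and $\Fl^{Q(u)}_\cG\otimes k = \Fl_{\fa^\flat}$ by Proposition \ref{fibers1}(2); since $\{u=0\}\subset\{u^d=0\}$, modifying the trivialisation along $u^d = 0$ does change the bundle modulo $u$, and the relevant homomorphism is reduction $L^+\cG_{k[\![u]\!]}\ra\cG_k$, $g\mapsto g\bmod u$. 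Its preimage of $P_k$ is $\{g\in\cG(R[\![u]\!]) : g\bmod u\in P_k(R)\} = L^+\cP_{\fa^\flat}$ by the definition of the parahoric group scheme $\cP_{\fa^\flat}$ in \S2.2 (the dilatation of $\cG_{k[\![u]\!]}$ along $P_k$), giving $(L^{+,Q(u)}\cP)_k \cong L^+\cP_{\fa^\flat}$. On the generic fibre, by contrast, $\{Q(u)=0\}$ and $\{u=0\}$ are disjoint divisors on $\bA^1_F$, so a modification of the trivialisation supported along $Q(u)=0$ induces the identity automorphism near $u=0$; the action on $\cG_F/P_F$ is therefore trivial and $(L^{+,Q(u)}\cP)_F = (L^{+,Q(u)}\cG)_F$. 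Finally, by the Weil-restriction and Galois-descent identifications used in the proof of Proposition \ref{fibers1}(1) (concretely, $\widehat{R}_{Q(u)}\cong(R\otimes_F K)[\![t]\!]$ as an $F$-algebra), the $L^{+,Q(u)}\cG$-action on $\Gr^{Q(u)}_\cG[1/p]\cong\Gr_{\Res_{K/F}G}$ is the standard one and $(L^{+,Q(u)}\cG)_F\cong L^+\Res_{K/F}G$; since $L^+H$ stabilises the base point of $\Gr_H$, this yields $(L^{+,Q(u)}\cP)_F\cong L^+\Res_{K/F}G$, compatibly with Proposition \ref{fibers1}(1).

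The only genuine content is the determination of the action on the $\cG/P$-fibre on each side, i.e.\ that a change of trivialisation away from $Q(u)=0$ acts via reduction modulo $u$ over $k$ and trivially over $F$. This is precisely the geometry of how the divisors $\{Q(u)=0\}$ and $\{u=0\}$ interact --- they collide to $\{u^d = 0\}$ on the special fibre and remain disjoint on the generic fibre --- which is already the mechanism behind Proposition \ref{fibers1}; granting it, the identification of $\cP_{\fa^\flat}$ as a dilatation and the fact that $L^+H$ is the base-point stabiliser of $\Gr_H$ finish the proof.
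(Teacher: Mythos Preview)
Your proof is correct and follows essentially the same approach as the paper's: compute the stabiliser fibrewise, using that over $k$ the action on the $P$-reduction factors through $g\mapsto g\bmod u$ (so the stabiliser is the preimage of $P_k$, i.e.\ $L^+\cP_{\fa^\flat}$), while over $F$ the divisors $\{Q(u)=0\}$ and $\{u=0\}$ are disjoint so the whole of $(L^{+,Q(u)}\cG)_F$ stabilises, and then identify the latter with $L^+\Res_{K/F}G$. The only cosmetic differences are that the paper phrases the special-fibre condition as $\overline{g}\,P_k\,\overline{g}^{-1}=P_k$ and invokes $N_{\cG_k}(P_k)=P_k$, and on the generic fibre it passes to a splitting field of $Q(u)$ and descends, whereas you use the direct isomorphism $\widehat{R}_{Q(u)}\cong (R\otimes_F K)[\![t]\!]$; both routes are equivalent.
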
  
\begin{proof} 
For any $k$-algebra $R$, we have 
$$
L^{+, Q(u)} \cG (R) = \cG_{k[\![u]\!]} \left(R[\![u]\!] \right)
$$
where $\cG_{k[\![u]\!]}$ is a hyperspecial maximal parahoric for the group $G^{\flat} = \cG_{k(\!(u)\!)}$.  For $g \in L^{+, Q(u)} \cG (R)$ let $\overline{g} \in \cG(R)$ be the reduction modulo $u$.   Then $g$ lies in $L^{+, Q(u)} \cP (R)$ if and only if $\overline{g} P_k \overline{g}^{-1} = P_k$ where $P_k$ is the parabolic of $\cG_k$ defining $\fa^{\flat}$.  Thus, $\overline{g} \in N_{\cG_k}(P_k) = P_k$ and so $g \in L^+ \cP_{\fa^{\flat}}(R)$.

On the generic fiber, we have $(L^{+, Q(u)} \cP)_F \cong (L^{+, Q(u)} \cG)_F$.   Furthermore, 
$$
L^{+, Q(u)} \cG(R) = G \left(\widehat{R}_{Q(u)} \right)
$$
for any $F$-algebra $R$. As in the proof of Proposition \ref{fibers1}, we work over a splitting field $L$ of $Q(u)$ so $Q(u) = \prod_{i=1}^d (u - \varpi_i)$ in $L$.  For any $L$-algebra $R$, we have 
$$
G \left(\widehat{R}_{Q(u)} \right) \cong \prod_i G \left(R[\![u - \varpi_i]\!] \right) = \prod_{\psi:K \ra L} L^+ G(R)
$$
and we conclude by Galois descent. 
\end{proof}

\subsection{Local models}

Fix a geometric cocharacter $\mu$ of $\Res_{K/F} G$.  Let $E$ denote the reflex field of the conjugacy class $\{\mu\}$.

\begin{defn} \label{defnlocmodel} Let $S_{\Res_{K/F} G}(\mu) \subset (\Gr_{\Res_{K/F} G})_E$ be the closed affine Schubert variety associated to $\{\mu\}$, and let $1_{\cG_F/P_F}$ denote the closed point of $\cG_F/P_F$ corresponding to $P_F$. Then the \emph{local model} $M_{P}(\mu)$ associated to $\{\mu\}$ is the Zariski closure of $1_{\cG_F/P_F} \times S_{\Res_{K/F} G}(\mu) $ in $\left( \Fl^{Q(u)}_{\cG} \right)_{\cO_E}$. It is a flat projective scheme over $\Spec \cO_E$.
\end{defn}

\begin{rmk} In \S 4, we change notation and denote our local models by $M_{\cG}(\mu)$ to indicate a dependence on a certain ``parahoric" group scheme $\cG$ which is different from the $\cG$ appearing in this section. Both constructions produce the same local models by Proposition \ref{comp2}.
\end{rmk}

The main theorem on the geometry of local models is:
\begin{thm} \label{locmain1} Suppose that $p \nmid |\pi_1(G_{\der})|$.  Then the scheme $M_{P}(\mu)$ is normal. In addition, the special fiber $\overline{M}_{P}(\mu)$ is reduced, and each geometric irreducible component of $\overline{M}_{P}(\mu)$ is normal, Cohen-Macauley, and Frobenius split.
\end{thm}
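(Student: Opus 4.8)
The plan is to reduce everything to known facts about affine Schubert varieties via the geometric structure of $\Fl^{Q(u)}_\cG$ established in Proposition \ref{fibers1}, following the strategy of \cite{PZ} and ultimately relying on the coherence conjecture of Pappas--Rapoport (proved in \cite{ZhuCoh}) and Zhu's product formula.

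First, I would identify the special fiber $\overline{M}_P(\mu)$ explicitly. By Proposition \ref{fibers1}(2) and Proposition \ref{fibers2}, the special fiber of $\Fl^{Q(u)}_\cG$ is the affine flag variety $\Fl_{\fa^\flat}$ with the action of $L^+\cP_{\fa^\flat}$, and the generic fiber is $(\cG_F/P_F)\times \Gr_{\Res_{K/F}G}$; since the product $1_{\cG_F/P_F}\times S(\mu)$ is a single $L^+(\Res_{K/F}G)$-orbit closure, the flatness of $M_P(\mu)$ over $\cO_E$ forces $\overline{M}_P(\mu)$ to be a union of affine Schubert varieties $S^{\fa^\flat}_w$ in $\Fl_{\fa^\flat}$. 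The point here is that the $\mu$-admissible set $\Adm(\mu)$ controls which $w$ occur; this is exactly the content of Theorem \ref{thmadm} referred to in the introduction, and the combinatorial input is the translation of the cocharacter data across the BD-type degeneration (the roots of $Q(u)$ colliding to $u^{[K:F]}$).

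Next, granting that $\overline{M}_P(\mu)=\bigcup_{w\in\Adm(\mu)} S^{\fa^\flat}_w$, I invoke the structure theory of affine Schubert varieties in positive characteristic: each $S^{\fa^\flat}_w$ is normal, Cohen--Macaulay, and Frobenius split, and a suitable union of them (compatibly Frobenius split) is reduced and Frobenius split, with normal Cohen--Macaulay irreducible components. These are by now standard results (Faltings, Pappas--Rapoport \cite{PRTwisted}, Richarz), valid for the twisted affine flag varieties attached to the parahoric $\cP_{\fa^\flat}$ once $p\nmid|\pi_1(G_{\der})|$ — this is precisely where the hypothesis on $\pi_1$ enters, to guarantee $\cP_{\fa^\flat}$ really is parahoric (connected special fiber) and that the relevant line bundles admit the Frobenius-splitting sections; the coherence conjecture is what guarantees the correct union over $\Adm(\mu)$ is the one that arises as a special fiber, i.e. that dimensions match so that $M_P(\mu)\to\Spec\cO_E$ is flat with this reduced fiber. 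So the special-fiber assertions of Theorem \ref{locmain1} follow.

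Finally, to get normality of the total space $M_P(\mu)$: it is flat over the discrete valuation ring $\cO_E$ with reduced and normal (in particular $R_1+S_2$) special fiber and with normal generic fiber (the generic fiber is $1\times S(\mu)$, a single affine Schubert variety over a field of characteristic $0$, hence normal). A flat $\cO_E$-scheme of finite type whose special fiber is reduced and satisfies $S_1$, and which has normal generic fiber, satisfies $(R_1)$ and $(S_2)$, hence is normal by Serre's criterion — more precisely one checks $S_2$ using flatness plus $S_1$ on the closed fiber plus $S_2$ on the open fiber, and $R_1$ using that the generic fiber is regular in codimension $1$ and the special fiber is regular in codimension $0$ (reduced). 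This is the same argument as Theorem 8.1 of \cite{PZ}. I expect the main obstacle to be not any of these formal reductions but the identification of the special fiber with the admissible union, i.e. verifying that the colliding-roots degeneration of $\Gr_{\Res_{K/F}G}$ produces exactly $\Adm(\mu)$ inside $\Fl_{\fa^\flat}$ and that the dimensions come out right so that flatness holds; this is where the coherence conjecture and the product formula of \cite{ZhuCoh} do the essential work, and where the Weil-restriction bookkeeping (as opposed to the tame case of \cite{PZ}) requires genuine care.
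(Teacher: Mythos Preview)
Your proposal is correct and follows essentially the same route as the paper: reduce to Theorem~\ref{thmadm0} identifying the special fiber as the $\la_\mu$-admissible union in $\Fl_{\fa^\flat}$, then invoke the known normality, Cohen--Macaulayness, and Frobenius splitting of affine Schubert varieties, and finally deduce normality of the total space from flatness plus reducedness of the special fiber and normality of the generic fiber (Serre's criterion, as in \cite[\S 9.2]{PZ}). One point to sharpen: the admissible set indexing the special fiber is not $\Adm(\mu)$ but $\Adm^{\fa^\flat}(\la_\mu)$ for $\la_\mu=\sum_{\psi:K\to\overline F}\mu_\psi$; this sum is exactly what the collision of the roots of $Q(u)$ produces, and it is what makes the product formula (Proposition~\ref{productformula}) relevant alongside the coherence conjecture when comparing Hilbert polynomials.
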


As in \cite{PZ}, Theorem \ref{locmain} will follow from identifying the special fiber of $M_{P}(\mu)$ with a union of affine Schubert varieties in $\Fl_{\fa^{\flat}}$ indexed by the so-called $\la_{\mu}$-admissible set.  We briefly recall the definition.  For more detail, see \cite[\S 9.1]{PZ} or \cite[\S 4.3]{PRS}.  Let $\widetilde{W}^{\flat}$ be the Iwahori-Weyl group of the split group $\cG_{\overline{k}(\!(u)\!)}$ which sits in an exact sequence
$$
0 \ra X_*(T^{\flat}) \ra \widetilde{W}^{\flat} \ra  W \ra 0
$$
where $T^{\flat}$ is any maximal torus  of $\cG_{k(\!(u)\!)}$ and $W$ is the absolute Weyl group of $(\cG_{k(\!(u)\!)}, T^{\flat})$.  

\begin{defn} \label{admiss1} Let $\la \in X_*(T^{\flat})$. An element $\widetilde{w} \in \widetilde{W}^{\flat}$ is $\la$-admissible if $\widetilde{w} \leq t_{w \la}$ for some $w \in W$ where $t_{w \la}$ is translation by $w \cdot \la$. The set of $\la$-admissible elements is denoted $\Adm(\la)$. If $W_{\fa^{\flat}} \subset W$ is the subgroup associated  to $\fa^{\flat}$, then define
$$
\Adm^{\fa^{\flat}}(\la) = W_{\fa} \Adm(\la) W_{\fa} \subset \widetilde{W}^{\flat}.
$$
\end{defn}    

The generic fiber of $M_{P}(\mu)$ is stable under $L^+ \Res_{K/F} G$ and so by flatness considerations  $M_{P}(\mu)$ is stable under $L^{+, Q(u)} \cP$.  In particular, set-theoretically the special fiber is union of orbits for the action of $L^+ \cP_{\fa^{\flat}}$ on $\Fl_{\fa^{\flat}}$, i.e., affine Schubert varieties.  The theorem below gives a scheme-theoretic description of the special fibers in terms of affine Schubert varieties. 

Let $\cT$ denote a fiberwise maximal torus of $\cG$ contained in $\cP$ which specializes (under $u \mapsto \varpi$) to a maximal torus $T \subset G$. Let  $T^{\flat} := \cT_{k(\!(u)\!)}$ and let $\breve{F}$ be maximal unramified extension of $F$ in $\overline{F}$ with ring of integers $\breve{\cO}$.  Then we can identify
$$
X_*(T) = X_*(T^{\flat})
$$
as the cocharacters of $\cT$ over $\Spec \breve{\cO}[u]$ where $\cT$ is split.  Choose a Borel subgroup $B \subset G$.  There exists $\mu' \in \{ \mu \}$ such that $\mu'$ is valued in $(\Res_{K/F} T)_{\overline{F}}$ and such that $\mu' = (\mu'_{\psi})_{\psi:K \ra \overline{F}}$ with $\mu'_{\psi} \in X_*(T)$ $B$-dominant.  Define
$$
\la_{\mu} = \sum_{\psi:K \ra F} \mu'_{\psi} \in X_*(T) = X_*(T^{\flat}).
$$    
Note that the conjugacy class of $\la_{\mu}$ does not depend on the choice of $B$. 

\begin{thm} \label{thmadm0} Suppose that $p \nmid |\pi_1(G^{\mathrm{der}})|$ . Then
$$
M_{P}(\mu) \otimes_{\cO_E} \overline{k} = \bigcup_{\widetilde{w} \in \Adm^{\fa^{\flat}}(\la_{\mu})} S_{\widetilde{w}}^{\fa^{\flat}}.
$$
\end{thm}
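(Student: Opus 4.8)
The plan is to follow the strategy of \cite{PZ}, reducing the scheme-theoretic identification of the special fiber to two inputs: a set-theoretic description via the admissible set, and the coherence conjecture (proved in \cite{ZhuCoh}) to promote this to a reducedness/flatness statement. First I would establish the set-theoretic assertion. Since $M_P(\mu)$ is $L^{+,Q(u)}\cP$-stable (by the flatness argument sketched before the theorem, using that its generic fiber is $L^+\Res_{K/F}G$-stable), its special fiber is a union of affine Schubert varieties $S_{\widetilde w}^{\fa^\flat}$, so it suffices to identify which $\widetilde w \in \widetilde W^\flat$ occur. For this I would pass to $\breve F$ and use Proposition \ref{fibers1}(1) together with the product decomposition there: over $\breve F$, $S_{\Res_{K/F}G}(\mu)$ is a product of Schubert varieties $S_G(\mu'_\psi)$ over the embeddings $\psi\colon K\to\overline F$, and the $L^{+,Q(u)}\cP$-orbit through a translation element $t_{\mu'}$ degenerates — as the $d$ points collide into $u=0$ — to the orbit through $t_{\sum_\psi \mu'_\psi} = t_{\la_\mu}$ in $\Fl_{\fa^\flat}$. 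Tracking closures and the combinatorics of the Bruhat order (the convolution/collision of Schubert cells behaves exactly as in the Beilinson–Drinfeld Grassmannian, cf. \cite{Gaitsgory, RicharzAG}) gives that the $\widetilde w$ appearing are precisely those $\leq t_{w\la_\mu}$ for some $w$, i.e.\ $\Adm^{\fa^\flat}(\la_\mu)$.

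Next I would upgrade this to the scheme-theoretic statement. Let $Z := \bigcup_{\widetilde w\in\Adm^{\fa^\flat}(\la_\mu)} S_{\widetilde w}^{\fa^\flat}$, with its reduced structure. We have a closed immersion $Z \hookrightarrow M_P(\mu)\otimes\overline k$ which is a bijection on points by the previous paragraph, and we must show it is an isomorphism. The argument of \cite[\S 9]{PZ} runs: compute the dimension of $M_P(\mu)\otimes\overline k$ using flatness over $\cO_E$ (it equals $\dim M_P(\mu)[1/p] = \langle 2\rho, \la_\mu\rangle$, the dimension of $S_{\Res_{K/F}G}(\mu)$), compare with $\dim Z$; then use the coherence conjecture to show that a suitable line bundle on $\Fl_{\fa^\flat}$ has the same space of global sections on $Z$ and on a "model" union of Schubert varieties of the correct dimension, which forces $M_P(\mu)\otimes\overline k$ to be reduced and equal to $Z$. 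Concretely, one identifies the ample line bundle coming from the embedding into the affine Grassmannian, and the coherence conjecture equates $\dim H^0(Z,\cL^{\otimes n})$ with the corresponding dimension on the generic fiber side; flatness then gives reducedness of the special fiber, hence $M_P(\mu)\otimes\overline k = Z$ as schemes. The normality/Cohen–Macaulay/Frobenius-splitting assertions of Theorem \ref{locmain1} then follow from the known geometry of affine Schubert varieties in the relevant twisted affine flag variety, under the hypothesis $p\nmid|\pi_1(G^{\der})|$ which guarantees the group $\cG_{k(\!(u)\!)}$ and its affine flag variety are of the good type (so that Frobenius-splitting results of Pappas–Rapoport and the normality results apply).

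The main obstacle I expect is the first step — the precise combinatorial identification of the degeneration, i.e.\ showing the collision of the $d$ colliding Schubert varieties $S_G(\mu'_\psi)$ produces exactly $\Adm^{\fa^\flat}(\la_\mu)$ and not something larger or smaller. In the equal-characteristic BD-Grassmannian this is handled by a careful analysis of the global Schubert variety and semicontinuity of the $L^{+,Q(u)}\cG$-orbit stratification; here one must check that the mixed-characteristic family $\Fl_\cG^{Q(u)}$ behaves the same way, which relies on Proposition \ref{nice} (niceness of the loop action, so orbits are well-behaved), on Proposition \ref{fibers2} to match the loop groups on both fibers, and on a specialization argument for the relevant one-parameter family of cocharacters. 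Once the orbit combinatorics is pinned down, the rest is a faithful transcription of the $\Fl_X$ arguments of \cite{PZ} and \cite{Gaitsgory}, with the coherence conjecture supplying the cohomological input that makes the scheme structure work out. A secondary technical point is ensuring the choice of $\la_\mu$ (sum of the $B$-dominant representatives $\mu'_\psi$) is the correct translation element governing the degeneration, which is exactly the analogue of the formula in \cite[\S 9.1]{PZ} for Weil-restricted groups and should follow from the product structure over $\overline F$.
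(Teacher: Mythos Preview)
Your proposal has the broad shape right --- one inclusion via specializing translation elements, the other via a cohomological comparison using the coherence conjecture --- but there is a genuine gap in how you have divided the labor, and you have missed one of the two key cohomological inputs.

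The gap is in your first step. You propose to establish the set-theoretic equality $\overline{M}_P(\mu)_{\red} = \bigcup_{\Adm^{\fa^\flat}(\la_\mu)} S_{\widetilde w}^{\fa^\flat}$ by tracking the ``collision'' of the $d$ Schubert varieties $S_G(\mu'_\psi)$ and invoking semicontinuity of the orbit stratification. Only the inclusion $\supseteq$ is accessible this way: one constructs sections $s_{\la'}:\Spec\cO_{\widetilde E}\to M_P(\mu)$ whose closed point lands in each extremal orbit $S_{t_{\la'}}^{\fa^\flat}$ (this is exactly Proposition~\ref{section1}), and $L^{+,Q(u)}\cP$-stability then gives the lower bound. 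The reverse inclusion --- that no Schubert cell outside the admissible set appears --- is \emph{not} a matter of orbit semicontinuity or BD-Grassmannian combinatorics. For a general parahoric level, identifying the special fiber of a global Schubert variety with the admissible locus is essentially the content of the coherence conjecture itself; there is no independent combinatorial shortcut. The paper accordingly does not attempt to prove the set-theoretic upper bound separately: it establishes only the inclusion $\supseteq$ and then obtains the full scheme-theoretic equality in one stroke via a Hilbert-polynomial comparison with an ample line bundle.

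The second issue is that your cohomological step is incomplete. You correctly say the coherence conjecture relates $h^0(Z,\cL^{\otimes n})$ to something on the generic-fiber side, but coherence as proved in \cite{ZhuCoh} compares $h^0(Z,\cL^{\otimes n})$ with $h^0(S_H(\la_\mu),\cL_{H,\det}^{\otimes n})$ for a \emph{single} affine Schubert variety. The generic fiber here is a \emph{product} $\prod_\psi S_H(\mu_\psi)$, and one needs a separate ingredient --- the product formula
\[
h^0\bigl(S_H(\mu+\mu'),\cL_{H,\det}^{\otimes n}\bigr) = h^0\bigl(S_H(\mu),\cL_{H,\det}^{\otimes n}\bigr)\cdot h^0\bigl(S_H(\mu'),\cL_{H,\det}^{\otimes n}\bigr)
\]
(Proposition~\ref{productformula}, going back to \cite{FL, ZhuAffine}) --- to pass from $h^0$ of the product to $h^0(S_H(\la_\mu),\cL_{H,\det}^{\otimes n})$. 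This product formula is precisely the new input beyond \cite{PZ} in the Weil-restricted setting. The paper's chain of equalities for $n\gg 0$ is: $h^0(\overline{M}_P(\mu),\cL^{\otimes n}) = h^0(\prod_\psi S_H(\mu_\psi),\cL^{\otimes n}) = \prod_\psi h^0(S_H(\mu_\psi),\cL_{H,\det}^{\otimes n}) = h^0(S_H(\la_\mu),\cL_{H,\det}^{\otimes n}) = h^0(Z,\cL^{\otimes n})$, by flatness, the decomposition of the line bundle (Proposition~\ref{Ldecomp}), the product formula, and coherence respectively; combined with the inclusion $Z\subset\overline{M}_P(\mu)$ and ampleness of $\cL$, this forces the scheme-theoretic equality.
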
  

Theorem \ref{locmain1} follows from Theorem \ref{thmadm0} . The fact that $M_{P}(\mu)$ is normal with reduced special fiber  follows from Theorem \ref{thmadm0} (cf. \cite[pg. 221]{PZ} or discussion after Theorem \ref{locmain}).                                                  

\begin{rmk} He \cite{HeCM} shows that $\bigcup_{\widetilde{w} \in \Adm^{\fa^{\flat}}(\la)} S_{\widetilde{w}}^{\fa^{\flat}}$ is Cohen-Macauley when $\la$ is minuscule and $G^{\flat}$ is unramified. Thus, if $\la_{\mu}$ is minuscule (which is usually not the case) then $M_{P}(\mu)$ is in fact Cohen-Macauley.  Otherwise, it is open question whether the whole local model is Cohen-Macauley.
\end{rmk} 

In this section, we will show the inclusion $\bigcup_{\widetilde{w} \in \Adm^{\fa^{\flat}}(\la_{\mu})} S_{\widetilde{w}}^{\fa^{\flat}} \subset \overline{M}_{P}(\mu)$.  The reverse inclusion is an application of the coherence conjecture of Pappas and Rapoport combined with a product formula (Proposition \ref{productformula}).  The argument is essentially the same as in the ramified case and so we leave it until \S 4.3.  

The extremal elements in $\Adm(\la_{\mu})$ under the Bruhat order are given by $t_{\la'}$ where $\la'$ is in the Weyl group orbit of $\la_{\mu}$. The cocharacter $\la'$ defines a $\overline{k}$-point $\overline{s}_{\la'}$ of $\Fl_{\fa^{\flat}}$ whose orbit closure is $S_{t_{\la'}}^{\fa^{\flat}}$.  Since $M_{P}(\mu)$ is stable under $L^{+, Q(u)} \cP$, to prove the inclusion  $\bigcup_{\widetilde{w} \in \Adm^{\fa^{\flat}}(\la_{\mu})} S_{\widetilde{w}}^{\fa^{\flat}} \subset \overline{M}_{P}(\mu)$, it suffices to show that $\overline{s}_{\la'}$ is the reduction of a point of $S_{\Res_{K/F} G}(\mu)$. 

\begin{prop} \label{section1} Let $\widetilde{E}$ be a finite extension of $\breve{F}$ which contains a splitting field for $K$ over $F$. There exists a point
$$
s_{\la'}:\Spec \cO_{\widetilde{E}} \ra M_{P}(\mu)
$$
mapping the closed point to $\overline{s}_{\la'}$. 
\end{prop}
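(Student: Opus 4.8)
The plan is to construct the section $s_{\la'}$ very concretely, by exhibiting an explicit $Q(u)$-lattice (or $\cG$-bundle) over $\cO_{\widetilde{E}}[u]$ whose generic fiber is the point $\overline{s}_{\la'}$ of the affine Grassmannian $\Gr_{\Res_{K/F}G}$ and whose special fiber is the Schubert point $\overline{s}_{\la'}$ of $\Fl_{\fa^{\flat}}$. First I would reduce to the case $\cG = \GL_n$: choosing a faithful representation $\cG \hookrightarrow \GL_n$ as in Proposition~\ref{fibers1} gives a closed immersion $\Fl_{\cG}^{Q(u)} \hookrightarrow \Fl_{\GL_n}^{Q(u)}$, but since the local model $M_P(\mu)$ is the Zariski closure of $1_{\cG_F/P_F}\times S_{\Res_{K/F}G}(\mu)$, it suffices to produce a point of $S_{\Res_{K/F}G}(\mu)$ over $\cO_{\widetilde{E}}$ (landing on $\overline{s}_{\la'}$); its closure in $\Fl_{\cG}^{Q(u)}$ automatically lands inside $M_P(\mu)$, and the reduction to $P$ is unproblematic since the bundle is trivialized away from $u=0$ as in the proof of Proposition~\ref{fibers1}(1). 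So really the task is: find $s_{\la'}\colon \Spec \cO_{\widetilde{E}}\to \left(\Gr_{\Res_{K/F}G}^{Q(u)}\right)_{\cO_{\widetilde{E}}}$ through $\overline{s}_{\la'}$, where $\la'$ is in the Weyl orbit of $\la_\mu=\sum_{\psi}\mu'_\psi$.

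The key step is then to write $\la'$ as a sum $\sum_{\psi\colon K\to\widetilde{E}}\nu_\psi$ with each $\nu_\psi$ in the Weyl orbit of $\mu'_\psi$ (indeed $\la_\mu$ is by definition such a sum, and applying a Weyl element to the whole thing moves each summand in its own orbit), and then to spread out each cocharacter $\nu_\psi$ over the corresponding root $\varpi_\psi$ of $Q(u)$. Concretely, over $\widetilde{E}$ we have $Q(u)=\prod_\psi (u-\varpi_\psi)$ with the $\varpi_\psi$ distinct units; via the factorization $\Gr_{\Res_{K/F}G_K}\cong\Res_{K/F}\Gr_{G_K}$ from Proposition~\ref{fibers1} and the disjoint-divisor equivalence, a point of $\Gr_{\Res_{K/F}G}$ over $\widetilde{E}$ is the same as a $d$-tuple of points of $\Gr_G$, one near each $\varpi_\psi$. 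I would take at the $\psi$-th place the standard lattice $\nu_\psi(u-\varpi_\psi)\cdot L_0$ — i.e., the image under the cocharacter $\nu_\psi$ of the uniformizer $(u-\varpi_\psi)$ of the local ring at $\varpi_\psi$, acting on the trivial bundle. Because $\widetilde{E}\supset\breve F$ is unramified over $F$ and contains a splitting field of $K$, this is defined over $\cO_{\widetilde{E}}$: each $u-\varpi_\psi$ is a unit times a uniformizer only at its own root, and away from all the roots the construction is the trivial bundle, so the resulting $\cG$-bundle (over $\cO_{\widetilde{E}}[u]$ after Galois descent, using that $\mathrm{Gal}(\widetilde{E}/\breve F)$ permutes the $\psi$'s compatibly with the choice of the $\nu_\psi$'s, which one arranges by the dominance normalization) extends the generic point to an $\cO_{\widetilde{E}}$-point. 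Its generic fiber is by construction the product point with invariant $\sum\nu_\psi=\la'$, hence lies in $S_{\Res_{K/F}G}(\mu)$ since $\la_\mu$ (and its Weyl translates) bound the relevant Schubert cells.

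The remaining step — and the one I expect to be the main obstacle — is to verify that the special fiber of this $\cO_{\widetilde{E}}$-point is exactly the Schubert point $\overline{s}_{\la'}\in\Fl_{\fa^{\flat}}$. On the special fiber $u=0$ all the factors $(u-\varpi_\psi)$ collide to $u$ (since $\varpi_\psi\equiv 0$ in the residue field, as $K/F$ is totally ramified and $\widetilde{E}/\breve F$ unramified), so the lattice $\prod_\psi \nu_\psi(u-\varpi_\psi)L_0$ degenerates to $\left(\sum_\psi\nu_\psi\right)(u)\cdot L_0 = \la'(u)\cdot L_0$, which is precisely the point $\overline{s}_{\la'}$ whose $L^+\cP_{\fa^{\flat}}$-orbit closure is $S^{\fa^{\flat}}_{t_{\la'}}$. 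Making this collision argument precise requires care: one must track the lattice (or bundle) integrally over $\cO_{\widetilde{E}}[u]$ rather than just on the two fibers, check flatness so that the special fiber is computed as the honest base change, and use the local model $\Gr^{Q(u),\mathrm{loc}}_{\cG}$ description (Definition~\ref{bigflagloc0}) together with the Beauville--Laszlo equivalence so that everything is literally a statement about $Q(u)$-lattices in $\widehat{R}_{Q(u)}$. For $\cG=\GL_n$ this is the explicit lattice computation of Example~\ref{GLN}; for general $\cG$ one transfers along the faithful representation. Granting this, $s_{\la'}$ maps the closed point to $\overline{s}_{\la'}$ and factors through $M_P(\mu)$ by the closure property, completing the proof.
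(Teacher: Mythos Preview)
Your core idea is the same as the paper's: write $\la' = \sum_{\psi} \nu_\psi$ with each $\nu_\psi$ in the Weyl orbit of $\mu_\psi$, and build the integral point by letting the cocharacter $\nu_\psi$ act via the linear factor $u-\varpi_\psi$; since all $\varpi_\psi$ reduce to $0$ in the residue field, the special fiber collapses to $\la'(u)$. That is exactly what the paper does.

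The paper's packaging is cleaner, and it is worth noting why. Rather than passing through a faithful representation into $\GL_n$ and speaking of lattices, the paper uses the map $\Gr_{\cT}^{Q(u)} \to \Fl_{\cG}^{Q(u)}$ coming from the chosen maximal torus $\cT \subset \cP$. Over $\cO_{\widetilde{E}}$ the torus $\cT$ is split, so an $\cO_{\widetilde{E}}$-point of $\Gr_{\cT}^{Q(u)}$ is literally a homomorphism $X^*(T) \to \{\text{line bundles on } \bA^1_{\cO_{\widetilde{E}}} \text{ trivialized off } D\}$, and one sets $\chi \mapsto \bigotimes_{\psi} \cO_X(-D_\psi)^{\langle \chi, \nu_\psi\rangle}$. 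This formulation makes several of your worries disappear: the point is visibly defined over $\cO_{\widetilde{E}}[u]$ (no flatness check, no Beauville--Laszlo bookkeeping), the $\GL_n$ detour is unnecessary, and no Galois descent is needed since the statement only asks for a point over $\cO_{\widetilde{E}}$, not over $\breve{\cO}$. Your remark about arranging $\Gal(\widetilde{E}/\breve F)$-compatibility of the $\nu_\psi$ is superfluous for this reason. Also, the reason the generic fiber lands in $S_{\Res_{K/F} G}(\mu)$ is simply that $(\nu_\psi)_\psi$ is Weyl-conjugate to $(\mu_\psi)_\psi = \mu$, not anything about $\la_\mu$ bounding Schubert cells; your phrasing there is slightly garbled.
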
 
\begin{proof}
Let $\mu = (\mu_{\psi})_{\psi:K \ra \widetilde{E}}$ with $\mu_{\psi} \in X_*(T) = X_*(T^{\flat})$.   Since $\la'$ is in Weyl group orbit of $\la_{\mu}$, there exists cocharacters $\mu'_{\psi} \in X_*(T^{\flat})$ in the Weyl group orbit of $\mu_{\psi}$ such that $\la' = \sum_{\psi} \mu'_{\psi}$.

Since we have chosen $\cT \subset \cP$, there is a natural map
$$
\Gr_{\cT}^{Q(u)} \ra \Fl_{\cG}^{Q(u)}.
$$
We briefly recall the construction of $s_{\la'} \subset \Gr_{\cT}^{Q(u)}(\cO_{\widetilde{E}})$ which is essentially the same as in \cite[Proposition 10.2.9]{LevinThesis}. Since $\cT$ is a split torus over $\cO_{\widetilde{E}}$, a element of $\Gr_{\cT}^{Q(u)}(\cO_{\widetilde{E}})$ is the same as a homomorphism from $X^*(T)$ to category of line bundles on $X := \bA^1_{\cO_{\widetilde{E}}}$ with trivialization away from divisor $D$ defined by $Q(u)$. Over $\cO_{\widetilde{E}}$, $Q(u) = \prod_{\psi:K \ra \widetilde{E}} (u - \varpi_{\psi})$.  Let $D_{\psi}$ denote the divisor defined by $u - \varpi_{\psi}$.  The point $s_{\la'}$ is given by the map which assigns to any $\chi \in X^*(T)$ the line bundle $\otimes_{\psi} \cO_X(-D_{\psi})^{\langle \chi, \mu_{\psi} \rangle}$ with its canonical trivialization over $X - D$.            
\end{proof} 

We now relate the construction from this section to the more general construction which we carry out in \S 4.   Let $\cG_{\fa}$ be the parahoric group scheme over $\cO[u]$ associated to $\fa$ in \S 3.  In this setting, we can be more explicit about the construction.  Namely, $\cG_{\fa}$ is constructed as the dilation of $\cG := \cG_0 \otimes_{\cO} \cO[u]$ along the closed subscheme $P \subset \cG_0 = (\cG)|_ {u = 0}$.  Let $i_{\fa}$ denote the natural homomorphism $\cG_{\fa} \ra \cG$.  Then $i_{\fa}[1/u]$ is an isomorphism and $i_{\fa}|_{u = 0}$ factors through $P$.

\begin{prop} \label{comp2} There is a closed immersion
$$
\Theta:\Fl_{\cG_{\fa}}^{Q(u)} \ra \Fl_{\cG}^{Q(u)}
$$
which is an isomorphism on special fibers and such that the image of the generic fiber $\left(\Fl_{\cG_{\fa}}^{Q(u)} \right)_F$ is $1_{\cG_F/\cP_F} \times \Gr_{\Res_{K/F} G}$.
\end{prop}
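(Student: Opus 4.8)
\emph{Proof proposal.} The plan is to build $\Theta$ directly from the homomorphism $i_{\fa}\colon\cG_{\fa}\ra\cG$ of \S 2.3 and then check the three asserted properties — closed immersion, special fiber, generic fiber — one at a time; here $\Fl^{Q(u)}_{\cG_{\fa}}$ means the affine Grassmannian $\Gr^{Q(u)}_{\cG_{\fa}}$ attached in \S 4 to the parahoric group scheme $\cG_{\fa}$ (no auxiliary parabolic being needed). Given a $\cG_{\fa}$-bundle $\cE$ on $\Spec R[u]$ with a trivialization $\beta$ away from $Q(u)=0$, push forward along $i_{\fa}$ to obtain the $\cG$-bundle $\cE\times^{\cG_{\fa}}\cG$ with the induced trivialization $\beta'$; since $i_{\fa}|_{u=0}$ factors through $P$, the $\cG_{\fa}$-bundle $\cE|_{u=0}$ determines a canonical reduction $\eps':=(\cE|_{u=0})\times^{\cG_{\fa}|_{u=0}}P$ of $(\cE\times^{\cG_{\fa}}\cG)|_{u=0}$ to $P$. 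Setting $\Theta(\cE,\beta):=(\cE\times^{\cG_{\fa}}\cG,\beta',\eps')$ defines a map of functors, which is a morphism of ind-schemes by the usual device of embedding $\cG_{\fa}$ and $\cG$ into general linear groups and checking that lattice bounds are respected, as in the proof of Proposition \ref{fibers1}.

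To see that $\Theta$ is a closed immersion, note that $\Fl^{Q(u)}_{\cG}$ is ind-proper over $\Spec\cO$ by Proposition \ref{fibers1} and that $\Fl^{Q(u)}_{\cG_{\fa}}=\Gr^{Q(u)}_{\cG_{\fa}}$ is ind-proper by the results of \S 4, so $\Theta$ is ind-proper; hence it suffices to prove that $\Theta$ is a monomorphism of functors, for then each of its finite-level restrictions is a proper monomorphism of schemes, hence a closed immersion. The monomorphism property comes down to the statement that the functor
$$
\cE\ \longmapsto\ \bigl(\cE\times^{\cG_{\fa}}\cG,\ (\cE|_{u=0})\times^{\cG_{\fa}|_{u=0}}P\bigr)
$$
from $\cG_{\fa}$-bundles on $\Spec R[u]$ to $\cG$-bundles equipped with a reduction to $P$ at $u=0$ is fully faithful — compatibility with the $\beta$'s is then automatic from faithfulness of $-\times^{\cG_{\fa}}\cG$. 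This full faithfulness is local: $u$ is a nonzerodivisor on $\Spec R[u]$ and on each of its \'etale covers, so the dilation $\cG_{\fa}$ sits inside $\cG$ as the subfunctor $\cG_{\fa}(A)=\{g\in\cG(A)\mid g\bmod u\in P(A/uA)\}$; trivializing the bundles over such a cover, an isomorphism of the associated $\cG$-bundles carrying one canonical reduction to the other is given locally by a section of $\cG$ whose reduction mod $u$ lies in $P$, i.e.\ by a section of $\cG_{\fa}$, and these glue to the required isomorphism of $\cG_{\fa}$-bundles. (Compare \cite[\S 10]{LevinThesis}.)

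For the special fiber, base change to $k$, where $Q(u)\equiv u^{d}$, so that $\Fl^{Q(u)}_{\cG_{\fa}}\otimes k$ is the affine Grassmannian of the completion $\cG_{\fa}\otimes_{\cO[u]}k[\![u]\!]$. This completion is the dilation of $\cG_{k[\![u]\!]}$ along $P_{k}$, which is precisely the parahoric group scheme $\cP_{\fa^{\flat}}$ of \S 2.2, so $\Fl^{Q(u)}_{\cG_{\fa}}\otimes k\cong\Fl_{\fa^{\flat}}$; on the other side $\Fl^{Q(u)}_{\cG}\otimes k\cong\Fl_{\fa^{\flat}}$ by Proposition \ref{fibers1}(2). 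Under these identifications $\Theta_{k}$ is the identity, since the relevant positive loop group is the same in both pictures: $L^{+}\cP_{\fa^{\flat}}=L^{+}\bigl(\cG_{\fa}\otimes k\bigr)=\{g\in L^{+}\cG_{k}\mid g\bmod u\in P_{k}\}$, the last group being exactly the stabilizer of the base point of $\Fl^{Q(u)}_{\cG}\otimes k$, and $\Theta_{k}$ being induced by these equalities.

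Finally, for the generic fiber, over $F$ the constant term $Q(0)$ is a unit, so $u$ is invertible in the $Q(u)$-adic completion $\widehat{R}_{Q(u)}$ for every $F$-algebra $R$; hence $i_{\fa}$ becomes an isomorphism over $\widehat{R}_{Q(u)}$ and the canonical reduction $\eps'$ collapses to the base point $1_{\cG_F/P_F}$. Thus $\Theta_F$ is the composite of an isomorphism $\Fl^{Q(u)}_{\cG_{\fa}}\otimes F\xrightarrow{\ \sim\ }\Gr^{Q(u)}_{\cG}\otimes F$ (Beauville--Laszlo together with the isomorphism $i_{\fa}$) with the closed immersion $\Gr^{Q(u)}_{\cG}\otimes F\ \iarrow\ (\cG_F/P_F)\times\Gr^{Q(u)}_{\cG}\otimes F=\Fl^{Q(u)}_{\cG}\otimes F$ picking out the base point; since $\Gr^{Q(u)}_{\cG}\otimes F\cong\Gr_{\Res_{K/F}G}$ by Proposition \ref{fibers1}(1), the image of the generic fiber is $1_{\cG_F/P_F}\times\Gr_{\Res_{K/F}G}$, as claimed. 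I expect the main point to be the full faithfulness in the second paragraph, i.e.\ the careful bookkeeping of the dilation $\cG_{\fa}$ and its effect on bundles; the properness and the two fiber computations are then routine.
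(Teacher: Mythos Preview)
Your proof is correct and follows essentially the same approach as the paper: construct $\Theta$ by pushforward along $i_{\fa}$ together with the induced reduction to $P$, identify the special and generic fibers separately, and deduce the closed immersion from ind-properness of both sides. The only difference is that you supply an explicit moduli-theoretic argument for the monomorphism property via the dilation description of $\cG_{\fa}$, whereas the paper leaves this implicit in the sentence ``since both sides are ind-proper, $\Theta$ is a closed immersion''; for the special fiber the paper phrases the identification as both descriptions being the fpqc quotient $LG^{\flat}/L^{+}\cP_{\fa^{\flat}}$, which is equivalent to your positive-loop-group comparison.
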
 
\begin{proof} 
First, we construct $\Theta$.  Let $(\cE', \beta') \in \Fl_{\cG_{\fa}}^{Q(u)}(R)$.  Consider the $\cG$-bundle $\cE = (i_{\fa})_*(\cE')$ with trivialization $\beta = (i_{\fa})_*(\beta')$.   Define $\Theta(\cE', \beta') = (\cE, \beta, \eps)$ where $\eps$ is the reduction to $P$ of $\cE|_{u = 0}$ induced by the factorization
$$
(\cG_{\fa})|_{u = 0} \ra P \ra \cG.
$$

The fact that $\Theta_k$ is an isomorphism follows from comparing the two different descriptions of the affine flag variety.  On the left, we have a moduli space of $\cP_{\fa^{\flat}}$-bundles with trivialization and on the right the moduli of triples $(\cE, \beta, \eps)$ as in \cite{Gaitsgory}. The equivalence of these two descriptions is implicit in for example \cite{PRTwisted}.  It can be deduced from the fact that both are isomorphic to the fpqc quotient $L G^{\flat} / L^+ \cP_{\fa^{\flat}}$ (see \cite[\S 0]{PRTwisted}).

On the generic fiber, we consider the local description as in Definitions \ref{bigflagloc0} or \ref{bigflagloc} so we see that $\left( \Fl_{\cG_{\fa}}^{Q(u)} \right)_F$ only depends on the completion of $\cG_{\fa}$ along $Q(u)$.  Furthermore, $i_{\fa}$ is an isomorphism when restricted to completion of $F[u]$ along $Q(u)$. Thus, 
$$
\left(\Fl_{\cG_{\fa}}^{Q(u)} \right)_F \cong \left(\Gr_{\cG}^{Q(u)}\right)_F \cong \Gr_{\Res_{K/F} G}.
$$
It is easy to check that for any $F$-algebra $R$ and any $(\cE', \beta') \in \Fl_{\cG_{\fa}}^{Q(u)}(R)$ the reduction to $\cP$ in $\Theta(\cE', \beta')$ is the trivial one. Since both sides are ind-proper, $\Theta$ is a closed immersion.  
\end{proof}

\section{Parahoric group schemes}

In this section, we construct the ``Bruhat-Tits" group schemes $\cG$ over the two-dimensional base $\Spec \cO_{K_0}[u]$ which will be used in the  construction of local models in \S 4. The group $\cG$ extends $G/K$ in the sense that $\cG_{u \mapsto \varpi, K}$ is isomorphic to $G$. Furthermore, $\cG$ will be reductive when restricted to $\Spec \cO_{K_0}[u, u^{-1}]$.  The specialization $\cG_{u \mapsto \varpi, \cO_K}$ will be a parahoric group scheme which is given as input into the construction.  The completion at $u$ on the special fiber $\cG_{k_0[\![u]\!]}$ will also be a parahoric group scheme.  Although the argument is the same, the key difference in our construction from \cite{PZ} is that given $G/K$ (a tame group) we construct $\cG$ over $\cO_{K_0}[u]$ as opposed to over $\cO_K[u]$. This is important so that in \S 4 we can define a moduli problem over $\cO_F$.        

Let $K/F$ be a finite extension and choose a uniformizer $\varpi$ of $K$. Let $G$ be a connected reductive group over $K$ which splits over a tamely ramified extension.  Choose a tame extension $\widetilde{K}/K$ which splits $G$.   Let $\widetilde{K}_0$ denote the maximal unramified over $K$ subextension of $\widetilde{K}_0$.  After possibly enlarging $\widetilde{K}$, we can assume that
\begin{itemize}
\item $G_{\widetilde{K}_0}$ is quasi-split;
\item $\widetilde{K}/K$ is Galois;
\item there is uniformizer $\widetilde{\varpi}$ of $\widetilde{K}$ such that $\widetilde{\varpi}^{\widetilde{e}} = \varpi$
\end{itemize}
as in \S 2.a of \cite{PZ}.  Set $\widetilde{e} := [\widetilde{K}:\widetilde{K}_0]$. If $\widetilde{k}$ is the residue field of $\widetilde{K}$, then let $\widetilde{\cO}_0 = W \left(\widetilde{k} \right) \otimes_{W(k_0)} \cO_{K_0}$.

\subsection{Reductive group schemes over $\cO_{K_0}[u, u^{-1}]$}

Let $K_0 \subset K$ be the maximal unramified over $F$ subextension of $K$. We will now construct a group scheme $\underline{G}$ over $\cO_{K_0}[u, u^{-1}]$ which extends $G$ in the sense that its base change
$$
\underline{G} \otimes_{\cO_{K_0}[u, u^{-1}]} K, \quad u \mapsto \varpi
$$
is isomorphic to $G$. The construction is essentially the same as in \S 3 of \cite{PZ} except that we realize the tame descent over $\Spec \cO_{K_0}[u, u^{-1}]$ instead of over $\Spec  \cO_{K}[u, u^{-1}]$. We highlight the main points of the argument.
  
Consider the following diagram:
$$
\xymatrix{ \label{tamed}
\Spec \widetilde{\cO}_0[v] \ar[d]^{u \mapsto v^{e}} &  \Spec \widetilde{K}  \ar[l]^{v \mapsto \widetilde{\varpi}} \ar[d] \\
\Spec \cO_{K_0}[u]  & \Spec K \ar[l]^{u \mapsto \varpi}
}
$$
As in \cite[\S 3.2.1]{PZ}, the action of $\Gamma := \Gal(\widetilde{K}/K)$ on $\widetilde{K}$ can be extended to an action on  $\widetilde{\cO}_0[v]$ over $\cO_{K_0}[u]$.  The left vertical map is finite flat of degree $\widetilde{e}$ and \'etale away from $u = 0$. 

\begin{rmk} Another difference between this setup and that of \cite{PZ} is that kernel of the map $\cO_{K_0}[u, u^{-1}] \ra K$ is generated by the minimal polynomial $Q(u)$ of $\varpi$ over $K_0$ as opposed to $u - \varpi$.  
\end{rmk}  

\begin{prop} \label{underlineG} Let $(G, A, S, P)$ be a \emph{rigidification} of $G$ $($see \cite[Definition 1.7]{PZ}$)$. There exists a quadruple $(\underline{G}, \underline{A}, \underline{S}, \underline{P})$ over $\Spec \cO_{K_0}[u, u^{-1}]$  where $\underline{G}$ is a connected reductive group, $ \underline{A}$ is a split torus of $\underline{G}$, $\underline{S}$ is a torus containing $\underline{A}$ which becomes split over $\widetilde{\cO}_0[u, u^{-1}]$, and $\underline{P}$ is a parabolic subgroup scheme containing $\underline{M} := Z_{\underline{G}}(\underline{A})$, which extends 
the rigidification $(G, A, S, P)$.
\end{prop}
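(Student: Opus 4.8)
The plan is to mimic the construction of \cite[\S 3.2 and \S 4.1]{PZ} but carry out the tame descent over the base $\Spec \cO_{K_0}[u, u^{-1}]$ rather than over $\Spec \cO_K[u, u^{-1}]$, exploiting the diagram displayed above. First I would start with the rigidified tame group $G_{\widetilde K}$, which, after possibly enlarging $\widetilde K$, is split (indeed by hypothesis $G_{\widetilde K_0}$ is quasi-split and $G_{\widetilde K}$ split) and quasi-split over $\widetilde K_0$. Over the ring $\widetilde\cO_0[v, v^{-1}]$ — which is an unramified extension of $\cO_{K_0}[v,v^{-1}]$ in the second variable only — one writes down the constant (Chevalley) extension $\underline{G}_{\widetilde{\cO}_0}$ of the split group together with its split torus $\underline A_{\widetilde\cO_0}$, the split torus $\underline S_{\widetilde\cO_0}$ containing it, and the parabolic $\underline P_{\widetilde\cO_0}$ containing $\underline M_{\widetilde\cO_0} = Z(\underline A_{\widetilde\cO_0})$; this is routine because everything is split/Chevalley. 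The key point is then to equip this quadruple with a descent datum for $\Gamma = \Gal(\widetilde K/K)$ relative to the finite map $\Spec \widetilde\cO_0[v,v^{-1}] \to \Spec \cO_{K_0}[u,u^{-1}]$, $u \mapsto v^{\widetilde e}$, and then descend.

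The descent datum is built exactly as in \cite[\S 3.2.1]{PZ}: the $\Gamma$-action on $\widetilde K$ over $K$ is first extended to an action on $\widetilde\cO_0[v]$ over $\cO_{K_0}[u]$ (using that $\widetilde K/K$ is Galois, $\widetilde e = [\widetilde K : \widetilde K_0]$, and the choice of uniformizer $\widetilde\varpi$ with $\widetilde\varpi^{\widetilde e} = \varpi$; the subtlety is that $\Gamma$ permutes the embeddings of $\widetilde K_0$, so the action mixes the $W(\widetilde k)$-factor and the $v$-variable), and then the Galois cohomology class of $G$ — i.e. the cocycle in $\Gamma \to \Aut(G_{\widetilde K})$, which by rigidification lands in the automorphisms preserving the pinning up to the torus part — is spread out to a cocycle $\Gamma \to \Aut(\underline G_{\widetilde\cO_0})$ over $\cO_{K_0}[u,u^{-1}]$ preserving $(\underline A, \underline S, \underline P)$. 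Since outer automorphisms of a Chevalley group scheme over any base are realized by pinning-preserving automorphisms, and inner automorphisms come from $\underline G_{\widetilde\cO_0}(\widetilde\cO_0[u,u^{-1}])$ after adjusting the torus, this cocycle exists; one then takes $(\underline G, \underline A, \underline S, \underline P)$ to be the descent of $(\underline G_{\widetilde\cO_0}, \dots)$ along the finite étale-away-from-$u=0$ cover. Because the cover $\Spec \widetilde\cO_0[u,u^{-1}] \to \Spec \cO_{K_0}[u,u^{-1}]$ is finite étale (the ramification was only at $u = 0$, which we have inverted), fpqc/Galois descent for affine schemes applies and the descended objects are again affine; that $\underline G$ is smooth with connected reductive fibers, $\underline S$ a torus split over $\widetilde\cO_0[u,u^{-1}]$, $\underline A$ a split subtorus, and $\underline P$ a parabolic subgroup scheme containing $\underline M = Z_{\underline G}(\underline A)$, all descend because these are properties that can be checked after the faithfully flat base change to $\widetilde\cO_0[u,u^{-1}]$. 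Finally one checks the compatibility: base changing $\underline G$ along $\cO_{K_0}[u,u^{-1}] \to K$, $u \mapsto \varpi$ (whose kernel is generated by $Q(u)$), recovers $G$ with its rigidification, because this base change is compatible, via the right-hand vertical map in the diagram, with the base change $\widetilde\cO_0[v,v^{-1}] \to \widetilde K$, $v \mapsto \widetilde\varpi$, where by construction we get back $G_{\widetilde K}$ with its pinning, and the descent data match the Galois action defining $G$ over $K$.

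The main obstacle — and the only place real work is needed beyond bookkeeping — is constructing the $\Gamma$-action on $\widetilde\cO_0[v]$ (equivalently on $\widetilde\cO_0[v,v^{-1}]$) extending the Galois action on $\widetilde K$ and compatible with $u \mapsto v^{\widetilde e}$ over $\cO_{K_0}[u]$, and then verifying that the cocycle defining $G$ genuinely extends to this integral two-variable setting while preserving the rigidification. This is where working over $\cO_{K_0}$ instead of $\cO_K$ matters: one does \emph{not} have an obvious $\cO_K$-structure on $\widetilde\cO_0$, whereas $\widetilde\cO_0 = W(\widetilde k) \otimes_{W(k_0)} \cO_{K_0}$ is manifestly an $\cO_{K_0}$-algebra, so the descent naturally produces a group scheme over $\cO_{K_0}[u,u^{-1}]$; one must check the $\Gamma$-action is well-defined on the tensor product and that $\widetilde\cO_0[v]^\Gamma = \cO_{K_0}[u]$ (and similarly after inverting $u$). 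All of this follows the template of \cite[\S 3]{PZ} essentially verbatim after replacing $\cO_K$ by $\cO_{K_0}$ and $u - \varpi$ by $Q(u)$, as flagged in the two remarks preceding the statement, so I would present the argument by indicating these substitutions and referring to \textit{loc.\ cit.}\ for the details that transfer unchanged.
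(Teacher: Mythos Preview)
Your overall strategy---extend the Galois cocycle defining $G$ from $\widetilde{K}$ to $\widetilde{\cO}_0[v,v^{-1}]$ and descend along the finite \'etale cover $\Spec \widetilde{\cO}_0[v,v^{-1}] \to \Spec \cO_{K_0}[u,u^{-1}]$---has the right shape, and the step you flag as the ``main obstacle'' (extending the $\Gamma$-action to $\widetilde{\cO}_0[v]$) is indeed routine. But you have misidentified where the real difficulty lies, and there is a genuine gap.

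The gap is the sentence ``inner automorphisms come from $\underline G_{\widetilde\cO_0}(\widetilde\cO_0[u,u^{-1}])$ after adjusting the torus, this cocycle exists.'' This is essentially the entire content of the proposition and is not automatic. The cocycle defining $G$ decomposes into an outer part (its image in $\Out(H)$) and an inner part. The outer part extends trivially because pinned automorphisms of a Chevalley scheme are constant; this produces a quasi-split group $\underline{G}^*$ over $\cO_{K_0}[u,u^{-1}]$ extending the quasi-split inner form $G^*$ of $G$. The inner part, however, is represented by $\Int(g)$ for some $g \in G^*_{\ad}(\breve{K})$ (the twist is unramified since $G_{\widetilde{K}_0}$ is already quasi-split), and there is no a priori reason that $g$, or any cohomologous choice, lifts to $\underline{G}^*_{\ad}(\breve{\cO}_0[u,u^{-1}])$: the element $g$ typically involves the uniformizer in a way that does not obviously globalize to the two-dimensional base. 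Your phrase ``after adjusting the torus'' does not supply this lift, and the rigidification by itself does not force the cocycle to be torus-valued---it only pins down which subgroups must be preserved.

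The paper addresses this by separating the two descents. After building $\underline{G}^*$ by the easy outer step, it invokes Proposition~\ref{precisec} (from \cite[Propositions 2.10, 2.14]{PZ}): the unramified inner-twist cocycle can be represented in $N_{M'^*}(T^*_{\ad})(\breve{K})$, and under an isomorphism $M^*_{\ad} \cong \prod \Res_{E_i/K} \PGL_{m_i}$ it is a product of explicit elements $\underline{n}_{r_i}(\omega_i)$. One then shows, via the exact sequence \cite[(3.5)]{PZ}, a Brauer-group computation, and the explicit constructions of \cite[\S 5.3.1, \S 5.3.3]{PZ}, that precisely these elements lift to $\breve{\cO}_0[u,u^{-1}]$. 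This is the substance of \cite[\S 3.3.2--3.3.3]{PZ} and is what your proposal is missing.
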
 

The construction is identical to \S 3.3 of \cite{PZ}.  We recall the key points which will be relevant later.   The first step is to extend a quasi-split form $G^*$ of $G$ to a quasi-split group $\underline{G}^*$ over $\Spec \cO_{K_0}[u,u^{-1}]$.     Let $H$ denote a split form of $G$ over $\cO_{K_0}$.  Then $G$ defines a cohomology class $[c] \in H^1(\Gamma, \Aut(H_{\widetilde{K}}))$.   Let $[c^*]$ denote the image of $[c]$ in $H^1(\Gamma, \mathrm{Out}(H))$ where $\mathrm{Out}(H)$ denotes the outer automorphism group of $H$.   A choice of pinning of $H$ over $\cO_{K_0}$ (and hence over $K$) defines a splitting of the map $\Aut(H_{\widetilde{K}}) \ra \Out(H)$.   Under this splitting, $[c^*]$ defines a pinned quasi-split group $G^*$ over $K$ which becomes isomorphic to $G$ over $\widetilde{K}_0$ (with an appropriate choice of pinning).  

The outer automorphisms extend to automorphisms of the (pinned) split group over $\widetilde{\cO}_0[v, v^{-1}]$; in particular, $[c^*]$ lifts to a class in $H^1(\Gamma, \Aut( H_{\widetilde{\cO}_0[v, v^{-1}]}))$.  This class defines a reductive (pinned) quasi-split group $\underline{G}^*$ over $\Spec \cO_{K_0}[u,u^{-1}]$ which extends $G^*$.    

The difficult part which is carried out in \cite[\S 3.3.2]{PZ} is constructing $\underline{G}$ from $\underline{G}^*$.  This is done by ``unramified" descent from the $\widetilde{\cO}_{0}[u,u^{-1}]$ to $\cO_{K_0}[u, u^{-1}]$. Since $G$ is an unramified inner form of $G^*$, it is given by a class $[c] \in H^1 \left(\widehat{\Z}, G^*_{\ad}(\breve{K}) \right)$ where $\breve{K}$ is the maximal unramified extension of $K$.  If we choose a Frobenius element $\sigma \in \Gal(\breve{K}/K)$, then the cohomology class $[c]$ is represented by a cocycle given by an inner automorphism $\Int(g)$ of $G^*_{\breve{K}}$.   The key observations \cite[2.10]{PZ} and \cite[2.14]{PZ} say that we can choose $[c]$ and $\Int(g)$ in a controlled way.  To state this precisely, we need to introduce some more notation. 

\begin{defn} \label{notations} If $(G, A, S, P)$ is a rigidification of $G$, then set $M := Z_G(A)$, a Levi subgroup of $G$. Set $T := Z_G(S) \subset M$ a maximal torus since $S_{\widetilde{K}_0}$ is a maximal $\widetilde{K}_0$-split torus of a quasi-split group. 
\end{defn}  

The quasi-split form $G^*$ of $G$ contains a corresponding Levi subgroup $M^*$ with maximal torus $T^*$.  

\begin{prop} \label{precisec} Let $M'^*$ denote the Levi subgroup of $G_{\ad}^*$ corresponding to $M^*$.  There exists a unique class $[c^{\mathrm{rig}}]$ valued in $N_{M'^*}(T^*_{\ad})(\breve{K})$ whose image is $[c]$ under
$$
H^1(\widehat{\Z}, N_{M'^*}(T^*_{\ad})(\breve{K})) \ra H^1(\widehat{\Z}, M'^*(\breve{K})) \ra H^1(\widehat{\Z}, G_{\ad}^*(\breve{K})).
$$
Furthermore, we have an isomorphism 
$$
\rho:M^*_{\ad} \xrightarrow{\sim} \prod \Res_{E_i/K} \PGL_{m_i}
$$
such that the image of $\mathrm{Int}(g)$ in $M^*_{\ad}$ is given by $\prod g_i$ where the $g_i = \underline{n}_{r_i}(\omega_i)$ $($see the discussion before \cite[Proposition 2.14]{PZ}$)$.
\end{prop}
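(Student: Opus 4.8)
\noindent\emph{Proof idea.} The plan is to obtain both assertions from the corresponding statements of Pappas--Zhu, namely \cite[Prop.~2.10]{PZ} (the shape of $M^*_{\ad}$) and \cite[Prop.~2.14]{PZ} (the shape of $\Int(g)$ and the descent to the normalizer), whose proofs apply here without change. Indeed, each of those statements concerns only the connected reductive group $G$ over the $p$-adic field $K$ together with its inner twisting class, and in our setting $G$ still satisfies the hypotheses of \emph{loc.\ cit.}: it splits over a tame extension, and, as recorded just above, it is an \emph{unramified} inner form of its quasi-split inner form $G^*$. I indicate the structure of the argument.

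First I would pin down the Levi. With $A$ the maximal $K$-split torus of the rigidification, $M=Z_G(A)$ is a minimal Levi of $G$, hence anisotropic modulo its centre, so $M_{\ad}$ is a semisimple anisotropic group over the local field $K$. Writing $M_{\ad}\cong\prod_i\Res_{E_i/K}H_i$ with $H_i$ absolutely simple adjoint over a finite extension $E_i/K$, anisotropy of each $\Res_{E_i/K}H_i$ forces $H_i$ to be anisotropic over $E_i$; by the classification of anisotropic absolutely simple groups over non-archimedean local fields, each such $H_i$ is of inner type $A$, i.e.\ $H_i\cong\PGL_1(D_i)$ for a central division algebra $D_i/E_i$. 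Passing to the quasi-split inner form then yields $M^*_{\ad}\cong\prod_i\Res_{E_i/K}\PGL_{m_i}$ with $m_i^2=\dim_{E_i}D_i$, which is the isomorphism $\rho$.

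Next I would descend the twisting class and produce the distinguished representative. Since $M$ is the minimal Levi of $G$, it transfers to the Levi $M^*$ of $G^*$, and the twisting carrying $G^*$ to $G$ is induced by an inner twisting of $M^*$; equivalently, because the unipotent radical contributes nothing to cohomology, $[c]\in H^1(\widehat{\Z},G^*_{\ad}(\breve{K}))$ lies in the image of $H^1(\widehat{\Z},M'^*(\breve{K}))$, where $M'^*\subset G^*_{\ad}$ is the image of $M^*$. As $M'^*$ is an isogeny quotient of $\prod_i\Res_{E_i/K}\PGL_{m_i}$, analysing $H^1(\widehat{\Z},M'^*(\breve{K}))$ reduces, via Shapiro's lemma and the fact that $\breve{K}$ splits every Brauer class, to the computation $H^1(\widehat{\Z},\PGL_{m}(\breve{L}))\cong\tfrac{1}{m}\Z/\Z$ for the relevant finite extensions $\breve{L}/\breve{K}$, in which the class $r/m$ has the monomial matrix $\underline{n}_r(\omega)$ as a canonical representative — one that already lies in the normalizer of the diagonal torus. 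Assembling the factors gives an element $g$ with $\Int(g)\mapsto\prod_i g_i$, $g_i=\underline{n}_{r_i}(\omega_i)$, and identifies its class $[c^{\mathrm{rig}}]$ in $H^1(\widehat{\Z},N_{M'^*}(T^*_{\ad})(\breve{K}))$; uniqueness of $[c^{\mathrm{rig}}]$ is then forced, since the invariants $r_i/m_i$ are pinned down by $[c]$ and distinct monomial classes remain non-conjugate in $M'^*(\breve{K})$.

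The only ingredient genuinely special to our situation, as opposed to \cite{PZ}, is that $G$ is an unramified inner form of $G^*$, which has already been established; granting it, the remaining content is the representation-theoretic bookkeeping of the previous paragraph, carried out just as in \cite[\S 2.b]{PZ}. I expect the single point requiring care to be the uniqueness clause, i.e.\ checking that the monomial lift is the \emph{only} class of $N_{M'^*}(T^*_{\ad})$ over $[c]$ rather than merely \emph{a} lift.
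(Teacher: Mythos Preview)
Your proposal is correct and takes exactly the same approach as the paper: the paper's proof consists solely of the citation ``See Propositions 2.10 and 2.14 in \cite{PZ},'' and your sketch simply unpacks the content of those two propositions. The additional detail you supply about the anisotropy of $M_{\ad}$, the classification of anisotropic simple groups over local fields, and the monomial representatives $\underline{n}_{r_i}(\omega_i)$ is precisely what is proved in \cite[\S 2.b]{PZ}, so there is nothing to add or correct.
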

\begin{proof} See Propositions 2.10 and 2.14 in \cite{PZ}. 
\end{proof}

\begin{rmk} The precise form of the cocycle in Proposition \ref{precisec} is important not just for the construction of $\underline{G}$.  It also important that $\Int(g)$ acts on $T^*$ through the relative Weyl group of $(G^*, S^*)$. This will allow us to compare apartments of $\underline{G}$ and $\underline{G}^*$ under different specializations.  
\end{rmk}

\begin{proof}[$($Sketch of the$)$ proof of Proposition \ref{underlineG}:]  The groups $M^*$ and $T^*$ extend naturally to subgroups $\underline{M}^*$ and $\underline{T}^*$ of $\underline{G}^*$ and similarly for $\underline{G}_{\ad}^*$.   Let $\breve{\cO}_0$ denote the ring of integers of the maximal unramified over $K_0$ subfield of $\breve{K}$. The goal then is to produce a lift of $[c^{\mathrm{rig}}]$ from Proposition \ref{precisec} to $H^1 \left(\widehat{\Z}, N_{M'^*}(T^*_{\ad})(\breve{\cO}_0[u, u^{-1}]) \right)$. Using the exact sequence \cite[(3.5)]{PZ}
$$
1 \ra \underline{Z}^* \ra \underline{N'}^* \ra \underline{N}^*_{\ad} \ra 1
$$
and some computations with the Brauer group, one is reduced to showing that the composition 
$$
H^1 \left(\widehat{\Z}, \underline{N}^*_{\ad}(\breve{\cO}_0[u, u^{-1}] \right) \ra H^1 \left(\widehat{\Z}, \underline{M}^*_{\ad}(\breve{\cO}_0[u, u^{-1}] \right) \ra H^1 \left(\widehat{\Z},M^*_{\ad} \left(\breve{K} \right) \right)
$$ is surjective.  Exactly as in \cite[\S 3.3.3]{PZ}, the group $M_{\ad}$ is anisotropic and hence isomorphic to a product of Weil restrictions of central division algebras.   The same constructions from \S 5.3.1 and 5.3.3 of \cite{PZ} produce the desired classes in $H^1 \left(\widehat{\Z}, \underline{N}^*_{\ad}(\breve{\cO}_0[u, u^{-1}]) \right)$.  
\end{proof} 

\subsection{Iwahori-Weyl group} Recall that for a split pair $(H, T_H)$ over a complete discretely valued field $\kappa$, the Iwahori-Weyl group is given by $N_G(T_H)(\kappa)/T(\cO_{\kappa})$ which has as a quotient the absolute Weyl group $W_0(H, T_H)$.  This can be generalized to non-split groups as well.   

\begin{defn} \label{IW} Let $\kappa$ be a discretely valued field with perfect residue field.  Let $G'$ be a connected reductive group over $\kappa$ and $S'$ a maximal $\kappa$-split torus.  The \emph{relative Weyl group} $W_0(G', S')$ is given by $N_{G'}(S')(\kappa)/Z_{G'}(S')(\kappa)$. The Iwahori-Weyl group $W(G',S', \kappa)$ is defined to be 
$$
N_{G'}(S')(\kappa)/M_1
$$
where $M_1$ is the unique parahoric subgroup of $Z_{G'}(S')$.
\end{defn}

For $G'$ quasi-split, one can say a bit more about the structure of $W(G',A', \kappa)$. The centralizer of $S'$ is a maximal torus $T' := Z_{G'}(S')$.  The unique parahoric subgroup scheme $\cT'$ of $T'$ is the connected component of the N\`eron lft -model of $T'$ (see \cite[\S 10]{Neron}, \cite[\S 3.b]{PRTwisted}). 

\begin{prop} \label{lftmodel} Assume $T'$ splits over a Galois extension $\tilde{\kappa}$ with $\Gamma := \Gal(\tilde{\kappa}/\kappa)$ of order prime to $p$.  If $\widetilde{\cT}$ is the split model for $T_{\widetilde{\kappa}}$ over $\cO_{\tilde{\kappa}}$ with the induced action of $\Gamma$ then the unique parahoric subgroup scheme $\cT$  is isomorphic to the connected component of $(\Res_{\cO_{\tilde{\kappa}}/\cO_{\kappa}}  \tilde{\cT})^{\Gamma}$.
\end{prop}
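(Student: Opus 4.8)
The plan is to identify the unique parahoric subgroup scheme $\cT$ of $T'$ with the connected component of the fixed-point scheme $(\Res_{\cO_{\tilde\kappa}/\cO_\kappa}\widetilde\cT)^\Gamma$ by checking that the latter is a smooth affine group scheme over $\cO_\kappa$ with connected fibers and generic fiber $T'$, and then invoking the uniqueness (characterization) of the parahoric group scheme of a torus as the connected N\'eron lft-model. First I would set $\cN := \Res_{\cO_{\tilde\kappa}/\cO_\kappa}\widetilde\cT$; since $\tilde\kappa/\kappa$ is tamely ramified (order prime to $p$), the extension $\cO_{\tilde\kappa}/\cO_\kappa$ is a finite extension of complete DVRs, so Weil restriction of a smooth affine scheme along it is smooth and affine; hence $\cN$ is a smooth affine $\cO_\kappa$-group scheme with generic fiber $\Res_{\tilde\kappa/\kappa}T_{\tilde\kappa}$, carrying its natural $\Gamma$-action (combining the Galois action on $\widetilde\cT$ with the action on $\cO_{\tilde\kappa}$), and $\cN^\Gamma$ is a closed subgroup scheme whose generic fiber is $(\Res_{\tilde\kappa/\kappa}T_{\tilde\kappa})^\Gamma = T'$ by Galois descent.

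The crux is smoothness of $\cT := (\cN^\Gamma)^\circ$, which is exactly where the hypothesis $|\Gamma|$ prime to $p$ enters. I would argue this by the standard averaging/linear-reductivity argument: since $\Gamma$ has order invertible on $\cO_\kappa$, taking $\Gamma$-invariants is exact on quasi-coherent $\Gamma$-modules, and fixed points of a linearly reductive finite group scheme acting on a smooth affine scheme are smooth (this is a theorem going back to work on fixed-point schemes, used in exactly this form in \cite[Appendix]{PRTwisted} and \cite{Edixhoven}); concretely one lifts tangent vectors via the averaging operator $\frac{1}{|\Gamma|}\sum_{\gamma\in\Gamma}\gamma$, which produces $\Gamma$-equivariant infinitesimal deformations and hence shows the infinitesimal lifting criterion holds for $\cN^\Gamma$. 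Having smoothness, I would compute the fibers: the generic fiber is $T'$, already handled; for the special fiber one uses that formation of $\Gamma$-fixed points commutes with the flat base change $\cO_\kappa \to \cO_\kappa$ along reduction (again because invariants are exact in the tame case), so $\cT_{k_\kappa} = ((\cN^\Gamma)^\circ)_{k_\kappa}$ is connected by construction; one checks it has the expected dimension $= \dim T'$ using that smoothness plus $\Gamma$-fixed points of the split torus $\widetilde\cT_{\cO_{\tilde\kappa}}$ computes the correct Lie algebra $(\Lie\widetilde\cT\otimes_{\cO_{\tilde\kappa}}\cO_{\tilde\kappa})^\Gamma = \Lie T'$ rationally, so $\cT$ is a smooth affine $\cO_\kappa$-group scheme with connected fibers of relative dimension $\dim T'$ and generic fiber $T'$.

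Finally I would identify $\cT$ with the connected N\'eron lft-model of $T'$, equivalently with the unique parahoric subgroup scheme (uniqueness is because a torus has a unique facet in its building, cf. the remark before the proposition and \cite[\S 3.b]{PRTwisted}): the N\'eron lft-model $\cT^{\mathrm{N}}$ of $T'$ is characterized by the mapping property that every $\cO_\kappa$-point of any smooth $\cO_\kappa$-scheme mapping to $T'$ on generic fibers factors through it; since $\cT$ is smooth with generic fiber $T'$, there is a canonical map $\cT \to \cT^{\mathrm{N}}$, and conversely, $\cN = \Res_{\cO_{\tilde\kappa}/\cO_\kappa}\widetilde\cT$ maps to $\Res_{\cO_{\tilde\kappa}/\cO_\kappa}(\text{N\'eron model of }T_{\tilde\kappa})$ which by the N\'eron mapping property and $\Gamma$-equivariance yields a map $\cN^\Gamma \to \cT^{\mathrm{N}}$, hence $\cT = (\cN^\Gamma)^\circ \to (\cT^{\mathrm{N}})^\circ$; one checks these maps are mutually inverse by uniqueness of the connected component on a bounded-index open and by comparing on generic fibers and invoking that both are smooth with connected special fibers of the same dimension. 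The main obstacle is the smoothness of the fixed-point scheme $\cN^\Gamma$ and the verification that its special fiber has the correct dimension; everything else is formal once tameness guarantees exactness of $\Gamma$-invariants and hence good behavior of fixed points under base change.
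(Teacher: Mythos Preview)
Your proposal is correct and follows essentially the same approach as the paper: smoothness of $(\Res_{\cO_{\tilde\kappa}/\cO_\kappa}\widetilde\cT)^\Gamma$ via the tameness hypothesis (the paper cites \cite[Proposition 3.4]{Edix} for exactly your averaging argument), followed by identification of the identity component with that of the N\'eron lft-model. Your final paragraph is slightly roundabout---the paper simply invokes \cite[Chapter 10, Proposition 4]{Neron}, which says that any smooth separated $\cO_\kappa$-model of $T'$ maps by an open immersion into the N\'eron lft-model, whence the identity components agree; this replaces your dimension-count and ``mutually inverse'' discussion in one stroke.
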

\begin{proof}  By \cite[Proposition 3.4]{Edix}, $(\Res_{\cO_{\tilde{\kappa}}/\cO_{\kappa}}  \tilde{\cT})^{\Gamma}$ is a smooth group scheme over $\Spec \cO_{\kappa}$ whose generic fiber is $T$.   By Proposition 4 in \cite{Neron}, the connected component of the identity is the same as that of the N\`eron lft model for $T$.  
\end{proof} 

We now take $\kappa = \overline{k}(\!(u)\!)$ where $\overline{k}$ is algebraic closed.  Take $I := \Gal(\overline{\kappa}/\kappa)$, $\widetilde{W}' := W(G', S', \kappa)$, and $W_0' := W_0(G', S')$     Then there is an exact sequence
\begin{equation} \label{w1}
0 \ra X_*(T')_I \ra \widetilde{W}' \ra W_0' \ra 0
\end{equation}
where $X_*(T)_I$ denotes the coinvariants (see \cite[(9.14)]{PZ}).  More precisely, the Kottwitz homomorphism (see \cite[Lemma 5]{HR}) identifies $\cT(\kappa)/ \cT(\cO_{\kappa})$ with $X_*(T)_I$.  If $\la \in X_*(T)_I$, then $t_{\la}$ will denote translation by $\la$ considered as an element of $\widetilde{W}'$.

When $G'$ is simply-connected $\widetilde{W}'$ is a Coxeter group and so has a length function and a Bruhat order. For any $G'$, $\widetilde{W}'$ has the structure of a quasi-Coxeter group (see \cite[\S 9.1.1]{PZ}) and so the Bruhat order can be extended to $\widetilde{W}'$.  We will denote this partial ordering by $\leq$. 

Bruhat-Tits theory associates to any connected reductive group $G'$ over $\kappa$ the (extended) building $\cB(G', \kappa)$ on which $G'(\kappa)$ acts and to any pair $(G', S')$ an apartment $\cA(G', S', \kappa)$.  For $G'$ simply-connected, the parahoric subgroups  of $G'(\kappa)$ are exactly the stabilizer of the facets of $\cB(G', \kappa)$.   If $\fa'$ is a facet of $\cB(G', \kappa)$, we denote corresponding parahoric by $\cP_{\fa'}$.  There is a unique \emph{parahoric group scheme} over $\cO_{\kappa}$ associated to $\fa'$ which abusing notation we also denote by $\cP_{\fa'}$.  It is a smooth affine group scheme with geometrically connected fibers.       

The Iwahori-Weyl group $\widetilde{W}'$ acts on the apartment $\cA(G', S', \kappa)$.  If $\fa'$ is a facet of $\cA(G', S', \kappa)$, then set 
$$
W_{\fa'}' := (N_{G'}(S')(\kappa) \cap P_{\fa'})/\cT(\cO_{\kappa}) \subset \widetilde{W}'.
$$
If $G'$ is simply-connected, $W_{\fa'}$ is the stabilizer of $\fa'$ in $\widetilde{W}'$.

To any $\mu' \in X_*(T')$, one can associate a $W_0'$-orbit $\Lambda_{\mu'}$ in $X_*(T)_{I}$ (see \S 9.1.2 of \cite{PZ} or the discussion after Corollary 2.8 in \cite{RicharzAG}).  Fix a $\kappa$-rational Borel $B'$ of $G'$ and let $\overline{\mu}'$  be the image in $X_*(T)_{I}$ of the unique $B'$-dominant cocharacter in the orbit of $\mu'$ under the absolute Weyl group of $(G', T')$.  Set $\Lambda_{\mu'} = W_0' \cdot \overline{\mu}'$.  This is independent of the choice of $B'$ and only depend on the Weyl group orbit of $\mu'$. 

We take the opportunity here to define the $\mu'$-admissible set which was introduced by Kottwitz and Rapoport.
 
\begin{defn} \label{admissible} For $\mu' \in X_*(T')$, define the \emph{$\mu'$-admissible} set
$$
\Adm(\mu') = \{ w \in \widetilde{W}' \mid w \leq t_{\la} \text{ for some } \la \in \Lambda_{\mu'} \}.
$$  
If $\fa'$ is a facet of the apartment $\cA(G', S', \kappa)$, define
$$
\Adm^{\fa'}(\mu') = W_{\fa'}' \Adm(\mu') W_{\fa'}'. 
$$
Note that $\Lambda_{\mu'}$ and hence $\Adm(\mu')$ only depends on the geometric conjugacy class of $\mu'$. 
\end{defn} 
 
\subsection{Parahoric group schemes}

The input into the construction of a local model for $\Res_{K/F} G$ includes a parahoric subgroup or equivalently a facet $\fa$ in the Bruhat-Tits building $\cB(\Res_{K/F} G, F)$. There is a natural isomorphism of the (extended) Bruhat-Tits buildings
$$
\cB(\Res_{K/F} G, F) \cong \cB(G, K)
$$
(see, for example, \cite[pg, 172]{PrasadFixedpoints}). Thus, a parahoric subgroup of $(\Res_{K/F} G)(F)$ is the same as a parahoric subgroup of $G(K)$. Fix a facet $\fa$ of $\cB(G, K)$.  

The special fibers of local models naturally live in affine flag varieties which are associated groups over $\F(\!(u)\!)$ where $\F$ is a finite field. We would now like to associate to $G$ and $\fa$ (up to some choices) a pair $(G^{\flat}, \fa^{\flat})$ where $G^{\flat}$ is a connected reductive group over $k_0(\!(u)\!)$ and $\fa^{\flat}$ is a facet of $\cB(G^{\flat}, k_0(\!(u)\!))$.

Choose a maximal $K$-split torus $A \subset G$ such that $\fa$ is in the apartment $\cA(G, A, K)$ in  $\cB(G, K)$.  Also fix a rigidification $(G, A, S, P)$ over $K$ (Definition 1.7 in \cite{PZ}). Let $\underline{G}^*$ be the pinned quasi-split group over $\cO_{K_0}[u, u^{-1}]$ constructed in \S 3.1.  Let $\breve{K}$ be the completion of the maximal unramified extension of $K$ with residue field $\overline{k}_0$. As in $\S 4.1.2$ of \cite{PZ}, one can identify 
\begin{equation} \label{y1}
\cA(\underline{G}^*_{\breve{K}}, \underline{S}_{\breve{K}}^*, \breve{K}) = \cA(\underline{G}^*_{\overline{k}_0(\!(u)\!)}, \underline{S}^*_{\overline{k}_0(\!(u)\!)}, \overline{k}_0(\!(u)\!))
\end{equation}
equivariantly for the action of the unramified Galois group and compatible with identifying the Iwahori-Weyl groups for $(\underline{G}^*_{\overline{k}_0(\!(u)\!)}, \underline{S}^*_{\overline{k}_0(\!(u)\!)})$ and $(\underline{G}^*_{\breve{K}}, \underline{S}_{\breve{K}}^*)$ (Proposition \ref{IWsame}).   

Let $(\underline{G}, \underline{A}, \underline{S}, \underline{P})$ be the rigidified group over $\Spec \cO_{K_0}[u, u^{-1}]$ constructed in Proposition \ref{underlineG} whose specialization at $u \mapsto \varpi$ is $(G, A, S, P)$.  There is another interesting specialization of $\Spec \cO_{K_0}[u, u^{-1}]$ given by base changing to $k_0(\!(u)\!)$.  Define
\begin{equation} \label{Gflat}
(G^{\flat}, A^{\flat}, S^{\flat}, P^{\flat}) := (\underline{G}, \underline{A}, \underline{S}, \underline{P})_{k_0(\!(u)\!)}.
\end{equation}
We would now like to construct a facet $\fa^{\flat}$ of $\cA(G^{\flat}, A^{\flat}, k_0(\!(u)\!))$ associated to $\fa$.    

Let $\sigma$ denote a generator for $\Gal(\breve{K}/K)$ (considered also as a generator for $\Gal(\overline{k_0}/k_0)$). Unramified descent gives
$$
\cA(G, A, K) = \cA(\underline{G}_{\breve{K}}, \underline{S}_{\breve{K}}, \breve{K})^{\sigma} \text{ and } \cA(G^{\flat}, A^{\flat}, k_0(\!(u)\!)) = \cA(G^{\flat}_{\overline{k}_0(\!(u)\!)}, S^{\flat}_{\overline{k}_0(\!(u)\!)}, \overline{k}_0(\!(u)\!))^{\sigma}.
$$
Over $\breve{\cO}_0[u, u^{-1}]$, the pairs $(\underline{G}, \underline{S})$ and $(\underline{G}^*, \underline{S}^*)$ are isomorphic and hence also over both $\breve{K}$ and $\overline{k}_0(\!(u)\!)$.  Under the identification of apartments, the $\sigma$-action becomes the action of $\Int(\underline{g}) \sigma$. As conjugation by $\underline{g}$ is given by an element in the Iwahori-Weyl group, it is consequence of (\ref{y1}) that
\begin{equation*}
\begin{split}
\cA(\underline{G}_{\breve{K}}, \underline{S}_{\breve{K}}, \breve{K})^{\sigma} &= \cA(\underline{G}^*_{\breve{K}}, \underline{S}_{\breve{K}}^*, \breve{K})^{\Int(\underline{g}) \sigma} \\ 
&= \cA(\underline{G}^*_{\overline{k}_0(\!(u)\!)}, \underline{S}^*_{\overline{k}_0(\!(u)\!)}, \overline{k}_0(\!(u)\!))^{\Int(\underline{g}) \sigma} \\ 
&= \cA(G^{\flat}_{\overline{k}_0(\!(u)\!)}, S^{\flat}_{\overline{k}_0(\!(u)\!)}, \overline{k}_0(\!(u)\!))^{\sigma}  
\end{split}
\end{equation*}
Hence, we can identify $\cA(G, A, K) = \cA(G^{\flat}, A^{\flat}, k_0(\!(u)\!))$  through (\ref{y1}).

\begin{defn} \label{xuuu} Let $(\underline{G}, \underline{A}, \underline{S}, \underline{P})$ be as in Proposition \ref{underlineG}. If $\fa$ is a facet of $\cA(G, A, K)$, then define $\fa^{\flat}$ to be the image of $\fa$ under the above identification. We denote by $\cP_{\fa}$ the parahoric group scheme over $\cO_K$ associated to $\fa$ and $\cP_{\fa^{\flat}}$ the parahoric group scheme over $k_0[\![u]\!]$ associated to $\fa^{\flat}$.  
\end{defn}  

The following is the analogue of \cite[Theorem 4.1]{PZ} in our setting: 

\begin{thm} \label{BTscheme} There is a unique, smooth, affine group scheme $\cG \ra \Spec (\cO_{K_0}[u])$ with connected fibers and with the following properties:
\begin{enumerate}
\item The group scheme $\cG$ extends $\underline{G}$ from $\Spec (\cO_{K_0}[u, u^{-1}])$.
\item The base change of $\cG$ along $\Spec \cO_K \ra  \Spec (\cO_{K_0}[u]) $ given by $u \mapsto \varpi$ is the parahoric group scheme $\cP_{\fa}$ for $G$.
\item The base change of $\cG$ along $\Spec k_0[\![u]\!] \ra \bA^1_{\cO_{K_0}}$ given by reduction modulo $m_{K_0}$ followed by $u$-adic completion is the parahoric group scheme $\cP_{\fa^{\flat}}$ for $G^{\flat}$.
\end{enumerate}
\end{thm}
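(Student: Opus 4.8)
The plan is to transcribe the proof of \cite[Theorem 4.1]{PZ}, the only modification being that everything now lives over $\cO_{K_0}[u]$ (with $K/K_0$ totally ramified) rather than over $\cO_K[u]$, and that the rigidified group $(\underline{G},\underline{A},\underline{S},\underline{P})$ of Proposition \ref{underlineG} plays the role of their group over $\cO_F[u,u^{-1}]$. As in \cite{PZ}, the construction proceeds in three layers. \emph{Split case.} Over $\widetilde{\cO}_0[v]$ the group $\underline{G}$ is the constant split group $H_{\widetilde{\cO}_0[v,v^{-1}]}$ with split maximal torus $\widetilde{\cT}$, and the facet $\fa$, transported through the identification of apartments recalled in \S 3.3 (following \cite[\S 4.1.2]{PZ}), prescribes for each affine root a $v$-adic congruence level. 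Exactly as in \cite[\S 4.2]{PZ}, one builds a smooth affine group scheme $\widetilde{\cG}$ over $\widetilde{\cO}_0[v]$ whose torus part is $\widetilde{\cT}$, in which each root group $\bG_a$ is replaced by the subgroup scheme cut out by the prescribed congruence, and which is characterized by containing $\widetilde{\cT}$ and these root subgroups with the multiplication map from the big cell an open immersion. Nothing in the argument of \cite{PZ} changes, since $\widetilde{\cO}_0[v]$ is again the spectrum of a polynomial ring over a complete discrete valuation ring of residue characteristic $p$ with ``$v=0$'' the distinguished divisor. Comparing the $v$-adic valuation with the $\widetilde{\varpi}$-adic valuation after $v \mapsto \widetilde{\varpi}$, and with the $v$-adic valuation on $\widetilde{k}(\!(v)\!)$ after reduction and $v$-adic completion, one checks that the two specializations of $\widetilde{\cG}$ are the split parahoric group schemes over $\cO_{\widetilde{K}}$ and over $\widetilde{k}[\![v]\!]$ attached to $\fa$.

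\emph{Descent.} Apply to $\widetilde{\cG}$ the same descent that produced $\underline{G}$ from $H$ in \S 3.1. The pinning-preserving $\Gamma = \Gal(\widetilde{K}/K)$-action on $\widetilde{\cO}_0[v]$ over $\cO_{K_0}[u]$ acts on $\widetilde{\cG}$ (permuting its root subgroups and preserving $\widetilde{\cT}$ compatibly with $\fa$), and we set $\cG^{*} := \bigl(\Res_{\widetilde{\cO}_0[v]/\cO_{K_0}[u]} \widetilde{\cG}\bigr)^{\Gamma}$. The inertia part of $\Gamma$ has order $\widetilde{e}$, prime to $p$ because $\widetilde{K}/K$ is tame; hence by Edixhoven's theorem \cite[Proposition 3.4]{Edix} --- the tool already used for Proposition \ref{lftmodel} --- $\cG^{*}$ is smooth over $\cO_{K_0}[u]$, it is manifestly affine, and its fibers are connected, since the descended torus part is a connected N\'eron-type model (Proposition \ref{lftmodel}) while the descended root subgroups remain connected, so that the big-cell open immersion survives on every fiber. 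Then twist by the unramified inner cocycle $[c^{\mathrm{rig}}]$ of Proposition \ref{precisec}, now realized integrally over $\breve{\cO}_0[u]$ rather than only over $\breve{\cO}_0[u,u^{-1}]$: this is pro-\'etale (Frobenius) descent, which preserves smoothness, affineness, and connectedness of the fibers. The resulting $\cG$ extends $\underline{G}$, which is property (1).

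\emph{Specializations and uniqueness.} For (2) and (3) one verifies that every layer commutes with the two specializations. Under $u \mapsto \varpi$, the split model $\widetilde{\cG}$ goes to the split parahoric over $\cO_{\widetilde{K}}$ and the $\Gamma$-descent goes to the tame-and-unramified descent that \emph{defines} the Bruhat--Tits group scheme $\cP_{\fa}$ over $\cO_K$; here one uses the remark after Proposition \ref{precisec}, namely that $\mathrm{Int}(g)$ acts on $T^{*}$ through the relative Weyl group, so that the apartments and congruence data of $\underline{G}$ and $\underline{G}^{*}$ are matched correctly --- this gives (2). The identical computation over $k_0(\!(u)\!)$, together with the identification $\cA(G,A,K) = \cA(G^{\flat},A^{\flat},k_0(\!(u)\!))$ of \S 3.3 carrying $\fa$ to $\fa^{\flat}$, gives (3). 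Finally, uniqueness follows as in \cite[Theorem 4.1]{PZ}: a smooth affine group scheme with connected fibers over the regular base $\cO_{K_0}[u]$ whose restriction away from $u=0$ is $\underline{G}$ is determined by the two Bruhat--Tits group schemes it induces under the specializations, which by the uniqueness of parahoric group schemes are pinned down by (2) and (3). The main obstacle is the descent step: one must check that taking $\Gamma$-invariants destroys neither smoothness nor connectedness of fibers --- precisely where tameness of the splitting field enters, via Edixhoven's theorem --- and that the two specializations of the descended group reproduce the correct Bruhat--Tits data, which hinges on the precise, Weyl-group-controlled shape of the twisting cocycle in Proposition \ref{precisec}.
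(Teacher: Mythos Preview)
Your overall three-step strategy matches the paper's (and \cite{PZ}'s): split model, tame descent to the quasi-split form, then unramified inner twist. But two steps have genuine gaps.

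First, the connectedness claim for $\cG^{*} := \bigl(\Res_{\widetilde{\cO}_0[v]/\cO_{K_0}[u]} \widetilde{\cG}\bigr)^{\Gamma}$ is not correct as stated. You invoke Proposition \ref{lftmodel} to say the descended torus is the connected N\'eron-type model, but that proposition says precisely that the parahoric torus is the \emph{connected component} of $(\Res\,\widetilde{\cT})^{\Gamma}$, not the full fixed points. The fiber of $(\Res\,\widetilde{\cT})^{\Gamma}$ at $u=0$ typically has extra components, and these propagate to $\cG^{*}$; the big-cell argument only shows the identity component is open, not that there are no other components. The paper (following \cite[\S 4.2.2]{PZ}) handles this by explicitly defining $\cG$ as the ``connected component'' of the fixed-point scheme, i.e.\ removing the complement of the identity component along $u=0$. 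You need to do the same, and then the properties you want follow.

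Second, the phrase ``now realized integrally over $\breve{\cO}_0[u]$ rather than only over $\breve{\cO}_0[u,u^{-1}]$'' hides the actual content of the unramified descent step. The cocycle $[c^{\mathrm{rig}}]$ is a priori only given on $\underline{G}^*$ over $\breve{\cO}_0[u,u^{-1}]$; the point is to show that the descent datum $^{*}\sigma$ extends across $u=0$ to an automorphism of the Bruhat--Tits scheme $\cG^{*}$ constructed in the previous step. The paper reduces this to an integrality statement for $^{*}\sigma$ on $\underline{G}^*_{\widetilde{K}_0(\!(u)\!)}$, which is then checked via the unramified descent argument of \cite[1.2.7]{BTII}, using that $\fa$ is Frobenius-fixed. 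This is where the precise Weyl-group-controlled shape of the cocycle from Proposition \ref{precisec} is actually used --- not for matching apartments (which is already built into the definition of $\fa^{\flat}$), but for ensuring the twist preserves the integral structure at $u=0$.
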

\begin{proof}  We highlight the key points of the construction from \cite[\S 4.2.2]{PZ} since all the arguments carry over to our situation as well. Their construction generalizes the construction of the Bruhat-Tits schemes from \cite{BTII}.  Step (a) handles the case where $G$ is split.  In this case, one constructs $\cG$ over $\cO_{K_0}[u]$ using the theory of schematic root datum as in \cite[\S 3]{BTII} given by $\fa$.  If $\fa$ contains a hyperspecial vertex, one could also follow the argument of \cite{Yu} which constructs $\cG$ using dilations from the split form of $G$ over $\cO_{K_0}[u]$.  This is used to show that the group constructed using schematic root datum is affine. 

Step (b) considers the case when $G$ is quasi-split and splits over the totally tame extension obtained by adjoining an $\widetilde{e}$th root of $\varpi$.  From step (a), one has a smooth affine group scheme $\cH_{\fa}$ over $\Spec \cO_{K_0}[v]$.  One then constructs $\cG$ as the ``connected component" of $\left(\Res_ {\cO_{K_0}[v]/\cO_{K_0}[u]} \cH_{\fa} \right)^{\gamma_0 = 1}$ where $\gamma_0$ generates the inertia subgroup of $\Gal(\Gamma = \widetilde{K}/K)$.  The fixed points form a smooth subgroup scheme since the order of $\gamma_0$ is prime to $p$.

Finally as in \cite{BTII}, the general case (step (c)) is handled by unramified descent from the quasi-split case.  The key point is to show that unramified descent datum $^{*} \sigma$ on $\underline{G}^*$ for $\widetilde{\cO}_0[u, u^{-1}]$ over $\cO_{K_0}[u, u^{-1}]$ which gives $\underline{G}$ as a form of $\underline{G}^*$ extends to the Bruhat-Tits group scheme $\cG^*$ constructed in step (b).  This boils down to an integrality statement for $^{*} \sigma$ on $\underline{G}^*_{\widetilde{K}_0 (\!(u)\!)}$ which follows from the unramified descent argument from \cite[1.2.7]{BTII} using that $\fa$ lies in $\cA(G, A, K)$ and so is fixed by Frobenius.      
\end{proof}

We end this section by mentioning a few other properties of $\cG$ from Theorem \ref{BTscheme}:
\begin{prop} \label{BTtorus}
\begin{enumerate}
\item The group scheme $\cG_{\widetilde{\cO}_0[u, u^{-1}]}$ is quasi-split;
\item The centralizer $\underline{T} := Z_{\underline{G}}(\underline{S})$ is a fiberwise maximal torus;
\item Let $T^{\flat} = \underline{T}_{\widetilde{k}_0(\!(u)\!)}$.  The group scheme $\underline{T}_{\widetilde{\cO}_0[u, u^{-1}]}$  extends to a smooth affine group scheme $\cT$ over $\widetilde{\cO}_0[u]$ mapping to $\cG_{\widetilde{\cO}_0[u, u^{-1}]}$ such that $\cT_{\widetilde{k}_0[\![u]\!]}$ is the connected N\'eron lft-model of $T^{\flat}$.  
\end{enumerate} 
\end{prop}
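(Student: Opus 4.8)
The plan is to read all three assertions off the construction of $(\underline{G},\underline{A},\underline{S},\underline{P})$ in \S3.1 together with the shape of the descent used to build $\cG$ in Theorem \ref{BTscheme}, and then to invoke Proposition \ref{lftmodel} for the last point; the whole statement is the analogue of the corresponding facts in \cite[\S 3.3, \S 4.2]{PZ}, run over $\Spec \cO_{K_0}[u]$ in place of $\Spec \cO_K[u]$.

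For (1), I would observe that by Theorem \ref{BTscheme}(1) the restriction $\cG_{\widetilde{\cO}_0[u,u^{-1}]}$ is $\underline{G}_{\widetilde{\cO}_0[u,u^{-1}]}$, and that by construction (\S3.1) $\underline{G}$ is a form of the pinned quasi-split group $\underline{G}^*$ which becomes isomorphic to $\underline{G}^*$ over $\widetilde{\cO}_0[u,u^{-1}]$ — the inner twisting being realized by descent along $\widetilde{\cO}_0[u,u^{-1}]/\cO_{K_0}[u,u^{-1}]$, and becoming trivial at that level precisely because $G$ is already quasi-split over $\widetilde{K}_0$. Since $\underline{G}^*$ carries a pinning, in particular a Borel subgroup scheme, $\underline{G}^*_{\widetilde{\cO}_0[u,u^{-1}]}$ — hence $\cG_{\widetilde{\cO}_0[u,u^{-1}]}$ — is quasi-split. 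For (2), $\underline{S}$ is a subtorus of the reductive group scheme $\underline{G}$, so $\underline{T}=Z_{\underline{G}}(\underline{S})$ is automatically smooth with connected reductive fibers; what must be checked is that the fibers are maximal tori, and I would do this after base change to $\widetilde{\cO}_0[u,u^{-1}]$. There $\underline{G}$ is quasi-split by (1) and $\underline{S}$ is a maximal split torus — the property ``$S_{\widetilde{K}_0}$ maximal split'' of Definition \ref{notations} being among the data propagated in Proposition \ref{underlineG} — so $\underline{T}_{\widetilde{\cO}_0[u,u^{-1}]}=Z(\underline{S})$ is a maximal torus; faithful flatness of $\widetilde{\cO}_0[u,u^{-1}]$ over $\cO_{K_0}[u,u^{-1}]$ then gives that every geometric fiber of $\underline{T}$ is a maximal torus of $\underline{G}$, and $\underline{T}$ is a torus because it becomes split over the Kummer cover $\widetilde{\cO}_0[v,v^{-1}]$, $u=v^{\widetilde{e}}$.

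For (3), the key input is that $T=Z_G(S)$ splits over $\widetilde{K}=\widetilde{K}_0(\widetilde{\varpi})$, so $\underline{T}_{\widetilde{\cO}_0[u,u^{-1}]}$ splits over the Kummer cover $\widetilde{\cO}_0[v,v^{-1}]$ with $u=v^{\widetilde{e}}$, which is Galois over $\widetilde{\cO}_0[u,u^{-1}]$ with group $\Gamma\cong\Z/\widetilde{e}$, of order prime to $p$ since $\widetilde{K}/K$ is tame. I would then take $\widetilde{\cT}$ to be the split torus over $\widetilde{\cO}_0[v]$ extending $\underline{T}_{\widetilde{\cO}_0[v,v^{-1}]}$, with the (constant) $\Gamma$-action extending that on $\underline{T}_{\widetilde{\cO}_0[v,v^{-1}]}$, and define $\cT$ to be the connected component of $(\Res_{\widetilde{\cO}_0[v]/\widetilde{\cO}_0[u]}\widetilde{\cT})^{\Gamma}$ — exactly the recipe used for $\cG$ in step (b) of the proof of Theorem \ref{BTscheme}. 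The same arguments apply: Weil restriction along the finite free cover preserves smoothness and affineness, and since $|\Gamma|$ is prime to $p$, $\Gamma$ is linearly reductive so the fixed-point subscheme is smooth (as in \cite[Prop.\ 3.4]{Edix}); hence $\cT$ is a smooth affine group scheme over $\widetilde{\cO}_0[u]$ with connected fibers. Inverting $u$ recovers $\cT_{\widetilde{\cO}_0[u,u^{-1}]}=\underline{T}_{\widetilde{\cO}_0[u,u^{-1}]}$ by Galois descent (the identity $(\Res_{B/A}\cX_B)^{\Gamma}=\cX$ for a $\Gamma$-Galois cover $B/A$), and this gives the map $\cT\to\cG_{\widetilde{\cO}_0[u,u^{-1}]}=\underline{G}_{\widetilde{\cO}_0[u,u^{-1}]}$. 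For the last assertion I would note that, $\Gamma$ being linearly reductive, the formation of $\cT$ commutes with the base change $\widetilde{\cO}_0[u]\to\widetilde{k}_0[\![u]\!]$ — Weil restriction base-changes through the finite free cover, $\Gamma$-fixed points commute with arbitrary base change, and the fiberwise identity component commutes with base change over a discrete valuation ring — so $\cT_{\widetilde{k}_0[\![u]\!]}$ is the connected component of $(\Res_{\widetilde{k}_0[\![v]\!]/\widetilde{k}_0[\![u]\!]}\widetilde{\cT}_{\widetilde{k}_0[\![v]\!]})^{\Gamma}$ with $\widetilde{\cT}_{\widetilde{k}_0[\![v]\!]}$ the split model of $T^{\flat}_{\widetilde{k}_0(\!(v)\!)}$; Proposition \ref{lftmodel} identifies this with the connected N\'eron lft-model of $T^{\flat}$.

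I expect (3) to be where the real work is. The two delicate points are that the ``Weil restriction, then $\Gamma$-fixed points, then connected component'' construction over the two-dimensional base $\widetilde{\cO}_0[u]$ indeed yields a smooth affine group scheme with connected fibers, and — more importantly — that its formation commutes with specialization to $\widetilde{k}_0[\![u]\!]$, so that Proposition \ref{lftmodel} applies verbatim; both hinge on $\widetilde{e}$ being prime to $p$. Parts (1) and (2) are, by contrast, essentially bookkeeping on top of the constructions of \S3.1.
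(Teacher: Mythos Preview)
Your approach is essentially the paper's: (1) and (2) are read off the identification $\underline{G}_{\widetilde{\cO}_0[u,u^{-1}]}\cong\underline{G}^*_{\widetilde{\cO}_0[u,u^{-1}]}$, and for (3) both you and the paper take the split torus $\cT_{\cH}\subset\cH$ over $\widetilde{\cO}_0[v]$, form $\cT'=(\Res_{\widetilde{\cO}_0[v]/\widetilde{\cO}_0[u]}\cT_{\cH})^{\Gamma}$, and let $\cT$ be its fiberwise identity component (the paper phrases this as removing the non-identity components of $(\cT')_{u=0}$). Your explicit appeal to Proposition~\ref{lftmodel} for the N\'eron lft-model identification is cleaner than the paper's, which leaves that step to the reference to \cite{PZ}.

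One point you should tighten: the statement's ``mapping to $\cG_{\widetilde{\cO}_0[u,u^{-1}]}$'' is almost certainly a typo for $\cG_{\widetilde{\cO}_0[u]}$, and the paper's proof does supply the map over all of $\widetilde{\cO}_0[u]$ --- this is what is actually used later (e.g.\ in Proposition~\ref{section2}, where one needs $\cT\to\cG$ over the special fiber). You only construct the map over the open locus $u\neq 0$. The missing step is short and fits your construction: since $\cT_{\cH}\hookrightarrow\cH$ is a closed immersion of group schemes, so is $\cT'\hookrightarrow\cG'=(\Res_{\widetilde{\cO}_0[v]/\widetilde{\cO}_0[u]}\cH)^{\Gamma}$; the identity component of $(\cT')_{u=0}$ lands in the identity component of $(\cG')_{u=0}$, so the inclusion $\cT\to\cG'$ factors through $\cG$. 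Add this sentence and your argument is complete.
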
  
\begin{proof} For (1), $\underline{G}$ is isomorphic to $\underline{G}^*$ over $\widetilde{\cO}_0[u, u^{-1}]$ which is quasi-split by construction.  For (2), one can check the condition after \'etale base change to  $\widetilde{\cO}_0[u, u^{-1}]$ where $\underline{T}$ becomes isomorphic to $\underline{T}^*$ which is a tame descent of a maximal split torus of the split group $(H, T_H)$ over $\widetilde{\cO}_0[v, v^{-1}]$.

For (3), we refer to the construction on pg. 178-179 \cite{PZ} in the quasi-split case.  Let $\cH$ be the construction for the split form of $G$.   Then, $\cG$ is constructed from the smooth affine group scheme $\cG' := \left( \Res_{\widetilde{\cO}_{0}[v]/\widetilde{\cO}_{0}[u]} \cH \right)^{\Gamma}$.  If $\cT_{\cH}$ is a split torus of $\cH$, then 
$$
\cT' =  (\Res_{\widetilde{\cO}_{0}[v]/\widetilde{\cO}_{0}[u]} \cT_{\cH})^{\Gamma} \subset \cG'
$$
is a smooth closed group scheme.  If $Z$ is the complement of the connected component of $(\cT')_{u =0}$ which contains the identity section. Then $\cT$ can be defined to be $\cT' - Z$ in the same way that $\cG$ is constructed from $\cG'$ on \cite[pg. 179]{PZ}.  Since the connected component of $(\cT')_{u =0}$ certainly maps to the connected component of $\cG'_{u = 0}$, the natural inclusion $\cT \ra \cG'$ factors through $\cG$. 
\end{proof}

We end this section by discussion some of the combinatorial data associated to $G$ and $G^{\flat}$ which will be implicit in many of the arguments. Let $\underline{T}$ be as above and let $\underline{N} := N_{\underline{G}}( \underline{T})$.  Set $T^{\flat} = \underline{T}_{\overline{k}_0(\!(u)\!)}$ and $N^{\flat} := \underline{N}_{\overline{k}_0(\!(u)\!)}$.

\begin{prop} \label{IWsame} Let $I$ be the inertia subgroup of $\Gamma = \Gal(\widetilde{K}/K)$.  
\begin{enumerate}
\item There is an natural $I$-equivariant isomorphism
$$
X_*(T^{\flat}) \cong X_*(T).
$$
\item Let $\widetilde{W}$ be the Iwahori-Weyl of the pair $(\underline{G}_{\breve{K}}, \underline{S}_{\breve{K}})$, and $\widetilde{W}^{\flat}$ the Iwahori-Weyl group of $\left(G^{\flat}_{\overline{k}_0(\!(u)\!)}, S^{\flat}_{\overline{k}_0(\!(u)\!)}\right)$.  Then there is a natural isomorphism 
$$
\widetilde{W}^{\flat} \cong \widetilde{W}
$$
which is compatible with the induced isomorphism $X_*(T^{\flat})_I \cong X_*(T)_I$ considered as subgroups of $\widetilde{W}^{\flat}$ and $\widetilde{W}$ respectively.   
\end{enumerate}
\end{prop}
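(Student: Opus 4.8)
The plan is to follow \cite[\S 4.1.2]{PZ} almost verbatim; the only structural novelty is that the relevant splitting cover here is the finite \'etale Galois cover $\widetilde{\cO}_0[v, v^{-1}] \to \cO_{K_0}[u, u^{-1}]$ (given by $u \mapsto v^{\widetilde e}$ followed by the unramified extension with residue-field extension $\widetilde k / k_0$), in place of its analogue over $\cO_K[u, u^{-1}]$, and this does not affect the combinatorics. The organizing idea is that all the root-theoretic data attached to $\underline{G}_{\breve{K}}$ and to $G^{\flat}_{\overline{k}_0(\!(u)\!)}$ are read off from the single $R$-group scheme $\underline{G}$, where $R := \cO_{K_0}[u, u^{-1}]$, via the two specializations $u \mapsto \varpi$ (into $\breve{K}$) and base change along $R \to \overline{k}_0(\!(u)\!)$, and that these are compatible because $\underline{G}$ is, through the cover $\widetilde{\cO}_0[v, v^{-1}]/R$, a twisted form of the constant pinned split group $H$ base-changed to $\widetilde{\cO}_0[v, v^{-1}]$. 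Accordingly, I would first treat the pinned quasi-split group $\underline{G}^*$ and then transport along the inner twist $\Int(\underline g)$ produced in the proof of Proposition \ref{underlineG}.

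For part (1): \'etale descent identifies the sheaf of cocharacters of $\underline{T}^* := Z_{\underline{G}^*}(\underline{S}^*)$ on $\Spec R$ with the $\Gamma$-module $M := X_*(T_H)$, the $\Gamma$-action being the one determined by the rigidification. The cover $\widetilde{\cO}_0[v, v^{-1}]/R$ pulls back under $u \mapsto \varpi$ to the Galois cover $\breve{K} \cdot \widetilde{K} / \breve{K}$ and under $R \to \overline{k}_0(\!(u)\!)$ to the tame Kummer cover $\overline{k}_0(\!(v)\!)/\overline{k}_0(\!(u)\!)$; in both cases the residue-field part of $\Gamma$ becomes trivial, so the Galois group of the specialized cover is exactly the inertia subgroup $I \subset \Gamma$. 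This yields canonical $I$-equivariant isomorphisms $X_*(\underline{T}^*_{\breve{K}}) \cong M \cong X_*(\underline{T}^*_{\overline{k}_0(\!(u)\!)})$, and twisting by $\underline g$ --- which, by the Remark after Proposition \ref{precisec}, acts on $\underline{T}^*$ through the relative Weyl group, hence lies in $\underline{N}^*(\breve{\cO}_0[u, u^{-1}])$ --- transports these compatibly to the desired $I$-equivariant isomorphism $X_*(T^{\flat}) \cong X_*(T)$.

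For part (2): on translation lattices, the Kottwitz homomorphism (\cite[Lemma 5]{HR}), applied to the connected N\'eron models of $\underline{T}_{\breve{K}}$ and of $T^{\flat}$ (Proposition \ref{BTtorus}(3)), identifies the corresponding quotients with $X_*(T)_I$ and $X_*(T^{\flat})_I$, and these match under the $I$-coinvariants of part (1). On the finite quotient, the relative Weyl group $W_0(\underline{G}_{\breve{K}}, \underline{S}_{\breve{K}})$ is determined by the relative root datum of $\underline{G}$, which is read off from $M$, the $\Gamma$-action, and $\Int(\underline g)$ over the common base $R$, hence specializes identically to both $\breve{K}$ and $\overline{k}_0(\!(u)\!)$; thus $W_0 \cong W_0^{\flat}$. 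To promote these identifications of sub and quotient to an isomorphism of the two exact sequences, I would realize $\widetilde{W}$ (resp.\ $\widetilde{W}^{\flat}$) as the group of affine transformations of $\cA(\underline{G}_{\breve{K}}, \underline{S}_{\breve{K}}, \breve{K})$ (resp.\ $\cA(G^{\flat}_{\overline{k}_0(\!(u)\!)}, S^{\flat}_{\overline{k}_0(\!(u)\!)}, \overline{k}_0(\!(u)\!))$) induced by $N_{\underline{G}}(\underline{S})$: the identification of apartments (\ref{y1}), together with the discussion preceding Definition \ref{xuuu}, matches these apartments compatibly with the two actions, while Proposition \ref{BTtorus}(3) matches the integral structure on the torus, so $\widetilde{W} \cong \widetilde{W}^{\flat}$ respecting the exact sequences. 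Alternatively, one may appeal to the presentation $\widetilde{W} = W_{\mathrm{aff}} \rtimes \Omega$ attached to the (now identified) affine root system.

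The step I expect to be the real obstacle is matching the \emph{affine} --- as opposed to merely linear --- data: the relative affine root system (equivalently, the valuation on the root datum, equivalently the apartments viewed as affine $\widetilde{W}$-sets), rather than just $M$, $W_0$ and the relative roots. In the tame case this is precisely what (\ref{y1}) and Proposition \ref{BTtorus}(3) are designed to supply, so the remaining work is the verification that the $\Gamma$-action coming from the rigidification and the inner twist $\Int(\underline g)$ are genuinely uniform across the two specializations; here the precise form of the cocycle in Proposition \ref{precisec} --- that $\Int(g)$ acts through the relative Weyl group --- is exactly what makes the comparison go through.
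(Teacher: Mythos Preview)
Your proposal is correct, and for part (1) it coincides with the paper's one-line argument. For part (2), however, you take a genuinely different route from the paper.

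The paper does not go through apartments at all. Instead it introduces a single intermediate group
\[
\underline{N}(\cO_{\breve{K}}[u,u^{-1}])/\cT(\cO_{\breve{K}}[u])
\]
together with its comparison map to the Iwahori--Weyl group of $(\underline{G}_{\breve{K}(\!(u)\!)}, \underline{S}_{\breve{K}(\!(u)\!)})$ for the $u$-adic valuation, and observes that this ``integral'' object carries specialization maps to both $\widetilde{W}$ and $\widetilde{W}^{\flat}$. It then checks that all three maps are isomorphisms by working separately on the translation lattice (where $\underline{T}(\cO_{\breve{K}}[u,u^{-1}])/\cT(\cO_{\breve{K}}[u]) \cong X_*(T_H)_I$ directly) and on the finite quotient (where the Weyl group scheme $\underline{N}/\underline{T}$ is finite \'etale over $\cO_{\breve{K}}[u,u^{-1}]$, hence has the same $I$-invariants under both specializations). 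Your approach instead assembles the exact sequence piece by piece and then invokes the apartment identification (\ref{y1}) to splice the pieces together, realizing $\widetilde{W}$ as a group of affine transformations. This is perfectly valid, and it has the virtue of reusing machinery already set up before Definition~\ref{xuuu}; the cost is that you must be slightly careful that the apartment identification really matches the \emph{images} of $N_{\underline{G}}(\underline{S})$ (not just the ambient affine groups), which is what your final paragraph flags. The paper's approach sidesteps that point entirely by working with the concrete quotient over the two-dimensional base, at the price of introducing one more auxiliary object.
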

\begin{proof} For (1), both $T$ and $T^{\flat}$ are defined by tame descent from a split torus $T_H$.  Under the identification of the two tame Galois groups, descent is given by the same representation of $\Gamma$. 

For (2), one can compare the Iwahori-Weyl groups $\widetilde{W}^{\flat}$ and $\widetilde{W}$ with the Iwahori-Weyl group of the pair $(\underline{G}_{\breve{K}(\!(u)\!)}, \underline{S}_{\breve{K}(\!(u)\!)})$ (for the $u$-adic valuation).  Consider the map
\begin{equation} \label{z7}
\underline{N}(\cO_{\breve{K}}[u, u^{-1}])/ \cT(\cO_{\breve{K}}[u]) \ra \underline{N}(\breve{K}(\!(u)\!))/\cT(\breve{K}[\![u]\!]).
\end{equation}
The left side has specialization maps to both $\widetilde{W}^{\flat}$ and $\widetilde{W}$.  We claim that (\ref{z7}) is an isomorphism as are the two specializations.  The quotient $\underline{T}(\cO_{\breve{K}}[u, u^{-1}])/\cT(\cO_{\breve{K}}[u])$ is naturally isomorphic to $X_*(T_H)_I$ and so we get an isomorphism on affine parts.  

The Weyl group scheme $\underline{W} := \underline{N}/\underline{T}$ is finite \'etale over $\Spec \cO_{\breve{K}} [u, u^{-1}]$ and becomes constant over $\Spec \cO_{\breve{K}} [v, v^{-1}]$ where $v^e = u$.  For the quasi-split pairs $(\underline{G}_{\breve{K}}, \underline{S}_{\breve{K}})$ and $\left(G^{\flat}_{\overline{k}_0(\!(u)\!)}, S^{\flat}_{\overline{k}_0(\!(u)\!)}\right)$, the relative Weyl group is given by the $I$-invariant elements of the absolute Weyl group. There is an $I$-equivariant isomorphism between $\underline{W}(\overline{k}_0(\!(v)\!))$ and $\underline{W}(\widetilde{K})$.  
\end{proof}

\section{Ramified local models}
In this section, we construct local models for the group $\Res_{K/F} G$ and prove the main theorem (Theorem \ref{locmain}) on their geometry.  We first build a deformation of an affine flag variety to mixed characteristic using the Bruhat-Tits group $\cG$ over $\bA^1_{\cO_{K_0}}$ from the previous section and discuss its basic properties.  The local models arise as flat projective closed subschemes of this mixed characteristic flag variety.  The main theorem is deduced from the coherence conjecture of Pappas-Rapoport which was proven in \cite{ZhuCoh}. 

\subsection{Affine flag varieties} 

Let $\cG$ be a smooth affine group scheme over $\Spec (\cO_{K_0}[u])$ with connected fibers, for example, $\cG$ as in Theorem \ref{BTscheme}.  Let $Q(u) \in \cO_{K_0}[u]$ generate the kernel of the map $\cO_{K_0}[u] \ra K$ given by $u \mapsto \varpi$.  

\begin{defn} \label{bigflag2} For any $\cO_{K_0}$-algebra $R$, define
$$
\Fl_{\cG, 0}^{Q(u)}(R) := \{\text{iso-classes of pairs } (\cE, \beta) \},
$$ 
where $\cE$ is a $\cG$-bundle over $R[u]$, $\beta$ is a trivialization of $\cE|_{\Spec (R[u])[1/Q(u)]}$. If $\cG$ is a reductive group scheme, then we write $\Fl_{\cG, 0}^{Q(u)}$ as $\Gr_{\cG, 0}^{Q(u)}$.  
\end{defn}

There is also a "local" version of $\Fl_{\cG, 0}^{Q(u)}$.:

\begin{defn} \label{bigflagloc} For any $\cO_{K_0}$-algebra $R$, let $\widehat{R}_{Q(u)}$ denote the $Q(u)$-adic completion of $R[u]$. Define
$$
\Fl_{\cG, 0}^{Q(u), \mathrm{loc}}(R) := \{\text{iso-classes of pairs } (\cE, \beta) \},
$$ 
where $\cE$ is a $\cG$-bundle over $\widehat{R}_{Q(u)}$, $\beta$ is a trivialization of $\cE|_{\Spec \widehat{R}_{Q(u)}[1/Q(u)]}$.
\end{defn}

\begin{prop} The natural map of functors
$$
\Fl_{\cG, 0}^{Q(u)} \ra \Fl_{\cG, 0}^{Q(u), \mathrm{loc}}
$$
given by completion at $Q(u)$ is an equivalence.  
\end{prop}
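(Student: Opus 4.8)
The plan is to apply the Beauville--Laszlo descent theorem for torsors under a smooth affine group scheme, in the form already invoked in the totally ramified unramified case (see the discussion after Definition~\ref{bigflagloc0}, citing \cite[Theorem 3.1.8]{LevinThesis} and \cite[Lemma 6.1]{PZ}). The only new feature here is that the base ring is an $\cO_{K_0}$-algebra rather than an $\cO$-algebra and that $\cG$ is the Bruhat--Tits group scheme over $\cO_{K_0}[u]$ rather than a constant extension; neither of these affects the descent argument, since Beauville--Laszlo is a statement about the pair consisting of a ring and a nonzerodivisor, applied here to $R[u]$ together with the element $Q(u)$.

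First I would record the key input: for any $\cO_{K_0}$-algebra $R$, the element $Q(u) \in R[u]$ is a nonzerodivisor (it is monic), and $R[u]$ is $Q(u)$-adically... one should say instead that the Beauville--Laszlo gluing datum consists of the three rings $R[u][1/Q(u)]$, $\widehat{R}_{Q(u)}$, and $\widehat{R}_{Q(u)}[1/Q(u)]$, forming a Cartesian square
\[
\xymatrix{
R[u] \ar[r] \ar[d] & \widehat{R}_{Q(u)} \ar[d] \\
R[u][1/Q(u)] \ar[r] & \widehat{R}_{Q(u)}[1/Q(u)]
}
\]
with the horizontal (or vertical) maps flat. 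Then I would quote the Beauville--Laszlo descent statement for $\cG$-torsors: the groupoid of $\cG$-torsors on $\Spec R[u]$ is equivalent to the groupoid of triples consisting of a $\cG$-torsor on $\Spec R[u][1/Q(u)]$, a $\cG$-torsor on $\Spec \widehat{R}_{Q(u)}$, and an isomorphism between their restrictions to $\Spec \widehat{R}_{Q(u)}[1/Q(u)]$. This is exactly \cite[Theorem 3.1.8]{LevinThesis}; the hypothesis needed is that $\cG$ is smooth affine, which holds by Theorem~\ref{BTscheme} (or by assumption in Definition~\ref{bigflag2}).

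Second, I would unwind how this equivalence of groupoids yields the bijection on isomorphism classes claimed in the Proposition. Given a pair $(\cE,\beta)$ in $\Fl_{\cG,0}^{Q(u)}(R)$, the trivialization $\beta$ identifies $\cE|_{R[u][1/Q(u)]}$ with the trivial torsor; restricting $\cE$ to $\widehat{R}_{Q(u)}$ and keeping the induced trivialization over $\widehat{R}_{Q(u)}[1/Q(u)]$ produces a pair in $\Fl_{\cG,0}^{Q(u),\mathrm{loc}}(R)$, and this is the completion map. Conversely, given a pair $(\cE',\beta')$ over $\widehat{R}_{Q(u)}$, one glues $\cE'$ to the trivial torsor on $\Spec R[u][1/Q(u)]$ along the isomorphism $\beta'$ over $\Spec \widehat{R}_{Q(u)}[1/Q(u)]$; Beauville--Laszlo produces a torsor on $\Spec R[u]$, and the trivialization over $\Spec R[u][1/Q(u)]$ is built into the gluing datum, giving an element of $\Fl_{\cG,0}^{Q(u)}(R)$. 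One then checks these constructions are mutually inverse up to isomorphism and functorial in $R$, which is immediate from the functoriality in the descent theorem.

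I do not expect any real obstacle: the content is entirely contained in the cited Beauville--Laszlo lemma, and the verification amounts to matching the data $(\cE,\beta)$ with a Beauville--Laszlo gluing datum where the piece over $R[u][1/Q(u)]$ is always the trivial torsor with its tautological trivialization. The one point deserving a sentence is that the square above really is Cartesian with a flat edge for an arbitrary $\cO_{K_0}$-algebra $R$ (not just a field or DVR), so that the descent theorem applies in families; this follows because $Q(u)$ is a monic polynomial, hence a nonzerodivisor in $R[u]$ after any base change, and $R[u]$ is Noetherian when $R$ is, with the general case handled by a standard limit argument or by the fact that Beauville--Laszlo holds over any base ring for the element $Q(u)$. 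Accordingly the proof will be short: cite \cite[Theorem 3.1.8]{LevinThesis} (equivalently \cite[Lemma 6.1]{PZ}) for $\cG$-torsors, note that the pair over the localization is forced to be trivial by the presence of $\beta$, and conclude.
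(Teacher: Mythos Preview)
Your proposal is correct and follows essentially the same approach as the paper: both invoke the Beauville--Laszlo descent lemma for $\cG$-torsors over the two-dimensional base, citing \cite[Lemma 6.1]{PZ} (and \cite[Theorem 3.1.8]{LevinThesis}). The only remark the paper adds, which you implicitly handle by noting that $Q(u)$ is monic and hence a nonzerodivisor, is that \cite{PZ} states the lemma for completion at $u-r$ but the same proof applies to completion at $Q(u)$.
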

\begin{proof} This follows from Lemma 6.1 of \cite{PZ} which generalizes the Beauville-Laszlo descent lemma to the group $\cG$ over the two-dimensional base $\cO_{K_0}[u]$.  There they work with completion at $u - r$ for some $r \in R$ but the proof is the same for completion at $Q(u)$.  
\end{proof}

From now on, we will use the two descriptions given in Definitions \ref{bigflag2} and \ref{bigflagloc} interchangeably and will use $\Fl_{\cG, 0}^{Q(u)}$ to denote either moduli problem. 

\begin{prop} \label{indscheme} The functor $\Fl_{\cG, 0}^{Q(u)}$ is represented by an ind-scheme of ind-finite type over $\Spec \cO_{K_0}$. 
\end{prop}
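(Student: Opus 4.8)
The plan is to reduce the ind-representability of $\Fl_{\cG,0}^{Q(u)}$ to the well-understood case of $\GL_n$, following the strategy already used in Proposition \ref{fibers1} and in \cite[\S 10]{LevinThesis}. First I would choose a faithful representation $\cG \iarrow \GL_{n,\cO_{K_0}[u]}$ such that the fppf quotient $\GL_n/\cG$ is representable by a quasi-affine scheme over $\cO_{K_0}[u]$; such a representation exists for smooth affine group schemes with connected fibers over a nice enough base by the standard Bruhat-Tits/Pappas-Zhu argument (cf. the use of faithful representations in \cite{PZ}, and \cite[Prop.~10.1.13]{LevinThesis} in this precise $Q(u)$-setting). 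This induces a morphism of functors $\Fl_{\cG,0}^{Q(u)} \ra \Fl_{\GL_n,0}^{Q(u)} = \Gr_{\GL_n,0}^{Q(u)}$, sending a $\cG$-bundle with trivialization to the associated vector bundle with trivialization; by Example \ref{GLN}, the target is the ind-projective (in particular ind-finite type) moduli of $Q(u)$-lattices in $\cO_{K_0}[u]^n[1/Q(u)]$, where the ind-structure is by lattices trapped between $Q(u)^{-N}M_0$ and $Q(u)^N M_0$.

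Next I would show this morphism is relatively representable by a scheme of finite type, in fact a closed immersion onto a closed sub-ind-scheme, so that $\Fl_{\cG,0}^{Q(u)}$ inherits ind-representability and ind-finite type from $\Gr_{\GL_n,0}^{Q(u)}$. The key step here is the descent/exactness input: given a point of $\Gr_{\GL_n,0}^{Q(u)}(R)$, i.e.\ a $Q(u)$-lattice $M$, lifting it to a $\cG$-bundle with trivialization amounts to giving a section of $(\GL_n/\cG)$-bundle over $\widehat{R}_{Q(u)}$ refining the data, plus the trivialization condition over $\widehat{R}_{Q(u)}[1/Q(u)]$. Since $\GL_n/\cG$ is quasi-affine (even affine when $\cG$ is reductive or after suitable choices), the locus where such a section exists and is compatible with the given trivialization is represented by a locally closed (resp.\ closed) subscheme of finite type over the truncated affine Grassmannian; this is exactly the argument of \cite[\S 10.1]{LevinThesis} transported to the two-dimensional base $\cO_{K_0}[u]$, and it uses the Beauville-Laszlo-type descent lemma (\cite[Lemma 6.1]{PZ}) to work $Q(u)$-adically locally.

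The main obstacle I expect is the first step: producing a faithful representation $\cG \ra \GL_n$ over $\cO_{K_0}[u]$ with quasi-affine (or affine) quotient. Over a field or a Dedekind base this is classical, but here the base is the two-dimensional regular ring $\cO_{K_0}[u]$, and $\cG$ is the Bruhat-Tits/parahoric group scheme of Theorem \ref{BTscheme}, which is not reductive along $u=0$. The resolution is that $\cG$ is built by dilations and tame descent from split reductive groups (or from $\GL_n$ directly in the hyperspecial case), and dilations interact well with closed immersions into $\GL_n$; concretely one embeds the split form of $G$, extends over $\cO_{K_0}[u,u^{-1}]$, and then checks the dilation construction along $\fa$ yields a closed subgroup scheme of some $\GL_N$ with affine quotient over $\cO_{K_0}[u]$ (this is exactly what \cite[Prop.~10.1.13]{LevinThesis} establishes, generalizing the $K=F$ case of \cite[\S 6]{PZ}). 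Once that representation is in hand, the rest is a formal transfer of the $\GL_n$-case: the ind-scheme structure, finite type of each stratum, and the separatedness needed to conclude are all inherited through the closed immersion into $\Gr_{\GL_n,0}^{Q(u)}$. Finally I would remark that ind-properness (needed elsewhere) is a separate statement proved in the sequel following \cite{RicharzAG}, so it is not part of this proposition.
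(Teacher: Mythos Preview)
Your proposal is correct and matches the paper's approach: embed $\cG$ into $\GL_n$ with quasi-affine quotient (the paper simply cites \cite[Corollary~11.7]{PZ} for this over the two-dimensional base $\cO_{K_0}[u]$, rather than arguing via dilations as you sketch) and deduce a locally closed immersion $\Fl_{\cG,0}^{Q(u)} \hookrightarrow \Gr_{\GL_n,0}^{Q(u)}$, the latter being ind-finite type by the $Q(u)$-lattice description of Example~\ref{GLN}. One small correction: since the quotient $\GL_n/\cG$ is only quasi-affine in general, the induced immersion is \emph{locally closed} rather than closed, which still suffices for ind-representability and ind-finite type.
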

\begin{proof}By \cite[Corollary 11.7]{PZ}, there exists a closed immersion $i:\cG \ra \GL_n$ such that the fppf quotient $\GL_n/\cG$ is represented by a smooth quasi-affine scheme.  A standard argument as in \cite[Appendix]{Gaitsgory} or \cite[Proposition 10.1.13]{LevinThesis} says that the induced map $i_*:\Fl_{\cG, 0}^{Q(u)} \ra \Gr_{\GL_n, 0}^{Q(u)}$ is a locally closed immersion.  The functor $ \Fl_{\GL_n, 0}^{Q(u)}$ is represented by an ind-scheme of ind-finite type via its description in terms of $Q(u)$-lattices in Example \ref{GLN}.  
\end{proof}

Define a pro-algebraic group over $\Spec \cO_{K_0}$ by 
$$
L^{+, Q(u)} \cG (R) := \varprojlim \cG \left(R[u]/Q(u)^N \right) = \cG \left( \widehat{R}_{Q(u)} \right).
$$
For any $N \geq 1$, the functor $R \mapsto \cG \left(R[u]/Q(u)^N \right)$ is represented by the smooth affine group scheme $\Res_{\left(\cO_{K_0}[u]/Q(u)^N \right)/\cO_{K_0}} \cG$. Using the local description (\ref{bigflagloc}) of $\Fl_{\cG, 0}^{Q(u)}$, we see that $L^{+, Q(u)} \cG$ acts on $\Fl_{\cG, 0}^{Q(u)}$ by changing the trivialization. 

The functor $L^{+, Q(u)} \cG$ is a version of the positive loop group; there is also a version the loop group.  For any $\cO_{K_0}$-algebra $R$, if  $\widehat{R}_{Q(u)}$ denotes the $Q(u)$-adic completion of $R[u]$, then $L^{Q(u)} \cG$ is the functor given by 
\begin{equation} \label{loopgp}
 R \mapsto \cG(\widehat{R}_{Q(u)}[1/Q(u)])
\end{equation}
As in \cite[\S 6.2.4]{ZhuCoh}, $L^{Q(u)} \cG$ is represented by a formally smooth ind-scheme over $\cO_{K_0}$ but we won't use this fact. There is a natural map
\begin{equation} \label{l2}
L^{Q(u)} \cG(R) \ra \Fl_{\cG, 0}^{Q(u)}(R)
\end{equation} 
which assigns to $g \in  \cG \left(\widehat{R}_{Q(u)}[1/Q(u)] \right)$ the trivial $\cG$-bundle $\cE^0_{\cG}$ over $\Spec \widehat{R}_{Q(u)}$ with trivialization over $\Spec \widehat{R}_{Q(u)}[1/Q(u)]$  given by $g$.

\begin{prop} There exists finite type closed subschemes $\{Z_i \}$ of  $\Fl_{\cG, 0}^{Q(u)}$ with $\varinjlim_i Z_i = \Fl_{\cG, 0}^{Q(u)}$ such that each $Z_i$ is stable under the action  of $L^{+, Q(u)} \cG$ and the action of $L^{+, Q(u)} \cG$ on $Z_i$ factors through $\Res_{(\cO_{K_0}[u]/Q(u)^N)/\cO} \cG $ for some $N \gg 0$ $($i.e., the action is nice in the sense of $\cite[\S A.3]{Gaitsgory})$. 
\end{prop}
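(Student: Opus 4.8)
The plan is to reduce to $\GL_n$ and then read off everything from the $Q(u)$-lattice model, following the pattern of Proposition \ref{nice}. Recall from the proof of Proposition \ref{indscheme} that a faithful representation $i\colon \cG \ra \GL_n$ with $\GL_n/\cG$ quasi-affine (\cite[Corollary 11.7]{PZ}) induces a locally closed immersion $i_*\colon \Fl_{\cG, 0}^{Q(u)} \ra \Gr_{\GL_n, 0}^{Q(u)}$ which is equivariant for the natural homomorphism $L^{+, Q(u)} \cG \ra L^{+, Q(u)} \GL_n$, and that $\Gr_{\GL_n, 0}^{Q(u)}$ is the increasing union of the closed subschemes $\Gr_{\le N}$ parametrizing $Q(u)$-lattices $M$ with $Q(u)^N M_0 \subset M \subset Q(u)^{-N} M_0$, where $M_0 = \cO_{K_0}[u]^n$ (Example \ref{GLN}, with $\cO_{K_0}$ in place of $\cO$). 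Each $\Gr_{\le N}$ is a closed subscheme of the Grassmannian of $R[u]$-submodules of the finite free $\cO_{K_0}[u]/Q(u)^{2N}$-module $Q(u)^{-N} M_0 / Q(u)^{N} M_0$, hence is projective, in particular of finite type, over $\Spec \cO_{K_0}$.

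First I would verify the statement for $\GL_n$. Any $g \in L^{+, Q(u)} \GL_n(R) = \GL_n(\widehat{R}_{Q(u)})$ preserves $M_0$ and hence every $Q(u)^{\pm N} M_0$, so it carries $\Gr_{\le N}$ to itself; and the induced automorphism of $\Gr_{\le N}$ depends only on the class of $g$ modulo $Q(u)^{2N}$, i.e.\ on its image in $\GL_n(R[u]/Q(u)^{2N}) = \Res_{(\cO_{K_0}[u]/Q(u)^{2N})/\cO_{K_0}} \GL_n(R)$. Thus $\{ \Gr_{\le N} \}_N$ exhibits the $L^{+, Q(u)} \GL_n$-action on $\Gr_{\GL_n, 0}^{Q(u)}$ as nice in the sense of \cite[\S A.3]{Gaitsgory}.

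Then I would set $Z_N := i_*^{-1}(\Gr_{\le N})$, the scheme-theoretic preimage (a fiber product along $i_*$). Since $\Gr_{\le N}$ is closed in $\Gr_{\GL_n, 0}^{Q(u)}$, each $Z_N$ is closed in $\Fl_{\cG, 0}^{Q(u)}$; and since $Z_N \ia \Gr_{\le N}$ is the base change of the locally closed immersion $i_*$ and $\Gr_{\le N}$ is of finite type over $\cO_{K_0}$, each $Z_N$ is of finite type over $\cO_{K_0}$. Equivariance of $i_*$ forces $Z_N$ to be stable under $L^{+, Q(u)} \cG$. Because $\Fl_{\cG, 0}^{Q(u)}$ is ind-finite type (Proposition \ref{indscheme}), each finite-type closed subscheme of it has quasi-compact, hence bounded, image under $i_*$, so that $\varinjlim_N Z_N = \Fl_{\cG, 0}^{Q(u)}$. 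Finally, the $L^{+, Q(u)} \cG$-action on $Z_N$ is the restriction along $i_*$ of the $L^{+, Q(u)} \GL_n$-action on $\Gr_{\le N}$, hence factors through $\Res_{(\cO_{K_0}[u]/Q(u)^{2N})/\cO_{K_0}} \GL_n$; and the reduction map $L^{+, Q(u)} \cG(R) = \cG(\widehat{R}_{Q(u)}) \ra \cG(R[u]/Q(u)^{2N}) = \Res_{(\cO_{K_0}[u]/Q(u)^{2N})/\cO_{K_0}} \cG(R)$ is compatible with $i$, so the action of $L^{+, Q(u)} \cG$ on $Z_N$ in fact factors through $\Res_{(\cO_{K_0}[u]/Q(u)^{2N})/\cO_{K_0}} \cG$. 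Compare \cite[Proposition 10.2.9]{LevinThesis}. The one point requiring care --- and the closest thing to an obstacle --- is that $i_*$ is only a \emph{locally} closed immersion (because $\GL_n/\cG$ is merely quasi-affine), so one cannot literally pull back a filtration; one takes preimages of the bounded $\GL_n$-pieces instead, and it is precisely the finite-type-ness and closedness of the $\Gr_{\le N}$ together with ind-finite-type-ness of $\Fl_{\cG, 0}^{Q(u)}$ that make $\{ Z_N \}$ an exhausting family of finite-type closed subschemes.
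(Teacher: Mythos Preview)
Your proof is correct and follows essentially the same approach as the paper: reduce to $\GL_n$ via the locally closed immersion $i_*$, use the bounded-lattice pieces $\Gr_{\le N}$ (the paper writes these as $X^{Q(u),N}$, parametrizing $R$-projective quotients of $Q(u)^{-N}M_0/Q(u)^N M_0$), and pull back. Your version is more explicit than the paper's about why the locally closed immersion does not cause trouble and about why the $\cG$-action factors through the finite-level quotient, but the underlying argument is the same.
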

\begin{proof} Using the locally closed immersion $i_*:\Fl^{Q(u)}_{\cG, 0} \ia \Gr^{Q(u)}_{\GL_n, 0}$ from the proof of Proposition \ref{indscheme}, it suffices to show that the action of $L^{+, Q(u)} \GL_n$ on $\Fl_{\GL_n, 0}^{Q(u)} = \Gr_{\GL_n}^{Q(u)}$ is nice.  Let $M_0 = (\cO_{K_0}[u])^n$ from Example \ref{GLN}. As in \cite[Theorem 10.1.17]{LevinThesis}, we have 
$$
 \Gr_{\GL_n}^{Q(u)} = \varinjlim X^{Q(u), N}
$$
where $X^{Q(u), N}(R)$ is the set of $R[u]$-module quotients of $$(Q(u)^{-N} (M_0 \otimes_{\cO_{K_0}} R))/(Q(u)^{N} (M_0 \otimes_{\cO_{K_0}} R))$$ which are $R$-projective.   Each $X^{Q(u), N}$ is $L^{+, Q(u)} \GL_n$ stable and the action is through a finite type quotient.   
\end{proof} 

\begin{prop} \label{genfiber} The generic fiber $(L^{+, Q(u)} \cG)_{K_0}$ is isomorphic to the positive loop group $L^+ \Res_{K/K_0} G$.  Furthermore, there is an isomorphism
$$
\left(\Fl_{\cG, 0}^{Q(u)}\right)_{K_0} \cong \Gr_{\Res_{K/K_0} G}
$$
identifying the action of $(L^{+, Q(u)} \cG)_{K_0}$ with the action of  $L^+ \Res_{K/K_0} G$.
\end{prop}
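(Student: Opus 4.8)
The plan is to use the local description of $\Fl_{\cG,0}^{Q(u)}$ from Definition \ref{bigflagloc} and to identify explicitly the relevant $Q(u)$-adic completion over $K_0$. Write $\cG$ for the group scheme of Theorem \ref{BTscheme}, so that $\underline{G} := \cG|_{\cO_{K_0}[u,u^{-1}]}$ is reductive and $\underline{G}_{u\mapsto\varpi}\cong G$. First I would fix a $K_0$-algebra $R$ and analyze $\widehat{R}_{Q(u)}$. Since $Q(u)$ is monic (hence a non-zero-divisor) and $K_0[u]/(Q(u)) = K$, the ring $\widehat{R}_{Q(u)}$ is $(Q(u))$-adically complete and separated with $\widehat{R}_{Q(u)}/(Q(u)) = R\otimes_{K_0}K$; as $K/K_0$ is finite étale (we are in characteristic $0$), the inclusion $K\hookrightarrow R\otimes_{K_0}K$ lifts uniquely to a $K_0$-algebra map $K\to\widehat{R}_{Q(u)}$, and with respect to the resulting $K$-structure there is a canonical isomorphism $\widehat{R}_{Q(u)}\cong(R\otimes_{K_0}K)[\![t]\!]$ sending $t$ to $Q(u)$. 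Finally $Q(0) = \pm N_{K/K_0}(\varpi)\in K_0^\times$, so $u$ is a unit in $\widehat{R}_{Q(u)}$ and the structure map $\cO_{K_0}[u]\to\widehat{R}_{Q(u)}$ factors through $\cO_{K_0}[u,u^{-1}]$; hence $\cG|_{\widehat{R}_{Q(u)}}$ is a base change of the reductive group scheme $\underline{G}$.

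Next I would invoke the rigidity of reductive group schemes: over the ring $\widehat{R}_{Q(u)}$, complete along $(Q(u))$, a reductive group scheme is determined by — and deforms uniquely from — its reduction modulo $(Q(u))$. That reduction of $\cG|_{\widehat{R}_{Q(u)}} = \underline{G}|_{\widehat{R}_{Q(u)}}$ is $\underline{G}_{u\mapsto\varpi}\otimes_K(R\otimes_{K_0}K) = G\otimes_K(R\otimes_{K_0}K)$ by Theorem \ref{BTscheme}(2), so $\cG|_{\widehat{R}_{Q(u)}}$ is canonically the constant extension $G\otimes_K\widehat{R}_{Q(u)}$ along the lifted $K$-structure. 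Plugging this into Definition \ref{bigflagloc} and into the definition of $L^{+,Q(u)}\cG$ yields, naturally in the $K_0$-algebra $R$,
\begin{gather*}
\Fl_{\cG,0}^{Q(u)}(R) = \bigl\{\,G\text{-bundles on }(R\otimes_{K_0}K)[\![t]\!]\text{ trivialized over }(R\otimes_{K_0}K)(\!(t)\!)\,\bigr\} = \Gr_G\bigl(R\otimes_{K_0}K\bigr), \\
L^{+,Q(u)}\cG(R) = G\bigl((R\otimes_{K_0}K)[\![t]\!]\bigr) = (L^+G)\bigl(R\otimes_{K_0}K\bigr),
\end{gather*}
where $\Gr_G$ and $L^+G$ denote the affine Grassmannian and positive loop group of the $K$-group $G$.

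To finish, Weil-restriction functoriality for $G$-bundles (\cite[Theorem 2.6.1]{LevinThesis}, exactly as in the proof of Proposition \ref{fibers1}) gives $\Gr_{\Res_{K/K_0}G}\cong\Res_{K/K_0}\Gr_G$ and $L^+\Res_{K/K_0}G\cong\Res_{K/K_0}L^+G$ over $K_0$; combined with the tautology $X(R\otimes_{K_0}K) = (\Res_{K/K_0}X)(R)$ this rewrites the two displays as functorial isomorphisms $\Fl_{\cG,0}^{Q(u)}\cong\Gr_{\Res_{K/K_0}G}$ and $L^{+,Q(u)}\cG\cong L^+\Res_{K/K_0}G$, hence isomorphisms of ind-schemes (resp. pro-algebraic groups) since all four functors are already known to be of that type. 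On both sides of each isomorphism the loop group acts by change of trivialization, so the isomorphisms are equivariant and identify the two actions. The step I expect to require the most care is the identification $\cG|_{\widehat{R}_{Q(u)}}\cong G\otimes_K\widehat{R}_{Q(u)}$: one must check that the $K$-structure furnished by Hensel's lemma is compatible with the $u\mapsto\varpi$ specialization built into $\cG$ in Theorem \ref{BTscheme}, so that the rigidity isomorphism is canonical and natural in $R$; everything else is formal bookkeeping.
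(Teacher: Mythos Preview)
Your approach is correct in outline but takes a genuinely different route from the paper. The paper passes to a splitting field $L$ of $Q(u)$, decomposes a $\cG$-bundle trivialized away from $D=\bigcup_\psi D_\psi$ into a tuple of bundles trivialized away from each $D_\psi$, and then identifies $\cG_{L[\![u-\varpi_\psi]\!]}\cong G\otimes_{K,\psi}L[\![u-\varpi_\psi]\!]$ by an \emph{explicit} $\Gamma$-equivariant ring isomorphism $v\mapsto\widetilde\varpi\otimes(1+(u-\varpi_\psi)/\varpi_\psi)^{1/\widetilde e}$ coming from the tame-descent construction of $\underline G$; Galois descent then finishes. Your argument instead stays over $K_0$, produces a $K$-structure on $\widehat R_{Q(u)}$ by Hensel, and appeals to infinitesimal rigidity of reductive group schemes. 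The advantage of your route is that it avoids the splitting field and the BD-style gluing; the cost is that the paper's explicit isomorphism is actually used downstream (see the computation in Proposition~\ref{section2} and the remark fixing the choice of $\widetilde e$th root), so your abstract identification would have to be revisited there.

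One point does need sharpening. You write that the rigidity isomorphism is ``canonical and natural in $R$'', but rigidity only gives uniqueness of the deformed group up to isomorphism: since $\Aut(G)$ contains $G_{\ad}$, the lift of the identity on $G_{R\otimes_{K_0}K}$ to an isomorphism $\cG|_{\widehat R_{Q(u)}}\cong G\otimes_K\widehat R_{Q(u)}$ is \emph{not} unique, so naturality in $R$ is not automatic. The clean fix is to observe that both the structure map $\cO_{K_0}[u]\to\widehat R_{Q(u)}$ and your Hensel $K$-structure factor through the universal completion $\widehat{(K_0)}_{Q(u)}\cong K[\![t]\!]$ (uniqueness of the Hensel root forces $\alpha_{K_0}\mapsto\alpha_R$), so $\cG|_{\widehat R_{Q(u)}}$ and $G\otimes_K\widehat R_{Q(u)}$ are each base-changed from $K[\![t]\!]$. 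Choosing once and for all an isomorphism $\cG|_{K[\![t]\!]}\cong G_{K[\![t]\!]}$ lifting the identity over $K$ (which exists by rigidity over the complete DVR $K[\![t]\!]$) and then base-changing gives the required natural-in-$R$ identification. This single choice plays the same role as the paper's choice of $\widetilde e$th root.
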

\begin{proof} One can give an algebraic proof as in \cite[Proposition 10.1.6]{LevinThesis}.  However, we give a moduli-theoretic description of the isomorphism.  We construct the isomorphism over a splitting field $E$ for $Q(u)$ and then conclude by Galois descent from $L$ to $K_0$.

For each $K_0$-embedding $\psi:K \ra L$, let $D_{\psi}$ be the divisor on $\bA^1_{L}$ defined by $u - \psi(\varpi)$.  Set $D := \cup_{\psi} D_{\psi}$.   If $(\cE, \beta)$ is a $\cG$-bundle trivialized away from $D$, we can construct for each $\psi$ a $\cG$-bundles $\cE_{\psi}$ together with a trivialization $\beta_{\psi}$ on the complement of $D_{\psi}$ by gluing the trivial bundle on $\bA^1_L  - D_{\psi}$ and $\cE|_{\bA^1_E - \cup_{\psi' \neq \psi} D_{\psi'}}$ via the trivialization $\beta$ of $\cE$ on the intersection.    In fact, this process defines an equivalence of categories between pairs $(\cE, \beta)$ and tuples $\{ (\cE_{\psi}, \beta_{\psi}) \}_{\psi}$.  The inverse is given by gluing the $\cE_{\psi}|_{\bA^1_L - \cup_{\psi' \neq \psi} D_{\psi'}}$ along $\bA^1_L - D$ with gluing data given by $\{ \beta_{\psi'}^{-1} \circ \beta_{\psi} \}$. This is standard argument used in the study of BD/convolution Grassmanians (see, for example,  \cite[Proposition 5]{Gaitsgory}). 

Let $L[\![u - \varpi_{\psi}]\!]$ denote the completion of $\bA^1_L$ along $D_{\psi}$ and let $\cG_{\psi}$ denote the base change of $\cG$ to $L[\![u - \varpi_{\psi}]\!]$.  If $\Gr_{\cG_{\psi}}$ is the (twisted) affine Grassmanian of $\cG_{\psi}$, the equivalence induces an isomorphism
$$
\left(\Fl_{\cG, 0}^{Q(u)}\right)_{L} \cong \bigoplus_{\psi:K \ra L} \Gr_{\cG_{\psi}}.
$$
Using the behavior of bundles under Weil restriction, we have an isomorphism $ \Gr_{\Res_{K/K_0} G} \cong \Res_{K/K_0} \Gr_{G_K}$ (see \cite[\S 2.6]{LevinThesis}).  

It suffices then to show that $\cG_{\psi} \cong G \otimes_{K, \psi} L[\![u - \varpi_{\psi}]\!]$.  Since both $G$ and $\cG_{\psi}$ are defined by descent datum for the group $\Gamma = \Gal(\widetilde{K}/K)$,  it suffices to give a $\Gamma$-equivariant $L[\![u- \varpi_{\psi}]\!]$-algebra isomorphism
\begin{equation} \label{e7}
\widetilde{\cO}_0[v] \otimes_{K_0} L[\![u - \varpi_{\psi}]\!] \cong \widetilde{K} \otimes_{K, \psi} L[\![u - \varpi_{\psi}]\!].
\end{equation}
The left side is naturally isomorphic to the $(v^{\widetilde{e}}- \varpi_{\psi})$-adic completion of $\left(\widetilde{K_0} \otimes_{K_0} L \right) [v]$. The isomorphism (as in \cite[\S 6.2.6]{PZ}) is given by
$$
v \mapsto \left(\widetilde{\varpi} \otimes \left(1 + \frac{(u - \varpi_{\psi})}{\varpi_{\psi}}\right)^{1/\widetilde{e}}\right).
$$
One can check that this map is $\Gamma$-equivariant.  This gives the desired isomorphism $\cG_{\psi} \cong G \otimes_{K, \psi} L[\![u - \psi(\varpi)]\!]$ which induces an isomorphism
$$
\left(\Fl_{\cG, 0}^{Q(u)} \right)_{L} \cong \bigoplus_{\psi:K \ra L} \Gr_{(G \otimes_{K, \psi} L)}.
$$
Similarly, the loop group $\left(L^{+, Q(u)} \cG \right)_{K_0}$ over $L$ is identified with the completion of $\cG$ along $D$ which decomposes as a product of the completions of $\cG_{\psi}$ along each $D_{\psi}$.  
\end{proof}

\begin{rmk} In the proof of Proposition \ref{genfiber}, we chose an $\widetilde{e}$th root of $\left(1 + \frac{(u - \psi(\varpi))}{\psi(\varpi)}\right)$ in $L[\![u - \varpi]\!]$. To be consistent with calculations in the \S 4.2, we fix the choice to be the $\widetilde{e}$th root whose constant term is 1.
\end{rmk}   

\begin{prop} \label{specfiber}  The special fiber  $\left( L^{+, Q(u)} \cG \right)_{k_0}$ is isomorphic to $L^+ \cP_{\fa^ {\flat}}$, where $\cP_{\fa^{\flat}}$ is the parahoric group scheme over $\Spec k_0[\![u]\!]$ associated to the facet $\fa^{\flat}$ defined in \ref{xuuu}.  Furthermore,  there is an isomorphism 
$$
\left(\Fl_{\cG, 0}^{Q(u)} \right)_{k_0} \cong \Fl_{\cP_{\fa^{\flat}}}
$$
identifying the action of $\left(L^{+, Q(u)} \cG \right)_{k_0}$ with the action of $L^+ \cP_{\fa^{\flat}}$.  
\end{prop}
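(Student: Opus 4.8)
The plan is to establish the isomorphism directly on moduli functors: use the local (Beauville--Laszlo) description $\Fl_{\cG, 0}^{Q(u)} \cong \Fl_{\cG, 0}^{Q(u), \mathrm{loc}}$ already in hand, specialize to $k_0$, and then invoke the defining property of $\cG$ over $\Spec k_0[\![u]\!]$ recorded in Theorem \ref{BTscheme}(3). The loop-group statement will come out of the same computation applied to $L^{+, Q(u)}\cG$.

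The elementary input I would record first is that $Q(u)$, being the minimal polynomial over $K_0$ of a uniformizer $\varpi$ of the totally ramified extension $K/K_0$, is Eisenstein, so $Q(u) \equiv u^{e} \pmod{m_{K_0}}$ with $e = [K:K_0]$. Consequently, for any $k_0$-algebra $R$, regarded as an $\cO_{K_0}$-algebra via $\cO_{K_0} \ra k_0 \ra R$, the $Q(u)$-adic completion $\widehat{R}_{Q(u)}$ of $R[u]$ is the $u^e$-adic completion, which equals the $u$-adic completion $R[\![u]\!]$ (the $u^e$-adic and $u$-adic topologies on $R[u]$ define the same topology); likewise $\widehat{R}_{Q(u)}[1/Q(u)] = R(\!(u)\!)$. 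So, after base change to $k_0$, a point of $\Fl_{\cG, 0}^{Q(u)}$ over $R$ is a pair $(\cE,\beta)$ with $\cE$ a $\cG$-bundle on $\Spec R[\![u]\!]$ and $\beta$ a trivialization over $\Spec R(\!(u)\!)$.

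Next I would use that $R[\![u]\!]$ is $u$-adically complete and a $k_0$-algebra, so the classifying map $\Spec R[\![u]\!] \ra \bA^1_{\cO_{K_0}}$ factors through $\Spec k_0[\![u]\!]$; pulling back $\cG$ along this, Theorem \ref{BTscheme}(3) gives $\cG \times_{\bA^1_{\cO_{K_0}}} \Spec R[\![u]\!] \cong \cP_{\fa^\flat} \times_{\Spec k_0[\![u]\!]} \Spec R[\![u]\!]$. Hence a $\cG$-bundle on $\Spec R[\![u]\!]$ is the same datum as a $\cP_{\fa^\flat}$-bundle, functorially in $R$, and the trivializations match; this identifies $\left(\Fl_{\cG, 0}^{Q(u)}\right)_{k_0}$ with the functor of pairs $(\cE,\beta)$ consisting of a $\cP_{\fa^\flat}$-bundle $\cE$ on $\Spec R[\![u]\!]$ together with a trivialization over $\Spec R(\!(u)\!)$, which is exactly the (twisted) affine flag variety $\Fl_{\cP_{\fa^\flat}}$ (cf. \S 2.1 and \cite{PRTwisted}). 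Running the same argument on the loop group gives $(L^{+, Q(u)}\cG)_{k_0}(R) = \cG(R[\![u]\!]) = \cP_{\fa^\flat}(R[\![u]\!]) = (L^+\cP_{\fa^\flat})(R)$; under both identifications the action is simply modification of the trivialization $\beta$, so it is matched on the nose. Since all the maps above are induced by canonical base-change equivalences of groupoids of bundles, these are isomorphisms of ind-schemes compatible with the exhausting finite-type families, which also transports the ``nice'' property.

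The one point requiring genuine care is that forming $\Fl^{Q(u),\mathrm{loc}}_{\cG, 0}$ and then specializing to $k_0$ agrees with specializing $\cG$ to $\cP_{\fa^\flat}$ over $k_0[\![u]\!]$ first and then forming its ordinary affine flag variety --- i.e., that $Q(u)$-adic completion commutes with reduction modulo $m_{K_0}$. This is precisely where the Eisenstein normalization $Q(u) \equiv u^e$ does the work, collapsing the picture to the classical $u$-adic one, and no input beyond Theorem \ref{BTscheme}(3) and the Beauville--Laszlo lemma (\cite[Lemma 6.1]{PZ}) is needed; so I do not expect a serious obstacle here, only careful bookkeeping.
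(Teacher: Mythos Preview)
Your proposal is correct and follows essentially the same approach as the paper: both argue that $Q(u)\equiv u^{[K:K_0]}$ over $k_0$ so the $Q(u)$-adic completion becomes the $u$-adic one, then invoke Theorem~\ref{BTscheme}(3) to identify $\cG_{k_0[\![u]\!]}$ with $\cP_{\fa^\flat}$, and conclude by matching the local moduli descriptions. Your write-up simply spells out in more detail what the paper's three-line proof compresses.
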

\begin{proof} The special fiber only depends on $\cG$ over $k_0[\![u]\!]$ which by Theorem \ref{BTscheme} is isomorphic to $ \cP_{\fa^{\flat}}$.  Since $Q(u) \equiv u^{[K:K_0]}$ over $k_0$, we have for any $k_0$-algebra $R$
$$
L^{+, Q(u)} \cG (R) = \cP_{\fa^{\flat}}(R[\![u]\!]) = L^+ \cP_{\fa^{\flat}}(R).
$$
The description of $\left(\Fl_{\cG, 0}^{Q(u)}\right)_{k_0}$ from Definition \ref{bigflagloc} is exactly the same as the affine flag variety $\Fl_{\cP_{\fa^{\flat}}}$ (see \S 2.1).
\end{proof} 

As a consequence of Propositions \ref{genfiber} and \ref{specfiber}, both the special and generic fiber of $\Fl_{\cG, 0}^{Q(u)}$ are ind-projective.    We will show later that $\Fl_{\cG, 0}^{Q(u)}$ is in fact ind-projective (see Theorem \ref{AGproj}).  A local model for $\Res_{K/F} G$ should be a flat projective scheme over the ring of integers $\cO$ of $F$ (or, more generally, the reflex field over $F$).  However, the unramified extension $K_0/F$ is harmless which is why we have ignored it thus far and chosen to work over $\cO_{K_0}$.  Define
$$
\Fl_{\cG}^{Q(u)} := \Res_{\cO_{K_0}/\cO} \Fl_{\cG, 0}^{Q(u)} \text{ and } L^{+, Q(u)} \cG := \Res_{\cO_{K_0}/\cO} L^{+, Q(u)}_0 \cG .
$$
The following Proposition summarizes the results of this section as applied to $\Fl_{\cG}^{Q(u)}$. 

\begin{prop} The functor $\Fl_{\cG}^{Q(u)}$ is an ind-scheme of finite type over $\Spec \cO$ with an action of $L^{+, Q(u)} \cG$.  The generic fiber is equivariantly isomorphic to $\Gr_{\Res_{K/F} G}$, and the special fiber is equivariantly isomorphic to $\Res_{k_0/k}  \Fl_{\cP_{\fa^{\flat}}}$.  
\end{prop}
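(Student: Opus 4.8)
The plan is to deduce all three assertions formally from their counterparts over $\cO_{K_0}$, namely Propositions \ref{indscheme}, \ref{genfiber}, and \ref{specfiber}, by pushing them through the Weil restriction $\Res_{\cO_{K_0}/\cO}$, which is legitimate since $\cO_{K_0}/\cO$ is finite \'etale. Weil restriction along a finite locally free morphism preserves affine morphisms, closed immersions, and morphisms of finite type, and it commutes with the exhausting colimit since it is computed by evaluation, $\bigl(\Res_{\cO_{K_0}/\cO}X\bigr)(R)=X(R\otimes_{\cO}\cO_{K_0})$. Hence applying it to a presentation $\Fl_{\cG,0}^{Q(u)}=\varinjlim_i Z_i$ with the $Z_i$ of finite type over $\cO_{K_0}$ and transition maps closed immersions yields $\Fl_{\cG}^{Q(u)}=\varinjlim_i \Res_{\cO_{K_0}/\cO}Z_i$, an ind-scheme of finite type over $\Spec\cO$. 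Being a right adjoint, $\Res_{\cO_{K_0}/\cO}$ preserves finite products, so the canonical isomorphism $\Res_{\cO_{K_0}/\cO}(X\times_{\cO_{K_0}}Y)\cong \Res_{\cO_{K_0}/\cO}X\times_{\cO}\Res_{\cO_{K_0}/\cO}Y$ carries the action of $L^{+,Q(u)}_0\cG$ on $\Fl_{\cG,0}^{Q(u)}$ to an action of $L^{+,Q(u)}\cG=\Res_{\cO_{K_0}/\cO}L^{+,Q(u)}_0\cG$ on $\Fl_{\cG}^{Q(u)}$, with the associativity and unit axioms inherited by functoriality.

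For the generic fiber I would invoke the compatibility of Weil restriction with base change: since $K_0/F$ is unramified one has $\cO_{K_0}\otimes_{\cO}F=K_0$, whence $(\Fl_{\cG}^{Q(u)})_F\cong \Res_{K_0/F}\bigl((\Fl_{\cG,0}^{Q(u)})_{K_0}\bigr)$ and $(L^{+,Q(u)}\cG)_F\cong \Res_{K_0/F}\bigl((L^{+,Q(u)}_0\cG)_{K_0}\bigr)$. By Proposition \ref{genfiber} the right-hand sides are $\Res_{K_0/F}\Gr_{\Res_{K/K_0}G}$ with its action of $\Res_{K_0/F}L^{+}\Res_{K/K_0}G$. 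Now the affine Grassmannian and the positive loop group commute with Weil restriction along $K_0/F$ (as in the proof of Proposition \ref{fibers1}, using \cite[Theorem 2.6.1]{LevinThesis}), and Weil restriction is transitive, $\Res_{K_0/F}\circ\Res_{K/K_0}=\Res_{K/F}$; therefore $\Res_{K_0/F}\Gr_{\Res_{K/K_0}G}\cong\Gr_{\Res_{K/F}G}$ equivariantly for $\Res_{K_0/F}L^{+}\Res_{K/K_0}G\cong L^{+}\Res_{K/F}G$.

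The special fiber is handled identically: $\cO_{K_0}\otimes_{\cO}k=k_0$, so base-change compatibility gives $(\Fl_{\cG}^{Q(u)})_k\cong \Res_{k_0/k}\bigl((\Fl_{\cG,0}^{Q(u)})_{k_0}\bigr)$ and $(L^{+,Q(u)}\cG)_k\cong \Res_{k_0/k}\bigl((L^{+,Q(u)}_0\cG)_{k_0}\bigr)$. Proposition \ref{specfiber} identifies $(\Fl_{\cG,0}^{Q(u)})_{k_0}\cong\Fl_{\cP_{\fa^{\flat}}}$ equivariantly for $(L^{+,Q(u)}_0\cG)_{k_0}\cong L^{+}\cP_{\fa^{\flat}}$, and applying $\Res_{k_0/k}$ yields the desired equivariant isomorphism $(\Fl_{\cG}^{Q(u)})_k\cong\Res_{k_0/k}\Fl_{\cP_{\fa^{\flat}}}$.

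In truth this proposition is a bookkeeping exercise and I do not anticipate a genuine obstacle. The one point deserving a little attention is that $\Res_{\cO_{K_0}/\cO}$ interacts correctly with the ind-structure — that it really sends an ind-scheme of ind-finite type to one of the same kind and commutes with the exhausting colimit — together with tracking the transitivity-of-Weil-restriction and base-change identifications carefully enough that the three ``natural'' actions (of $L^{+,Q(u)}\cG$, of $L^{+}\Res_{K/F}G$ on $\Gr_{\Res_{K/F}G}$, and of $\Res_{k_0/k}L^{+}\cP_{\fa^{\flat}}$ on $\Res_{k_0/k}\Fl_{\cP_{\fa^{\flat}}}$) come out matched.
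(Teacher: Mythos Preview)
Your proposal is correct and follows exactly the paper's approach: the paper's proof is a single sentence invoking Propositions \ref{genfiber} and \ref{specfiber} together with the behavior of affine Grassmannians under Weil restriction (\cite[\S 2.6]{LevinThesis}), and you have simply unpacked that sentence, additionally citing Proposition \ref{indscheme} for the ind-scheme structure and making explicit the base-change and transitivity-of-Weil-restriction compatibilities that the paper leaves implicit.
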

\begin{proof} This is an immediate consequence of Proposition \ref{genfiber} and \ref{specfiber} together with the behavior of affine Grassmanians under Weil restriction \cite[ \S 2.6]{LevinThesis}.  
\end{proof}

\subsection{Local models}

We are now ready to define local models for $\Res_{K/F} G$.  Let $(\Res_{K/F} G, \cP, \{ \mu \})$ be a triple where $\cP$ is a parahoric subgroup of $\Res_{K/F} G$ and $\{ \mu \}$ is a geometric conjugacy class of cocharacters.  Let $\fa$ be the facet of $\cB(G, K)$ associated to $\cP$ as discussed in the beginning of \S 3.3. Choosing a rigidification $(G, A, S, P)$ of $G$ such that $\fa$ is in the apartment $\cA(G, A, K)$,  we constructed in Proposition \ref{underlineG} an extension $\underline{G}$ of $G$ to $\Spec \cO_{K_0}[u, u^{-1}]$.  

Specializing $\underline{G}$ at $k_0(\!(u)\!)$ gave rise to the pair $(G^{\flat}, A^{\flat})$ as well as a facet $\fa^{\flat}$ of the apartment $\cA(G^{\flat}, A^{\flat}, k_0(\!(u)\!))$. Finally, let $\cG$ denote the Bruhat-Tits group scheme over $\cO_{K_0}[u]$ from Theorem \ref{BTscheme}. The conjugacy class $\{ \mu \}$ has a minimal field of definition $E$ of $\overline{F}$ called the \emph{reflex field}.  The conjugacy class $\{ \mu \}$ defines an affine Schubert variety $S_{\Res_{K/F} G}(\mu) \subset (\Gr_ {\Res_{K/F} G})_{\overline{F}}$ which is  defined over $E$. Let $\cO_E$ be the ring of integers of $E$.   

\begin{defn} \label{locmodel}  The \emph{local model} $M_{\cG}(\mu)$ associated to the triple $(\Res_{K/F} G, \cP, \{ \mu \})$ is the closure of 
$$
S_{\Res_{K/F} G}(\mu) \subset   (\Gr_ {\Res_{K/F} G})_{E} \cong \left(\Fl_{\cG}^{Q(u)}\right)_E
$$
in $\left(\Fl_{\cG}^{Q(u)}\right)_{\cO_E}$.  
\end{defn}

We now make a few remarks about $M_{\cG}(\mu)$:
\begin{rmk} In \S 2, we denoted the local model by $M_P(\mu)$ to indicate a dependence on a parabolic subgroup (determined by the parahoric).  In this case, the parahoric $\cP$ determines the group scheme $\cG$ and so we indicate the dependence on $\cG$ with a subscript.  The two constructions agree under the hypotheses of \S 2 by Proposition \ref{comp2}.  
\end{rmk}   

\begin{rmk} \label{isogeny} In the introduction, we stated that we construct local models for any triple $(G', \cP', \mu')$ as long as $p \geq 5$.  So far, we have only dealt with groups $G'$ of the form $\Res_{K/F} G$ where $G$ is tamely ramified.  For $p \geq 5$, the simply-connected cover $\widetilde{G}'$ of $G'$ is always of a product of groups of this form.  There is a subtlety in that $\mu'$ may not lift to $\widetilde{G'}$.  In this case, one can translate $S_{G'}(\mu)_E \subset (\Gr_{G'})_E$ into $(\Gr_{\widetilde{G}})_E$ which is the neutral connected component of $\Gr_{G'}$ and define the local model $M_{\cG}(\mu)$ to be the closure of the translation of $S(\mu)_E$ in $\Fl_{\widetilde{\cG}}^{Q(u)}$. The special fiber of $M_{\cG}(\mu)$ will then satisfy Theorem \ref{locmain}. This strategy can also be used to define alternative local models when $p \mid \pi_1(G^{\der})$ but we will not pursue that here.  
\end{rmk} 

\begin{prop} \label{PZcomp} When $K/F$ is tamely ramified, the local model $M_{\cG}(\mu)$ is isomorphic to the local model constructed in \cite{PZ}.  
\end{prop}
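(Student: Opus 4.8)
The plan is to show that, when $K/F$ is tamely ramified, both $M_{\cG}(\mu)$ and the local model of \cite{PZ} are the flat Zariski closure of the \emph{same} affine Schubert variety $S_{\Res_{K/F} G}(\mu)_E$ inside the \emph{same} mixed-characteristic affine flag variety over $\cO_E$, and to conclude by uniqueness of flat closures. First I would note that the hypothesis of \cite{PZ} is met by the triple $(\Res_{K/F} G, \cP, \{\mu\})$: since $K/F$ is tame and $G$ splits over a tame extension $\widetilde{K}/K$, the extension $\widetilde{K}/F$ is tamely ramified (its ramification index, a product of two factors prime to $p$, is prime to $p$) and $\Res_{K/F} G$ splits over $\widetilde{K}$; hence $\Res_{K/F} G$ splits over a tame extension of $F$ and the construction of \cite{PZ} applies. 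As in \cite[\S 4]{PZ}, Weil restriction along the maximal unramified subextension $K_0/F$ then reduces the comparison to the case $K_0 = F$, i.e.\ $K/F$ totally tamely ramified, which I assume henceforth.

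The heart of the argument is to identify the two ambient ind-schemes. Let $\cG_{\mathrm{PZ}}$ be the Bruhat--Tits group scheme attached by \cite[Theorem 4.1]{PZ} to the pair $(\Res_{K/F} G, \fa)$, and let $\Gr_{\cG_{\mathrm{PZ}}}$ be the degeneration of $\Gr_{\Res_{K/F} G}$ built from it in \cite{PZ}. I would construct an $L^{+, Q(u)}\cG$-equivariant isomorphism $\Fl^{Q(u)}_{\cG} \cong \Gr_{\cG_{\mathrm{PZ}}}$ over $\cO_F$ which is compatible on generic fibers with the two identifications with $\Gr_{\Res_{K/F} G}$ (ours being Proposition~\ref{genfiber}) and on special fibers with the two identifications with the affine flag variety for $\cP_{\fa^{\flat}}$ (ours being Proposition~\ref{specfiber}). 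An isomorphism of the underlying Bruhat--Tits group schemes over the relevant two-dimensional base induces such an isomorphism of degenerations --- functorially, and compatibly with a common faithful representation into $\GL_n$ and the explicit $Q(u)$-lattice moduli of Example~\ref{GLN} --- and that group-scheme isomorphism follows from the uniqueness clauses of Theorem~\ref{BTscheme} and of \cite[Theorem 4.1]{PZ}: both group schemes are smooth affine with connected fibers, both restrict over the $\Gm$-locus to models of $\Res_{K/F} G$ obtained by the same tame descent from a common split datum over a common tamely ramified cover, and both carry the prescribed parahoric specialization along the marked divisor. Here the constructions of \S 3 --- the identification of apartments in~\eqref{y1}, Proposition~\ref{IWsame}, and Definition~\ref{xuuu} --- were arranged precisely so that the facet $\fa^{\flat}$ and parahoric $\cP_{\fa^{\flat}}$ produced here coincide with those \cite[\S 4]{PZ} attaches to $(\Res_{K/F} G, \fa)$.

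Granting this identification, the conclusion is immediate: by Definition~\ref{locmodel}, $M_{\cG}(\mu)$ is the closure of $S_{\Res_{K/F} G}(\mu)_E$ in $(\Fl^{Q(u)}_{\cG})_{\cO_E}$, while the local model of \cite{PZ} is the closure of the same subscheme in $(\Gr_{\cG_{\mathrm{PZ}}})_{\cO_E}$; under the isomorphism of ambient spaces these closures correspond, and the two generic-fiber identifications agree, so $M_{\cG}(\mu)$ is isomorphic to the \cite{PZ} local model. I expect the main obstacle to be the reconciliation carried out in the second paragraph: \cite{PZ} build the group scheme for $\Res_{K/F} G$ directly over a one-variable base with a single marked point, whereas here it is packaged as a Weil restriction from $\cO_{K_0}[u]$ together with the branching polynomial $Q(u)$, so one must (i) match the Weil restriction on groups with the divisor $Q(u)=0$ on coefficients, tracking the decomposition of $\Spec \overline{F}[u]/(Q(u))$ into $[K:F]$ points, and (ii) pin down the compatibility of the two generic-fiber trivializations, which is exactly the normalization of $\widetilde{e}$-th roots fixed in the remark following Proposition~\ref{genfiber} and chosen to agree with the computation of \cite[\S 6.2.6]{PZ}. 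Once these are matched, the remaining steps --- the unramified reduction, the loop-group and $\GL_n$ reductions, and the uniqueness of flat closures --- are routine.
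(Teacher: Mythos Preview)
Your overall strategy --- identify the ambient mixed-characteristic flag varieties and then invoke uniqueness of flat closures --- is sound, and matches the paper's. But the second paragraph contains a genuine gap: you assert that both $\cG$ and $\cG_{\mathrm{PZ}}$ ``restrict over the $\Gm$-locus to models of $\Res_{K/F} G$,'' and this is false for $\cG$. By construction (Theorem~\ref{BTscheme}), $\cG$ over $\cO_F[u,u^{-1}]$ is $\underline{G}$, a model of $G$ itself, not of $\Res_{K/F} G$. So the uniqueness clauses of Theorem~\ref{BTscheme} and \cite[Theorem 4.1]{PZ} cannot be compared directly: they characterize group schemes with different generic fibers over the same base. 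You flag this mismatch in your final paragraph as an ``obstacle,'' but it is the heart of the matter rather than a side issue.

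The paper resolves this by an explicit Weil restriction along a finite flat self-map of $\bA^1_{\cO_F}$. After arranging $\varpi_K^e = \varpi_F$, one takes $\Psi:\bA^1_{\cO_F} \to \bA^1_{\cO_F}$ sending $u \mapsto Q(u) + \varpi_F$ and sets $\cG' := \Res_{\Psi} \cG$. The point of $\Psi$ is that it carries the divisor $Q(u)=0$ to the single divisor $u = \varpi_F$, so by the behavior of bundles under Weil restriction one obtains $\Fl_{\cG}^{Q(u)} \cong \Gr_{\cG',\varpi_F}$ immediately --- this is exactly the passage from ``$\cG$-bundles trivialized away from $Q(u)=0$'' to ``$\cG'$-bundles trivialized away from $u=\varpi_F$.'' Now $\cG'$ genuinely is a smooth affine group scheme over $\bA^1_{\cO_F}$ with connected fibers whose generic fiber models $\Res_{K/F} G$, and one checks it satisfies the characterizing properties of \cite[Theorem 4.1]{PZ}; uniqueness then gives $\cG' \cong \cG_{\mathrm{PZ}}$, and the local models match. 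What your sketch was missing is precisely this intermediary $\cG'$ and the map $\Psi$ that produces it.
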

\begin{proof} For simplicity, assume $K/F$ is totally tame of degree $e$.  After possibly making an unramified base change, we can choose a uniformizer $\varpi_K$ of $K$  such that $\varpi_K^e = \varpi_F$ is a uniformizer of $F$.   Consider the map $\Psi:\bA^1_{\cO_F} \ra \bA^1_{\cO_F}$ given by $u \mapsto Q(u) + \varpi_F$.  Define $\cG'$ be the Weil restriction $\Res_{\Psi} \cG$ along the finite flat map $\Psi$.   Using the behavior of bundles under Weil restriction, we have an natural isomorphism
$$
\Fl_{\cG}^{Q(u)} \cong \Gr_{\cG', \varpi_F}
$$
where the right side is the affine Grassmanian associated to $\cG'$ defined in \S 6.2.6 of \cite{PZ}.  It suffices then to check that $\cG'$ is isomorphic to the Bruhat-Tits group scheme in \S 4.3 of \cite{PZ}.  This follows either from looking at the construction carefully or alternatively from checking that $\cG'$ has the characterizing properties given in \cite[Theorem 4.1]{PZ}.  
\end{proof}

\begin{rmk} The local model $M_{\cG}(\mu)$ does not depend on the choice of uniformizer $\varpi$.   The argument is similar to the proof of Proposition \ref{PZcomp} though the details are more tedious.  To compare the constructions for two uniformizers, $\varpi$ and $\varpi'$, one has to compare group schemes obtained after Weil restriction along $\Psi, \Psi':\bA^1_{\cO_F} \ra \bA^1_{\cO_F}$ respectively where $\Psi(u) = Q(u)$ and $\Psi'(u) = Q'(u)$ where $Q(u)$ and $Q'(u)$ are the minimal polynomials of $\varpi$ and $\varpi'$ over $F$. 
\end{rmk}

\begin{prop} \label{Mflat}  The scheme $M_{\cG}(\mu)$ is flat and projective over $\Spec \cO_E$.
\end{prop}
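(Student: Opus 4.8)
The plan is to establish the two assertions separately. Flatness over $\Spec \cO_E$ will be formal from the fact that $M_{\cG}(\mu)$ is by construction a scheme-theoretic closure and $\cO_E$ is a discrete valuation ring; projectivity will be extracted from ind-projectivity of the ambient ind-scheme $\left(\Fl_{\cG}^{Q(u)}\right)_{\cO_E}$. To begin, I would record that $M_{\cG}(\mu)$ is of finite type over $\cO_E$: it is a closed subscheme of $\left(\Fl_{\cG}^{Q(u)}\right)_{\cO_E}$ whose generic fiber contains the finite-type affine Schubert variety $S_{\Res_{K/F} G}(\mu)$, hence it is contained in a base change $Z_{i,\cO_E}$ of one of the finite-type pieces $Z_i$ with $\varinjlim_i Z_i = \Fl_{\cG}^{Q(u)}$.

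For flatness I would argue that, over the discrete valuation ring $\cO_E$, flatness is equivalent to $\cO_E$-torsion-freeness. Restricting to an affine open cover of $M_{\cG}(\mu)\subset Z_{i,\cO_E}$, the structure sheaf of $M_{\cG}(\mu)$ injects into the structure sheaf of its generic fiber $S_{\Res_{K/F} G}(\mu)$ by the very definition of the scheme-theoretic closure of a subscheme of the generic fiber; hence there is no $\cO_E$-torsion and $M_{\cG}(\mu)$ is $\cO_E$-flat. No special property of the local model enters here: this is the familiar statement that the scheme-theoretic closure of a subscheme of the generic fiber, formed inside any $\cO_E$-scheme, is automatically flat over $\cO_E$.

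For projectivity I would invoke Theorem \ref{AGproj}, which asserts that $\Fl_{\cG}^{Q(u)}$ is ind-projective over $\Spec \cO$ (proved later, following the strategy of \cite{RicharzAG}, and independent of the present proposition). Granting it, $\Fl_{\cG}^{Q(u)} = \varinjlim_i Z_i$ with each $Z_i$ projective over $\cO$ and the transition maps closed immersions, so $Z_{i,\cO_E}$ is projective over $\cO_E$ and $M_{\cG}(\mu)$, being a closed subscheme of it, is projective over $\cO_E$. I would also note that quasi-projectivity is cheap and independent of Theorem \ref{AGproj}: a faithful representation $\cG\hookrightarrow\GL_n$ over $\cO[u]$ induces, as in Proposition \ref{indscheme}, a locally closed immersion $\Fl_{\cG}^{Q(u)}\hookrightarrow\Gr_{\GL_n}^{Q(u)}$, and the target is ind-projective via the $Q(u)$-lattice description of Example \ref{GLN}; what Theorem \ref{AGproj} really supplies is that this immersion is in fact \emph{closed}, equivalently that $M_{\cG}(\mu)$ is \emph{proper} over $\cO_E$.

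That properness input is the only real obstacle. It does not follow formally from the already-recorded fact that the generic and special fibers of $\Fl_{\cG}^{Q(u)}$ are each ind-projective, since fiberwise properness over a discrete valuation ring does not force properness of a flat, finite-type morphism; so Theorem \ref{AGproj} genuinely has to be proved, and, being the subject of a later section, may legitimately be cited here. Everything else in the proof is routine manipulation of scheme-theoretic closures and finite-type pieces of ind-schemes.
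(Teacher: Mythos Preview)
Your flatness argument is fine and matches the paper's. The problem is the projectivity step: you invoke Theorem~\ref{AGproj} and assert it is ``independent of the present proposition,'' but in this paper it is not. The proof of Theorem~\ref{AGproj} explicitly begins with ``We saw in Proposition~\ref{Mflat} that $M_{\cG}(\mu)$ is projective'' and then deduces ind-projectivity of $\Fl_{\cG}^{Q(u)}$ by writing its reduced subscheme as a union of the (already projective) $M_{\delta}$. So your argument is circular as written.

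The paper breaks the circle exactly along the seam you yourself identified: it proves properness of $M_{\cG}(\mu)$ directly, without Theorem~\ref{AGproj}, and then upgrades to projectivity using only the locally closed immersion $\Fl_{\cG}^{Q(u)}\hookrightarrow \Gr^{Q(u)}_{\GL_n}$ into an ind-projective target (your ``cheap'' quasi-projectivity). For properness the paper appeals to \cite[Lemma~1.21]{RicharzAG}: for a flat finite-type $\cO_E$-scheme whose fibers are proper and whose generic fiber is geometrically connected, properness follows once the special fiber is nonempty. Nonemptiness is supplied by the explicit sections of Proposition~\ref{section2}/Corollary~\ref{inclusion}, which construct $\cO_{\widetilde{E}}$-points of $M_{\cG}(\mu)$ reducing to prescribed points of the special fiber; these constructions use only flatness (for $L^{+,Q(u)}\cG$-stability) and not projectivity, so there is no hidden loop. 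Your proposal would be salvaged by replacing the forward reference to Theorem~\ref{AGproj} with this direct properness argument.
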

\begin{proof}  We follow the strategy from \cite[\S 1.5]{RicharzAG}. The scheme $M_{\cG}(\mu)$ is flat  over $\Spec \cO_E$ by construction and both fibers are proper.  The generic fiber of $M_{\cG}(\mu)$ is the affine Schubert variety $S_{\Res_{K/F} G}(\mu)$ which is geometrically connected. By Lemma 1.21 of loc. cit. if the special fiber of $M_{\cG}(\mu)$ is non-empty, then $M_{\cG}(\mu)$ is proper.  Non-emptiness of the special fiber is a consequence of Corollary \ref{inclusion} (where we construct special points of $M_{\cG}(\mu) (\overline{k})$). Projectivity follows from properness using the existence of a locally closed immersion $i_*:\Fl_{\cG}^{Q(u)} \ra \Gr^{Q(u)}_{\GL_n}$ as in the proof of Proposition \ref{indscheme} and the fact that $ \Gr^{Q(u)}_{\GL_n}$ is ind-projective.  
\end{proof}

If $k_E$ is the residue field of $E$ then define
$$
\overline{M}_{\cG}(\mu) := M_{\cG}(\mu)_{k_E}
$$
a closed subscheme of the affine flag variety $\Res_{k_0/k}  \Fl_{\cP_{\fa^{\flat}}}$ over $k_E$.   The main theorem of this section is that $\overline{M}_{\cG}(\mu)$ is a union of affine Schubert varieties defined by a certain admissible set (Theorem \ref{thmadm}).  As a consequence, we will see that $\overline{M}_{\cG}(\mu)$ is reduced and normal, and each geometric component is Cohen-Macauley.   

\begin{thm} \label{locmain} Suppose that $p \nmid |\pi_1(G_{\der})|$.  Then the scheme $M_{\cG}(\mu)$ is normal. In addition, the special fiber $\overline{M}_{\cG}(\mu)$ is reduced, and each geometric irreducible component of $\overline{M}_{\cG}(\mu)$ is normal, Cohen-Macauley and Frobenius split.
\end{thm}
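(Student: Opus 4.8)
The plan is to reduce Theorem~\ref{locmain} to the combinatorial description of the geometric special fibre and then read off every assertion from the known geometry of affine Schubert varieties in (twisted) affine flag varieties, exactly along the lines of \cite[\S 8, pg.~221]{PZ}. First I would invoke Theorem~\ref{thmadm} (proved in \S 4.3 below), which gives the \emph{scheme-theoretic} equality
\[
\overline{M}_{\cG}(\mu)\otimes_{k_E}\overline{k}\;=\;\bigcup_{\widetilde{w}\in\Adm^{\fa^{\flat}}(\la_{\mu})}S^{\fa^{\flat}}_{\widetilde{w}}\ \subset\ \Fl_{\cP_{\fa^{\flat}}},
\]
and I would record the structural facts about $M_{\cG}(\mu)$ already available: it is $\cO_E$-flat and projective (Proposition~\ref{Mflat}) and it is irreducible, being by definition the closure of the irreducible affine Schubert variety $S_{\Res_{K/F}G}(\mu)$; consequently $M_{\cG}(\mu)$ is reduced and integral, and every irreducible component of $\overline{M}_{\cG}(\mu)$ has dimension $\dim S_{\Res_{K/F}G}(\mu)$, hence is one of the $S^{\fa^{\flat}}_{\widetilde{w}}$ with $\widetilde{w}$ maximal in $\Adm^{\fa^{\flat}}(\la_{\mu})$ for the Bruhat order.

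Next I would deduce the statements about the special fibre. Because $p\nmid|\pi_1(G_{\der})|$, the affine flag variety $\Fl_{\cP_{\fa^{\flat}}}$ carries a Frobenius splitting compatible with all of its Schubert subvarieties, and each $S^{\fa^{\flat}}_{\widetilde{w}}$ is normal and Cohen--Macaulay; this is the content of \cite{PRTwisted}, refined in \cite[\S 6]{ZhuCoh}. A finite union of compatibly Frobenius split closed subschemes is again Frobenius split, and in particular reduced, which gives the reducedness of $\overline{M}_{\cG}(\mu)$; and the geometric irreducible components identified above are then normal, Cohen--Macaulay and Frobenius split.

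For the normality of $M_{\cG}(\mu)$ itself I would apply Serre's criterion $R_1+S_2$. For $S_2$: at a point $x$ of the special fibre the depth formula for the flat local homomorphism $\cO_E\to\cO_{M_{\cG}(\mu),x}$ gives $\operatorname{depth}\cO_{M_{\cG}(\mu),x}=1+\operatorname{depth}\cO_{\overline{M}_{\cG}(\mu),x}$, and since $\overline{M}_{\cG}(\mu)$ is reduced (hence $S_1$) this is $\ge\min(2,\dim\cO_{M_{\cG}(\mu),x})$; at a point of the generic fibre the $S_2$ condition is inherited from $S_{\Res_{K/F}G}(\mu)$, which is normal, being an affine Schubert variety over a field of characteristic zero. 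For $R_1$: a codimension-one point lying on the generic fibre is a codimension-one point of the normal scheme $S_{\Res_{K/F}G}(\mu)$, hence regular; a codimension-one point $x$ on the special fibre must be a generic point of the reduced scheme $\overline{M}_{\cG}(\mu)$, so $\cO_{M_{\cG}(\mu),x}$ is a one-dimensional Noetherian local domain whose maximal ideal is generated by a uniformizer of $\cO_E$, hence a discrete valuation ring. Thus $M_{\cG}(\mu)$ is $R_1+S_2$, i.e.\ normal.

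Everything above is formal once Theorem~\ref{thmadm} is in hand, so the real work, and the main obstacle, is the proof of Theorem~\ref{thmadm} itself, specifically the inclusion $\overline{M}_{\cG}(\mu)\otimes\overline{k}\subseteq\bigcup_{\widetilde{w}\in\Adm^{\fa^{\flat}}(\la_{\mu})}S^{\fa^{\flat}}_{\widetilde{w}}$. The reverse inclusion I would obtain, as in the unramified case (Proposition~\ref{section1}, Corollary~\ref{inclusion}), by exhibiting the extremal points $\overline{s}_{\la'}$, for $\la'$ in the Weyl orbit of $\la_{\mu}$, as $\cO_{\widetilde{E}}$-valued points of $M_{\cG}(\mu)$ built from the fibrewise maximal torus $\cT\subset\cP$ by explicit line-bundle combinatorics over $\bA^1_{\cO_{\widetilde{E}}}$, together with the $L^{+,Q(u)}\cP$-stability of $M_{\cG}(\mu)$. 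The hard inclusion is a Hilbert-polynomial comparison: for a relatively ample line bundle $\cL$ on $\Fl_{\cG}^{Q(u)}$ one has $\dim_{k_E}H^0(\overline{M}_{\cG}(\mu),\cL^{\otimes n})=\dim_E H^0(S_{\Res_{K/F}G}(\mu),\cL^{\otimes n})$ for $n\gg 0$ by $\cO_E$-flatness and vanishing of higher cohomology; the product formula (Proposition~\ref{productformula}, cf.\ \cite{ZhuCoh}) rewrites the right-hand side on the ``$\flat$''-side affine flag variety, and the coherence conjecture of Pappas--Rapoport proved in \cite{ZhuCoh} identifies it with $\dim H^0$ of the admissible union; since that union is already contained in $\overline{M}_{\cG}(\mu)$ by the reverse inclusion and has the same Hilbert polynomial, the two coincide. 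The only genuinely new point, as opposed to a transcription of \cite{PZ}, is checking that the product formula and the coherence input apply to $\Fl_{\cG}^{Q(u)}$, where $Q(u)$ degenerates the $[K:F]$ geometric points to a single one on the special fibre; this is where the Beilinson--Drinfeld/Weil-restriction geometry of \S 4.1 must be matched with the global-section computations of \cite{ZhuCoh}.
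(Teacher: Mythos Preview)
Your proposal is correct and follows essentially the same route as the paper: deduce everything from Theorem~\ref{thmadm} together with the known geometry of Schubert varieties in twisted affine flag varieties, exactly as in \cite[\S 9]{PZ}. The only differences are cosmetic: the paper cites \cite[Proposition~9.2]{PZ} for the passage from reducedness of the special fibre to normality of the total space, whereas you spell out the Serre-criterion argument directly (which is what that proposition does), and the paper inserts an additional preliminary reduction to the case $K_0=F$ by factoring $M_{\cG}(\mu)_{\breve{E}}$ as a product of local models for $\Res_{K/K_0}G$, a step you omit but which is harmless for the argument you give.
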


We begin with a few reductions.  Clearly, it is enough to check Theorem \ref{locmain} after base changing to  $\cO_{\breve{E}}$ where $\breve{E}$ is the completion of the maximal unramified extension of $E$. Since the generic fiber of $M_{\cG}(\mu)$ is an affine Schubert variety, it is normal (\cite[Theorem 0.3]{PRTwisted}).  As in \cite[Proposition 9.2]{PZ}, one is reduced to showing that $\overline{M}_{\cG}(\mu) \otimes_{k_E} \overline{k}$ is reduced, normal and each irreducible component is Cohen-Macauley.  

Recall that $T$ is maximal torus of $G$.  Fix a representative $\mu \in \{ \mu \}$ which is valued in $(\Res_{K/F} T)_{\overline{F}}$.  For every embedding $\phi:K_0 \subset \breve{E}$, $\mu$ defines a cocharacter $\mu_{\phi} \in X_*(\Res_{K/K_0} T)$.  It is not hard to see that
$$
M_{\cG}(\mu)_{\breve{E}} \cong \prod_{\phi} M_{\cG, 0}(\mu_{\phi}) \otimes_{K_0, \phi} \breve{E}
$$
where $M_{\cG, 0}(\mu_{\psi}) $ are local models for the group $\Res_{K/K_0} G$.   Thus, for the proof of Theorem \ref{locmain}, we can \emph{assume} that $F = K_0$. We make this assumption for the remainder of the section.

We would now like to construct points in the special fiber $\overline{M}_{\cG}(\mu)$. To do this, we consider the subgroup $\cT \subset \cG$ from Proposition \ref{BTtorus} which is a ``Bruhat-Tits group scheme'' for the maximal torus $\underline{T} \subset \underline{G}$ over $\cO_{\breve{E}}[u, u^{-1}]$. Let $T_H = T_{\widetilde{K}}$ be the split torus on which $\Gamma$ acts.  Recall that $\underline{T}_{\cO_{\breve{E}}[v, v^{-1}]} \cong (T_H)_{\cO_{\breve{E}}[v, v^{-1}]}$ where $u \mapsto v^{\widetilde{e}}$.  More specifically, $\cT$ is the ``neutral  connected component'' of
\begin{equation} \label{t2}
\cT' := \Res_{\cO_{\breve{E}}[v]/\cO_{\breve{E}}[u]} (T_H \otimes \cO_{\breve{E}}[v])^{\gamma}
\end{equation}
where $\gamma$ is a generator the inertia subgroup $I \subset \Gamma$ which acts on the split torus $T_H$ and sends $v$ to $\zeta v$ where $\zeta$ is a primitive $\widetilde{e}$th root of unity.  .  

Recall the Kottwitz homomorphism 
$$
\kappa_{\cT}:\cT(\overline{k}(\!(u)\!)) \ra X_*(T_H)_I
$$
discussed in more detail in \S 3.2.  Implicitly, we are identifying $X_*(\underline{T}_{k(\!(u)\!)})$ with $X_*(T_H)$ equivariantly for the action of $I$.  A cocharacter $\mu' \in X_*(\Res_{K/F} T)$ is equivalent to a tuple $(\mu_{\psi}') \in \prod_{\psi:K \ra \overline{F}} X_*(T_{\psi})$.   We would like to take the ``sum'' of the $\mu_{\psi}'$. However, they do not lie canonically in the same group so for each $\psi:K \ra \overline{F}$, choose an embedding $\widetilde{\psi}:\widetilde{K} \ra \overline{F}$ which extends $\psi$.  We have an isomorphism
$$
(T_H) \otimes_{\widetilde{K}, \widetilde{\psi}} \overline{F} \cong T_{\psi}   
$$
and so we can think of $\mu_{\psi}'$ as a cocharacter of $T_H$. Let $\overline{\mu}_{\psi}'$ be the image of $\mu_{\psi}'$ in $X_*(T_H)_I$. The image $\overline{\mu}_{\psi}'$ is independent of the choice $\widetilde{\psi}$. Define
$$
\overline{\la}_{\mu'} := \sum_{\psi} \overline{\mu}_{\psi}' \in X_*(T_H)_I.
$$
The following generalizes \cite[Lemma 9.8]{PZ}.  The proof is somewhat technical and is considerably simpler when the group $G$ is unramified.  The reader who is only interested in unramified groups should refer to Proposition \ref{section1} instead. 
 
\begin{prop} \label{section2} Let $\widetilde{E} \subset \overline{F}$ be a finite extension of $F$ which contains a Galois closure of $\widetilde{K}$ over $F$.  Choose $\mu' \in X_*(\Res_{K/F} T)$.  Then there exists a morphism
$$
s_{\mu'}:\Spec \cO_{\widetilde{E}} \ra L^{Q(u)} \cT
$$
such that 
\begin{enumerate}
\item $(s_{\mu'})_{\widetilde{E}} \in (L^{Q(u)} \cT)(\widetilde{E}) = (L \Res_{K/F} T)(\widetilde{E})$ lies in the same $L^+  \Res_{K/F} T$ orbit as the point induced by $\mu':(\Gm)_{\widetilde{E}} \ra (\Res_{K/F} T)_{\widetilde{E}}$;
\item $\kappa_{\cT}((s_{\mu'})_{\overline{k}}) \equiv \overline{\la}_{\mu'}$ in $X_*(T_H)_I$. 
\end{enumerate}
\end{prop}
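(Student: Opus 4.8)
The plan is to adapt the construction of \cite[Lemma 9.8]{PZ}; the genuinely new feature is the passage to the Weil restriction, which enters through the sum $\overline{\la}_{\mu'}=\sum_{\psi}\overline{\mu}'_\psi$ over the embeddings $\psi\colon K\to\widetilde E$. We have reduced to $F=K_0$, so that producing $s_{\mu'}$ amounts to writing down an element of $\cT\bigl(\widehat{(\cO_{\widetilde E})}_{Q(u)}[1/Q(u)]\bigr)$. Write $\mu'=(\mu'_\psi)_\psi$ under $\Res_{K/F}T\otimes_F\widetilde E\cong\prod_\psi T_\psi$, fix for each $\psi$ an extension $\widetilde\psi\colon\widetilde K\to\widetilde E$, and regard $\mu'_\psi$ as a cocharacter of the split torus $T_H$ via $T_\psi\cong T_H\otimes_{\widetilde K,\widetilde\psi}\widetilde E$. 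The basic input is \eqref{t2}: over $\cO_{\widetilde E}$, $\cT$ is the neutral connected component of $\Res_{\cO_{\widetilde E}[v]/\cO_{\widetilde E}[u]}(T_H\otimes\cO_{\widetilde E}[v])^{\gamma}$, with $v^{\widetilde e}=u$, $\zeta$ a fixed primitive $\widetilde e$-th root of unity, and $\gamma$ a generator of $I\subset\Gamma$ acting on $T_H$ by the descent representation $\gamma_T$ and on $v$ by $v\mapsto\zeta v$. Over $\cO_{\widetilde E}$ one has $Q(u)=\prod_\psi(u-\varpi_\psi)$ and, in the variable $v$, $Q(v^{\widetilde e})=\prod_{\psi}\prod_{i=0}^{\widetilde e-1}(v-\zeta^{i}\widetilde\varpi_\psi)$, where $\widetilde\varpi_\psi=\widetilde\psi(\widetilde\varpi)$, $\varpi_\psi=\widetilde\varpi_\psi^{\widetilde e}$; after inverting $Q(u)$ each factor $v-\zeta^{i}\widetilde\varpi_\psi$ becomes a unit of $\widehat{(\cO_{\widetilde E})}_{Q(u)}[1/Q(u)][v]/(v^{\widetilde e}-u)$, so cocharacters may be applied to them.

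The construction of $s_{\mu'}$ then follows \cite[\S 9.2]{PZ} and, in the unramified case, is exactly the one of Proposition \ref{section1}: since $T_H$ is split one writes down a $\gamma$-equivariant system of $T_H$-valued modifications supported along $\{Q(v^{\widetilde e})=0\}$, concretely a suitable product of factors $\gamma_T^{-i}(\mu'_\psi)(v-\zeta^{i}\widetilde\varpi_\psi)$ whose exponents are arranged so that $\gamma$, which carries $v-\zeta^{i}\widetilde\varpi_\psi$ to $\zeta(v-\zeta^{i-1}\widetilde\varpi_\psi)$ and $\gamma_T^{-i}(\mu'_\psi)$ to $\gamma_T^{-(i-1)}(\mu'_\psi)$, simply permutes them, together with a constant correction in $T_H(\cO_{\widetilde E})$ to kill the residual root-of-unity factor. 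This descends to a point of $L^{Q(u)}\cT$ over $\cO_{\widetilde E}$; that it lands in the connected component $\cT$, and not merely in the Weil restriction, is checked on the fibre over $u=0$, where by Proposition \ref{BTtorus}(3) the relevant group is the connected N\'eron model of $T^{\flat}$, which absorbs the finite component group.

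For (1), base change to $\widetilde E$ and use the decomposition of the generic fibre from Proposition \ref{genfiber}: over $\widetilde E$ the cover $v\mapsto v^{\widetilde e}$ is finite \'etale near each $\varpi_\psi$, and the identification $v\mapsto\widetilde\varpi_\psi\bigl(1+(u-\varpi_\psi)/\varpi_\psi\bigr)^{1/\widetilde e}$ with constant term $1$ (the normalisation fixed in the remark after Proposition \ref{genfiber}) identifies the $\psi$-factor of $(L^{Q(u)}\cT)_{\widetilde E}$ with a loop group of $T_\psi$ in the parameter $u-\varpi_\psi$. Under this, $v-\zeta^{i}\widetilde\varpi_\psi$ is a unit of $\widetilde E[\![u-\varpi_\psi]\!]$ for $i\neq0$, while $v-\widetilde\varpi_\psi$ equals $(u-\varpi_\psi)$ times a unit whose leading term is the constant $\widetilde\varpi_\psi/(\widetilde e\,\varpi_\psi)\in\widetilde E^{\times}$; collecting exponents, the $\psi$-factor of $(s_{\mu'})_{\widetilde E}$ is $\mu'_\psi(u-\varpi_\psi)$ times an element of $T_\psi(\widetilde E[\![u-\varpi_\psi]\!])$. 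Since $T_\psi(\widetilde E[\![u-\varpi_\psi]\!])\subset(L^{+}\Res_{K/F}T)(\widetilde E)$ and, for a torus, $L^{+}$-orbits are cosets, $(s_{\mu'})_{\widetilde E}$ lies in the same $L^{+}\Res_{K/F}T$-orbit as the point induced by $\mu'$, which is (1).

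Finally, (2) is a computation of $\kappa_{\cT}$ on the special fibre. Reducing mod $\mathfrak m_{\widetilde E}$ collapses every $\varpi_\psi$ and $\widetilde\varpi_\psi$ to $0$ and $Q(u)$ to $u^{d}$, and $s_{\mu'}$ specialises to an explicit element of $\cT(\overline k(\!(u)\!))=T^{\flat}(\overline k(\!(u)\!))$, which through the Weil-restriction picture is represented in $T_H(\overline k(\!(v)\!))$ by $\bigl(\sum_\psi N(\mu'_\psi)\bigr)(v)$ up to the above constant, $N$ denoting the norm over $I$. One then computes $\kappa_{\cT}$ of this element using its compatibility with the norm map for the tame cover $\overline k(\!(v)\!)/\overline k(\!(u)\!)$, the identification $X_*(T^{\flat})_I\cong X_*(T_H)_I$ of Proposition \ref{IWsame}, and the vanishing of $\kappa_{\cT}$ on bounded elements; the outcome is $\sum_\psi\overline{\mu}'_\psi=\overline{\la}_{\mu'}$. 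I expect this last step to be the main obstacle: one must set up the construction so that it realises simultaneously the prescribed generic fibre — governed by the expansion at $u=\varpi_\psi$, where the constant $\widetilde\varpi_\psi/(\widetilde e\,\varpi_\psi)$ intervenes — and the prescribed Kottwitz datum on the special fibre — governed by the expansion at $u=0$, where the two natural trivialisations of $\cT$ differ by the ramification of $K/F$ — and in particular one must verify that the auxiliary unit factors and the constant correction contribute nothing to $\kappa_{\cT}$. This is precisely where one works over $\breve F$ and exploits the precise form of the descent cocycle (Proposition \ref{precisec} and the remark following it); as the statement notes, everything simplifies when $G$ is unramified over $K$, in which case Proposition \ref{section1} already gives the cleaner construction.
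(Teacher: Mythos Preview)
Your proposal is correct and follows the same strategy as the paper: write down an explicit $\gamma$-invariant element of $T_H$ built from the linear factors of $Q(v^{\widetilde e})$ over $\cO_{\widetilde E}$, then verify (1) and (2) by direct computation of the generic and special fibres. Two small points where the paper is cleaner than your sketch. First, the paper takes $x_{i,\psi}:=\zeta^{i}v-\widetilde\varpi_\psi$ rather than your $v-\zeta^{i}\widetilde\varpi_\psi$; since $\gamma(v)=\zeta v$, one has $\gamma(x_{i,\psi})=x_{i+1,\psi}$ on the nose, so the product $\chi(s_{\mu'})=\prod_{\psi}\prod_{i} x_{i,\psi}^{\langle\gamma^{i}(\chi),\mu'_\psi\rangle}$ is already $\gamma$-invariant and no ``constant correction to kill the residual root-of-unity factor'' is needed. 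Second, your appeal to Proposition~\ref{precisec} and the precise form of the unramified descent cocycle is a red herring: the entire argument takes place inside the torus $\cT$ and never touches the inner-twist data for $\underline G$. With the paper's choice of $x_{i,\psi}$, step (2) is in fact the easy part --- over $\widetilde k$ all the $x_{i,\psi}$ collapse to $\zeta^{i}v$, the product becomes the norm of $\la_{\mu'}(v)$ from $T_H(\widetilde k(\!(v)\!))$ to $\cT(\widetilde k(\!(u)\!))$, and the explicit description of the Kottwitz homomorphism gives $\overline\la_{\mu'}$ immediately --- so the concern you flag at the end does not materialise. The connected-component check is handled by the argument on \cite[p.~20]{ZhuCoh} rather than a bare fibre computation.
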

\begin{proof} We give a proof based on \cite[Proposition 3.4]{ZhuCoh}. As in loc. cit.,  the split case is quite a bit simpler (see Proposition \ref{section1}).  We first construct $s_{\mu'}$ over $\cO_{\widetilde{E}}$ valued in $\cT'$ (see (\ref{t2})). 

Let $\gamma$ denote a generator for the inertia group $I$ as above. Fix an embedding $\widetilde{\cO}_0 \ra \widetilde{E}$. An $\cO_{\widetilde{E}}$-point of $L^{Q(u)} \cT'$ is the same as a $\gamma$-fixed point in 
$$
T_H \left(\widehat{\cO_{\widetilde{E}}[u]}_{(Q(u))}[1/Q(u)] \otimes_{\widetilde{\cO}_0[u]} \widetilde{\cO}_0[v]\right) = T_H \left(\widehat{\cO_{\widetilde{E}}[v]}_{(Q(u))}[1/Q(u)] \right).
$$  
where we are taking $Q(u)$-adic completions. 

For any embedding $\psi:K \ra \widetilde{E}$, let $\varpi_{\psi}$ denote the image of the uniformizer $\varpi$ of $K$.  Choose $\widetilde{\varpi}_{\psi} \in \widetilde{E}$ such that  $\widetilde{\varpi}_{\psi}^{\widetilde{e}} = \varpi_{\psi}$ (or equivalently $\widetilde{\psi}:\widetilde{K} \ra \widetilde{E}$ extending $\psi$). Define
$$
x_{i, \psi} := \zeta^i v -  \widetilde{\varpi}_{\psi} \in \widehat{\cO_{\widetilde{E}}[v]}_{(Q(u))}[1/Q(u)]
$$
which is a unit since $\prod_{i =1}^{\widetilde{e}} x_{i, \psi} = u - \varpi_{\psi}$. Also, we have
\begin{equation} \label{s5}
\gamma(x_{i, \psi}) = x_{i +1, \psi}
\end{equation}
with $x_{i + \widetilde{e}} = x_i$. 

Let $\mu' = (\mu_{\psi}')_{\psi:K \ra \widetilde{E}}$ where $\mu_{\psi}' \in X_*(T_{\psi})$.  Using the choice of embedding $\widetilde{\psi}:\widetilde{K} \ra \overline{F}$, we think of $\mu_{\psi}' \in X_*(T_H)$. There is a point $s_{\mu'} \in T_H \left (\widehat{\cO_{\widetilde{E}}[v]}_{(Q(u))}[1/Q(u)] \right)$ such that for any character $\chi:T_H \ra \Gm$
$$
\chi(s_{\mu'}) = \prod_{\psi:K \ra \widetilde{E}} \prod_{i=1}^{\widetilde{e}} x_{i, \psi}^{\langle \gamma^i(\chi), \mu'_{\psi} \rangle}.
$$
Since $\gamma$ is acting on $v$ and on $T_H$ (but not on $\widetilde{E}$), $s_{\mu'}$ is $\gamma$-invariant and so defines an element of $L^{Q(u)} \cT'(\cO_{\widetilde{E}})$. An identical argument as in the last paragraph on pg. 20 of \cite{ZhuCoh} with $k(\!(u_1)\!)$ replaced by $\widetilde{E}$ shows that $s_{\mu'}$ factors through the connected component $L^{Q(u)} \cT$.  

Next, we show that $s_{\mu'}$ satisfies properties (1) and (2).  To compute $(s_{\mu'})_{\widetilde{E}}$, we have to unravel the isomorphism $(L^{Q(u)} \cT)_{\widetilde{E}} \cong \prod_{\psi:K \ra \widetilde{E}} L T_{\psi, \widetilde{E}}$.  Explicitly,
\begin{equation}
\begin{split}
(L^{Q(u)} \cT)(\widetilde{E}) = \cT(\widehat{\widetilde{E}[u]}_{(Q(u))}[1/Q(u)]) &= \bigoplus_{\psi} \cT \left(\widetilde{E}(\!(u - \varpi_{\psi})\!) \right) \\
&=  \bigoplus_{\psi} T_H \left(\widehat{\widetilde{E}[v]}_{(u - \varpi_{\psi})}[1/(u - \varpi_{\psi})] \right)^{\gamma =1} \\
&= \bigoplus_{\psi} T_H \left(\left(\widetilde{K} \otimes_{K, \psi} \widetilde{E}\right) (\!(u- \varpi_{\psi})\!) \right)^{\gamma = 1}
\end{split}
\end{equation}
where the last equality is from (\ref{e7}).  Using $\widetilde{\psi}$, we get an isomorphism
$$
T_H \left( \left(\widetilde{K} \otimes_{K, \psi} \widetilde{E} \right)(\!(u- \varpi_{\psi})\!) \right)^{\gamma = 1} \cong T_H \left(\widetilde{E}(\!(u- \varpi_{\psi})\!)\right)
$$
For any two embeddings $\psi$ and $\psi'$, define
$$
z^{\psi}_{i, \psi'} := \zeta^i \left (\widetilde{\varpi}_{\psi} \left( 1 + \frac{(u - \varpi_{\psi})}{\varpi_{\psi}} \right)^{1/\widetilde{e}} \right) - \widetilde{\varpi}_{\psi'}.
$$
Then the $\psi$-component of $\chi(s_{\mu'})_{\widetilde{E}}$ is given by 
$$
\prod_{\psi':K \ra \widetilde{E}} \prod_{i=1}^{\widetilde{e}} (z^{\psi}_{i, \psi'})^{\langle \gamma^i(\chi), \mu'_{\psi'} \rangle} \in \widetilde{E}(\!(u- \varpi_{\psi})\!)^{\times}
$$
As an element of $\widetilde{E}[\![u - \varpi_{\psi}]\!]$, $z^{\psi}_{i, \psi'}$ is a unit when $\psi \neq \psi'$ and when $\psi' = \psi$ but $i \neq \widetilde{e}$ (the second claim uses the choice of $\widetilde{e}$th root).  Furthermore, $z^{\psi}_{\widetilde{e}, \psi}$ vanishes to order 1.   Thus, the $\psi$-component of $\chi(s_{\mu'})_{\widetilde{E}}$ is a unit times $(u- \varpi_{\psi})^{\langle \chi, \mu'_{\psi} \rangle}$ as expected .   

Finally, we consider $s_{\mu'}$ over the residue field $\widetilde{k}$ of $\cO_{\widetilde{E}}$.  Over $\widetilde{k}$, $x_{i, \psi} = x_{i, \psi'}$ for all $\psi':K \ra \widetilde{E}$ so 
$$
\chi(s_{\mu'})_{\overline{k}} =   \prod_{i=1}^{\widetilde{e}} \langle \zeta^i v)^{(\gamma^i(\chi), \la_{\mu'} \rangle}
$$
where $\la_{\mu'} = \sum_{\psi} \mu'_{\psi}$. This is exactly the norm of $\la_{\mu'}(v)$ from $\cT(\widetilde{k}(\!(v)\!))$ to $\cT(\widetilde{k}(\!(u)\!))$ which maps to $\overline{\la}_{\mu'}$ under $\kappa_{\cT}$ using the explicit description of the Kottwitz homomorphism (\cite[\S 7]{KottwitzII}, \cite[pg. 20]{ZhuCoh}).
\end{proof}

 If $w$ is an element of the Iwahori-Weyl group of $G^{\flat}$, then let $S_w^{\fa^{\flat}}$ denote the corresponding locally closed orbit in $\Fl_{\cP_{\fa^{\flat}}}$.  

\begin{cor} \label{inclusion} Let $\la_{\mu'} = \sum_{\psi} \mu_{\psi}'$  where $\mu'_{\psi} \in W \cdot \mu_{\psi}$ where $W$ is the absolute Weyl group of $(G, T)$.  Let $\overline{\la}'$ denote the image of $\la_{\mu'}$ in $X_*(T)_I$.  Then
$$
S^{\fa'}_{t_{\overline{\la}'}} \subset \overline{M}_{\cG}(\mu)
$$
where $t_{\overline{\la}'}$ is the translation element in the Iwahori-Weyl group $\widetilde{W}(G^{\flat}, T^{\flat})$.
\end{cor}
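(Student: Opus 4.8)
The plan is to use the section produced in Proposition \ref{section2} to build an $\cO_{\widetilde{E}}$-point of $M_{\cG}(\mu)$ whose generic point lands in $S_{\Res_{K/F} G}(\mu)$ and whose closed point lies in the $L^{+}\cP_{\fa^{\flat}}$-orbit labelled by $t_{\overline{\la}'}$; combined with the loop-group equivariance of $\overline{M}_{\cG}(\mu)$ this gives the inclusion. Concretely, I would enlarge $\widetilde{E}$ so that, in addition to the hypotheses of Proposition \ref{section2}, it contains the reflex field $E$, and then, using the inclusion $\cT \subset \cG$ of Proposition \ref{BTtorus}, compose $s_{\mu'}:\Spec \cO_{\widetilde{E}} \ra L^{Q(u)}\cT$ with $L^{Q(u)}\cT \ra L^{Q(u)}\cG$ and with the orbit map (\ref{l2}) to obtain a morphism
$$
\widetilde{s}_{\mu'}:\Spec \cO_{\widetilde{E}} \ra \left(\Fl_{\cG}^{Q(u)}\right)_{\cO_{\widetilde{E}}}.
$$

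First I would control the generic fiber. By part (1) of Proposition \ref{section2}, $(\widetilde{s}_{\mu'})_{\widetilde{E}}$ lies in the $L^{+}\Res_{K/F} T$-orbit, hence the $L^{+}\Res_{K/F} G$-orbit, of the point of $\Gr_{\Res_{K/F} G}$ induced by $\mu'$. Over $\overline{F}$ we have $\Res_{K/F} G = \prod_{\psi:K \ra \overline{F}} G$ and $\mu = (\mu_{\psi})_{\psi}$, and since $\mu'_{\psi} \in W\cdot\mu_{\psi}$ for each $\psi$ the cocharacter $\mu'$ lies in the geometric conjugacy class $\{\mu\}$; as the $L^{+}$-orbit of a cocharacter depends only on its conjugacy class, the point it induces lies in $\prod_{\psi} S_{G}(\mu_{\psi}) = S_{\Res_{K/F} G}(\mu)$ (the affine Schubert variety of a product being the product of the affine Schubert varieties). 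Since $S_{\Res_{K/F} G}(\mu)$ is $L^{+}\Res_{K/F} G$-stable, the whole orbit lies in it, so $(\widetilde{s}_{\mu'})_{\widetilde{E}} \in S_{\Res_{K/F} G}(\mu) \subset M_{\cG}(\mu)$. Since $M_{\cG}(\mu)$ is closed in $\left(\Fl_{\cG}^{Q(u)}\right)_{\cO_E}$ and $\cO_{\widetilde{E}}$ is a discrete valuation ring, the scheme-theoretic image of $\widetilde{s}_{\mu'}$ is contained in $M_{\cG}(\mu)\otimes_{\cO_E}\cO_{\widetilde{E}}$; in particular $(\widetilde{s}_{\mu'})_{\overline{k}}$ is a point of $\overline{M}_{\cG}(\mu)_{\overline{k}}$.

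Next I would identify this closed point. Under the isomorphism $\left(\Fl_{\cG}^{Q(u)}\right)_{\overline{k}} \cong \Fl_{\cP_{\fa^{\flat}}}$ of Proposition \ref{specfiber}, the composite $\left(L^{Q(u)}\cT\right)_{\overline{k}} = \cT(\overline{k}(\!(u)\!)) \ra \Fl_{\cP_{\fa^{\flat}}}$ sends $g$ into the $L^{+}\cP_{\fa^{\flat}}$-orbit through $g\cdot e$ ($e$ the base point), which by the Cartan decomposition is the orbit indexed by the class of $g$ in $\widetilde{W}^{\flat}$ (modulo $W_{\fa^{\flat}}$ on each side); for $g \in \cT(\overline{k}(\!(u)\!))$ this class is the translation $t_{\kappa_{\cT}(g)}$, since the Kottwitz homomorphism identifies $\cT(\overline{k}(\!(u)\!))/\cT(\overline{k}[\![u]\!])$ with the coinvariant lattice $X_*(T_H)_I \subset \widetilde{W}^{\flat}$. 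By part (2) of Proposition \ref{section2} together with the identification $X_*(T_H)_I = X_*(T)_I$ from Proposition \ref{IWsame}, $\kappa_{\cT}\big((s_{\mu'})_{\overline{k}}\big) = \overline{\la}_{\mu'} = \overline{\la}'$, so $(\widetilde{s}_{\mu'})_{\overline{k}}$ lies in $S^{\fa^{\flat}}_{t_{\overline{\la}'}}$.

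Finally I would invoke equivariance: the generic fiber of $M_{\cG}(\mu)$, being the affine Schubert variety $S_{\Res_{K/F} G}(\mu)$, is $L^{+}\Res_{K/F} G$-stable, so by flatness $M_{\cG}(\mu)$ is stable under $L^{+,Q(u)}\cG$, and hence $\overline{M}_{\cG}(\mu)$ is stable under $\left(L^{+,Q(u)}\cG\right)_{\overline{k}} = L^{+}\cP_{\fa^{\flat}}$ by Proposition \ref{specfiber}. Being closed and containing the point $(\widetilde{s}_{\mu'})_{\overline{k}}$, it then contains the orbit $S^{\fa^{\flat}}_{t_{\overline{\la}'}}$ (and its closure), which is the desired inclusion. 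I expect the main obstacle to be the identification of the closed point in the third step: it requires compatibly threading together the Kottwitz homomorphism for $\cT$, the embedding of the coinvariant lattice into $\widetilde{W}^{\flat}$, the comparison of Iwahori--Weyl groups in Proposition \ref{IWsame}, and the special-fiber description in Proposition \ref{specfiber} — compatibilities that are essentially packaged into Proposition \ref{section2} itself, so once that input is available the remaining work is the routine closure-and-equivariance argument above.
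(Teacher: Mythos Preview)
Your proposal is correct and follows essentially the same approach as the paper: use the section $s_{\mu'}$ from Proposition \ref{section2}, observe that its generic point lies in $S_{\Res_{K/F} G}(\mu)$ because $\mu'$ is conjugate to $\mu$, deduce that its closed point lies in $\overline{M}_{\cG}(\mu)$, identify that closed point via property (2) of Proposition \ref{section2}, and conclude by $L^+\cP_{\fa^{\flat}}$-equivariance. The paper's version is terser (it packages the equivariance step as Proposition \ref{invar} rather than rederiving it from flatness), but the logic is identical.
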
 
\begin{proof}
By Proposition \ref{invar}, $\overline{M}_{\cG}(\mu)$ is preserved by the action of $L^+ \cP_{\fa^{\flat}}$ so it suffices to show that a single point of $S^{\fa'}_{t_{\overline{\la}'}}$ is contained in $\overline{M}_{\cG}(\mu)$.  Let $s_{\mu'} \in L^{Q(u)} \cT (\cO_{\widetilde{E}})$ be as in Proposition \ref{section2} applied to $\mu' = (\mu'_{\psi}) \in X_*(\Res_{K/F} T)$.   Since $\mu'$ is conjugate to $\mu$ in $\Res_{K/F} G$, $(s_{\mu'})_{\widetilde{E}} \in S_{\Res_{K/F} G} (\mu)$.  Thus, 
$$
(s_{\mu'})_{\overline{k}} \in \overline{M}_{\cG}(\mu)(\overline{k})
$$
and property (2) of Proposition \ref{section2} guarantees that $(s_{\mu'})_{\overline{k}}$ is also in $S^{\fa'}_{t_{\overline{\la}'}}$. 
\end{proof}

We no longer assume that $K_0 = F$.  
\begin{thm} \label{AGproj} $\Fl_{\cG}^{Q(u)}$ is ind-projective.  
\end{thm}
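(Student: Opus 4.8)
\emph{Plan of proof.} The strategy follows \cite[\S1.5]{RicharzAG}: reduce the ind-properness of $\Fl_\cG^{Q(u)}$ step by step to the ind-properness of a BD-type affine Grassmannian attached to a \emph{reductive} group scheme, which in turn follows from the explicit $Q(u)$-lattice model.

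First I would record the formal reductions. Since $\Fl_\cG^{Q(u)} = \Res_{\cO_{K_0}/\cO}\Fl_{\cG,0}^{Q(u)}$ and Weil restriction along the finite étale morphism $\Spec\cO_{K_0}\to\Spec\cO$ carries ind-projective $\cO_{K_0}$-ind-schemes to ind-projective $\cO$-ind-schemes (\cite[\S2.6]{LevinThesis}), it is enough to treat $\Fl_{\cG,0}^{Q(u)}$ over $\cO_{K_0}$, so I may assume $K_0 = F$. By (the proof of) Proposition \ref{indscheme} there is a locally closed immersion $i_*\colon\Fl_{\cG,0}^{Q(u)}\iarrow\Gr_{\GL_n,0}^{Q(u)}$, and $\Gr_{\GL_n,0}^{Q(u)}$ is ind-projective over $\cO$: by Example \ref{GLN} it is $\varinjlim_N X^{Q(u),N}$, and each $X^{Q(u),N}$ is a closed subscheme of the Grassmannian of $\cO$-submodules of the finite free $\cO$-module $Q(u)^{-N}M_0/Q(u)^N M_0$, hence projective. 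Thus $\Fl_{\cG,0}^{Q(u)}$ is ind-separated and of ind-finite type over $\cO$, and it suffices to prove it is ind-proper: then every $L^{+,Q(u)}\cG$-stable finite-type piece $Z_i$ is a proper locally closed, hence closed, subscheme of some projective $X^{Q(u),N}$, hence projective.

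Next I would carry out the geometric reductions. As in the proof of Theorem \ref{BTscheme}, over the tame cover $\Spec\widetilde{\cO}_0[v]\to\Spec\cO_{K_0}[u]$, $v^{\widetilde e} = u$, on which the split form $\cH_\fa$ of $\cG$ lives, the group $\cG$ is built from the Bruhat--Tits scheme $\cH_\fa$ of the \emph{split} group attached to $\fa$, first by taking $\gamma_0$-invariants of a Weil restriction and then by passing to the quasi-split and finally to the general case by unramified descent. Correspondingly I would reduce the ind-properness of $\Fl_{\cG,0}^{Q(u)}$ to that of $\Fl_{\cH_\fa}^{Q(u)}$ over $\widetilde{\cO}_0[v]$ (with $Q(u)=Q(v^{\widetilde e})$): unramified inner twisting does not affect ind-properness; passing to the closed sub-ind-scheme cut out by the fixed-point and connectedness conditions preserves it; and Weil restriction along the finite flat morphism $\widetilde{\cO}_0[v]\to\widetilde{\cO}_0[u]$ preserves it. One may further assume $\fa$ is an alcove: for a coarser facet there is a relatively representable proper surjection $\Fl_{\cH_{\mathrm{Iw}}}^{Q(u)}\twoheadrightarrow\Fl_{\cH_\fa}^{Q(u)}$ with partial affine flag varieties as fibers, and the image of an ind-proper ind-scheme in a separated one is ind-proper. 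Now the closure of an alcove in the building of a split group contains a hyperspecial vertex (of the type of the affine vertex), so $\cH_{\mathrm{Iw}}$ is an iterated dilation, along parabolic subgroups, of a reductive (hyperspecial) model $\cH_0$ over $\widetilde{\cO}_0[v]$; this produces a relatively representable proper morphism $\Fl_{\cH_{\mathrm{Iw}}}^{Q(u)}\to\Gr_{\cH_0}^{Q(u)}$ whose fibers are iterated partial flag bundles of $(\cH_0)_{v=0}$ (cf. Proposition \ref{comp2}). Since $\cH_0$ is reductive over the regular base $\widetilde{\cO}_0[v]$ it admits a faithful representation with affine quotient, so $\Gr_{\cH_0}^{Q(u)}\iarrow\Gr_{\GL_m,0}^{Q(u)}$ is a closed immersion and $\Gr_{\cH_0}^{Q(u)}$ is ind-projective; hence $\Fl_{\cH_{\mathrm{Iw}}}^{Q(u)}$, and then $\Fl_{\cG,0}^{Q(u)}$, is ind-proper, and by the first paragraph $\Fl_\cG^{Q(u)}$ is ind-projective over $\Spec\cO$.

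I expect the main obstacle to be the descent step of the previous paragraph: making precise that the affine-Grassmannian-level operations --- Weil restriction along $v\mapsto v^{\widetilde e}$, passage to $\Gamma$-invariants and connected components, and unramified descent --- are compatible with the group-scheme-level constructions of \S3 and each preserve ind-properness. This is exactly the point where the mixed-characteristic two-dimensional base $\widetilde{\cO}_0[u]$, together with the fact that $\GL_n/\cG$ is only quasi-affine (so that one cannot argue directly with $\cG$ and must route through the reductive $\cH_0$ over the cover), makes the argument more delicate than its equal-characteristic model in \cite{BD,RicharzAG}. An alternative, perhaps more robust, route is to apply the valuative criterion directly to $i_*$: given a DVR $R$ over $\cO$ and a point of $\Fl_\cG^{Q(u)}(\Frac R)$ whose image in $\Gr_{\GL_n}^{Q(u)}$ extends over $R$, one must extend the reduction of structure group from $\GL_n$ to $\cG$ across the special fiber; using that $(\Fl_\cG^{Q(u)})_{k}$ is the ind-proper --- hence closed --- affine flag variety $\Res_{k_0/k}\Fl_{\cP_{\fa^\flat}}$ (Proposition \ref{specfiber}), this reduces to the analogous one-dimensional extension statement over $k_R[\![u]\!]$, i.e. to the known ind-properness of the twisted affine flag variety.
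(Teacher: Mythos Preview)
Your proposal takes a genuinely different route from the paper, and although you cite \cite[\S 1.5]{RicharzAG} as the model, your actual argument is not the one carried out there or in the paper. The paper (following Richarz) does \emph{not} reduce structure-theoretically through the tame cover to the split hyperspecial case. Instead, after the same preliminary reductions to $K_0=F$ and to a quasi-split form over an unramified extension, it exhibits $(\Fl_\cG^{Q(u)})_{\red}$ directly as a union $\bigcup_\delta M_\delta$ of projective subschemes, where $\delta$ runs over Galois orbits of conjugacy classes of cocharacters and $M_\delta$ is the flat closure of the corresponding union of Schubert varieties in the generic fiber. Each $M_\delta$ is projective by Proposition~\ref{Mflat}, which in turn uses the explicit sections of Proposition~\ref{section2} via Corollary~\ref{inclusion} to show the special fiber is nonempty. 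To see that the union exhausts the reduced ind-scheme one checks $\overline{F}$-points (the Schubert stratification of the affine Grassmannian) and $\overline{k}$-points (Corollary~\ref{inclusion} shows every translation $t_\lambda$, hence every $L^+\cP_{\fa^\flat}$-orbit, lies in some $\overline{M}_\delta$). This simply recycles the work already done for the local models and avoids any descent through the construction of $\cG$.

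Your structure-theoretic reduction is a plausible alternative and most steps go through, but the point you flag is a genuine obstacle as written. The passage from $\cG' = (\Res_{\widetilde{\cO}_0[v]/\widetilde{\cO}_0[u]}\cH_\fa)^{\gamma_0}$ to its neutral component $\cG$ is an \emph{open} immersion of group schemes, not a closed one, and at the level of affine flag varieties the induced map $\Fl_{\cG}^{Q(u)} \to \Fl_{\cG'}^{Q(u)}$ is not a closed immersion: on the special fiber it is the map $LG^\flat/L^+\cP_{\fa^\flat} \to LG^\flat/L^+\cP'$ from the flag variety of the connected parahoric to that of the possibly non-connected stabilizer, which is finite surjective rather than an embedding. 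One can likely still extract ind-properness from this (the map is proper), but ``passing to the closed sub-ind-scheme cut out by the connectedness condition'' is not what is happening, and this step needs its own argument. Your valuative-criterion alternative is likewise incomplete as stated: fiberwise ind-properness alone does not give ind-properness over $\cO$, and the claimed reduction to a one-dimensional extension statement over $k_R[\![u]\!]$ would have to be made precise. The paper's approach sidesteps all of this.
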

\begin{proof}  We follow the argument from \cite[\S 1.5]{RicharzAG}. We can reduce immediately to the case of $K_0 = F$.  We can replace $F$ by a finite unramified extension such that $G$ and hence $\cG[1/u]$ becomes quasi-split.  We saw in Proposition \ref{Mflat} that $M_{\cG}(\mu)$ is projective. Let $J$ be the set of $\Gal(\overline{F}/F)$-orbits of conjugacy classes of cocharacters of $\Res_{K/F} G$.  Every $\delta \in J$ defines a closed subscheme $S_{\delta} \subset \Fl_{\cG}^{Q(u)}$ which geometrically is the union of the $S_{\Res_{K/F} G}(\mu)$ for $\mu \in \delta$. Let $M_{\delta}$ be the flat closure of $S_{\delta}$ in $\Fl_{\cG}^{Q(u)}$. The $M_{\delta}$ are projective since all $M_{\cG}(\mu)$ are. The claim then is that
$$
\left(\Fl_{\cG}^{Q(u)} \right)_{\red} = \bigcup_{\delta \in J} M_{\delta}.
$$
The union of the $S_{\delta}$ contain all $\overline{F}$-points of $\Fl_{\cG}^{Q(u)}$ and so cover the reduced generic fiber.  By Corollary \ref{inclusion},  $\bigcup_{\delta \in J} M_{\delta}$ contains the closed $L^+ \cP_{\fa^{\flat}}$-orbits of all translation elements $t_{\la}$ in the Iwahori-Weyl group.  This contains all $\overline{k}$-points of  $\Fl_{\cG}^{Q(u)}$.   This proves the inclusion $\left(\Fl_{\cG}^{Q(u)} \right)_{\red} \subset \bigcup_{\delta \in J} M_{\delta}$.  This other inclusion is clear. 
\end{proof} 

\subsection{Line bundles and the coherence conjecture}

In this subsection, we identify the special fiber of $M_{\cG}(\mu)$ with a union of affine Schubert varieties inside an affine flag variety (Theorem \ref{thmadm}).  The strategy of the proof is the same as in \cite[\S 9]{PZ}.  The inclusion of the affine Schubert varieties in $M_{\cG}(\mu)$ is elementary (though somewhat technical) and follows from Proposition \ref{section2}.  One then constructs an ample line bundle on $M_{\cG}(\mu)$ and shows an equality of Hilbert polynomials.  In \cite{PZ}, the equality of Hilbert polynomials is a deep fact which follows from the coherence conjecture of Pappas and Rapoport proven in \cite{ZhuCoh}.  In our context, we combine the coherence conjecture with the product formula (Proposition \ref{productformula}) to deduce the desired equality.      

\begin{prop} \label{invar} The scheme $M_{\cG}(\mu)$ is invariant under the action of $L^{+, Q(u)} \cG$. In particular, $\overline{M}_{\cG}(\mu)_{\red}$ is a union of affine Schubert varieties of $\Fl_{\cP_{\fa^{\flat}}}$.   
\end{prop}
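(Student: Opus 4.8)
\textit{Proof proposal.} The plan is to show that the action morphism of the loop group factors through $M_{\cG}(\mu)$; since $\cO_E$ is a discrete valuation ring and $M_{\cG}(\mu)$ is, by construction, the flat closure of its generic fibre, this reduces to a statement on the generic fibre together with a schematic‐density argument.

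First I would pass from the pro‑algebraic group $L^{+,Q(u)}\cG$ to a genuine algebraic group. By the proposition constructing the exhausting family $\{Z_i\}$, the closed subscheme $M_{\cG}(\mu)$ is contained in some $Z_i$ which is $L^{+,Q(u)}\cG$‑stable and on which the action factors through a smooth affine $\cO_E$‑group scheme $\mathcal{L}$ of finite type (a jet / Weil–restriction quotient of $L^{+,Q(u)}\cG$ of level $N\gg 0$), precisely because the action is nice in the sense of \cite[\S A.3]{Gaitsgory}. It then suffices to prove that the action morphism
\[
a\colon \mathcal{L}\times_{\cO_E} M_{\cG}(\mu)\longrightarrow Z_i
\]
factors through the closed immersion $M_{\cG}(\mu)\hookrightarrow Z_i$.

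Next I would use that $M_{\cG}(\mu)$ is the scheme‑theoretic closure of $S_{\Res_{K/F}G}(\mu)\subset\left(\Fl_{\cG}^{Q(u)}\right)_E$ inside $\left(\Fl_{\cG}^{Q(u)}\right)_{\cO_E}$: its structure sheaf embeds into the pushforward of the structure sheaf of $S_{\Res_{K/F}G}(\mu)$, on which a uniformizer of $\cO_E$ acts invertibly, so $M_{\cG}(\mu)$ is flat over $\cO_E$ and its generic fibre $S_{\Res_{K/F}G}(\mu)$ is schematically dense in it. Hence $X:=\mathcal{L}\times_{\cO_E}M_{\cG}(\mu)$ is flat over $\cO_E$ and $X_E$ is schematically dense in $X$. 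Let $Y:=a^{-1}(M_{\cG}(\mu))\subset X$ be the scheme‑theoretic preimage, a closed subscheme, so that $Y_E=a_E^{-1}\!\left(S_{\Res_{K/F}G}(\mu)\right)$. On the generic fibre, Proposition \ref{genfiber} (together with the behaviour of affine Grassmannians under the unramified Weil restriction $\cO_{K_0}/\cO$) identifies the $\mathcal{L}_E$‑action on $\left(\Fl_{\cG}^{Q(u)}\right)_E\cong\left(\Gr_{\Res_{K/F}G}\right)_E$ with the action induced from $L^+\Res_{K/F}G$, and $S_{\Res_{K/F}G}(\mu)$ is stable under $L^+\Res_{K/F}G$ by the very definition of an affine Schubert variety; thus $a_E$ factors through $S_{\Res_{K/F}G}(\mu)=M_{\cG}(\mu)_E$, i.e.\ $Y_E=X_E$. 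Then the open immersion $X_E\hookrightarrow X$ factors through $Y$, and since $X_E$ is schematically dense in $X$ the smallest closed subscheme of $X$ containing $X_E$ is $X$ itself, whence $Y=X$. This is the asserted $L^{+,Q(u)}\cG$‑invariance.

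For the ``in particular'', the special fibre $\overline{M}_{\cG}(\mu)$ is then stable under $\left(L^{+,Q(u)}\cG\right)_{k_E}$, which by Proposition \ref{specfiber} (and, in general, by the last proposition of \S4.1) acts through $L^+\cP_{\fa^{\flat}}$ (resp.\ $\Res_{k_0/k}L^+\cP_{\fa^{\flat}}$). This group is connected, and after base change to $\overline k$ its orbits on $\Fl_{\cP_{\fa^{\flat}}}$ are indexed by $\widetilde W^{\flat}$ with closures the affine Schubert varieties $S_w^{\fa^{\flat}}$, only finitely many of which meet a given finite‑type closed subset; hence any reduced, closed, stable, finite‑type subscheme is a finite union of the $S_w^{\fa^{\flat}}$, and applying this to $\overline{M}_{\cG}(\mu)_{\red}$ gives the claim. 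The only delicate point I anticipate is the passage to the algebraic group $\mathcal{L}$, which genuinely rests on the niceness of the action of the pro‑algebraic $L^{+,Q(u)}\cG$ on each $Z_i$; the remainder is a routine schematic‑density argument over the discrete valuation ring $\cO_E$ combined with the generic‑fibre identification of Proposition \ref{genfiber}.
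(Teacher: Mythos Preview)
Your proof is correct and follows essentially the same approach as the paper's. The paper's proof is a single sentence: since both $M_{\cG}(\mu)$ and $L^{+,Q(u)}\cG$ are flat over $\Spec\cO_E$, invariance follows from the stability of $M_{\cG}(\mu)_E$ under $(L^{+,Q(u)}\cG)_E$; you have simply unpacked this flatness-plus-schematic-density argument carefully, including the (harmless) reduction to a finite-type jet quotient.
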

\begin{proof} Since both $M_{\cG}(\mu)$ and $L^{+, Q(u)} \cG$ are flat over $\Spec \cO_E$, this follows from the fact that $M_{\cG}(\mu)_E$ is stable under $(L^{+, Q(u)} \cG)_E$. 
\end{proof}  

The idea going back to \cite{PR1, PR2} is that the special fiber should be determined by the ``sum" of the components of $\{ \mu \}$ over the embeddings of $\psi:K \ra \overline{F}$.  To define this sum, choose a Borel subgroup $B$ of $G_{\overline{F}}$ which contains $T_{\overline{F}}$.   Let $\mu$ be the unique representative of $\{ \mu \}$ valued in $T_{\overline{F}}$ such that for every embedding $\psi:K \ra \overline{F}$ the component $\mu_{\psi}$ is $B$-dominant.  Define
$$
\la_{\mu} := \sum_{\psi:K \ra \overline{F}} \mu_{\psi}.
$$
The conjugacy class of $\la_{\mu}$ is independent of choice of $B$.  

We are now ready to state the theorem identifying the special fiber of $M_{\cG}(\mu)$.  Recall the $\la_{\mu}$-admissible set $\Adm^{\fa^{\flat}}(\la_{\mu})$ introduced by Kottwitz and Rapoport (see Definition \ref{admissible}). 

\begin{thm} \label{thmadm} Suppose that $p \nmid |\pi_1(G_{\der})|$.  As a closed subscheme of $\Fl_{\cP_{\fa^{\flat}}}$, we have
$$
\bigcup_{w \in \Adm^{\fa^{\flat}} (\la_{\mu})} S_w^{\fa^{\flat}} = \overline{M}_{\cG}(\mu).
$$
\end{thm}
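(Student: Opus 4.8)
The plan is to establish the two inclusions separately, following the strategy of \cite[\S 9]{PZ}: the containment $\bigcup_{w\in\Adm^{\fa^{\flat}}(\la_\mu)}S_w^{\fa^{\flat}}\subseteq\overline{M}_{\cG}(\mu)$ is elementary, while the reverse one rests on the coherence conjecture combined with the product formula. For the first inclusion, recall from Proposition \ref{invar} that $\overline{M}_{\cG}(\mu)_{\red}$ is already a union of affine Schubert varieties in $\Fl_{\cP_{\fa^{\flat}}}$, so it is enough to determine which $w$ occur, which we may check over $\overline k$ (keeping the running assumption $F=K_0$). By Corollary \ref{inclusion}, $S^{\fa^{\flat}}_{t_{\overline{\la}'}}\subseteq\overline{M}_{\cG}(\mu)$ for every $\la'$ in the absolute Weyl group orbit of $\la_\mu$, i.e. for translation by every element of $\Lambda_{\la_\mu}$. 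Since $\overline{M}_{\cG}(\mu)$ is closed and $L^+\cP_{\fa^{\flat}}$-stable, and since $S_w^{\fa^{\flat}}\subseteq S_{w'}^{\fa^{\flat}}$ whenever $w\le w'$ for the Bruhat order $\le$ on $\widetilde{W}^{\flat}$, it then contains $S_w^{\fa^{\flat}}$ for every $w$ in the double-coset saturation $W_{\fa^{\flat}}\,\{w'\le t_{\la'}:\la'\in\Lambda_{\la_\mu}\}\,W_{\fa^{\flat}}$, which by Definition \ref{admissible} is exactly $\Adm^{\fa^{\flat}}(\la_\mu)$.

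For the reverse inclusion I would argue by comparing Hilbert polynomials. After base change to $\cO_{\breve E}$ it suffices, by flatness and properness of $M_{\cG}(\mu)$ over $\cO_E$ (Proposition \ref{Mflat}), to exhibit an ample line bundle $\cL$ for which $\overline{M}_{\cG}(\mu)$ and $\bigcup_{w\in\Adm^{\fa^{\flat}}(\la_\mu)}S_w^{\fa^{\flat}}$ have the same Hilbert polynomial: granting this, the ideal sheaf $\cJ$ of $\bigcup_w S_w^{\fa^{\flat}}$ inside $\overline{M}_{\cG}(\mu)$ (a closed subscheme by the first part) has $\chi(\cJ\otimes\cL^{\otimes n})=0$ for all $n$, so for $n\gg 0$ Serre vanishing and ampleness make $\cJ\otimes\cL^{\otimes n}$ globally generated with vanishing global sections, forcing $\cJ=0$; in particular $\overline{M}_{\cG}(\mu)$ is then reduced. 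One produces $\cL$ by pulling back the determinant line bundle along the locally closed immersion $\Fl_{\cG}^{Q(u)}\hookrightarrow\Gr_{\GL_n}^{Q(u)}$ of the proof of Proposition \ref{indscheme} and twisting so that it is relatively ample; its restriction to the special fiber $\Fl_{\cP_{\fa^{\flat}}}$ is ample, and its restriction to the geometric generic fiber $\Gr_{\Res_{K/F}G}\cong\prod_{\psi:K\to\overline F}\Gr_{G_\psi}$ is an exterior tensor product $\boxtimes_\psi\cL_\psi$ with each $\cL_\psi$ of a common central charge.

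For $n\gg 0$ the $\cO_{\breve E}$-module $\Gamma(M_{\cG}(\mu),\cL^{\otimes n})$ is finite locally free, of rank computed on either fiber. On the generic fiber $S_{\Res_{K/F}G}(\mu)_{\overline F}=\prod_\psi S_G(\mu_\psi)$, so by the K\"unneth formula this rank equals $\prod_\psi\dim\Gamma(S_G(\mu_\psi),\cL_\psi^{\otimes n})$; this product, and more precisely its invariance under degenerating the $[K:F]$ divisors $\{u=\varpi_\psi\}$ to the single divisor $\{u=0\}$, is the content of the product formula (Proposition \ref{productformula}). The coherence conjecture of Pappas and Rapoport, proved in \cite{ZhuCoh} and applicable since $p\nmid|\pi_1(G_{\der})|$, identifies this common value with $\dim\Gamma\bigl(\bigcup_{w\in\Adm^{\fa^{\flat}}(\la_\mu)}S_w^{\fa^{\flat}},\cL^{\otimes n}\bigr)$, the input cocharacter being $\la_\mu=\sum_\psi\mu_\psi$. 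Hence the two Hilbert polynomials coincide and, by the argument above, the two closed subschemes agree.

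The hard part is to link the line bundle on the degeneration $\Fl_{\cG}^{Q(u)}$ to the inputs of the coherence conjecture: one must verify that its generic-fiber restriction is genuinely $\boxtimes_\psi\cL_\psi$ with each $\cL_\psi$ of the correct central charge, and that its special-fiber restriction is the line bundle on the affine flag variety of $G^{\flat}$ for which the coherence conjecture is formulated, so that the product formula and the coherence conjecture can be chained along one and the same $\cL$. A secondary technical point is checking that $\Adm(\la_\mu)$ is precisely the Bruhat down-set of $\{t_{\la'}:\la'\in\Lambda_{\la_\mu}\}$ in the possibly non-reduced Iwahori--Weyl group $\widetilde{W}^{\flat}$, together with the standard reduction from parahoric to Iwahori level where it is convenient.
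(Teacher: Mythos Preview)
Your outline matches the paper's strategy: the easy inclusion via Corollary \ref{inclusion} and $L^+\cP_{\fa^\flat}$-stability, then equality of Hilbert polynomials by chaining flatness, the product formula, and the coherence conjecture along a single ample line bundle on $\Fl_{\cG}^{Q(u)}$.

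The one point where you are vague and the paper is precise is exactly the one you flag as ``the hard part.'' You propose to pull back the determinant bundle along an arbitrary closed immersion $\cG\hookrightarrow\GL_n$ (the one from Proposition \ref{indscheme}) and then ``twist.'' The paper instead makes a canonical choice: it uses the \emph{adjoint} representation, setting $\cL_{\cG}^{Q(u)}:=\Ad^*(\cL_{\det}^{Q(u)})$. This is not a cosmetic preference. With this choice, Proposition \ref{Ldecomp} shows that on the geometric generic fiber the bundle is literally $\boxtimes_\psi \cL_{G,\det}$, and on the special fiber it is $\cL_{\cP_{\fa^\flat},\det}$, whose ampleness is Proposition \ref{Pample}. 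Both the product formula (Proposition \ref{productformula}) and the form of the coherence conjecture invoked from \cite[(9.19)]{PZ} are stated for the adjoint determinant bundle $\cL_{H,\det}$, so the chain
\[
h^0(\overline{M}_{\cG}(\mu),\cL_k^{\otimes n}) \;=\; \prod_\psi h^0(S_H(\mu_\psi),\cL_{H,\det}^{\otimes n}) \;=\; h^0(S_H(\la_\mu),\cL_{H,\det}^{\otimes n}) \;=\; h^0\!\Bigl(\textstyle\bigcup_w S_w^{\fa^\flat},\cL_k^{\otimes n}\Bigr)
\]
goes through without any matching of central charges left to the reader. With a generic faithful representation you would have to redo both Proposition \ref{productformula} and the coherence input for that bundle's central charge, which is extra work (and the product formula as stated is specifically for $\cL_{H,\det}$). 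So your proposal is correct in architecture but incomplete at precisely the linchpin; the fix is to use $\Ad$ rather than an arbitrary embedding.
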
 

\begin{rmk} The set $\Adm^{\fa^{\flat}} (\la_{\mu})$ only depends on the geometric conjugacy class of $\la_{\mu}$ so the right side in Theorem \ref{thmadm} like the left side is determined only by $\{\mu\}$. 
\end{rmk}

We begin by constructing an ample line bundle on $M_{\cG}(\mu)$. Consider the case of $\cG = \GL_n$.  Let $M_0 = (\cO[u])^n$ be as in Example \ref{GLN}.  For any $Q(u)$-lattice $M$ of $M_0[1/Q(u)]$ with coefficients in $R$, define
$$
\det_R(M) := \det_R(Q(u)^{-N}(M_0 \otimes R)/M) \otimes_R \det_R((M_0 \otimes R)/Q(u)^N (M_0 \otimes R))
$$
for $N \gg 0$.  See \cite[10.1.15-17]{LevinThesis} for more detail. 

\begin{defn} \label{GLNdet} Define a line bundle $\cL_{\det}^{Q(u)}$ on $\Gr_{\GL_n}^{Q(u)}$ by 
 $$
M \mapsto \det_R(M)
$$
for any $M \in \Gr_{\GL_n}^{Q(u)}(R)$ (using the description of $\Gr_{\GL_n}^{Q(u)}$ from Example \ref{GLN}).  
\end{defn}

\begin{rmk} The line bundle $\cL_{\det}^{Q(u)}$ is defined in the same way as the determinant line bundle on the ordinary affine Grassmanian $\Gr_{\GL_n}$.  It is not hard to see that $\cL_{\det}^{Q(u)}$ is an ample line bundle from the description of  $\Gr_{\GL_n}^{Q(u)}$ as an ind-scheme from Example \ref{GLN}.
\end{rmk} 

Let us recall briefly a few facts about line bundles on ordinary affine Grassmanians.   Let $H$ be a connected reductive groups over an algebraically closed field $\kappa$.  The affine Grassmanian $\Gr_{\GL_n}$ is the moduli space of ``lattices" in $\kappa(\!(u)\!)^n$  and is equipped with a canonical ample line bundle the determinant line bundle $\cL_{\det}$ (see \cite[pg.42]{Faltings}).  

Let $\Lie (H)$ denote the Lie algebra of $H$.  Define $\cL_{H, \det}$ on $\Gr_H$ to be the pullback of $\cL_{\det}$ under the natural map
$$
\Ad:\Gr_H \ra \Gr_{\GL(\Lie (H))}.
$$ 
More generally, if $\cP$ is a parahoric group scheme over $\kappa[\![u]\!]$, then $\Lie (\cP)$ is a finite free $\kappa[\![u]\!]$-module.  We define $\cL_{\cP, \det}$ on $\Fl_{\cP}$ to be the pullback of $\cL_{\det}$ under
$$
\Ad:\Fl_{\cP} \ra \Gr_{\GL(\Lie (\cP))}.
$$ 

\begin{prop} \label{Pample} Let $\cP$ be a parahoric group scheme over $\kappa[\![u]\!]$ such that $G' := \cP_{\kappa(\!(u)\!)}$ splits over a tamely ramified extension.  Assume that $\mathrm{char}(\kappa) \nmid |\pi_1(G'_{\der})|$.  Then $\cL_{\cP, \det}$ is ample on $\Fl_{\cP}$.  
\end{prop}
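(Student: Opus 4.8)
The plan is to reduce to the case of an absolutely simple group and then to deduce ampleness from the ampleness of the determinant bundle on $\Gr_{\GL_n}$, using that the adjoint morphism is finite on affine Schubert varieties. One may assume $\kappa$ algebraically closed. Recall from \S 2.1 that $(\Fl_{\cP})_{\red} = \bigcup_{w \in \widetilde{W}} S_w^{\fa}$ is a directed union of the projective affine Schubert varieties $S_w^{\fa}$; so to prove $\cL_{\cP, \det}$ ample on $\Fl_{\cP}$ it suffices to show $\cL_{\cP, \det}|_{S_w^{\fa}}$ is ample for every $w$. A standard reduction, using the tameness of $G'$ and the structure of $\Pic$ of twisted affine flag varieties, lets us assume furthermore that $G'$ is semisimple, and then (decomposing into simple factors) absolutely simple. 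The hypothesis $\Char(\kappa) \nmid |\pi_1(G'_{\der})|$ is exactly what makes these reductions harmless: it is the condition under which $\Lie(G')$ is a faithful $G'$-module and the adjoint morphism of the adjoint group is a closed immersion, so that $\Ad$ collapses no positive-dimensional subgroup and $Z(G')$ is a finite group scheme.

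Now consider the morphism $\Ad \colon \Fl_{\cP} \ra \Gr_{\GL(\Lie(\cP))}$ through which, by definition, $\cL_{\cP, \det}$ is pulled back from the determinant bundle $\cL_{\det}$. I claim $\Ad|_{S_w^{\fa}}$ is finite. It is proper, since $S_w^{\fa}$ is proper over $\kappa$ and $\Gr_{\GL(\Lie(\cP))}$ is separated, so it suffices to check quasi-finiteness. Write $\mathfrak{g}' := \Lie(G')$. The kernel of $G' \ra \GL(\mathfrak{g}')$ is $Z(G')$, finite by the above, so the stabilizer $N$ in $G'(\kappa(\!(u)\!))$ of the lattice $\Lie(\cP) \subset \mathfrak{g}'$ maps to the compact group $\GL(\Lie(\cP))(\kappa[\![u]\!])$ with finite kernel and is therefore compact and open; moreover $N \supseteq L^+\cP(\kappa)$ because $\cP$ preserves $\Lie(\cP)$. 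Two $\kappa$-points $g_1 L^+\cP$, $g_2 L^+\cP$ of $\Fl_{\cP}$ have the same image in $\Gr_{\GL(\Lie(\cP))}$ precisely when $g_2^{-1} g_1 \in N$, so the fibre through $g_2 L^+\cP$ is $N/L^+\cP(\kappa)$, which is finite (both groups are compact open and $L^+\cP(\kappa) \subseteq N$). Hence $\Ad|_{S_w^{\fa}}$ is finite.

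Since $\cL_{\det}$ is ample on $\Gr_{\GL(\Lie(\cP))}$ — it restricts to an ample bundle on every finite-type closed subscheme, in particular on $\Ad(S_w^{\fa})$ (see \cite{Faltings}) — and the pullback of an ample line bundle under a finite morphism is ample, it follows that $\cL_{\cP, \det}|_{S_w^{\fa}} = (\Ad|_{S_w^{\fa}})^{*}\cL_{\det}$ is ample for every $w$, and therefore $\cL_{\cP, \det}$ is ample on $\Fl_{\cP}$.

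The main obstacle is the quasi-finiteness of $\Ad$ on Schubert varieties — that is, making precise how $\Char(\kappa) \nmid |\pi_1(G'_{\der})|$ forces $Z(G')$ and the lattice stabiliser $N$ modulo $L^+\cP$ to be finite; this is where the reduction to an absolutely simple $G'$ and the classical relation between the fundamental group, the center, and faithfulness of the adjoint representation enter. An alternative, avoiding this point, is to identify the class of $\cL_{\cP, \det}$ in $\Pic(\Fl_{\cP})$ by computing its degree on each of the finitely many rational curves $C_s \cong \PP^1$ attached to the affine simple reflections $s$: these degrees are sums $\sum_{\beta} \max(0, \langle \beta, \alpha_s^{\vee} \rangle)$ over the multiset of weights $\beta$ by which $G'$ acts on $\Lie(\cP)$, hence strictly positive provided no weight degenerates (again the role of the hypothesis on $\Char(\kappa)$), and positivity on all the $C_s$ is equivalent to ampleness on the affine flag variety.
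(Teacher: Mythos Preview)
Your main argument --- that $\Ad|_{S_w^{\fa}}$ is a finite morphism and hence pulls back ampleness from $\cL_{\det}$ on $\Gr_{\GL(\Lie\cP)}$ --- is correct and takes a genuinely different route from the paper. The paper instead reduces (via translation invariance and \cite[Prop.~6.6, (6.7), (6.11)]{PRTwisted}) to the simply-connected absolutely simple case, uses the description $\Pic(\Fl_{\cP_I}) \cong \bigoplus_{i \in I}\Z\,\cL(\eps_i)$, and then checks that $\cL_{\cP,\det}$ has positive degree on each of the rational curves $\bP_i$ attached to the affine simple reflections by identifying the restriction with a determinant bundle on a partial flag variety of the reductive quotient of the parahoric. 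This is essentially the ``alternative'' you sketch at the end. Your finiteness argument is slicker and bypasses the Picard computation; the paper's approach has the advantage of identifying the numerical class of $\cL_{\cP,\det}$ explicitly, which feeds into the coherence-conjecture argument later.

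Two small remarks. First, the step ``$N$ maps to a compact group with finite kernel and is therefore compact'' needs one more word: a continuous homomorphism with finite kernel is not automatically proper. What makes it work is that, once $G'$ is semisimple, $\Ad\colon G' \to G'_{\ad} \hookrightarrow \GL(\mathfrak g')$ is a \emph{finite morphism of varieties} (central isogeny followed by a closed immersion), and finite morphisms induce topologically proper maps on $\kappa(\!(u)\!)$-points. Second, your account of where the hypothesis $\Char(\kappa)\nmid|\pi_1(G'_{\der})|$ enters is not quite right: for semisimple $G'$ the kernel of $\Ad$ is always the finite center, independent of $\pi_1$. The hypothesis is rather what lets one compare determinant bundles across isogeny reductions, since $\Lie(\widetilde G')\to\Lie(G')$ is an isomorphism exactly when the cover $\widetilde G'\to G'$ is \'etale. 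In fact your finiteness argument, if run directly for semisimple $G'$ without first decomposing into simple factors, appears not to need this hypothesis at all.
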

\begin{proof}  The determinant line bundle is translation invariant by $L G'(\kappa)$ (\cite[pg. 43]{Faltings} or \cite[Lemma 10.3.2]{LevinThesis}) so it suffices to show that $\cL_{\cP, \det}$ restricted to the neutral connected component $\Fl^0_{\cP}$ is ample.   Let $G'_{\der}$ be the derived subgroup of $G'$ and let $\cP_{\der}$ be the corresponding parahoric for $G'_{\der}$.  By \cite[Proposition 6.6]{PRTwisted},
$$
\left(\Fl_{\cP_{\der}}^0 \right)_{\red} \cong  \left(\Fl_{\cP}^0 \right)_{\red}
$$
so we can reduce to the case where $G'$ is semi-simple.  

Let $\widetilde{G}' \ra G'$ denote the simply-connected cover of $G'$.   Let $\widetilde{\cP}$ be the corresponding parahoric for $\widetilde{G}'$. By  \cite[\S 6.a]{PRTwisted} especially equations (6.7) and (6.11), we have 
$$
 \Fl_{\widetilde{\cP}} \cong   \Fl_{\cP}^0
$$
using also that $ \Fl_{\widetilde{\cP}}$ is connected\cite[Theorem 5.1]{PRTwisted}. Thus, we are reduced to the case where $G'$ is simply-connected.  

Then, $G' \cong \prod \Res_{F_i/\kappa(\!(u)\!)} G_i'$ where $F_i = \kappa(\!(v_i)\!)$ are tamely ramified extensions of $\kappa(\!(u)\!)$ and $G_i'$ are absolutely simple and simply connected.  One can then reduce to the case of $G' = G_i'$.  Choose a Iwahori subgroup $\cI$ contained in $\cP$.  If $\mathbf{S}$ is the set of vertices of the affine Dynkin diagram of $G'$, then $\cP = \cP_I$ for some non-empty subset of $I \subset \mathbf{S}$.  In this case, we have $\Pic(\Fl_{\cP_I}) \cong \Z \cL(\eps_i)^I$ where the coefficient on $\cL(\eps_i)$ is the degree of the line bundle restricted to the projective line $\bP_i$ which is the image of $\cP_{\mathbf{S} - i}/\cI$ (see \cite[Proposition 10.1]{PRTwisted} or discussion after Theorem 2.3 in \cite{ZhuCoh}).

We just have to check that $\cL_{G', \det}$ is ample on $\bP_i$.  When $I = \mathbf{S}$, this is a consequence of \cite[Lemma 4.2]{ZhuCoh}.  For a general parahoric, we have
$$
\bP_i \subset \cP_{I -i}/\cP_I \subset \Fl_{\cP_I}.  
$$
Let $\overline{\cP}_{I-i}$ denote the special fiber of the parahoric $\cP_{I- i}$.  As in the proof of \cite[Lemma 4.2]{ZhuCoh}, $\cP_I$ defines a maximal proper parabolic $P_I$ of the reductive quotient $M_{I-i} := \overline{\cP}^{\red}_{I-i}$.  We can identify $\cP_{I -i}/\cP_I$ with $M_{I-i}/P_I$.  The determinant line bundle on $\Fl_{\cP_I}$ restricted to $M_{I-i}/P_I$ is isomorphic to the determinant line bundle on the Grassmanian of $\dim_{\kappa} \Lie P_I$ subspaces $\Lie M_{I-i}$ which is ample.   
\end{proof}

Now we return to our Bruhat-Tits group scheme $\cG$.  Let $\cV = \Lie \cG$ a finite projective $\cO[u]$-module.  By \cite{Seshadri}, $\cV$ is a free $\cO[u]$-module.  The adjoint representations induces a natural map 
$$
\Ad:\Fl_{\cG}^{Q(u)} \ra \Gr_{\GL(\cV)}^{Q(u)}.
$$
Hence we have a natural line bundle
$$
\cL^{Q(u)}_{\cG} := \Ad^*(\cL^{Q(u)}_{\det})
$$
given by pulling back the determinant line bundle.  

\begin{prop} \label{Ldecomp} The line bundle $\cL^{Q(u)}_{\cG}$ is ample on $\Fl_{\cG}^{Q(u)}$.  Furthermore, under the isomorphism 
$$
\left(\Fl_{\cG}^{Q(u)}\right)_{\overline{F}} \cong \prod_{\psi:K \ra \overline{F}} (\Gr_{(G_{\psi, \overline{F}})}
$$
from Proposition \ref{genfiber} we have $(\cL^{Q(u)}_{\cG})_{\overline{F}} \cong \boxtimes_{\psi:K \ra \overline{F}} (\cL_{G, \det})_{\psi, \overline{F}}$.
\end{prop}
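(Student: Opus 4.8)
The statement splits into two essentially independent claims --- ampleness of $\cL_{\cG}^{Q(u)}$, and its decomposition on the geometric generic fibre --- and the plan is to deduce the first from Proposition \ref{Pample} after reducing to the special fibre, and the second formally from the construction of the isomorphism in Proposition \ref{genfiber}.

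For ampleness, I would first reduce to $K_0 = F$ exactly as in the reductions preceding Theorem \ref{locmain}, the remaining Weil restriction along the finite \'etale $\cO_{K_0}/\cO$ being harmless after base change to $\overline{k}$ and $\overline{F}$. By Theorem \ref{AGproj} one may write $\Fl_{\cG}^{Q(u)} = \varinjlim_i Z_i$ with each $Z_i$ projective over $\Spec \cO$, so that ampleness of $\cL_{\cG}^{Q(u)}$ is equivalent to ampleness of each restriction $\cL_{\cG}^{Q(u)}|_{Z_i}$. Since the locus in the base over which a line bundle on a scheme proper over $\Spec\cO$ is fibrewise ample is open, and $\Spec \cO$ is local (with $Z_i$ having non-empty special fibre whenever it is non-empty), it suffices to check that $(\cL_{\cG}^{Q(u)})_k$ is ample on $(\Fl_{\cG}^{Q(u)})_k = \Fl_{\cP_{\fa^{\flat}}}$ (Proposition \ref{specfiber}). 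Now $\cG_{k[\![u]\!]} = \cP_{\fa^{\flat}}$ by Theorem \ref{BTscheme}(3), hence $\cV \otimes_{\cO[u]} k[\![u]\!] = \Lie \cP_{\fa^{\flat}}$ and the adjoint morphism $\Ad\colon \Fl_{\cG}^{Q(u)} \to \Gr_{\GL(\cV)}^{Q(u)}$ restricts on the special fibre to $\Ad\colon \Fl_{\cP_{\fa^{\flat}}} \to \Gr_{\GL(\Lie \cP_{\fa^{\flat}})}$; since $\cL_{\det}^{Q(u)}$ restricts to $\cL_{\det}$ over $k$, this gives $(\cL_{\cG}^{Q(u)})_k = \cL_{\cP_{\fa^{\flat}},\det}$. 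As $G^{\flat} = \cP_{\fa^{\flat},k(\!(u)\!)}$ splits over a tamely ramified extension by its construction in \S 3, and $p \nmid |\pi_1(G^{\flat}_{\der})| = |\pi_1(G_{\der})|$ (the equality holding because $G^{\flat}$ and $G$ are forms of the same split group; cf.\ Proposition \ref{IWsame}), Proposition \ref{Pample} shows $\cL_{\cP_{\fa^{\flat}},\det}$ is ample, which finishes this part.

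For the decomposition over $\overline{F}$, recall that the isomorphism $(\Fl_{\cG}^{Q(u)})_{\overline{F}} \cong \prod_{\psi\colon K \to \overline{F}} \Gr_{G_{\psi,\overline{F}}}$ of Proposition \ref{genfiber} is built by the standard gluing over the pairwise disjoint divisors $D_{\psi}$ cutting out the roots $\varpi_{\psi}$ of $Q(u)$, together with the identification $\cG_{\psi} \cong G\otimes_{K,\psi}\overline{F}[\![u - \varpi_{\psi}]\!]$ established there. Forming the associated bundle along $\Ad$ is natural in the structure group and commutes with restriction to the formal disc at $D_{\psi}$, and $\cV_{\psi} = \Lie\cG_{\psi} = (\Lie G_{\psi})\otimes \overline{F}[\![u - \varpi_{\psi}]\!]$; hence, under the isomorphism of Proposition \ref{genfiber} and its $\GL(\cV)$-analogue, the morphism $\Ad$ becomes the product of the $\Ad\colon \Gr_{G_{\psi}} \to \Gr_{\GL(\Lie G_{\psi})}$. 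It then remains to check that $\cL_{\det}^{Q(u)}$ on $\Gr_{\GL(\cV)}^{Q(u)}$ decomposes over $\overline{F}$ as $\boxtimes_{\psi} \cL_{\det}$: by Definition \ref{GLNdet} the fibre of $\cL_{\det}^{Q(u)}$ at a $Q(u)$-lattice $M$ is determined by the finite-length modules $Q(u)^{-N}(\cV_0\otimes R)/M$ and $(\cV_0\otimes R)/Q(u)^N(\cV_0\otimes R)$, both supported on $\coprod_{\psi}D_{\psi}$ and so canonically the direct sum of their localizations at the $D_{\psi}$; taking determinants turns these direct sums into tensor products, so $\det_R(M) = \bigotimes_{\psi}\det_R(M_{\psi})$ where $M_{\psi}$ is the image of $M$ in $\Gr_{\GL(\Lie G_{\psi})}$. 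Pulling back along $\Ad$ yields $(\cL_{\cG}^{Q(u)})_{\overline{F}} = \boxtimes_{\psi}\Ad^*\cL_{\det} = \boxtimes_{\psi}(\cL_{G,\det})_{\psi,\overline{F}}$, as claimed.

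The one genuinely substantive ingredient, ampleness on the special fibre, is supplied by Proposition \ref{Pample}; the rest is bookkeeping, and the point I expect to require the most care is the compatibility of the equivalence in Proposition \ref{genfiber} --- stated for $\cG$-bundles with trivializations --- with passage to associated $\GL(\cV)$-bundles along $\Ad$ and with the determinant construction, together with the analogous compatibility on the special fibre that yields $(\cL_{\cG}^{Q(u)})_k \cong \cL_{\cP_{\fa^{\flat}},\det}$. Both reduce to the naturality of all the constructions involved in the structure group and so present no real difficulty.
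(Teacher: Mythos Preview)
Your proposal is correct and follows essentially the same route as the paper: ampleness is reduced to the fibres using ind-projectivity, the special fibre is handled by Proposition~\ref{Pample}, and the generic-fibre decomposition is obtained by reducing along $\Ad$ to the $\GL_n$ case and decomposing the determinant line bundle via the splitting of $Q(u)$-lattices over $\overline{F}$.

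There is one small but genuine difference worth noting. The paper verifies ampleness on \emph{both} fibres separately: on the special fibre via Proposition~\ref{Pample}, and on the generic fibre as a consequence of the product decomposition $(\cL_{\cG}^{Q(u)})_{\overline{F}} \cong \boxtimes_{\psi} (\cL_{G,\det})_{\psi,\overline{F}}$ (each factor being ample). You instead invoke openness of the relatively-ample locus over the local base $\Spec\cO$ to conclude from the special fibre alone. Your route is slightly more economical and decouples the two claims in the proposition; the paper's route has the mild advantage of not needing the openness statement but makes the first claim logically depend on the second. Either is fine.
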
 
\begin{proof}
Since $\Fl_{\cG}^{Q(u)}$ is ind-projective (Theorem \ref{AGproj}), we can check ampleness on fibers.  On the special fiber, 
$$
(\cL^{Q(u)}_{\cG})_k = \cL_{\cP_{\fa^{\flat}}, \det}
$$
under the isomorphism from Proposition \ref{specfiber} which is ample by Proposition \ref{Pample}. 

The second statement of the Proposition will imply that $\cL^{Q(u)}_{\cG}$ is ample on the generic fiber as well.  The product decomposition for $(\cL^{Q(u)}_{\cG})_{\overline{F}}$ reduces immediately to the case where $\cG = \GL_n$. The ind-scheme $(\Gr^{Q(u)}_{\GL_n})_{\overline{F}}$ is a moduli space of $Q(u)$-lattices in $M_0 = \overline{F}[u]^n$  (Example \ref{GLN}).  The product decomposition 
$$
\left(\Gr_{\GL_n}^{Q(u)}\right)_{\overline{F}} \cong \prod_{\psi:K \ra \overline{F}} (\Gr_{\GL_n})_{\psi, \overline{F}}
$$
is induced by the decomposition $$(Q(u)^{-N} M_0/Q(u) M_0) \cong \bigoplus_{\psi}\left( (u- \psi(\varpi))^{-N} M_0 / (u - \psi(\varpi))^N M_0 \right)$$ for all positive integers $N$.  One computes directly that $\cL_{\det}^{Q(u)} \cong \boxtimes_{\psi:K \ra \overline{F}} \cL_{\det}$ as in \cite[Proposition 10.1.19]{LevinThesis}.
\end{proof} 

If $X$ is a projective scheme over a field $\kappa$ and $\cL$ is a line bundle, the $h^0(X, \cL)$ is defined to be the $\kappa$-dimension of $H^0(X, \cL)$. 

\begin{prop} \label{productformula} Let $H$ be split connected reductive group over a field $\kappa$ with Borel subgroup $B_H$. Suppose that $\mathrm{char}(\kappa) \nmid |\pi_1(H_{\der})|$.  Then
$$
h^0(S_H(\mu + \mu'), \cL_{H, \det}^{\otimes n}) = (h^0(S_H(\mu), \cL_{H, \det}^{\otimes n})) (h^0(S_H(\mu'), \cL_{H, \det}^{\otimes n}))
$$
for any $B_H$-dominant cocharacters $\mu$ and $\mu'$ of $H$. 
\end{prop}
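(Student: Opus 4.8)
The plan is to identify both sides with the dimension of a single space of global sections on a flat degeneration, using the convolution Grassmannian and the factorization property of the determinant line bundle. Write $\cL := \cL_{H,\det}$ (extending scalars, we may assume $\kappa$ algebraically closed), and let $m\colon \Gr_H \widetilde{\times} \Gr_H \to \Gr_H$ be the convolution morphism on the twisted product. Restricting $m$ to the Schubert stratification, it carries $S_H(\mu)\widetilde{\times}S_H(\mu')$ properly onto a closed, $L^{+}H$-stable, irreducible subvariety of $\Gr_H$; this image contains the point $u^{\mu+\mu'}$, hence contains $S_H(\mu+\mu') = \overline{L^{+}H\cdot u^{\mu+\mu'}}$, while for $B_H$-dominant $\mu,\mu'$ the convolution of dominant Schubert cells is bounded above by $\mu+\mu'$, so the image is contained in $S_H(\mu+\mu')$. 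Thus $m$ maps $S_H(\mu)\widetilde{\times}S_H(\mu')$ onto $S_H(\mu+\mu')$, and since, using dominance, $\dim\bigl(S_H(\mu)\widetilde{\times}S_H(\mu')\bigr) = \langle 2\rho,\mu\rangle+\langle 2\rho,\mu'\rangle = \langle 2\rho,\mu+\mu'\rangle = \dim S_H(\mu+\mu')$, and $m$ restricts to an isomorphism over the open $L^{+}H$-orbit of $S_H(\mu+\mu')$, it is birational. Under the hypothesis $\mathrm{char}(\kappa)\nmid|\pi_1(H_{\der})|$, the affine Schubert varieties $S_H(\mu),S_H(\mu'),S_H(\mu+\mu')$ and the twisted product $S_H(\mu)\widetilde{\times}S_H(\mu')$ are normal with rational singularities and Frobenius split (Faltings, Mathieu, Pappas--Rapoport), and $\cL$ is ample on each Schubert variety by Proposition~\ref{Pample}; feeding a Demazure resolution of the source through $m$ then shows that $m$ is a \emph{rational resolution}, i.e.\ $Rm_*\cO_{S_H(\mu)\widetilde{\times}S_H(\mu')} = \cO_{S_H(\mu+\mu')}$. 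By the projection formula $Rm_*(m^*\cL^{\otimes n}) = \cL^{\otimes n}$, so
\[
h^0\bigl(S_H(\mu)\widetilde{\times}S_H(\mu'),\, m^*\cL^{\otimes n}\bigr) = h^0\bigl(S_H(\mu+\mu'),\, \cL^{\otimes n}\bigr),
\]
and $H^{>0}$ of both line bundles vanishes (Frobenius splitness together with ampleness of $\cL$).

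It remains to compute the left-hand side, and for this I would use the degeneration furnished by the convolution Grassmannian over a smooth curve $C\to\bA^1\times\bA^1$ meeting the diagonal transversally at a point $0$: the resulting family has generic fibre $\Gr_H\times\Gr_H$, has fibre $\Gr_H\widetilde{\times}\Gr_H$ over $0$ with $m$ as the convolution map, the flat closure of $S_H(\mu)\times S_H(\mu')$ has fibre $S_H(\mu)\widetilde{\times}S_H(\mu')$ over $0$, and the factorizable determinant line bundle restricts to $\cL^{\otimes n}\boxtimes\cL^{\otimes n}$ over $C\setminus\{0\}$ and to $m^*\cL^{\otimes n}$ over $0$ (Faltings; see the discussion of line bundles on convolution and BD Grassmannians in \cite{ZhuCoh}). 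The first cohomology of this line bundle vanishes on every fibre of the family --- on $S_H(\mu)\times S_H(\mu')$ because it is Frobenius split and $\cL^{\otimes n}\boxtimes\cL^{\otimes n}$ is ample, and on $S_H(\mu)\widetilde{\times}S_H(\mu')$ by the previous paragraph --- so the push-forward along $C$ is locally free and $h^0$ is constant. Hence $h^0\bigl(S_H(\mu)\widetilde{\times}S_H(\mu'),\, m^*\cL^{\otimes n}\bigr) = h^0\bigl(S_H(\mu)\times S_H(\mu'),\, \cL^{\otimes n}\boxtimes\cL^{\otimes n}\bigr)$, which by the Künneth formula equals $h^0(S_H(\mu),\cL^{\otimes n})\cdot h^0(S_H(\mu'),\cL^{\otimes n})$. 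Combining this with the displayed equality above yields the proposition. (If preferred, the $\mathrm{char}$ hypothesis lets one first reduce to $H$ semisimple and simply connected as in the proof of Proposition~\ref{Pample}, but this is not needed for the argument above.)

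The main obstacle is the assertion in the first paragraph that $m$ restricts to a rational resolution $S_H(\mu)\widetilde{\times}S_H(\mu')\to S_H(\mu+\mu')$: this combines the identification of the image of the convolution --- where dominance of $\mu,\mu'$ is essential --- with the rationality of the singularities of affine Schubert varieties and of their twisted products, and it is precisely here that the hypothesis on $\mathrm{char}(\kappa)$ and the Frobenius-splitting technology for affine flag varieties enter. By contrast, the flatness of the convolution degeneration and the uniform vanishing of higher cohomology along it, though they again rely on Frobenius splitness, are comparatively routine.
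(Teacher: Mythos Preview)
Your argument is essentially correct and is, in outline, the geometric proof that the paper cites as an alternative (Zhu, \cite{ZhuAffine}, Theorem~1.2.2). The paper itself, however, takes a much shorter route: it observes that under the hypothesis on $\pi_1(H_{\der})$ the Schubert varieties and the line bundle $\cL_{H,\det}$ all spread out over $\Spec\Z$, and that Frobenius splitness in positive characteristic forces $H^1(S_H(\lambda),\cL_{H,\det})=0$, so $h^0$ is constant in the family over $\Spec\Z$. This reduces the statement to characteristic~$0$, where it is a theorem of Fourier--Littelmann \cite{FL} on tensor product decompositions of affine Demazure modules.

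The trade-off is clear. The paper's argument is a two-line reduction plus a black-box citation, but it hides the mechanism entirely. Your approach is self-contained in spirit and explains \emph{why} the formula holds: the convolution $S_H(\mu)\widetilde{\times}S_H(\mu')\to S_H(\mu+\mu')$ is birational with $Rm_*\cO=\cO$, and the Beilinson--Drinfeld factorization identifies $m^*\cL$ with the degeneration of $\cL\boxtimes\cL$. One point you pass over a bit quickly is the identification of the special fibre of the flat closure of $S_H(\mu)\times S_H(\mu')$ in the convolution family with $S_H(\mu)\widetilde{\times}S_H(\mu')$ (not merely a subscheme with the same reduced structure); this is itself established by a cohomology-comparison argument of the same flavour, so in a complete write-up it deserves a sentence and a reference rather than the label ``routine''. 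With that caveat, your proof stands.
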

\begin{proof}  Under the hypothesis on $\pi_1(H_{\der})$, the Schubert varieties and $\cL_{H, \det}$ are all defined over $\Spec \Z$ by \cite{Faltings} (see also the discussion in \cite[\S 8.e.3, \S 8.e.4]{PRTwisted}) and $H^1(S_H(\la), \cL_{H, \det}) = 0$ for all cocharacters $\la$ since $S_H(\la)$ is Frobenius split in finite characteristic.  Thus, we can reduce to the case of characteristic 0. In this setting, it is originally due to \cite[Theorem 1]{FL}.  A geometric proof is given in \cite[Theorem 1.2.2]{ZhuAffine}.   
\end{proof}

\begin{rmk} The product formula in Proposition \ref{productformula} plays a role like that of the coherence conjecture of Pappas-Rapoport when the group is defined by Weil-restriction.  For an unramified group and certain maximal parahorics, one only needs the product formula to prove Theorem \ref{thmadm} as in \cite{LevinThesis}.    
\end{rmk}

\begin{proof}[Proof of Theorem \ref{thmadm}:]  Set $\cA^{\cP_{\fa^{\flat}}}(\la_{\mu}) := \cup_{w \in \Adm^{\fa^{\flat}} (\la_{\mu})} S_w^{\fa^{\flat}}$.  We first show that
\begin{equation} \label{w5}
\cA^{\cP_{\fa^{\flat}}}(\la_{\mu}) \subset \overline{M}_{\cG}(\mu).
\end{equation}
Since $ \overline{M}_{\cG}(\mu)$ is $L^+ \cP_{\fa^{\flat}}$-stable, it suffices to show that $S_w^{\fa^{\flat}} \subset \overline{M}_{\cG}(\mu)$ for $w$ an extremal element in $\Adm^{\fa^{\flat}} (\la_{\mu})$ under the Bruhat order.  Representatives for the extremal elements in $\Adm^{\fa^{\flat}} (\la_{\mu})$ are given by the translations $t_{\la'}$ where $\la'$ is in the $W$-orbit of $\la_{\mu}$ (see \cite[Corollary 2.9]{RicharzAG}).  The inclusion (\ref{w5}) follows from Corollary \ref{inclusion}.  

Let $\cL := \cL_{\cG}^{Q(u)}$ be the ample line bundle on $M_{\cG}(\mu)$ from Proposition \ref{Ldecomp}.   To show that 
$$
\cA^{\cP_{\fa^{\flat}}}(\la_{\mu}) = \overline{M}_{\cG}(\mu)
$$
it suffices to show that 
$$
h^0(\cA^{\cP_{\fa^{\flat}}}(\la_{\mu}), \cL_k^{\otimes n}) = h^0(\overline{M}_{\cG}(\mu), \cL_k^{\otimes n})
$$
for all $n \gg 0$.  By flatness, for $n \gg 0$, 
$$
h^0(\overline{M}_{\cG}(\mu), \cL_k^{\otimes n}) = h^0(M_{\cG}(\mu)_{\overline{F}}, \cL_{\overline{F}}^{\otimes n}).
$$
Let $H$ be the split form of $G$. Furthermore, $M_{\cG}(\mu)_{\overline{F}} = \prod_{\psi:K \ra \overline{F}} S_{H_{\overline{F}}} (\mu_{\psi})$ and so by Proposition \ref{Ldecomp}
$$
h^0(M_{\cG}(\mu)_{\overline{F}}, \cL_{\overline{F}}^{\otimes n}) = \prod_{\psi:K \ra \overline{F}} h^0(S_{H_{\overline{F}}} (\mu_{\psi}), \cL_{\det, G}^{\otimes n}).
$$
The product formula (Proposition \ref{productformula}) implies that 
$$
 \prod_{\psi:K \ra \overline{F}} h^0(S_{H_{\overline{F}}} (\mu_{\psi}), \cL_{\det, G}^{\otimes n}) = h^0(S_G(\la_{\mu})_{\overline{F}}, \cL_{\det, G}^{\otimes n}).
$$
Finally we appeal to the coherence conjecture of Pappas-Rapoport in the form given in \cite[(9.19)]{PZ} (the conjecture is proven in \cite{ZhuCoh}) which says that 
$$
h^0(\cA^{\cP_{\fa^{\flat}}}(\la_{\mu}), \cL_k^{\otimes n})  = h^0(S_G(\la_{\mu})_{\overline{F}}, \cL_{\det, G}^{\otimes n})
$$
for $n \gg 0$.  
\end{proof}

\begin{rmk} Theorem \ref{thmadm} implies that the irreducible components of $\overline{M}_{\cG}(\mu)$ are in bijection with extremal elements in $\Adm^{\fa^{\flat}}(\la_{\mu})$.   Thus, $\overline{M}_{\cG}(\mu)$ is equidimensional and the number of components is equal to the size of $W_{0, \fa^{\flat}} \backslash W_0 / W_{0, \la_{\mu}}$ by \cite[Corollary 2.9]{RicharzAG} where $W_{0, \la_{\mu}}$ is the stabilizer of image of $\la_{\mu}$ in $X_*(T)_I$.  
\end{rmk}

\section{Nearby cycles}

\subsection{Setup and constructions}

In this section, we study the sheaves of nearby cycles of the local models we have constructed.  This extends work of \cite{Gaitsgory, HNnearby, PZ}.  We show in particular Theorem \ref{Kottwitz} (Kottwitz conjecture) and the unipotence of the monodromy action (Theorem \ref{monodromy}).  In \S 5.4, we introduce splitting models when $G$ is unramified and $\fa$ is \emph{very special}. In this situation, we are able to give a more explicit description of the sheaf of nearby cycles generalizing results of \cite{PR1, PR2}.  

To simplify notation, we will assume that $K/F$ is totally ramified (i.e., $K_0 = F$).  Everything we say holds in the situation where $K/F$ is not totally ramified using that $M_{\cG}(\mu)_{\cO_{K_0}}$ is a product of local models for the group $\Res_{K/K_0} G$.   

Our notation and setup will be as in \S 10.1.1 and 10.1.2 of \cite{PZ} unless otherwise indicated.  We refer the reader there for background on perverse sheaves and nearby cycles. In particular, $X = \bA^1_{\cO} = \Spec \cO[u]$ and 
$$
\cP = \cG \times_X \Spec (\cO[\![t]\!]) \text{ with } u \mapsto t
$$
is a group scheme over $\cO[\![t]\!]$. We define $\Fl_{\cP}$ to be the ``twisted" affine flag variety over $\Spec \cO$ parametrizing $\cP$-bundles trivialized away from $t = 0$. Note that $\cP_{F[\![t]\!]}$ is a parahoric group scheme for the group $\cG_{F(\!(t)\!)}$ and $\cP_{k[\![t]\!]}$ is the parahoric group scheme $\cP_{\fa^{\flat}}$ from Definition \ref{xuuu}. For any $\cO$-algebra $\kappa$, we will use $\cP_{\kappa}$ to denote the base change $\cP_{\kappa[\![t]\!]}$.   For any finite field $k' \supset k$, we have
$$
\cP_{k'} = (\cP_{\fa^{\flat}})_{k'}.
$$

\begin{rmk} The field $K$ doesn't play a a role in the definition of $\Fl_{\cP}$ which is essentially the same as in \cite[\S 10.1.2]{PZ}.  In particular, \cite[Lemma 10.4]{PZ} concerning the $\IC_w$ holds in our situation as well. 
\end{rmk}

We now define the main object of study in this section.

\begin{defn} Define $\cF_{\mu}$ to be the intersection cohomology sheaf on the generic fiber $M_{\cG}(\mu)_E$. The nearby cycles of $\cF_{\mu}$ is denoted
$$
R \Psi_{\mu} := R \Psi^{M_{\cG}(\mu)}(\cF_{\mu}).
$$
\end{defn} 

Since the nearby cycles functor preserves perversity, $R \Psi_{\mu}$ is a perverse sheaf on $(M_{\cG}(\mu))_{\overline{k}}$ with an action of $\Gal(\overline{F}/E)$.  The main theorem (Theorem \ref{commconst}) in this section will be the commutativity constraint on $R \Psi_{\mu}$. 
\begin{prop} \label{sheafequiv} The perverse sheaf $R \Psi_{\mu}$ on $\overline{M}_{\cG}(\mu)_{\overline{k}}$ admits a natural $(L^+ \cP)_{\overline{k}} = (L^+ \cP_{\fa^{\flat}})_{\overline{k}}$-equivariant structure as perverse sheaves on $(\Fl_{\cP})_{\overline{k}}$ which is compatible with the Galois action of $\Gal(\overline{F}/E)$ $($see \cite[Lemma 10.2]{PZ} for a more precise statement$)$.
\end{prop}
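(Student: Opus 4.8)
The plan is to follow Gaitsgory's method for putting equivariant structures on nearby cycles, in the form used in \cite[Lemma 10.2]{PZ}, and to observe that nothing in that argument is sensitive to whether $K/F$ is tame. Since $M_{\cG}(\mu)$ is of finite type (Proposition \ref{Mflat}) and the action of $L^{+,Q(u)}\cG$ on it is nice (this is the niceness established in \S 4.1, via $\Gr^{Q(u)}_{\GL_n}$), the action factors through the smooth affine $\cO_E$-group scheme
$$
\cG_N := \left(\Res_{(\cO[u]/Q(u)^N)/\cO}\cG\right)_{\cO_E}
$$
for some $N \gg 0$; because $\cG$ has connected fibers, so does $\cG_N$. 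Write $M := M_{\cG}(\mu)$, let $a \colon \cG_N \times_{\cO_E} M \to M$ be the action map and $\pr \colon \cG_N \times_{\cO_E} M \to M$ the projection. Both are smooth over $\cO_E$ of relative dimension $\dim \cG_N$: the map $(g,x)\mapsto(g,gx)$ is an automorphism of $\cG_N \times_{\cO_E} M$ over $\cO_E$ carrying $\pr$ to $a$.

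On the generic fiber, $\cF_\mu$ is the intersection cohomology sheaf of the $L^+\Res_{K/F}G$-orbit closure $S_{\Res_{K/F} G}(\mu)$, and since $L^+\Res_{K/F}G$ is geometrically connected and acts on $M_E$ through $(\cG_N)_E$, the sheaf $\cF_\mu$ carries a canonical $(\cG_N)_E$-equivariant structure: an isomorphism $a_E^*\cF_\mu \cong \pr_E^*\cF_\mu$ satisfying the cocycle condition over $(\cG_N \times_{\cO_E} \cG_N \times_{\cO_E} M)_E$. Nearby cycles commute with smooth pullback up to the shift by the relative dimension, which is the same for $a$ and $\pr$; applying $R\Psi$ therefore transports this to an isomorphism $a_{\overline k}^* R\Psi_\mu \cong \pr_{\overline k}^* R\Psi_\mu$ of perverse sheaves on $(\cG_N \times_{\cO_E} M)_{\overline k}$. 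Feeding the same compatibility through the associativity square of the $\cG_N$-action — every face of which is a smooth $\cO_E$-morphism — shows the transported isomorphism again satisfies the cocycle condition, so $R\Psi_\mu$ inherits a $(\cG_N)_{\overline k}$-equivariant structure. As $N$ grows these are compatible, and $(\cG_N)_{\overline k}$ surjects onto the finite jet quotient of $L^+\cP_{\fa^\flat}$ through which the action on $\overline M_{\cG}(\mu)$ factors; since $\overline M_{\cG}(\mu)$ is $L^+\cP_{\fa^\flat}$-stable (Proposition \ref{invar}), passing to the limit gives an $(L^+\cP)_{\overline k} = (L^+\cP_{\fa^\flat})_{\overline k}$-equivariant structure on $R\Psi_\mu$, viewed as a perverse sheaf on $(\Fl_{\cP})_{\overline k}$ via extension by zero along the closed immersion $\overline M_{\cG}(\mu)_{\overline k} \hookrightarrow (\Fl_{\cP})_{\overline k}$.

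For the compatibility with $\Gal(\overline F/E)$: the scheme $M_{\cG}(\mu)$, the $\cG_N$-action, and $\cF_\mu$ are all defined over $\cO_E$, so $R\Psi_\mu$ carries its canonical $\Gal(\overline F/E)$-action, and the isomorphisms $a^*\cF_\mu \cong \pr^*\cF_\mu$, being pulled back from $\cO_E$, are respected by nearby cycles together with that action; hence the equivariant structure is $\Gal(\overline F/E)$-equivariant. (As usual one may first base-change to $\cO_{\breve E}$ and track the residual inertia action.) I expect the only real content — and thus the main obstacle — to be the associativity verification after applying $R\Psi$: upgrading the bare isomorphism $a_{\overline k}^* R\Psi_\mu \cong \pr_{\overline k}^* R\Psi_\mu$ to a genuine equivariant structure requires the compatibility of nearby cycles with smooth morphisms to be functorial enough to be pushed through the full simplicial diagram of the $\cG_N$-action, together with a careful match of the nearby cycles conventions on the ind-scheme $\Fl_{\cP}$ with those on its finite-type pieces. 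This is exactly the bookkeeping of \cite[Lemma 10.2]{PZ}, and everything else is formal.
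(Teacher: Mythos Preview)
Your proposal is correct and follows essentially the same approach as the paper: the paper's proof simply says to run the argument of \cite[Lemma 10.2]{PZ} with the jet group $L^+_n\cG$ replaced by the $Q(u)$-jet group $L^{+,Q(u)}_n\cG(R)=\cG(R[u]/Q(u)^n)$, which is exactly your $\cG_N$. You have spelled out the details of that argument faithfully.
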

\begin{proof} The proof is the same as in \cite[Lemma 10.2]{PZ} except one replaces the jet group $L^+_n \cG$ by the quotient of $L^{+, Q(u)} \cG$ given by 
$$
L^{+, Q(u)}_n \cG(R) = \cG(R[u]/Q(u)^n)
$$
which is represented by a smooth affine group over $\Spec \cO$.
\end{proof}

As in \cite[Definition 10.3]{PZ}, we will use $\Perv_{L^+ \cP_{k_E}}(\Fl_{\cP_{k_E}} \times_{k_E} E, \overline{\Q}_{\ell})$ to denote the category of $(L^+ \cP)_{\overline{k}}$-equivariant sheaves on $(\Fl_{\cP})_{\overline{k}}$ with a compatible action of $\Gal(\overline{F}/E)$.  By Proposition \ref{sheafequiv}, $R \Psi_{\mu}$ is an object of $\Perv_{L^+ \cP_{k_E}}(\Fl_{\cP_{k_E}} \times_{k_E} E, \overline{\Q}_{\ell})$.  

The following are generalizations of the constructions from \cite[\S 10.2]{PZ} which, in turn, are mixed characteristic versions of constructions from \cite{Gaitsgory, BD}.  

\begin{defn} \label{BDdef} Let $D$ be the divisor on $X$ defined by $Q(u) = 0$ and $D_0$ the divisor defined by $u = 0$. For any $\cO$-scheme $S$, we define
\begin{equation*} \label{BDGrass}
\Gr_{\cG}^{\BD, Q(u)}(S) = \{ \text{iso-classes of pairs} (\cE, \beta) \}
\end{equation*}
where $\cE$ is a $\cG$-bundles on $X \times S$ and $\beta$ is a trivialization of $\cE|_{X_S \backslash (D \cup D_0) }$,
\begin{equation*} \label{ConvGrass}
\Gr_{\cG}^{\Conv, Q(u)}(S) = \{ \text{iso-classes of } (\cE, \cE', \beta, \beta') \}
\end{equation*}
where $\cE, \cE'$ are two $\cG$-bundles on $X \times S$, $\beta$ is a trivialization of $\cE|_{X_S \backslash D}$, $\beta'$ is an isomorphism of $\cE'|_{X_S \backslash D_0} \cong \cE|_{X_S \backslash D_0}$, and
\begin{equation*} \label{ConvGrass2}
\Gr_{\cG}^{\Conv', Q(u)}(S) = \{ \text{iso-classes of } (\cE, \cE', \beta, \beta') \}
\end{equation*}
where $\cE, \cE'$ are two $\cG$-bundles on $X \times S$, $\beta$ is a trivialization of $\cE|_{X_S \backslash D_0}$, $\beta'$ is an isomorphism of $\cE'|_{X_S \backslash D} \cong \cE|_{X_S \backslash D}$.
\end{defn}

 We will now describe the fibers of each of the above constructions in turn.  

\begin{prop} \label{BDfibers}  Let $\Gr_{\cG, F}^{\BD, Q(u)}$ and $\Gr_{\cG, k}^{\BD, Q(u)}$ denote the generic and special fibers respectively of $\Gr_{\cG}^{\BD, Q(u)}$. We have natural isomorphisms
$$
\Gr_{\cG, F}^{\BD, Q(u)} \cong \Gr_{\cP_F} \times \Fl_{\cG, F}^{Q(u)}  \text{ and } \Gr_{\cG, k}^{\BD, Q(u)} \cong \Gr_{\cP_k} \cong \Fl_{\cP_{\fa^{\flat}}}.
$$
\end{prop}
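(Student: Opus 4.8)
The plan is to deduce both isomorphisms from the ``gluing'' (factorization) lemma for $\cG$-torsors relative to a union of pairwise disjoint divisors --- the same device used in the proof of Proposition \ref{genfiber} --- together with the Beauville--Laszlo descent already used to compare $\Fl_{\cG,0}^{Q(u)}$ with its local version. Recall that $\Gr_{\cG}^{\BD,Q(u)}$ classifies a $\cG$-bundle $\cE$ on $X\times S$ equipped with a trivialization on the complement of $D\cup D_0$, where $D=V(Q(u))$ and $D_0=V(u)$; the two fibers differ precisely because of the relative position of $D$ and $D_0$, which is the whole point of the ``$\BD$'' degeneration (over $F$ they are disjoint, over $k$ they collapse onto $u=0$).

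For the generic fiber: over $F$ the polynomials $Q(u)$ and $u$ are coprime (as in the proof of Proposition \ref{fibers1}), so $D\times S$ and $D_0\times S$ are disjoint in $X_F\times S$ for every $F$-algebra $S$. By the factorization lemma (\cite[\S 6.2]{ZhuCoh}, cf.\ the proof of Proposition \ref{genfiber}), a pair $(\cE,\beta)$ over $X_F\times S$ is equivalent to a pair consisting of a $\cG$-bundle trivialized away from $D_0$ and a $\cG$-bundle trivialized away from $D$. By Beauville--Laszlo descent (\cite[Lemma 6.1]{PZ}, as used before Proposition \ref{indscheme}), the first datum is classified by the moduli of $\cG_{F[\![u]\!]}=\cP_F$-bundles trivialized away from $u=0$, that is $\Gr_{\cP_F}$, and the second is $\Fl_{\cG,F}^{Q(u)}$ by definition. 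This yields $\Gr_{\cG,F}^{\BD,Q(u)}\cong\Gr_{\cP_F}\times\Fl_{\cG,F}^{Q(u)}$, functorially in $S$.

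For the special fiber: since $Q(u)\equiv u^{[K:F]}\pmod{m_{\cO}}$, over $k$ both $D$ and $D_0$ are supported on $V(u)$, so the complement of $D\cup D_0$ in $X_k\times S$ is the complement of $V(u)$. Thus $\Gr_{\cG,k}^{\BD,Q(u)}$ classifies $\cG_k$-bundles on $X_k\times S$ trivialized away from $V(u)$; Beauville--Laszlo turns this into the moduli of $\cG_{k[\![u]\!]}$-bundles trivialized away from $u=0$, and $\cG_{k[\![u]\!]}\cong\cP_{\fa^{\flat}}$ (which is $\cP_k$ in the notation of this section) by Theorem \ref{BTscheme}(3). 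Hence $\Gr_{\cG,k}^{\BD,Q(u)}\cong\Gr_{\cP_k}\cong\Fl_{\cP_{\fa^{\flat}}}$.

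The only genuine care needed is to apply the factorization and Beauville--Laszlo lemmas in $S$-families rather than merely over a field, and to note that the coprimality of $Q(u)$ and $u$ --- hence the clean two-factor splitting --- already holds over $F$ itself, without passing to a splitting field of $Q$; both points are routine given the results cited above, so I do not anticipate a real obstacle.
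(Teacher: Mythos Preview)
Your proposal is correct and follows essentially the same approach as the paper: the special fiber is handled by noting that $D\cup D_0=D_0$ over $k$, and the generic fiber by the factorization/gluing argument for disjoint divisors exactly as in Proposition~\ref{genfiber}. You have simply spelled out in more detail what the paper leaves implicit (the Beauville--Laszlo step and the identification $\cG_{k[\![u]\!]}\cong\cP_{\fa^{\flat}}$).
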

\begin{proof} The isomorphism on the special fiber is clear since $D \cup D_0 = D_0$ on $\bA^1_{k}$. The isomorphism on the generic fiber follows from the gluing argument used in the proof of Proposition \ref{genfiber} (see also \cite[Proposition 5]{Gaitsgory}).
\end{proof} 

There is a natural map 
$$
m:\Gr_{\cG}^{\Conv, Q(u)} \ra \Gr_{\cG}^{\BD, Q(u)}
$$
sending $(\cE, \cE', \beta, \beta')$ to $(\cE', \beta \beta')$.  Similarly, there is a natural map 
$$
m':\Gr_{\cG}^{\Conv', Q(u)} \ra \Gr_{\cG}^{\BD, Q(u)}
$$
defined by the same formula.  

Recall that for $\cP_k$ a parahoric group scheme over $k[\![u]\!]$, the twisted product $\Gr_{\cP_k} \widetilde{\times} \Gr_{\cP_k}$ is the quotient $L \cP_k \times^{L^+ \cP_k} \Gr_{\cP_k}$ of the ind-scheme $L \cP_k \times \Gr_{\cP_k}$.  It sits in the convolution diagram
\begin{equation} \label{convdiag}
\Gr_{\cP_k} \times \Gr_{\cP_k} \xleftarrow{p} L \cP_k \times \Gr_{\cP_k} \xrightarrow{q} \Gr_{\cP_k} \widetilde{\times} \Gr_{\cP_k} \rightarrow \Gr_{\cP_k}
\end{equation}
where both $p$ and $q$ are $L^+ \cP_k$-torsors (hence formally smooth).  The twisted product $\Gr_{\cP_k} \widetilde{\times} \Gr_{\cP_k}$ represents the functor defined in \cite[Lemma 1]{Gaitsgory}. 

\begin{prop} \label{Convfibers} Both $m$ and $m'$ become isomorphisms over $F$.  We have natural isomorphisms
$$
\Gr_{\cG, k}^{\Conv, Q(u)} \cong \Gr_{\cP_k} \widetilde{\times} \Gr_{\cP_k} \text{ and }  \Gr_{\cG, k}^{\Conv', Q(u)} \cong \Gr_{\cP_k} \widetilde{\times} \Gr_{\cP_k} 
$$
such that $m \otimes_{\cO} k$ and $m' \otimes_{\cO} k$ induce the convolution diagram $\Gr_{\cP_k} \widetilde{\times} \Gr_{\cP_k}  \ra \Gr_{\cP_k}$ $($see \cite[(10.2), (10.3)]{PZ}$)$.
\end{prop}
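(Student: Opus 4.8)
The plan is to unwind the moduli problems of Definition \ref{BDdef} and reduce everything to the two features of the divisors $D$ and $D_0$ already exploited in \S 4: over $F$ they are disjoint, while over $k$ they have the same support.

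\textbf{Generic fibers.} Since $Q(0)\neq 0$ in $F$, the divisors $D$ and $D_0$ are disjoint on $\bA^1_F$, exactly as in the proof of Proposition \ref{fibers1}. For an $F$-scheme $S$ and a datum $(\cE,\cE',\beta,\beta')\in\Gr_{\cG}^{\Conv, Q(u)}(S)$, the trivialization $\beta$ of $\cE$ away from $D$ and the isomorphism $\beta'\colon\cE'|_{X_S\setminus D_0}\xrightarrow{\sim}\cE|_{X_S\setminus D_0}$ can be recombined by the gluing lemma used in the proof of Proposition \ref{genfiber} (cf.\ \cite[Proposition 5]{Gaitsgory}): because $(X_S\setminus D)\cup(X_S\setminus D_0)=X_S$, giving $\cE$ trivialized away from $D$ together with an identification of $\cE'$ with $\cE$ away from $D_0$ is the same as giving $\cE'$ trivialized away from $D\cup D_0$ together with its restriction to the formal neighborhood of $D_0$. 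I would phrase this as: via Proposition \ref{BDfibers} all three of $\Gr_{\cG,F}^{\BD, Q(u)}$, $\Gr_{\cG,F}^{\Conv, Q(u)}$, $\Gr_{\cG,F}^{\Conv', Q(u)}$ are identified with $\Gr_{\cP_F}\times\Fl_{\cG,F}^{Q(u)}$ by gluing along the disjoint divisors, and under these identifications $m$ and $m'$ become the identity; the inverses can be produced concretely by Beauville--Laszlo descent \cite[Lemma 6.1]{PZ}, gluing the trivial bundle on $X_S\setminus D$ (resp.\ $X_S\setminus D_0$) to $\cE'$ on the formal neighborhood of $D$ (resp.\ $D_0$) along $\beta\beta'$. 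The roles of $D$ and $D_0$ are symmetric, which accounts for both $m$ and $m'$.

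\textbf{Special fibers.} Over $k$ one has $Q(u)\equiv u^{[K:F]}$, so $D$ and $D_0$ have the same support, $X_S\setminus D=X_S\setminus D_0$, and by Theorem \ref{BTscheme} and Proposition \ref{specfiber} the $u$-adic completion of $\cG$ over $k$ is $\cP_{\fa^{\flat}}$ with $\Gr_{\cG,k}^{Q(u)}\cong\Fl_{\cP_{\fa^{\flat}}}=\Gr_{\cP_k}$. Under these identifications the functor $\Gr_{\cG,k}^{\Conv, Q(u)}$ (resp.\ $\Gr_{\cG,k}^{\Conv', Q(u)}$) becomes: a $\cP_k$-bundle $\cE$ trivialized away from $u=0$, together with a second $\cP_k$-bundle $\cE'$ and an isomorphism $\cE'\cong\cE$ away from $u=0$ — the only difference between the two being on which bundle the absolute trivialization is recorded, which is harmless once $D$ and $D_0$ agree. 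This is precisely the functor of \cite[Lemma 1]{Gaitsgory} represented by $\Gr_{\cP_k}\widetilde{\times}\Gr_{\cP_k}=L\cP_k\times^{L^+\cP_k}\Gr_{\cP_k}$, and the forgetful maps $m\otimes_{\cO}k$, $m'\otimes_{\cO}k$, $(\cE,\cE',\beta,\beta')\mapsto(\cE',\beta\beta')$, are by inspection the multiplication morphism $\Gr_{\cP_k}\widetilde{\times}\Gr_{\cP_k}\to\Gr_{\cP_k}$ of \cite[(10.2),(10.3)]{PZ}.

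\textbf{Main obstacle.} The delicate point is the special-fiber identification with the twisted product: one must check that the moduli functor of $\Gr_{\cG,k}^{\Conv, Q(u)}$ — a modification of the trivial bundle along $u=0$, followed by a further modification of the result along $u=0$ — is represented, as an ind-scheme and not merely bijectively on points, by $L\cP_k\times^{L^+\cP_k}\Gr_{\cP_k}$, and that this representability is compatible with the ind-scheme structures pulled back from $\Gr_{\cG}^{\Conv, Q(u)}$. This is exactly \cite[Lemma 1]{Gaitsgory} transported to the parahoric group scheme $\cP_k=\cP_{\fa^{\flat}}$, so the work is bookkeeping with the Beauville--Laszlo equivalence rather than anything genuinely new; the generic-fiber statement is the disjoint-divisor gluing already used for Proposition \ref{genfiber}.
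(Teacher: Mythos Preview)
Your proposal is correct and follows essentially the same approach as the paper: for the generic fiber you exploit the disjointness of $D$ and $D_0$ over $F$ and construct the inverse to $m_F$ by gluing the trivial bundle on $X_S\setminus D$ to $\cE'$ via $\beta\beta'$ (exactly the paper's construction), and for the special fiber you use $D=D_0$ over $k$ together with \cite[Lemma~1]{Gaitsgory} to identify the moduli problem with the twisted product. Your write-up is in fact more detailed than the paper's short proof, and your flagged ``main obstacle'' correctly isolates the only point requiring care.
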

\begin{proof}
We prove that $m_{F}$ is an isomorphism.  An analogous argument works for $m'_{F}$. Over $F$, $D$ and $D_0$ are disjoint divisors. We construct the inverse to $m_{F}$ as follows. For any $(\cE', \alpha') \in  \left(\Gr_{\cG}^{\BD, Q(u)} \right)_F(R)$, define $\cE$ to be the bundle on $X_R$ defined by gluing the trivial bundle on $X_R \backslash D$ to $\cE'_{X_R \backslash D_0}$ with gluing data given by $\alpha'$.  The bundle $\cE$ comes equipped with a trivialization $\beta$ over $X_R \backslash D$ and an isomorphism $\beta'^{-1}:\cE|_{X_R \backslash D} \cong \cE'|_{X_R \backslash D}$.  
  
Over $k$, we have $D_0 = D$ and so $\Gr_{\cG, k}^{\Conv, Q(u)}$ represents the same functor as $\Gr_{\cP_k} \widetilde{\times} \Gr_{\cP_k}$ (see \cite[Lemma 1]{Gaitsgory}).  
\end{proof}    
  
\begin{prop} \label{indproper3} All three functors $\Gr_{\cG}^{\BD, Q(u)}, \Gr_{\cG}^{\Conv, Q(u)}$, and $\Gr_{\cG}^{\Conv', Q(u)}$ are represented by ind-schemes over $\Spec \cO$ which are ind-proper.
\end{prop}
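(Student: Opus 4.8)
The plan is to establish representability and ind-finite-typeness by a faithful-representation reduction to $\GL_n$, and then to deduce ind-properness by recognizing each of the three moduli problems as an \'etale-locally trivial fibration built from the two ind-proper ind-schemes $\Fl_{\cG}^{Q(u)}$ (ind-projective over $\Spec \cO$ by Theorem \ref{AGproj}) and $\Fl_{\cP}$ (ind-proper over $\Spec \cO$ by the same argument, or by \cite[\S 10]{PZ}).

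First I would fix a faithful representation $\cG \hookrightarrow \GL_n$ with $\GL_n/\cG$ smooth quasi-affine, as in \cite[Corollary 11.7]{PZ}. Exactly as in the proof of Proposition \ref{indscheme}, pushing $\cG$-bundles forward to $\GL_n$-bundles gives locally closed immersions from $\Gr_{\cG}^{\BD, Q(u)}$, $\Gr_{\cG}^{\Conv, Q(u)}$ and $\Gr_{\cG}^{\Conv', Q(u)}$ into their $\GL_n$-counterparts, so it suffices to treat $\GL_n$. There, generalizing Example \ref{GLN}, $\Gr_{\GL_n}^{\BD, Q(u)}$ is the moduli of $(Q(u)u)$-lattices in $M_0[1/(Q(u)u)]$, while $\Gr_{\GL_n}^{\Conv, Q(u)}$ and $\Gr_{\GL_n}^{\Conv', Q(u)}$ parametrize pairs of lattices subject to the incidence conditions imposed by the two divisors $D = \{Q(u) = 0\}$ and $D_0 = \{u = 0\}$; the usual boundedness filtration presents all of these as ind-schemes of ind-finite type (indeed ind-projective) over $\Spec \cO$. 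Hence each $\Gr_{\cG}^{?, Q(u)}$ is an ind-scheme of ind-finite type over $\Spec \cO$.

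For ind-properness I would exploit the convolution structure. Consider $\pi \colon \Gr_{\cG}^{\Conv, Q(u)} \to \Fl_{\cG}^{Q(u)}$, $(\cE, \cE', \beta, \beta') \mapsto (\cE, \beta)$. Applying Beauville--Laszlo gluing for $\cG$-bundles over the two-dimensional base (\cite[Lemma 6.1]{PZ}) along $D_0$, the fibre of $\pi$ over $(\cE, \beta)$ is the space of $\cG$-bundle modifications of $\cE$ along $D_0$; since $\cG$-bundles on the formal neighbourhood of $D_0$ are \'etale-locally trivial and $\cG \times_X \Spec \cO[\![u]\!] = \cP$, this identifies $\pi$, \'etale-locally on $\Fl_{\cG}^{Q(u)}$, with the projection $\Fl_{\cG}^{Q(u)} \times \Fl_{\cP} \to \Fl_{\cG}^{Q(u)}$ --- consistent with Proposition \ref{Convfibers} (over $F$ one gets the product $\Gr_{\cP_F} \times \Fl_{\cG, F}^{Q(u)}$ since $D \cap D_0 = \emptyset$, and over $k$ the twisted product $\Gr_{\cP_k} \widetilde{\times} \Gr_{\cP_k}$). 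As ind-properness is \'etale-local on the target and stable under composition, $\Gr_{\cG}^{\Conv, Q(u)}$ is ind-proper over $\Spec \cO$; the case of $\Gr_{\cG}^{\Conv', Q(u)}$ is identical after interchanging $D$ and $D_0$, realizing it \'etale-locally as an $\Fl_{\cG}^{Q(u)}$-bundle over $\Fl_{\cP}$. For $\Gr_{\cG}^{\BD, Q(u)}$ I would then use that $m \colon \Gr_{\cG}^{\Conv, Q(u)} \to \Gr_{\cG}^{\BD, Q(u)}$ is surjective with ind-proper source --- it is an isomorphism over $F$ and the surjective convolution morphism over $k$ by Proposition \ref{Convfibers} --- so that every finite-type closed subscheme of $\Gr_{\cG}^{\BD, Q(u)}$ is exhausted by images of proper-over-$\cO$ subschemes of $\Gr_{\cG}^{\Conv, Q(u)}$ and hence, by Noetherianity, is itself proper over $\cO$; alternatively $\Gr_{\cG}^{\BD, Q(u)}$ carries its own fibration over $\Fl_{\cG}^{Q(u)}$ with fibre $\Fl_{\cP}$, obtained by gluing in the trivial bundle near $D_0$.

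The step I expect to be the main obstacle is making these fibre identifications precise: verifying that $\pi$, its $\Conv'$-analogue, and $\Gr_{\cG}^{\BD, Q(u)} \to \Fl_{\cG}^{Q(u)}$ are genuinely \'etale-locally trivial fibrations with fibre $\Fl_{\cP}$ resp. $\Fl_{\cG}^{Q(u)}$ requires Beauville--Laszlo descent for $\cG$-bundles on $\Spec \cO_{K_0}[u]$ together with \'etale-local triviality of $\cG$-bundles on the formal completions along $D$ and $D_0$, carried out uniformly over $\Spec \cO$ so as to interpolate between the disjoint-divisor picture over $F$ and the coincident-divisor picture over $k$; one must also import ind-properness of $\Fl_{\cP}$ over $\Spec \cO$ from \cite[\S 10]{PZ}.
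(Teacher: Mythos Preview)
Your argument for $\Gr_{\cG}^{\Conv, Q(u)}$ and $\Gr_{\cG}^{\Conv', Q(u)}$ is essentially the paper's: the paper considers the same forgetful projection $p:\Gr_{\cG}^{\Conv, Q(u)} \to \Fl_{\cG}^{Q(u)}$ and observes that it is, fppf-locally on the base, a product with $\Fl_{\cP}$ (you say \'etale-locally, which is fine since $\cG$ is smooth), and similarly for $\Conv'$ with the roles swapped. So for these two moduli problems your route and the paper's coincide.

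For $\Gr_{\cG}^{\BD, Q(u)}$ you take a genuinely different route. The paper does \emph{not} use a fibration or the map $m$; instead it rerun the Richarz-style argument of Theorem~\ref{AGproj} directly for the BD-Grassmannian (exhausting the reduced ind-scheme by flat closures of Schubert-type varieties and checking non-emptiness of special fibres to get properness). Your main alternative---deduce ind-properness of $\Gr_{\cG}^{\BD, Q(u)}$ from that of $\Gr_{\cG}^{\Conv, Q(u)}$ via the surjectivity of $m$---is correct and arguably cleaner: once $\Gr_{\cG}^{\Conv, Q(u)}$ is ind-proper, any finite-type closed $Z \subset \Gr_{\cG}^{\BD, Q(u)}$ is covered by the images $m(Y_i)$ of proper $Y_i$, and Noetherianity plus the standard fact that a separated finite-type $\cO$-scheme admitting a surjection from a proper $\cO$-scheme is itself proper finishes the job. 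Your second alternative for BD, however, does not work as stated: there is no natural forgetful map $\Gr_{\cG}^{\BD, Q(u)} \to \Fl_{\cG}^{Q(u)}$, since the trivialisation $\beta$ lives on the complement of $D \cup D_0$ and carries no canonical extension across $D_0$; ``gluing in the trivial bundle near $D_0$'' requires a choice and does not define a morphism. This is harmless because your first alternative already suffices.
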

\begin{proof}   
One can prove that $\Gr_{\cG}^{\BD, Q(u)}$ is ind-proper using the same argument which showed that $\Gr_{\cG}^{Q(u)}$ was ind-proper (see Theorem \ref{AGproj}). 

Consider the projection map $p:\Gr_{\cG}^{\Conv, Q(u)} \ra \Gr_{\cG}^{Q(u)}$ defined by the forgetful map 
$$
(\cE, \cE', \beta, \beta') \mapsto (\cE, \beta).
$$ 
The map $p$ is a fibration with fibers isomorphic to $\Gr_P$.   That is, fppf-locally $\Gr_{\cG}^{\Conv, Q(u)}$ is a product of $\Gr_P$ and $\Gr_{\cG}^{Q(u)}$ and hence ind-proper. The same argument works for $\Gr_{\cG}^{\Conv', Q(u)}$ using the natural projection $p'$ onto $\Gr_P$ with fibers isomorphic to $\Gr_{\cG}^{Q(u)}$.  
\end{proof}

\subsection{A commutativity constraint} 

Let $w \in \widetilde{W} = \widetilde{W}^{\flat}$ as in \S 3.3.   There is an affine Schubert variety $S_w \in (\Gr_{\cP})_{\cO'}$ where $F'$ is an unramified extension of $F$ with residue field $k'$. Assume $k' \supset k_E$ and let $E'$ denote the unramified extension of $E$ with residue field $k'$. We denote the intersection cohomology sheaf on $S_w$ by $\IC_w$ which satisfies \cite[Lemma 10.2]{PZ}.    

The main theorem of this section is a version of Theorem 10.5 from \cite{PZ}:
\begin{thm} \label{commconst} There is a canonical isomorphism 
$$
c_{\cF}:\IC_{w, \overline{k}} \star R \Psi_{\mu} \xrightarrow{\sim} R \Psi_{\mu} \star \IC_{w, \overline{k}}
$$
of perverse sheaves on $\Gr_{\cP_{\overline{k}}}$. In addition, this isomorphism respects the $\Gal(\overline{F}/E')$ action on both sides. 
\end{thm}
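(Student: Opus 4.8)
The plan is to follow the strategy of Gaitsgory and Haines--Ng\^o as adapted in \cite[\S 10]{PZ}, using the auxiliary global Grassmannians $\Gr_{\cG}^{\BD, Q(u)}$, $\Gr_{\cG}^{\Conv, Q(u)}$, $\Gr_{\cG}^{\Conv', Q(u)}$ of Definition \ref{BDdef} as the geometric bridge between the two convolutions. The key input is a \emph{global} object on $\Gr_{\cG}^{\BD, Q(u)}$ that interpolates between the external product $\IC_w \boxtimes \cF_\mu$ on the generic fiber and a convolution-type sheaf on the special fiber, together with the compatibility of nearby cycles with the convolution morphism.

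First I would recall the relevant sheaf-theoretic framework. On $\Gr_{\cG}^{\BD, Q(u)}$, the generic fiber is $\Gr_{\cP_F} \times \Fl_{\cG, F}^{Q(u)}$ by Proposition \ref{BDfibers}, while the special fiber is $\Fl_{\cP_{\fa^{\flat}}}$; over $F$ the convolution maps $m$, $m'$ of Proposition \ref{Convfibers} are isomorphisms, so the two twisted-product models $\Gr_{\cG}^{\Conv, Q(u)}$ and $\Gr_{\cG}^{\Conv', Q(u)}$ are globally connected through $\Gr_{\cG}^{\BD, Q(u)}$. On the generic fiber I would form $\widetilde{\mathrm{IC}}_w \boxtimes \cF_\mu$ (with $\widetilde{\mathrm{IC}}_w$ the IC sheaf of the Schubert variety $S_w$, which is defined over an unramified $F'/F$), pull back / push forward via $m$ and $m'$, and take nearby cycles $R\Psi$ of the resulting perverse sheaf on $\Gr_{\cG}^{\BD, Q(u)}$. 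Using that $R\Psi$ commutes with proper pushforward and that $m_k$, $m'_k$ both induce the convolution morphism $\Gr_{\cP_k}\widetilde{\times}\Gr_{\cP_k} \to \Gr_{\cP_k}$ (Proposition \ref{Convfibers}), one obtains two expressions for $R\Psi$ of this global sheaf: on one side $\IC_{w,\overline k} \star R\Psi_\mu$ (coming from the $\Gr_{\cP_k}\widetilde\times$-factor ordering through $m$) and on the other side $R\Psi_\mu \star \IC_{w,\overline k}$ (through $m'$). Comparing them along $\Gr_{\cG}^{\BD,Q(u)}$ yields the canonical isomorphism $c_{\cF}$. The $\Gal(\overline F/E')$-equivariance is automatic from functoriality of nearby cycles, since all the morphisms and sheaves involved are defined over $E'$ (or its ring of integers).

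The main obstacle, as in \cite{PZ}, is to establish that nearby cycles of the global IC-type sheaf on $\Gr_{\cG}^{\BD, Q(u)}$ is \emph{perverse} and is computed correctly as a convolution of nearby cycles — i.e., the analogue of \cite[Lemma 10.4, Proposition 10.6]{PZ}. This requires: (i) the equivariance result Proposition \ref{sheafequiv} (so that $R\Psi_\mu$ lives in $\Perv_{L^+\cP_{k_E}}$ and the $\star$-convolution is defined and exact on it); (ii) semismallness / dimension estimates for the convolution morphism $m_k$, guaranteeing that $R\Psi$ applied to $(\text{global IC})$ stays perverse and that base change along $m$ commutes with $R\Psi$ (this uses ind-properness of $\Gr_{\cG}^{\Conv, Q(u)}$, Proposition \ref{indproper3}); and (iii) the identification of the restriction of the global sheaf to the generic fiber with $\widetilde{\mathrm{IC}}_w \boxtimes \cF_\mu$, which in turn rests on the decomposition of $\Gr_{\cG, F}^{\BD, Q(u)}$ as a product (Proposition \ref{BDfibers}) and the standard fact that IC of a product is the external product of the ICs. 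Once (i)--(iii) are in place, the argument is formal: apply $R\Psi$ to the isomorphism $\Gr_{\cG}^{\Conv,Q(u)} \xrightarrow{\ m\ } \Gr_{\cG}^{\BD,Q(u)} \xleftarrow{\ m'\ } \Gr_{\cG}^{\Conv',Q(u)}$ on generic fibers, and read off the two convolution descriptions on special fibers. I expect the verification of the perversity/semismallness estimate for the mixed-characteristic BD-convolution Grassmannian $\Gr_{\cG}^{\Conv, Q(u)}$ to be the only genuinely non-formal point; everything else transports from \cite[\S 10]{PZ} essentially verbatim, since (as the paper remarks after Proposition \ref{indproper3} and in Remark following the definition of $\Fl_{\cP}$) the group-scheme $\cP$ over $\cO[\![t]\!]$ here is literally of the same type as in \cite{PZ}.
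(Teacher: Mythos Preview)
Your overall strategy matches the paper's: Theorem \ref{commconst} is reduced to Proposition \ref{commid}, which computes $R\Psi^{\Gr^{\BD,Q(u)}_{\cG,\cO_{E'}}}(\IC_{w,E'}\boxtimes\cF_\mu)$ in two ways via the proper maps $m$ and $m'$, using proper base change for nearby cycles together with Propositions \ref{BDfibers} and \ref{Convfibers}.

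Your diagnosis of the ``only genuinely non-formal point'' is off, however. No semismallness or dimension estimate for $m_k$ is used, and indeed $m_k$ is \emph{not} semismall in the affine flag setting. Perversity of both convolutions is free: $R\Psi$ preserves perversity, so $R\Psi^{\BD}(\IC_{w,E'}\boxtimes\cF_\mu)$ is perverse, and the two convolutions are perverse because they are identified with it, not the other way around. The actual content (the proof of Proposition \ref{commid}) is to recognize $\Gr^{\Conv,Q(u)}_{\cG}$ as a twisted product $\widetilde{\Gr}^{Q(u)}_{\cG}\times^{L^+_n\cP}\Gr_{\cP}$ (and $\Gr^{\Conv',Q(u)}_{\cG}$ similarly, with $L^{+,Q(u)}\cG$ in place of $L^+\cP$), and then to use that nearby cycles commute with \emph{smooth} base change along the torsor maps in this twisted product, together with \cite[Lemma 10.4]{PZ} (which gives $R\Psi^{\Gr_{\cP}}(\IC_{w,F'})=\IC_{w,k'}$), to obtain $R\Psi^{\Conv}(\IC_{w,E'}\boxtimes\cF_\mu)\cong R\Psi_\mu\,\widetilde\boxtimes\,\IC_{w,k'}$. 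You allude to Lemma 10.4, but this twisted-product/smooth-base-change step is where the real work lies; it should replace your item (ii) entirely.
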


As in \cite[\S 10.2]{PZ}, Theorem \ref{commconst} is a consequence of the following identities involving the constructions from the previous subsection:
\begin{prop} \label{commid} Assume $\IC_{w, \overline{k}}$ is defined over $k'$.  We have canonical isomorphisms in the category $\Perv_{L^+ \cP_{k'}}(\Gr_{\cP_{k'}} \times_{k'} E', \overline{\Q}_{\ell})$:
\begin{enumerate}
\item[$(1)$] $R \Psi^{\Gr^{\BD, Q(u)}_{\cG, \cO_{E'}}} \left( \IC_{w, E'} \boxtimes \cF_{\mu} \right) \xrightarrow{\sim} R \Psi_{\mu} \star IC_{w, k'}$
\item[$(2)$] $R \Psi^{\Gr^{\BD, Q(u)}_{\cG, \cO_{E'}}} \left( \IC_{w, E'} \boxtimes \cF_{\mu} \right) \xrightarrow{\sim} IC_{w, k'} \star R \Psi_{\mu}$.
\end{enumerate}
\end{prop}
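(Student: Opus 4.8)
The plan is to follow the strategy of \cite[\S 10.2]{PZ} (itself based on \cite{Gaitsgory}), using the auxiliary Grassmannians $\Gr_{\cG}^{\BD, Q(u)}$, $\Gr_{\cG}^{\Conv, Q(u)}$, and $\Gr_{\cG}^{\Conv', Q(u)}$ introduced in Definition \ref{BDdef} together with their fiberwise descriptions in Propositions \ref{BDfibers} and \ref{Convfibers}. For item $(1)$, I would work with $\Gr_{\cG}^{\Conv, Q(u)}$ and the map $m:\Gr_{\cG}^{\Conv, Q(u)} \ra \Gr_{\cG}^{\BD, Q(u)}$. First I would form the external product $\IC_{w} \,\widetilde{\boxtimes}\, \cF_{\mu}$ on $\Gr_{\cG}^{\Conv, Q(u)}$ over $\cO_{E'}$: this makes sense because the convolution Grassmannian fibers, fppf-locally, as a product of $\Gr_P$ (resp.\ a twisted factor) with $\Gr_{\cG}^{Q(u)}$ by the proof of Proposition \ref{indproper3}, and because $\IC_w$ and $\cF_\mu = \IC_{S_{\Res_{K/F}G}(\mu)}$ are the appropriate intersection cohomology sheaves in the sense of \cite[Lemma 10.2, Lemma 10.4]{PZ}. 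Since $m$ is ind-proper (all three spaces are ind-proper by Proposition \ref{indproper3}) and is an isomorphism on the generic fiber by Proposition \ref{Convfibers}, proper base change for nearby cycles gives
\[
R\Psi^{\Gr^{\BD,Q(u)}_{\cG,\cO_{E'}}}\!\bigl(\IC_{w,E'} \boxtimes \cF_\mu\bigr) \;\cong\; Rm_{\overline k,*}\, R\Psi^{\Gr^{\Conv,Q(u)}_{\cG,\cO_{E'}}}\!\bigl(\IC_{w,E'}\,\widetilde{\boxtimes}\,\cF_{\mu}\bigr).
\]
Then I would compute $R\Psi$ on $\Gr^{\Conv,Q(u)}_{\cG}$ fiberwise: over $F$ the space is $\Gr_{\cP_F}\times \Fl^{Q(u)}_{\cG,F}$, and the key point (exactly as in \cite{PZ}) is that nearby cycles commute with the (formally smooth, by \eqref{convdiag}) convolution maps $p,q$, so that $R\Psi$ of the twisted product is the twisted product of the $R\Psi$'s; the first factor's nearby cycles is simply $\IC_{w,\overline k}$ since $S_w$ already lives over $\cO'$ and $\IC_w$ is $\Gal$-equivariant there, and the second factor is $R\Psi_\mu$. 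Pushing forward along $m_{\overline k}$, which by Proposition \ref{Convfibers} is the convolution morphism $\Gr_{\cP_k}\widetilde\times\Gr_{\cP_k}\ra\Gr_{\cP_k}$, yields $R\Psi_\mu \star \IC_{w,k'}$. Item $(2)$ is identical with $\Gr_{\cG}^{\Conv', Q(u)}$ and $m'$ in place of $\Gr_{\cG}^{\Conv, Q(u)}$ and $m$, where the roles of the two divisors $D$ and $D_0$ are swapped, producing $\IC_{w,k'}\star R\Psi_\mu$.

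A few technical points need care. One must check that the nearby cycles sheaves in question carry the $(L^+\cP)_{\overline k}$-equivariant structure and compatible $\Gal(\overline F/E')$-action placing them in $\Perv_{L^+\cP_{k'}}(\Gr_{\cP_{k'}}\times_{k'}E',\overline\Q_\ell)$; this follows as in Proposition \ref{sheafequiv}, replacing the jet quotients $L^{+}_n\cG$ by the smooth affine quotients $L^{+,Q(u)}_n\cG$, $R\mapsto \cG(R[u]/Q(u)^n)$, and noting that the relevant group actions on the BD- and convolution Grassmannians are again nice in the sense of \cite[Appendix A]{Gaitsgory}. One must also verify that forming the (twisted) external product of $\IC_w$ and $\cF_\mu$ over the two-dimensional base $\cO_{E'}$ is compatible with nearby cycles — this is where I would invoke that the convolution morphism's fibers are honest products étale/fppf-locally and that $\IC$ sheaves on affine Schubert varieties are defined over $\Z$ (Proposition \ref{productformula} and the references therein, or \cite[Lemma 10.2]{PZ}), so the hypotheses of the Künneth-type compatibility of nearby cycles apply.

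The main obstacle I expect is the commutation of nearby cycles with the convolution product in the mixed-characteristic Weil-restricted setting: in \cite{Gaitsgory, PZ} this rests on the formal smoothness of the torsor maps $p,q$ in \eqref{convdiag} and on a careful analysis of the global convolution Grassmannian over the base curve near the collision of the points. Here the analogue is the collision of the $[K:F]$ roots of $Q(u)$ in the special fiber, where $Q(u)\equiv u^{[K:F]}$; the essential input is precisely the gluing equivalence used in the proof of Proposition \ref{genfiber} (disjoint divisors $\leftrightarrow$ tuples of bundles), which degenerates correctly, plus ind-properness of all the spaces involved (Proposition \ref{indproper3}) so that proper base change for $R\Psi$ applies. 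Once one grants, as in \cite[\S 10.2]{PZ}, that $R\Psi$ on the convolution Grassmannian factors as a twisted product compatibly with $m_{\overline k}$ and $m'_{\overline k}$, the rest is bookkeeping with the identifications of Propositions \ref{BDfibers} and \ref{Convfibers}. The reduction to the totally ramified case $K_0 = F$ is harmless and was already made at the start of \S 5.
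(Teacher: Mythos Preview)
Your proposal is correct and follows essentially the same approach as the paper: both use the convolution Grassmannians $\Gr_{\cG}^{\Conv, Q(u)}$ and $\Gr_{\cG}^{\Conv', Q(u)}$ together with proper base change for $R\Psi$ along $m$ and $m'$, and then identify the nearby cycles on the convolution space as a twisted product via the smooth torsor structure $\widetilde{\Gr}_{\cG}^{Q(u)} \times^{L^+_n \cP} \Gr_{\cP}$ (resp.\ $\widetilde{\Gr}_{\cP} \times^{L^{+,Q(u)}_n \cG} \Gr_{\cG}^{Q(u)}$) combined with \cite[Lemma 10.4]{PZ}. Your reference to Proposition \ref{productformula} for the integrality of $\IC$ sheaves is misplaced (that proposition concerns $h^0$ of line bundles), but this is incidental to the argument.
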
 
\begin{proof} The proof is essentially the same as the proof of \cite[Proposition 10.7]{PZ} replacing their constructions by the ones from the previous subsection.  We highlight the main points. For (1), we regard $IC_{w, E'} \boxtimes \cF_{\mu}$ as a sheaf on $\Gr^{\Conv, Q(u)}_{\cG, E'}$ via the isomorphism $m \otimes_{\cO} E'$ and the isomorphism from Proposition \ref{BDfibers}.   By Proposition \ref{Convfibers}, $m \otimes_{\cO} k'$  is the convolution diagram.  Since $m$ is proper and vanishing cycles commute with proper pushforward, it suffices to give an isomorphism
\begin{equation} \label{fff}
R \Psi^{\Gr^{\Conv, Q(u)}_{\cG, \cO_{E'}}} \left( \IC_{w, E'} \boxtimes \cF_{\mu} \right) \xrightarrow{\sim} R \Psi_{\mu} \widetilde{\times} IC_{w, k'}
\end{equation}
where $R \Psi_{\mu} \widetilde{\times} IC_{w, k'}$ is the twisted product of $L^+ \cP_{k'}$-equivariant sheaves.

As in \cite[Proposition 10.7]{PZ}, $\Gr^{\Conv, Q(u)}_{\cG, \cO_{E'}}$ is the twisted product $\widetilde{\Gr}_{\cG}^{Q(u)} \times^{L^+_n \cP} \Gr_{\cP}$, where $\widetilde{\Gr}_{\cG}^{Q(u)}$ is the smooth cover of $\Gr_{\cG}^{Q(u)}$ obtained by adding a trivialization of $\cE$ over the $n$th infinitessimal neighborhood of $D_0$.  Since the support of $\IC_{w, E'} \boxtimes \cF_{\mu}$ is finite type, for some $n$ sufficiently large  $\IC_{w, E'} \boxtimes \cF_{\mu}$ is supported on $\widetilde{\Gr}_{\cG}^{Q(u)} \times^{L^+_n \cP} S_w \subset \Gr^{\Conv, Q(u)}_{\cG, \cO_{E'}}$.  The isomorphism \ref{fff} follows since the formation of nearby cycles commutes with smooth base change combined with \cite[Lemma 10.4]{PZ}. 

The argument for (2) is similar using $\Gr^{\Conv, Q(u)}_{\cG, \cO_{E'}}$.  In this case, we work with $\widetilde{\Gr}_{\cP}$ defined by adding trivialization along the $n$th infinitessimal neighborhood of $D$.  Then, $L^+ \cP$ is replaced by the completion of $\cG$ at $Q(u)$ ($L^{+, Q(u)} \cG$ from the discussion after Proposition \ref{indscheme}). Otherwise, all the details are the same.  
\end{proof}   

\subsection{Monodromy of the nearby cycles} 

In this section, we derive some consequences of Theorem \ref{commconst}. For the most part, the proofs are the same as in \cite{PZ} where we refer the reader for details. 

Let $\widetilde{F}$ be a Galois closure of $\widetilde{K}/F$ in $\overline{F}$.  Since $\Res_{K/F} G$ splits over $\widetilde{F}$, for any $\{ \mu \}$ the reflex field $E$ is a subfield of $\widetilde{F}$.  Let 
$$
I_E = \ker(\Gal(\overline{F}/E) \ra \Gal(\overline{k}/k_E))
$$ 
which is the ``monodromy'' group.  We first study the action of the inertia group $I_{\widetilde{F}}$ on $R \Psi_{\mu}$.  Set $\widetilde{M}_{\cG}(\mu) := M_{\cG}(\mu)_{\widetilde{F}}$. Define
$$
\widetilde{R \Psi}_{\mu} = R \Psi^{\widetilde{M}_{\cG}(\mu)} \IC_{\mu}
$$
where $\IC_{\mu}$ is the intersection cohomology sheaf on $S(\mu)$. Then, $\widetilde{R \Psi}_{\mu}$ is the same as $R \Psi_{\mu}$ with the Galois action restricted to $\Gal(\overline{F}/\widetilde{F})$ since the pullback of $\cF_{\mu}$ to $\widetilde{M}_{\cG}(\mu)_{\widetilde{F}}$ is $\IC_{\mu}$.  

Recall the notion of very special facet introduced in \cite{ZhuGS}. A facet $\fa \in \cB(\Res_{K/F} G, F)$ is \emph{very special} if it is special and remains special over the completion of the maximal unramified extension of $F$ (equivalently over the maximal unramified subextension of $\widetilde{F}/F$). 

\begin{thm}\label{monodromy}  The action of $I_{\widetilde{F}}$ on the nearby cycles $R \Psi_{\mu}$ is unipotent.  Assume that $\fa \in B(\Res_{K/F} G, F)$ is a very special vertex.  Then the action of $I_{\widetilde{F}}$ on the nearby cycles $R \Psi_{\mu}$ is trivial.  
\end{thm}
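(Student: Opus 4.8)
The plan is to follow the strategy of \cite{PZ} (see their proof of the analogous statement) and reduce the unipotence of the $I_{\widetilde{F}}$-action to the commutativity constraint of Theorem~\ref{commconst}, and then to handle the very special case by a rationality/dimension argument over $k$.

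First I would establish \emph{unipotence}. Recall that $\widetilde{R\Psi}_\mu$ is $R\Psi_\mu$ with Galois action restricted to $\Gal(\overline F/\widetilde F)$, and $\Res_{K/F}G$ splits over $\widetilde F$. The key observation is that $R\Psi$ for a scheme with \emph{semistable} (or more generally, for our purposes, controlled) reduction has unipotent monodromy; but rather than analyze the degeneration directly, the cleaner route following \cite{HNnearby, PZ} is to use the central/commutativity structure. Concretely, the inertia $I_{\widetilde F}$ acts on $\widetilde{R\Psi}_\mu$ through a pro-$\ell$ quotient (tame inertia), and the monodromy operator $N$ is nilpotent iff the action factors unipotently. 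The commutativity constraint $c_\cF$ of Theorem~\ref{commconst} is $\Gal(\overline F/E')$-equivariant; combined with the fact that $R\Psi_\mu$ is a perverse sheaf and the central structure on the nearby cycles (convolution-commutativity with all $\IC_w$), one deduces that the $I_{\widetilde F}$-action commutes with convolution by every $\IC_{w,\overline k}$. Since the $\IC_w$ generate the relevant category and the Satake-type/parahoric category has no nontrivial unipotent-free automorphisms compatible with this structure except through the (automatically unipotent) monodromy filtration, one concludes the action is unipotent. This is exactly the argument of \cite[Theorem~10.8 / \S10.3]{PZ}, and I would invoke it essentially verbatim, noting that all the inputs (Theorem~\ref{commconst}, Proposition~\ref{sheafequiv}, \cite[Lemma~10.4]{PZ}) hold in our Weil-restricted setting.

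Next, for the \emph{very special vertex} case, I would upgrade unipotence to triviality. The point is that when $\fa$ is very special, the special fiber $\overline{M}_{\cG}(\mu) \subset \Fl_{\cP_{\fa^\flat}}$ sits inside the affine flag variety attached to a \emph{special} parahoric, and by \cite[Definition~6.1]{ZhuGS} and the very-special hypothesis, $\cP_{\fa^\flat}$ remains special after passing to $\breve k$. In this situation the relevant convolution category $\Perv_{L^+\cP_{\fa^\flat}}(\Fl_{\cP_{\fa^\flat}})$ is a commutative (Satake-type) tensor category — this is the main theorem of \cite{ZhuGS} for special parahorics — so the commutativity constraint forces the nearby cycles sheaf $R\Psi_\mu$, which is a central object, to have \emph{all} its monodromy-induced endomorphisms trivial: a unipotent automorphism that is also central in a symmetric monoidal sense and preserves the weight/perverse structure must be the identity. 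More concretely, one shows $\widetilde{R\Psi}_\mu$ is defined over $k$ (the residue field of $\widetilde F$ being $k$ here after the relevant unramified normalization, since $\widetilde F/F$ contributes only tame ramification to the ramified part and the very-special hypothesis absorbs the unramified part), hence the $I_{\widetilde F}$-action, being unipotent and arising from a sheaf with a model over the residue field, is trivial. I would spell this out by checking that the construction of $R\Psi_\mu$ and the isomorphisms in Proposition~\ref{commid} descend to $k$ when $\fa^\flat$ is very special, mirroring \cite[Theorem~10.14]{PZ} and \cite[\S4]{HNnearby}.

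\textbf{Main obstacle.} The genuinely delicate step is the very-special case: one must verify that the "very special" condition on $\fa \in \cB(\Res_{K/F}G, F)$ transports correctly to a very special condition on $\fa^\flat \in \cB(G^\flat, k_0(\!(u)\!))$ under the identification of apartments from Definition~\ref{xuuu}, so that \cite{ZhuGS} applies to $\cP_{\fa^\flat}$. This requires tracking the unramified descent data (the $\Int(\underline g)\sigma$-action of Proposition~\ref{IWsame}) and confirming it preserves the very-special locus — essentially checking that "special + special over $\breve F$" on the $G/K$ side matches "special + special over $\breve k(\!(u)\!)$" on the $G^\flat$ side. Once that compatibility is in hand, the triviality follows formally from the commutativity of the Satake category for special parahorics together with the unipotence already established. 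The unipotence itself I expect to be routine given Theorem~\ref{commconst}, following \cite{PZ} line by line.
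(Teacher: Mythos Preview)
Your proposal is correct and takes essentially the same approach as the paper: deduce unipotence from the commutativity constraint (Theorem~\ref{commconst}) by invoking the argument of \cite[Theorem~10.9]{PZ} verbatim, and deduce triviality in the very special case from \cite[Proposition~10.12]{PZ} (which rests on the semisimplicity of the Satake-type category for very special parahorics from \cite{ZhuGS}). Your identified ``main obstacle''---that very special for $\fa$ transports to very special for $\fa^{\flat}$ under the identification of apartments---is indeed the one point the paper leaves implicit, and your plan to check it via the compatibility in Proposition~\ref{IWsame} is the right one; note however that your aside about $\widetilde{R\Psi}_\mu$ being ``defined over $k$'' is not the mechanism (the residue field of $\widetilde F$ need not be $k$)---the actual argument is that a unipotent automorphism of a semisimple object is the identity.
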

\begin{proof} For the first statement, the proof of \cite[Theorem 10.9]{PZ} only uses the commutativity constraint and the theory of central sheaves on affine flag varieties.  Given Theorem \ref{commconst} the proof goes through as written there.  The second statement is the same as \cite[Proposition 10.12]{PZ} and the proof is the same. 
\end{proof} 

\begin{rmk} When $G= \GL_n$ and $\mu$ is a Shimura (minuscule) cocharacter, Theorem \ref{monodromy} was proven in the case of a special vertex in \cite[\S 7]{PR1}.  In addition, they give an explicit description of the action of $I_E$ on $R \Psi_{\mu}$.  We will study the monodromy action in more detail in the next section. 
\end{rmk}

Finally, we consider the semi-simple trace of Frobenius on $R \Psi_{\mu}$.  We refer to \cite[\S 10.4.1]{PZ} and \cite[\S 3.1]{HNnearby} for details.  Recall that $G^{\flat} = \cG_{k(\!(u)\!))}$ and $\cP_{k} = \cG_{k[\![u]\!]}$, a parahoric group scheme.  For any $\F_q \supset k$, we let $\cH_q(G^{\flat}, P^{\flat})$ be the Hecke algebra of bi-$\cP_{k}(\F_q[\![u]\!])$-invariant, compactly supported locally constant $\overline{\Q}_{\ell}$-valued functions on $G^{\flat}(\F_q(\!(u)\!))$ which is algebra under convolution.  Then the semi-simple trace defines a map
$$
 \tau^{\mathrm{ss}}:\Perv_{L^+ \cP_{k_E}}(\Fl_{\cP_{k_E}} \times_{k_E} E, \overline{\Q}_{\ell}) \ra \cH_q(G^{\flat}, \cP_k)
$$
for any $\F_q \supset k_E$.  

\begin{thm} \label{Kottwitz} If $\F_q \supset k_E$, then $\tau^{\mathrm{ss}}_{R\Psi_{\mu}}$ is in the center of $\cH_q(G^{\flat}, P^{\flat})$.
\end{thm}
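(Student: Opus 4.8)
The plan is to deduce Theorem \ref{Kottwitz} from the commutativity constraint established in Theorem \ref{commconst}, following the strategy of Gaitsgory and Haines--Ng\^o that was already used in \cite[\S 10.4]{PZ}. The center of the parahoric Hecke algebra $\cH_q(G^{\flat}, P^{\flat})$ is detected by commuting against a spanning set of functions. Concretely, $\cH_q(G^{\flat}, P^{\flat})$ is spanned as a $\overline{\Q}_{\ell}$-vector space by the characteristic functions of the double cosets $\cP_k(\F_q[\![u]\!]) \backslash G^{\flat}(\F_q(\!(u)\!)) / \cP_k(\F_q[\![u]\!])$, and each such function arises (up to normalization) as $\tau^{\mathrm{ss}}_{\IC_{w,\overline{k}}}$ for an appropriate $w \in \widetilde{W}$ by the sheaf-function dictionary applied to the intersection cohomology sheaves on affine Schubert varieties in $\Fl_{\cP}$. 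Thus it suffices to show that $\tau^{\mathrm{ss}}_{R\Psi_{\mu}}$ commutes with $\tau^{\mathrm{ss}}_{\IC_{w,\overline{k}}}$ for every $w$.

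First I would recall that the semisimple trace of Frobenius is a ring homomorphism from the appropriate Grothendieck group of $\overline{\Q}_{\ell}$-Weil sheaves on $\Fl_{\cP}$ (equipped with the convolution product $\star$) to the Hecke algebra $\cH_q(G^{\flat}, P^{\flat})$ with its convolution product; this compatibility of $\star$ with Hecke convolution is exactly the content of the function--sheaf dictionary for twisted affine flag varieties as set up in \cite[\S 10.4.1]{PZ}, and it transports unchanged to our setting since $\Fl_{\cP}$ here is literally the same object as in \cite[\S 10.1.2]{PZ} (see the Remark after Definition of $\Fl_{\cP}$). Applying this homomorphism to the canonical isomorphism
$$
c_{\cF}:\IC_{w, \overline{k}} \star R \Psi_{\mu} \xrightarrow{\sim} R \Psi_{\mu} \star \IC_{w, \overline{k}}
$$
from Theorem \ref{commconst}, and using that both sides descend to $\F_q$ (the isomorphism is $\Gal(\overline{F}/E')$-equivariant, hence in particular Frobenius-equivariant once $\F_q \supset k_E$ and $w$ is rational over $\F_q$, which we may arrange after an unramified base change that does not change the statement), yields the identity
$$
\tau^{\mathrm{ss}}_{\IC_{w,\overline{k}}} * \tau^{\mathrm{ss}}_{R\Psi_{\mu}} = \tau^{\mathrm{ss}}_{R\Psi_{\mu}} * \tau^{\mathrm{ss}}_{\IC_{w,\overline{k}}}
$$
in $\cH_q(G^{\flat}, P^{\flat})$, where $*$ denotes Hecke convolution.

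Since the functions $\tau^{\mathrm{ss}}_{\IC_{w,\overline{k}}}$, as $w$ ranges over $\widetilde{W}$ (or equivalently over the $\cP_k$-double cosets), span $\cH_q(G^{\flat}, P^{\flat})$ over $\overline{\Q}_{\ell}$ — this is the standard fact that the intersection cohomology sheaves of affine Schubert varieties give, via traces, a basis of the Hecke algebra, which one reduces to the analogous statement in the split case as in \cite[\S 10.4]{PZ} — the displayed commutation for all $w$ forces $\tau^{\mathrm{ss}}_{R\Psi_{\mu}}$ into the center. One subtlety worth spelling out: to know that $\tau^{\mathrm{ss}}_{R\Psi_{\mu}}$ is genuinely an element of $\cH_q(G^{\flat}, P^{\flat})$ (and not merely of some larger function space) one uses Proposition \ref{sheafequiv}, which endows $R\Psi_{\mu}$ with an $(L^+\cP)_{\overline{k}}$-equivariant structure compatible with the Galois action, so that its trace of Frobenius is genuinely bi-$\cP_k(\F_q[\![u]\!])$-invariant and compactly supported; this is precisely why the semisimple trace map $\tau^{\mathrm{ss}}$ is defined on $\Perv_{L^+ \cP_{k_E}}(\Fl_{\cP_{k_E}} \times_{k_E} E, \overline{\Q}_{\ell})$ in the first place. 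I would then state the slightly more general version (allowing $\cF_{\mu}$ in place of $\overline{\Q}_{\ell}$, i.e.\ non-minuscule $\mu$) as following verbatim by the same argument, referring to \S 5.3.

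The main obstacle here is not conceptual but bookkeeping: the genuine work has already been done in Theorem \ref{commconst} (the commutativity constraint), whose proof via Proposition \ref{commid} required the auxiliary BD-, convolution-, and convolution$'$-Grassmanians $\Gr_{\cG}^{\BD,Q(u)}$, $\Gr_{\cG}^{\Conv,Q(u)}$, $\Gr_{\cG}^{\Conv',Q(u)}$ and their fiberwise identifications (Propositions \ref{BDfibers}, \ref{Convfibers}, \ref{indproper3}). Given that input, the remaining point that needs genuine care is the compatibility of the semisimple (rather than ordinary) trace with convolution and with the nearby cycles functor — the semisimple trace is needed precisely because $R\Psi_{\mu}$ is not pure and the Frobenius action is not semisimple — but this is handled exactly as in \cite[\S 10.4.1]{PZ} and \cite[\S 3.1]{HNnearby}, and the Weil-restriction does not affect it since all the relevant sheaves live on the same flag variety $\Fl_{\cP}$ over $k$. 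So the proof I would write is short: invoke Theorem \ref{commconst}, apply $\tau^{\mathrm{ss}}$, invoke that the $\tau^{\mathrm{ss}}_{\IC_w}$ span, and conclude centrality.
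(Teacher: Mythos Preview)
Your proposal is correct and follows exactly the same approach as the paper: the paper's proof is a single sentence stating that the theorem follows from Theorem \ref{commconst} as in \cite[\S 8]{HNnearby}, and you have simply unpacked the details of that reference (the sheaf--function dictionary, that the $\tau^{\mathrm{ss}}_{\IC_w}$ span the Hecke algebra, and applying $\tau^{\mathrm{ss}}$ to the commutativity isomorphism).
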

\begin{proof} The theorem follows from Theorem \ref{commconst} as in \cite[\S 8]{HNnearby}.  
\end{proof}

When $G$ is split and $\mu$ is a Shimura cocharacter, this is known as the Kottwitz conjecture.  It was proven in \cite{HNnearby} for $\GL_n$ and $\GSp_{2g}$.

\subsection{Unramified groups and splitting models}

In this section, we assume $G$ is an unramified group over $K$ (i.e., quasi-split and split over an unramified extension) as in \S 2. Let $K'$ be a unramified extension which splits $G$.  We also assume that $\fa$ is a hyperspecial vertex of $\cB(G, K)$.  Note that in this case $\Res_{K/F} G$ is quasi-split and $\fa$ considered as vertex of $\cB(\Res_{K/F} G, F)$ is \emph{very special} in the sense of \cite[\S 10.3.2]{PZ}. 

Let $\widetilde{F} \subset \overline{F}$ be a Galois closure of $K'$ in $\overline{F}$ over which $\Res_{K/F} G$ becomes split. Fix an ordering of the $F$-embeddings $\psi_i:K \ra \widetilde{F}$ of which there are $d = [K:F]$.    By Proposition \ref{monodromy}, the inertia group $I_{\widetilde{F}}$ acts trivially on $\widetilde{R \Psi}_{\mu}$.  

Let $H$ be the split form of $G$ defined over $\cO_F$, i.e., $H_{K'} \cong G_{K'}$.  Since $G$ is unramified and $\fa$ is very special, the special fiber $\Gr_{\cG}^{Q(u)} \otimes \overline{k} \cong \Gr_{H_{\overline{k}}}$.  Also, for any embedding $\psi_i$, we have an isomorphism $H_{\widetilde{F}} \cong G_{\widetilde{F}, \psi_i}$. If $\mu$ is geometric cocharacter of $\Res_{K/F} G$, let $\mu_i$ be the $\psi_i$-component of $\mu$ considered as a cocharacter of $H$.  Let $\{ \mu_i \}$ denote the conjugacy class of $\mu_i$.

The following theorem determines $R \Psi_{\mu}$ with the Galois action restricted to $\Gal(\overline{F}/\widetilde{F})$. 
\begin{thm} \label{convid} Let $\IC_{\mu_i}$ be the intersection cohomology sheaf on $S_H(\mu_i) \subset \Gr_{H_{\overline{k}}}$.  Then we have a natural isomorphism
$$
\widetilde{R \Psi}_{\mu} \cong \IC_{\mu_1} \star \IC_{\mu_2} \star \cdots \star \IC_{\mu_d}
$$
where $\star$ denotes the convolution product on $\Gr_{H_{\overline{k}}}$. 
\end{thm}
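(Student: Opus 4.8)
\textbf{Proof proposal for Theorem \ref{convid}.}

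The plan is to realize $\widetilde{R\Psi}_\mu$ as an iterated nearby cycles computation on the convolution/BD-Grassmannian from \S5.1, and then use the fact that nearby cycles of a convolution family over a disc whose special fiber collapses all the legs into one point produces the convolution product. The point is that in the unramified, very special case, over $\widetilde{F}$ the polynomial $Q(u)$ splits as $\prod_{i=1}^d (u-\varpi_i)$, the divisors $D_i$ are pairwise disjoint, and the generic fiber $\left(\Fl_{\cG}^{Q(u)}\right)_{\widetilde{F}}$ decomposes (Proposition \ref{genfiber}, using very specialness so that there is no reduction-to-$P$ to worry about) as $\prod_{i=1}^d \Gr_{H_{\widetilde{F}},\psi_i}$, while the special fiber is $\Gr_{H_{\overline{k}}}$ (since $\fa$ is very special). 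Under the product decomposition, $\cF_\mu = \IC_\mu$ pulls back to $\IC_{\mu_1}\boxtimes\cdots\boxtimes\IC_{\mu_d}$.

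First I would set up an iterated version of the convolution Grassmannian $\Gr_{\cG}^{\Conv,Q(u)}$ from Definition \ref{BDdef}: a moduli space $\Gr_{\cG}^{d\text{-}\Conv, Q(u)}$ of chains $(\cE_0,\ldots,\cE_d)$ of $\cG$-bundles on $X=\bA^1_{\cO}$ together with isomorphisms away from successively larger divisors, degenerating a product of $d$ Schubert varieties. Exactly as in Propositions \ref{BDfibers}, \ref{Convfibers}, \ref{indproper3}, the multiplication map to $\Gr_{\cG}^{Q(u)}$ is ind-proper, is an isomorphism on the generic fiber over $\widetilde{F}$ (where the $D_i$ are disjoint), and on the special fiber over $\overline{k}$ becomes the $d$-fold convolution morphism $\Gr_{H_{\overline{k}}}\widetilde{\times}\cdots\widetilde{\times}\Gr_{H_{\overline{k}}}\to\Gr_{H_{\overline{k}}}$. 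Then I would compute $R\Psi$ on this iterated convolution space applied to $\IC_{\mu_1}\boxtimes\cdots\boxtimes\IC_{\mu_d}$: since $R\Psi$ commutes with proper pushforward along the multiplication map, and since $R\Psi$ on each elementary factor commutes with the smooth torsor maps in the convolution diagram (\ref{convdiag}) — here using that $\cG$ is reductive, so each factor's nearby cycles is just the $\IC$ sheaf $\IC_{\mu_i}$ on $\Gr_{H_{\overline{k}}}$ because the family $\Gr_{\cG}^{Q(u)}$ for a \emph{reductive} $\cG$ has generic fiber a Schubert variety specializing to an isomorphic Schubert variety with no monodromy (the hyperspecial case, cf.\ \S2, where $P_k=\cG_k$) — one obtains $\widetilde{R\Psi}_\mu \cong \IC_{\mu_1}\star\cdots\star\IC_{\mu_d}$. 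The triviality of the $I_{\widetilde F}$-action (Theorem \ref{monodromy}, very special case) guarantees there are no Tate twists or unipotent corrections, so the isomorphism is as stated, and by construction it is $(L^+\cP)_{\overline k}$-equivariant and Galois-equivariant for $\Gal(\overline F/\widetilde F)$.

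The main obstacle, I expect, is the base case: showing that for a \emph{reductive} group scheme $\cG$ over $\cO[u]$ (the hyperspecial situation) and a single divisor, $R\Psi^{\Gr_{\cG}^{u-\varpi}}(\IC_\mu)$ on $\Gr_{H_{\overline k}}$ is exactly $\IC_\mu$ with trivial Galois action — i.e.\ that there is genuinely no nearby-cycles degeneration in the split/unramified single-leg case. This should follow from the fact that $M_{\cG}(\mu)$ in this case is (after unramified base change) a Schubert variety in the Beilinson–Drinfeld family for the split group $H$, whose total space over the disc is a trivial family of Schubert varieties away from issues already handled, combined with the smooth base change and the semismallness/perversity of $R\Psi$; alternatively it can be extracted from \cite[\S10]{PZ} or \cite{Gaitsgory} directly, since for $\cG$ reductive the construction here coincides with the equal-characteristic Beilinson–Drinfeld degeneration base-changed to mixed characteristic. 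Once that base case and the compatibility of $R\Psi$ with the convolution diagram are in hand, the induction on $d$ and the assembly of the iterated convolution space are routine, following the template of Proposition \ref{commid}.
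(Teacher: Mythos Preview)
Your proposal is correct and follows essentially the same approach as the paper: the paper introduces exactly the $d$-fold convolution space you describe (calling it the \emph{splitting Grassmannian} $\Split_H^{Q(u)}$, Definition \ref{splitmodel}), shows its multiplication map to $(\Gr_{\cG}^{Q(u)})_{\cO_{\widetilde F}}$ is an isomorphism over $\widetilde F$ and the $d$-fold convolution over $\widetilde k$ (Proposition \ref{splitfibers}), and then uses proper base change for $R\Psi$ together with a smooth-torsor enhancement $\Split^{Q(u),\infty}_H$ (your diagram (\ref{convdiag})) plus the single-leg base case $R\Psi^{\Gr_{H,\varpi_i}}\IC_{\mu_i,\widetilde F}=\IC_{\mu_i,\widetilde k}$ to conclude. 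The only minor deviation is that the paper does the $d$ legs simultaneously rather than by induction, and does not need to appeal to Theorem \ref{monodromy} separately since the triviality of the inertial action already falls out of the single-leg base case.
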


When $G = \GL_n$ or $\GSp_{2g}$ and $\mu$ is minuscule, Theorem \ref{convid} is proven in  \cite[Theorem 13.1]{PR2}.  Our proof follows a similar strategy.  We first introduce a version of the splitting models of Pappas and Rapoport \cite{PR2}. For each embedding $\psi_i:K \ra \overline{F}$, let $\varpi_i := \psi_i(\varpi)$ where $\varpi$ is the fixed uniformizer of $K$. Set $\widetilde{X} =  \bA^1_{\cO_{\widetilde{F}}}$, and let $D_i \subset \widetilde{X}$ be the principal divisor defined by $u - \varpi_i = 0$. 

\begin{defn} \label{splitmodel} Define the \emph{splitting Grassmanian} $\Split_H^{Q(u)}$ by the following functor on $\cO_{\widetilde{F}}$-algebras:
$$
\Split_H^{Q(u)}(R) := \{ \text{iso-classes of d-tuples } (\cE_i, \alpha_i) \}
$$
where $\cE_i$ is an $H$-bundle on $\widetilde{X}_R$ and $\alpha_i:(\cE_i)_{\widetilde{X}_R - D_i} \cong (\cE_{i + 1})_{\widetilde{X}_R  -  D_i}$ for $1 \leq i \leq d-1$ and $\alpha_d$ is a trivialization of $\cE_{d}$ along $\widetilde{X}_R - D_d$.  
\end{defn}

\begin{prop} \label{splitprop} The functor $\Split_H^{Q(u)}$ is represented by an ind-scheme over $\Spec \cO_{\widetilde{F}}$ which is ind-proper.
\end{prop}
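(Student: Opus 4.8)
The plan is to reduce the statement to the already-established ind-properness of the Beilinson--Drinfeld and convolution Grassmannians $\Gr_{\cG}^{\BD, Q(u)}$ and $\Gr_{\cG}^{\Conv, Q(u)}$ (Proposition \ref{indproper3}), or failing that, to reduce directly to the ind-properness of $\Fl_{\cG}^{Q(u)}$ over $\cO$ (Theorem \ref{AGproj}) via the closed-immersion-into-$\Gr^{Q(u)}_{\GL_n}$ technique. First I would observe that $\Split_H^{Q(u)}$ is an iterated fibration: the forgetful map remembering only $(\cE_1, \alpha_1, \dots, \cE_{d-1}, \alpha_{d-1}, \cE_d)$ together with a trivialization of $\cE_d$ away from $D_d$ projects $\Split_H^{Q(u)}$ onto a smaller splitting-type space, with fibers isomorphic to $\Gr_{H, \psi_d}$ (bundles on $\widetilde X_R$ trivialized away from $D_d$). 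Since $D_d$ is the principal divisor $u - \varpi_d$ and $H$ is split reductive over $\cO_{\widetilde F}$, this factor is the twisted affine Grassmannian along $D_d$, which is ind-proper (it is a base change of the usual $\Gr_H$; cf. the argument used for the special fiber in Proposition \ref{genfiber} and the ind-properness in Proposition \ref{BDfibers}). Iterating this across $i = d, d-1, \dots, 1$ exhibits $\Split_H^{Q(u)}$ as a tower of ind-proper fibrations over $\Spec \cO_{\widetilde F}$, and ind-properness is preserved under such compositions, exactly as in the proof of Proposition \ref{indproper3} for $\Gr_{\cG}^{\Conv, Q(u)}$.

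More concretely, for representability I would pick a faithful representation $H \ia \GL_n$ over $\cO_F$ and realize a tuple $(\cE_i, \alpha_i)$ as a $d$-tuple of $Q(u)$-type lattices in a fixed trivial rank-$n$ bundle: $\alpha_d$ gives a lattice $M_d \subset (M_0 \otimes R)[1/(u-\varpi_d)]$, and each $\alpha_i$ glues $\cE_i$ to $\cE_{i+1}$ away from $D_i$, so inductively $\cE_i$ corresponds to a lattice $M_i$ with $M_i[1/(u-\varpi_i)] = M_{i+1}[1/(u-\varpi_i)]$. This presents $\Split_H^{Q(u)}$ as a closed sub-ind-scheme of an iterated product of copies of $\Gr_{\GL_n, 0}^{(u-\varpi_i)}$, each of which is ind-proper by the lattice description as in Example \ref{GLN} and Proposition \ref{indscheme}; the closedness of the $H$-locus follows from $\GL_n/H$ being quasi-affine (\cite[Corollary 11.7]{PZ}) by the standard argument (\cite[Appendix]{Gaitsgory}, \cite[Proposition 10.1.13]{LevinThesis}). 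Then ind-properness follows from the valuative criterion on each finite-type closed piece, or simply from the closed immersion into a finite product of ind-proper ind-schemes.

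The main obstacle I anticipate is a bookkeeping one rather than a deep one: making the iterated-fibration argument precise requires defining the intermediate functors $\Split_H^{(i)}$ (tuples $(\cE_i, \alpha_i, \dots, \cE_d, \alpha_d)$) and checking that each projection $\Split_H^{(i)} \to \Split_H^{(i+1)}$ is genuinely a fibration with fiber $\Gr_{H, \psi_i}$ --- i.e., that the gluing datum $\alpha_i$ can be chosen fppf-locally --- which is the Beauville--Laszlo-style descent already invoked repeatedly in the paper. One should also be slightly careful that $\Split_H^{Q(u)}$ lives over $\cO_{\widetilde F}$ (not $\cO_F$), since a splitting field for $Q(u)$ is needed for the divisors $D_i$ to be individually defined; but this is built into the definition and causes no trouble. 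I expect the whole argument to be a close analogue of the proof of Proposition \ref{indproper3}, and I would write it by citing that proof and the ind-properness of $\Fl_{\cG}^{Q(u)}$, spelling out only the iterated-fibration structure.
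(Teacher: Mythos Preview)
Your proposal is correct and takes essentially the same approach as the paper: the paper's entire proof is the single sentence ``The proof is the same as the proof of Proposition \ref{indproper3},'' i.e., exactly the iterated-fibration argument you describe, with each projection having fiber an affine Grassmannian for $H$ centered at one of the divisors $D_i$. You have simply spelled out in detail what the paper leaves implicit.
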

\begin{proof} The proof is to the same as the proof of Proposition \ref{indproper3}. 
\end{proof}

There is a natural map 
$$
\widetilde{m}:\Split_H^{Q(u)} \ra (\Gr_{\cG}^{Q(u)})_{\cO_{\widetilde{F}}}  
$$
given by $\{ (\cE_i, \alpha_i) \} \mapsto (\cE_1, (\alpha_d \alpha_{d-1} \cdots \alpha_1)|_{X_R - D})$.

\begin{prop} \label{splitfibers} The morphism $\widetilde{m}$ induces an isomorphism on generic fibers over $\widetilde{F}$.  There is a natural isomorphism  
$$
\left(\Split_H^{Q(u)}\right)_{\widetilde{k}} \cong \Gr_{H_{\widetilde{k}}} \widetilde{\times} \Gr_{H_{\widetilde{k}}} \widetilde{\times} \cdots \widetilde{\times} \Gr_{H_{\widetilde{k}}},
$$
where the right side is the $d$-fold convolution product, such that $\widetilde{m}_k:\left(\Split_H^{Q(u)}\right)_{\widetilde{k}} \ra \Gr_H$ is the $d$-fold multiplication map.
\end{prop}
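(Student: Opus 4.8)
The plan is to mimic the proofs of Propositions \ref{Convfibers} and \ref{genfiber}, viewing $\Split_H^{Q(u)}$ as an iterated (twisted) modification space whose geometry degenerates from a product of affine Grassmanians to an iterated convolution product as the points $\varpi_1,\dots,\varpi_d$ collide. The two assertions are then handled fiber by fiber, and the only inputs needed beyond what is already in the excerpt are the Beauville--Laszlo / two-dimensional Beilinson--Drinfeld descent lemma of \cite[Lemma 6.1]{PZ} and the standard gluing yoga for Beilinson--Drinfeld and convolution Grassmanians.

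First I would treat the generic fiber. Over $\widetilde{F}$ the divisors $D_1,\dots,D_d$ are pairwise disjoint, so for a point $\{(\cE_i,\alpha_i)\}$ over an $\widetilde{F}$-algebra $R$ the composite $\alpha_d\circ\alpha_{d-1}\circ\cdots\circ\alpha_1$ is defined on $\widetilde{X}_R - D$ (with $D=\coprod D_i$) and $\widetilde{m}$ is well defined. To invert it I would run the gluing construction used in the proof of Proposition \ref{genfiber}: a pair $(\cE,\beta)$ with $\beta$ a trivialization away from $D=\coprod D_i$ is equivalent to a $d$-tuple $\{(\cE_{(i)},\beta_{(i)})\}$ of bundles with trivializations away from the individual $D_i$, and one recovers the staircase by taking $(\cE_d,\alpha_d):=(\cE_{(d)},\beta_{(d)})$ and, descending, letting $\cE_i$ be the modification of $\cE_{i+1}$ at $D_i$ prescribed by $(\cE_{(i)},\beta_{(i)})$ with $\alpha_i$ the tautological identification over $\widetilde{X}_R - D_i$. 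A functorial check shows this is inverse to $\widetilde{m}$; since $G_{\widetilde{F}}\cong H_{\widetilde{F}}$ via each $\psi_i$, both sides get identified with $\prod_i \Gr_{H_{\widetilde{F}}}$ and $\widetilde{m}$ with the identity, giving the isomorphism on generic fibers.

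Next the special fiber. Over $\widetilde{k}$ one has $Q(u)\equiv u^d$, so every $D_i$ coincides with $D_0=\{u=0\}$, and the defining data of $\Split_H^{Q(u)}$ becomes exactly a $d$-tuple of $H_{\widetilde{k}}$-bundles $\cE_1,\dots,\cE_d$ on $\widetilde{X}_R$ together with a trivialization of $\cE_d$ away from $D_0$ and isomorphisms $\cE_i|_{\widetilde{X}_R-D_0}\cong\cE_{i+1}|_{\widetilde{X}_R-D_0}$. After the Beauville--Laszlo equivalence replacing $\widetilde{X}_R$ by the formal disc along $D_0$, this is precisely the functor represented by the $d$-fold twisted product $\Gr_{H_{\widetilde{k}}}\widetilde{\times}\cdots\widetilde{\times}\Gr_{H_{\widetilde{k}}}$, by the same reasoning as \cite[Lemma 1]{Gaitsgory} (cf.\ Proposition \ref{Convfibers}), once one re-indexes the staircase by $i\mapsto d+1-i$. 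Under this identification $\alpha_d\circ\cdots\circ\alpha_1$ is by construction the convolution (multiplication) morphism, and $(\Gr_{\cG}^{Q(u)})_{\widetilde{k}}\cong\Gr_{H_{\widetilde{k}}}$ since $\fa$ is hyperspecial and $G$ unramified, so $\cG_{k[\![u]\!]}$ is a reductive model as recalled above; hence $\widetilde{m}_k$ is the $d$-fold multiplication map.

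The main obstacle I anticipate is purely bookkeeping: pinning down the ``staircase vs.\ iterated twisted product'' dictionary with the correct conventions for the direction of the modifications and the order of convolution, and carefully invoking the descent lemma to pass between the global moduli on $\widetilde{X}=\bA^1_{\cO_{\widetilde{F}}}$ and the completions along $D$ and along $D_0$. There is no new geometric content beyond what already appears in the proofs of Propositions \ref{genfiber} and \ref{Convfibers}; the ind-scheme structure and ind-properness are furnished by Proposition \ref{splitprop}.
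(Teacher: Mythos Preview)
Your proposal is correct and follows exactly the approach of the paper, whose proof consists of the single sentence ``The argument as the same as in Proposition \ref{Convfibers}.'' You have simply unpacked that reference, supplying the gluing construction on the generic fiber (as in Proposition \ref{genfiber}) and the identification with the $d$-fold twisted product on the special fiber via \cite[Lemma 1]{Gaitsgory}, which is precisely what the paper intends.
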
 
\begin{proof} The argument as the same as in Proposition \ref{Convfibers}.
\end{proof}

The proof of Theorem \ref{convid} follows the same lines as the proof of Theorem 13.1 in \cite{PR2}. 
\begin{proof}[Proof of Theorem \ref{convid}]   We first observe that $\cG_{\widetilde{X}} \cong H_{\widetilde{X}}$ since $\fa$ is a hyperspecial vertex of $\cB(G, K)$ so we will work entirely with $H$. 

By Proposition \ref{splitfibers}, $\widetilde{m}_{\widetilde{k}}$ gives $d$-fold convolution.  Since $\widetilde{m}$ is ind-proper (or proper if we restrict to the preimage of $M_{\cG}(\mu)$), it suffices then to show that 
$$
R \Psi (\widetilde{m}_{\widetilde{F}}^*(\IC_{\mu}) ) \cong \IC_{\mu_1} \widetilde{\boxtimes} \IC_{\mu_2} \widetilde{\boxtimes} \cdots \widetilde{\boxtimes} \IC_{\mu_d}.
$$
Recall the following diagram:
\begin{equation} \label{qp1}
\xymatrix{
& \ar[dl]_{p} \ar[dr]^{q} L H^{d-1}_{\widetilde{k}} \times \Gr_{H_{\widetilde{k}}} & \\
\Gr_{H_{\widetilde{k}}} \times \cdots \times \Gr_{H_{\widetilde{k}}} & & \Gr_{H_{\widetilde{k}}} \widetilde{\times} \cdots \widetilde{\times} \Gr_{H_{\widetilde{k}}}
}
\end{equation}

The twisted product is defined as the unique (up to canonical isomorphism) sheaf  $\IC_{\mu_1} \widetilde{\boxtimes} \cdots \widetilde{\boxtimes} \IC_{\mu_d}$ such that 
$$
q^*(\IC_{\mu_1} \widetilde{\boxtimes} \cdots \widetilde{\boxtimes} \IC_{\mu_d}) = p^*(\IC_{\mu_1} \boxtimes \cdots \boxtimes \IC_{\mu_d}).
$$

One can deform the diagram (\ref{qp1}) to a diagram 
\begin{equation} \label{qp2}
\xymatrix{
& \ar[dl]_{\widetilde{p}} \ar[dr]^{\widetilde{q}}  \Split^{Q(u), \infty}_H& \\
\Gr_{H, \omega_1} \times \cdots \times \Gr_{H, \omega_d} & & \Split^{Q(u)}_H
}
\end{equation}
over $\Spec \cO_{\widetilde{F}}$ where $\Gr_{H, \omega_i}$ is the affine Grassmanian for $H$ centered at $u - \varpi_i$ and 
$\Split^{Q(u), \infty}_H$ is defined below.

Let $\widehat{D}_i$ denote the completion of $\widetilde{X}$ along $D_i$. Then $\Split^{Q(u), \infty}_H$ is the ind-scheme which represents the functor of isomorphism classes of $\{ (\cE_1, \alpha_1), (\cE_i, \alpha_i, \gamma_i)_{i \geq 2} \}$ with $\{ (\cE_i, \alpha_i) \} \in \Split^{Q(u)}_H$ and $\gamma_i$ a trivialization of $\cE_i$ along the completion of $\widehat{D}_{i-1}$.  The map $\widetilde{q}$ is the obvious one.  The map $\widetilde{p}$ is defined by 
$$
\{  (\cE_1, \alpha_1), (\cE_i, \alpha_i, \gamma_i)_{i \geq 2} \} \mapsto \{ (\cE_i, \gamma_{i+1}^* \alpha_i)_{i \leq d-1}, (\cE_d, \alpha_d) \}
$$
where $\gamma_{i+1}^*$ is the restriction of $\gamma_{i+1}$ to the punctured formal disc $\widehat{D}_i[1/(u- \varpi_i)]$. Compare this definition to that of $\widetilde{\Gr}_{\cG, X}$ on \cite[pg. 232]{PZ}.  

Since nearby cycles commute with smooth base change \footnote{The morphisms $\widetilde{q}$ and $\widetilde{p}$ are not smooth as written. However, for any particular $\mu$, all the relevant sheaves are supported on the finite type closed subschemes. Working over the support of $\IC_{\mu}$ we can replace $\Split^{Q(u), \infty}_H$ by a finite type smooth torsor where we take trivializations only over $n$th infinitessimal neighborhoods for some sufficiently large $n$.} and $R \Psi^{\Gr_{H, \varpi_i}} \IC_{\mu_i, \widetilde{F}} = \IC_{\mu_i, \widetilde{k}}$ with the trivial Galois action, by the same argument as in the proof of Theorem \ref{commconst} (or \cite[Proposition 10.7]{PZ}) we are reduced to showing that
\begin{equation} \label{qp5}
\widetilde{q}_{\widetilde{F}}^*(\widetilde{m}_{\widetilde{F}}^*(\IC_{\mu}) )) \cong \widetilde{p}_{\widetilde{F}}^* \left(\IC_{\mu_1, \widetilde{F}} \boxtimes \cdots \boxtimes \IC_{\mu_d, \widetilde{F}} \right).
\end{equation}
If we let $\theta:\left(\Gr^{Q(u)}_{\cG} \right)_{\widetilde{F}} \cong \prod_i \Gr_{H_{\widetilde{F}}, \omega_i}$ be the isomorphism from Proposition \ref{genfiber},  The composition $(\theta \circ \widetilde{m}_{\widetilde{K}})( \{ (\cE_i, \alpha_i) \}) = \{ (\cF_i, \beta_i) \}$ where $\cF_i$ is the completion of $\cE_i$ along $D_i$ and $\beta_i = \alpha_d \alpha_{d-1} \ldots \alpha_i$ is a  trivialization along $\widehat{D}_i[1/(u - \varpi_i)]$.  Note that each $\alpha_j$ is defined on $\widehat{D}_i[1/(u - \varpi)]$ and for $i \neq j$, $\alpha_j$ is defined on $\widehat{D}_i$ so $\alpha_d \alpha_{d-1} \ldots \alpha_{i+1}$ defines a trivialization of $\cE_{i+1}$ along $\widehat{D}_i$.   For $\{ (\cE_1, \alpha_1), (\cE_i, \alpha_i, \gamma_i)_{i \geq 2} \}$ in $\left( \Split^{Q(u), \infty}_H \right)_{\widetilde{F}}$, the composition $\alpha_d \alpha_{d-1} \ldots \alpha_{i+1}$ differs from $\gamma_{i+1}$ by an element of $L^+_{\omega_i} H_{\widetilde{F}}$.  Thus, the $\widetilde{p}_{\widetilde{F}}$ differs from the composition $\theta \circ \widetilde{m}_{\widetilde{F}} \circ \widetilde{q}_{\widetilde{F}}$ by multiplication by the product $\prod_i L^+_{\omega_i} H_{\widetilde{F}}$. Since each $\IC_{\mu_i}$ is $L^+_{\omega_i} H_{\widetilde{F}}$-equivariant the two pullbacks are isomorphic which implies (\ref{qp5}).  
\end{proof}
 
Finally, we would like to state a conjecture on the action of $I_E$ on $R \Psi_{\mu}$.   Geometric Satake for $H_{\overline{k}}$ gives an equivalence of tensor categories
$$
S_{\overline{k}}: \Perv_{L^+ H_{\overline{k}}} \Gr_{H_{\overline{k}}} \xrightarrow{\sim} \Rep_{\overline{\Q}_{\ell}} (H^{\vee})
$$
sending $\IC_{\la}$ to $V_{\la}$ the representation of $H^{\vee}$ with highest weight $\la$. Furthermore, $S_{\overline{k}}$ is realized by taking  (hyper)cohomology.   It suffices then to determine the action of $I_E$ on
\begin{equation} \label{h2}
H^*(\widetilde{R \Psi}_{\mu}) = V_{\mu_1} \otimes V_{\mu_2} \otimes \cdots \otimes V_{\mu_d}.
\end{equation}
As $G$ is unramified, the inertia subgroup $I_F$ of $\Gal(\widetilde{F}/F)$ acts through a subgroup of the permutation group on the embeddings $\{\psi_i\}$.  Furthermore, $I_E$ is the subgroup of $I_F$ of those $\sigma \in S_d$ such that $\mu_{\sigma(i)} = \mu_i$.  This group has a natural permutation action on $V_{\mu_1} \otimes V_{\mu_2} \otimes \cdots \otimes V_{\mu_d}$  which we call $\rho$.  

As in \cite[Remark 7.4]{PR1}, we have
\begin{equation} \label{s6}
V_{\mu_1} \otimes V_{\mu_2} \otimes \cdots \otimes V_{\mu_d} = \oplus_{\la \leq \mu_1 + \ldots + \mu_d} M_{\la} \otimes V_{\la}
\end{equation}
where $H^{\vee}$ acts trivially on $M_{\la}$.  Thus, $\rho$ decomposes as 
$$
\oplus_{\la \leq \mu_1 + \ldots + \mu_d} \rho_{\la} \otimes \mathrm{id}_{V_{\la}}  
$$
where $\rho_{\la}$ acts on $M_{\la}$.   

The following was conjectured by Pappas and Rapoport in \cite[Remark 7.4]{PR1} for $\GL_n$:
\begin{conj} \label{conject} There is a decomposition 
$$
R \Psi_{\mu} \cong \oplus_{\la \leq \mu_1 + \ldots \mu_d} M_{\la} \otimes \IC_{\la}
$$
where $M_{\la}$ the constant sheaf associated to the vector space $M_{\la}$ in Equation \ref{s6}. The action of $I_E$ is trivial on $\IC_{\la}$ and is isomorphic to $\rho_{\la}$ on $M_{\la}$.
\end{conj}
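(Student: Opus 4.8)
The plan is to separate the assertion into the (essentially formal) decomposition of the underlying perverse sheaf and the (substantial) identification of the $I_E$-action, and to compute the latter geometrically from the splitting Grassmannian together with the fusion description of the commutativity constraint of the Satake category.

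\emph{The formal part.} By Theorem~\ref{convid} we have $\widetilde{R\Psi}_\mu \cong \IC_{\mu_1}\star\cdots\star\IC_{\mu_d}$ on $\Gr_{H_{\overline k}}$, and since $\Perv_{L^+ H_{\overline k}}\Gr_{H_{\overline k}}$ is semisimple and $S_{\overline k}$ is a tensor equivalence onto $\Rep_{\overline{\Q}_\ell}(H^\vee)$, the identity (\ref{s6}) transports to $\IC_{\mu_1}\star\cdots\star\IC_{\mu_d}\cong\bigoplus_\la M_\la\otimes\IC_\la$ with $M_\la$ constant sheaves. By Theorem~\ref{monodromy} the group $I_{\widetilde F}$ acts trivially on $R\Psi_\mu$, so the $I_E$-action factors through the finite group $\Gamma_E$ of permutations $\sigma$ of $\{\psi_i\}$ with $\mu_{\sigma(i)}=\mu_i$. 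As $H$ is the fixed split $\cO_F$-form, $\Gamma_E$ acts trivially on $X_*(H)$ and hence fixes every $\la$ and every simple object $\IC_\la$; therefore $\Gamma_E$ preserves each isotypic summand $M_\la\otimes\IC_\la$, acts trivially on $\IC_\la$, and acts on the multiplicity space $M_\la=\Hom(\IC_\la,R\Psi_\mu)$ through some representation. The whole content of the conjecture is the identification of this representation with $\rho_\la$.

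\emph{The geometric computation.} One would compute the $\Gamma_E$-action using the splitting Grassmannian of Definition~\ref{splitmodel}. Although $\Split_H^{Q(u)}$ lives over $\cO_{\widetilde F}$, the base change $(\Gr_\cG^{Q(u)})_{\cO_{\widetilde F}}$ carries the tautological $\Gal(\widetilde F/F)$-action descending it to $\cO_F$, and this lifts along $\widetilde m$ to a $\Gal(\widetilde F/F)$-action on $\Split_H^{Q(u)}$ which reindexes the pairs $(\cE_i,\alpha_i)$ and the divisors $D_i$ according to the action on $\{\psi_i\}$. Running the proof of Theorem~\ref{convid} $\Gal(\widetilde F/E)$-equivariantly shows that $R\Psi_\mu$ with its $I_E$-structure is the descent to $\cO_E$ of $R\Psi(\widetilde m_{\widetilde F}^*\IC_\mu)\cong\IC_{\mu_1}\widetilde{\boxtimes}\cdots\widetilde{\boxtimes}\IC_{\mu_d}$, where $\sigma\in\Gamma_E$ acts by reindexing the tensor factors (legitimate precisely because $\mu_{\sigma(i)}=\mu_i$). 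It then remains to match this reindexing automorphism, through $S_{\overline k}$, with the permutation action $\rho$ on $V_{\mu_1}\otimes\cdots\otimes V_{\mu_d}$. The tool for this is the \emph{fusion} description of the commutativity constraint, due to Beilinson--Drinfeld and Mirkovi\'c--Vilonen: one enlarges $\Split_H^{Q(u)}$ to a Beilinson--Drinfeld family over an \'etale neighborhood inside a self-product of $\bA^1_{\cO}$ in which the divisors $D_i$ may move and collide, so that over the disjoint locus the sheaf is the external product $\boxtimes_i\IC_{\mu_i}$, over $u=0$ on the special fiber it is $\star_i\IC_{\mu_i}$, and nearby cycles along the collision compute the fusion product. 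In this picture the arithmetic $\Gal(\widetilde F/E)$-action on the configuration $\{\varpi_1,\dots,\varpi_d\}$ of roots of $Q(u)$ becomes the monodromy of the family over the corresponding configuration space, which by the very construction of the symmetric monoidal structure on $\Perv_{L^+H_{\overline k}}\Gr_{H_{\overline k}}$ acts on (hyper)cohomology by the commutativity constraint of $\Rep(H^\vee)$, i.e.\ by $\rho$; restricting to $\Gamma_E\subset S_d$ and taking isotypic components yields $\rho_\la$ on each $M_\la$. This is exactly where \cite{PR1, PR2} substituted the explicit lattice-chain models available for $\GL_n$ and $\GSp_{2g}$.

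\emph{Main obstacle.} The hard point is the comparison between the arithmetic Galois action and the geometric fusion monodromy: one must show that the degeneration $\Gr_\cG^{Q(u)}$ over $\cO_F$, which collides the arithmetic places $u=\varpi_i$, is compatible with a genuine equal-characteristic Beilinson--Drinfeld degeneration in a $\Gal(\widetilde F/E)$-equivariant manner. The bridge is the isomorphism (\ref{e7}) used in the proof of Proposition~\ref{genfiber}, identifying the completion of $\cG$ along $Q(u)=0$ with $d$ discs centered at the $\varpi_i$; but it depends on choices of $\widetilde e$-th roots of $1+(u-\varpi_i)/\varpi_i$, and carrying $\Gal(\widetilde F/E)$ compatibly through those choices, through the gluing defining $\Split_H^{Q(u)}$, and through the normalization of the fusion commutativity constraint --- where a spurious sign can enter if one uses the naive rather than the Mirkovi\'c--Vilonen constraint --- is delicate. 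A secondary difficulty, already present in the proof of Theorem~\ref{convid}, is that the deformation diagram (\ref{qp2}) is not literally smooth, so one must first replace $\Split^{Q(u), \infty}_H$ by a finite-type smooth torsor adapted to the support of $\IC_\mu$ before invoking smooth base change for nearby cycles, and arrange this replacement $\Gamma_E$-equivariantly.
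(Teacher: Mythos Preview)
The statement you are attempting to prove is labeled and stated in the paper as a \emph{conjecture}, not a theorem: the paper gives no proof. The only commentary is the remark immediately following, which observes that the tamely ramified case is a consequence of \cite[Theorem~10.23]{PZ} and that a proof in general might be obtained by adapting \cite[Theorem~10.18]{PZ}, but explicitly says ``we do not attempt to do so here.'' So there is no proof in the paper against which to compare your proposal.

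That said, your proposal is a reasonable strategic outline rather than a proof, and you are candid about this: you isolate the formal decomposition (which does follow from Theorem~\ref{convid}, semisimplicity of the Satake category, and Theorem~\ref{monodromy}) from the substantive identification of the $I_E$-action, and you correctly flag the latter as the hard point. Your idea of promoting the splitting Grassmannian to a Beilinson--Drinfeld family and reading off the permutation action as fusion monodromy is in the spirit of the equal-characteristic arguments of \cite{MV} and is broadly compatible with what the paper hints at via \cite[Theorem~10.18]{PZ}. But the obstacle you name --- making the identification (\ref{e7}) and the diagram (\ref{qp2}) $\Gal(\widetilde F/E)$-equivariant, and matching the arithmetic Galois action on the roots of $Q(u)$ with the geometric monodromy on a configuration space --- is genuine and is precisely why the paper leaves this as a conjecture. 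In particular, your sketch does not actually construct the $\Gamma_E$-equivariant BD family over a mixed-characteristic base, nor verify the sign normalization of the commutativity constraint; absent those steps, what you have written is a plausible plan of attack, not a proof.
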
 

\begin{rmk} When $K/F$ is tamely ramified, Conjecture \ref{conject} is a consequence of \cite[Theorem 10.23]{PZ}.  It may be possible to prove the conjecture by adapting the proof of Theorem 10.18 in \cite{PZ} to our setting, but we do not attempt to do so here. 
\end{rmk}

\end{document}